\newcommand{\nospacepunct}[1]{\makebox[0pt][l]{\,#1}}
\setlist[enumerate]{nosep}
\newcommand\redsout{\bgroup\markoverwith{\textcolor{red}{\rule[0.5ex]{2pt}{0.8pt}}}\ULon}
\newtheorem{theorem}{Theorem}[section]
\newtheorem*{conjecture*}{Conjecture}
\newtheorem*{theorem*}{Theorem}
\newtheorem{lemma}[theorem]{Lemma}
\newtheorem*{lemma*}{Lemma}
\newtheorem{corollary}[theorem]{Corollary}
\newtheorem*{corollary*}{Corollary}
\newtheorem{proposition}[theorem]{Proposition}
\newtheorem{remark}[theorem]{Remark}
\newtheorem{definition}[theorem]{Definition}
\newtheorem*{definition*}{Definition}
\newtheorem*{definitions*}{Definitions}
\newtheorem*{question*}{Question}
\newtheorem*{questions*}{Questions}
\newtheorem{example}[theorem]{Example}
\newtheorem{thm}{Theorem}[section]
\newcommand{\cA}{\mathcal A}
\newcommand{\cB}{\mathcal B}
\newcommand{\cC}{\mathcal C}
\newcommand{\cD}{\mathcal D}
\newcommand{\cG}{\mathcal G}
\newcommand{\cH}{\mathcal H}
\newcommand{\cK}{\mathcal K}
\newcommand{\cO}{\mathcal O}
\newcommand{\cP}{\mathcal P}
\def\Az{\mathbb{A}}
\def\Cz{\mathbb{C}}
\def\Mz{\mathbb{M}}
\def\Nz{\mathbb{N}}
\def\Iz{\mathbb{I}}
\def\Qz{\mathbb{Q}}
\def\Rz{\mathbb{R}}
\def\Zz{\mathbb{Z}}
\def\1z{\mathbb{1}}
\newcommand{\fA}{\mathfrak A}
\def\SEMI{\mbox{$\times\kern-2pt\vrule height5pt width.6pt \kern3pt $}}
\newcommand{\Hom}{{\rm Hom}\,}
\newcommand{\End}{{\rm End}\,}
\newcommand{\Aut}{{\rm Aut}\,}
\newcommand{\ind}{\textup{ind}}
\newcommand{\id}{{\rm id}}
\newcommand{\res}{{\rm res}}
\newcommand{\sign}{\textup{sign}}
\newcommand{\acts}{\curvearrowright} 
\newcommand{\fg}[1]{[\![#1]\!]}
\renewcommand{\Im}{\textup{Im}}
\renewcommand{\phi}{\varphi}
\newcommand{\pr}{\textup{pr}}
\newcommand{\ccor}{\textup{cor}}
\newcommand{\ol}{\overline}
\newcommand{\incl}{\textup{incl}}
\newcommand{\lwedge}{{\textstyle\bigwedge}}
\newcommand{\ev}{\textup{ev}}
\newcommand{\Ind}{\textup{Ind}}
\newcommand{\Ad}{\textup{Ad}}
\newcommand{\K}{\textup{K}}
\newcommand{\KK}{\textup{KK}}
\newcommand{\M}{\textup{M}}
\newcommand{\A}{\mathscr{A}}
\renewcommand{\H}{\textup{H}}
\renewcommand{\A}{\mathscr{A}}
\renewcommand{\S}{\mathscr{S}}
\newcommand{\Tor}{\textup{Tor}}
\newcommand{\gp}[1]{\langle #1\rangle}
\newcommand{\tor}{\textup{tor}}
\newcommand{\Homeo}{\textup{Homeo}}
\DeclareMathOperator{\supp}{supp}
\DeclareMathOperator{\coker}{coker}
\begin{document}

\title{Groupoid homology and K-theory for algebraic actions from number theory}

\thispagestyle{fancy}

\author{Chris Bruce}\thanks{C. Bruce has received funding from the European Union’s Horizon 2020 research and innovation programme under the Marie Sklodowska-Curie grant agreement No 101022531 and the European Research Council (ERC) under the European Union’s Horizon 2020 research and innovation programme (grant agreement No. 817597).}
\address[Chris Bruce]{School of Mathematics, Statistics and Physics, Herschel Building, Newcastle University, Newcastle upon Tyne, NE1 7RU, UK}
\email[Bruce]{chris.bruce@newcastle.ac.uk}

\author{Yosuke Kubota}\thanks{Y. Kubota is supported by RIKEN iTHEMS and JSPS KAKENHI Grant Numbers 22K13910, JPMJCR19T2.}
\address[Yosuke Kubota]{Graduate School of Science, Kyoto University, Kitashirakawa Oiwake-cho, Sakyo-ku, Kyoto 606-8502, Japan}
\email[Kubota]{ykubota@math.kyoto-u.ac.jp}

\author{Takuya Takeishi}\thanks{T. Takeishi is supported by JSPS KAKENHI Grant Number JP19K14551 and JP24K06780.}
\address[Takuya Takeishi]{Faculty of Arts and Sciences, Kyoto Institute of Technology, Matsugasaki, Sakyo-ku, Kyoto, Japan}
\email[Takeishi]{takeishi@kit.ac.jp}

\date{\today}

\begin{abstract}
 We compute the groupoid homology for the ample groupoids associated with algebraic actions from rings of algebraic integers and integral dynamics. We derive results for the homology of the topological full groups associated with rings of algebraic integers, and we use our groupoid homology calculation to compute the K-theory for ring C*-algebras of rings of algebraic integers, recovering the results of Cuntz and Li and of Li and L\"{uck} without using Cuntz--Li duality. Moreover, we compute the K-theory for C*-algebras attached to integral dynamics, resolving the conjecture by Barlak, Omland, and Stammeier in full generality.   
\end{abstract}

\maketitle

\section{Introduction}

\subsection{Context}
Renault pioneered the study of groupoid C*-algebras over 40 years ago \cite{Ren}. Today, groupoid C*-algebras play a central role in C*-algebra theory and many of the most important classes of C*-algebras have groupoid models, e.g., by recent work of Li, every classifiable simple  C*-algebra can be realized as the reduced (twisted) C*-algebra of an \'{e}tale groupoid \cite{Li:Invent}.
A homology theory for \'{e}tale groupoids was introduced by Crainic and Moerdijk \cite{CM}, which has seen a great deal of interest from operator algebraists since work of Matui, where a striking connection between groupoid homology and K-theory of C*-algebras was discovered \cite{Mat16}. Indeed, Matui made the following conjecture:
\begin{conjecture*}[Matui's HK conjecture]
If $\cG$ is a topologically principal minimal Hausdorff ample groupoid with compact unit space, then there are isomorphisms
\[
\bigoplus_{i=0}^\infty \H_{2i}(\cG)\cong \K_0(C_r^*(\cG)) \quad\text{ and } \quad \bigoplus_{i=0}^\infty \H_{2i+1}(\cG)\cong \K_1(C_r^*(\cG)).
\]
\end{conjecture*}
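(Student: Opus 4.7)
The natural framework for this conjecture is an Atiyah--Hirzebruch-type spectral sequence
\[
E^2_{p,q} = \H_p(\cG;\K_q(\text{pt})) \Longrightarrow \K_{p+q}(C_r^*(\cG)).
\]
Since $\K_q(\text{pt}) = \Zz$ in even degrees and vanishes in odd degrees, the $E^2$-page is concentrated in even rows, and if the sequence degenerates at $E^2$ with no nontrivial extensions the two totals $\bigoplus_{i\geq 0} \H_{2i}(\cG)$ and $\bigoplus_{i\geq 0} \H_{2i+1}(\cG)$ recover $\K_0$ and $\K_1$ respectively. My plan is to (i) construct this spectral sequence in the stated generality, (ii) prove degeneration at $E^2$, and (iii) kill the extension problem.

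For step (i) I would build $C_r^*(\cG)$ as the totalization of a simplicial $C^*$-algebra whose $n$-simplices are the reduced algebras $C_r^*(\cG^{(n)})$ of the arrow spaces, apply $\K_*$ to obtain an exact couple, and identify the $E^1$-differential with the bar differential computing groupoid homology. Topological principality lets me replace each $C_r^*(\cG^{(n)})$ by $C_0(\cG^{(n)})$ up to $\K$-equivalence, so the $E^2$-term reads off as $\H_*(\cG;\Zz)$; a variant using Meyer--Nest localization or the Baum--Connes assembly machinery should produce the same spectral sequence and be technically cleaner.

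Step (ii), degeneration at $E^2$, is the principal obstacle: this is precisely where known counterexamples to the HK conjecture intervene, so any successful argument must use all three hypotheses---topologically principal, minimal, and compact unit space---in an essential way to exclude the pathologies appearing in those examples. My strategy would be to exploit minimality to write $\cG$ as a filtered colimit of clopen full subgroupoids with transitive quotients, show on each finite stage that the differentials $d^r$ for $r\geq 2$ are dual to products in a compatible cohomology theory that vanish for dimension reasons, and then transfer vanishing to the limit via the continuity of $\H_*$ and $\K_*$ under increasing unions. For step (iii), I would construct explicit projective lifts of the even-degree homology classes from projections supported on compact open bisections that represent cycles in $\H_{2i}(\cG)$, producing a splitting that annihilates the extensions by elementary-divisor considerations.

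The hard part will unquestionably be the degeneration argument in step (ii); everything else is essentially bookkeeping once the spectral sequence is in place. I expect the decisive use of the hypotheses to appear there: minimality to reduce to transitive pieces, topological principality to identify the $E^2$-term with ordinary groupoid homology, and compactness of $\cG^{(0)}$ to ensure that the colimit/limit arguments interact cleanly with $\K$-theory.
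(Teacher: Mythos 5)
There is a fundamental problem here: the statement you are trying to prove is false, and the paper never proves it. It is recorded purely as a (named, historical) conjecture for context; immediately after stating it, the paper notes that Scarparo \cite{Scar} disproved the HK conjecture, and that Deeley \cite{Deeley} produced a counterexample that is \emph{principal}. Those counterexamples --- transformation groupoids of odometer-type actions, e.g.\ of the infinite dihedral group on the $2$-adic integers --- are topologically principal (indeed principal), minimal, Hausdorff, ample, and have compact (Cantor) unit space. So every hypothesis in the statement is satisfied by a groupoid for which the claimed isomorphisms fail. No proof can exist, and the role of the conjecture in this paper is only to motivate the notion of the \emph{HK property}, which the authors then verify for specific classes of groupoids by computing both sides explicitly.

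Your step (ii) is exactly where this bites. You correctly identify degeneration of the spectral sequence as the obstacle and say the argument ``must use all three hypotheses in an essential way to exclude the pathologies appearing in those examples'' --- but the known counterexamples already satisfy all three hypotheses, so there is nothing left to exclude them with. Concretely, in Scarparo's and Deeley's examples the discrepancy between $\bigoplus_i \H_{2i}(\cG) \oplus \bigoplus_i \H_{2i+1}(\cG)$ and $\K_0 \oplus \K_1$ is caused by torsion phenomena (nontrivial higher differentials and/or extension problems in precisely the spectral sequence you describe), and these cannot be removed by minimality, topological principality, or compactness. The spectral sequence itself (step (i)) does exist in the torsion-free/ample setting (cf.\ \cite{PY} and the ABC/Kasparov spectral sequences used in this paper), and the paper's actual strategy is to prove degeneration \emph{case by case} for the specific groupoids at hand --- e.g.\ by comparing with Raven's Chern character, or by mapping in from $C(\Tz^d)$ via an explicit projection --- not in the generality of the conjecture. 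If you want a true general statement in this direction, the correct target is the rational version proposed by Deeley--Willett \cite{DW}, not the integral statement above.
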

Scarparo showed that the HK conjecture does not hold in general \cite{Scar} (see also \cite{Deeley}), but it has led to a great deal of important work, and a refined rational version has recently been proposed, see \cite{DW}. It is an interesting question whether or not any given class of groupoids satisfies the conclusion of the HK conjecture. In this article, we shall say that a groupoid $\cG$ has the \emph{HK property} if the conclusion of Matui's HK conjecture holds for $\cG$. 

The topological full group of an ample groupoid is the group $\fg{\cG}$ of homeomorphisms of the unit space of the groupoid whose elements are locally given given by the action of some groupoid element. Such groups were introduced by Giordano, Putnam, and Skau for Cantor minimal systems \cite{GPS} and then for ample groupoids by Matui \cite{Mat12}.
Topological full groups turn out to be very interesting from the point of view of group theory, and they have provided several celebrated example classes of groups \cite{JNdlS, JM,NekJOT, Nek18,SWZ}.

Matui conjectured the existence of a three-term short exact sequence relating the groupoid homology of an ample groupoid $\cG$ with the first homology (i.e., abelianization) of the topological full group $\fg{\cG}$, see \cite[Conjecture~2.9]{Mat16}. This conjecture was recently resolved in a very strong sense for all groupoids with comparison by Li \cite{Li:TFG}, which means that one can access certain group-theoretic properties of topological full groups by computing groupoid homology. Moreover, topological full groups often witness strong rigidity phenomena in the sense that the abstract isomorphism class of the group characterizes the groupoid up to isomorphism, see \cite{Mat15} and \cite{Rubin}.

\subsection{Groupoids from algebraic actions}

A particularly interesting setting in which the aforementioned interactions between groupoids, groups, and C*-algebras play out is that of algebraic actions of semigroups, which have been recently studied by the first-named author and Xin Li \cite{BruceLi2,BruceLi3}. One of the more surprising theorems in this setting is the following complete rigidity result for algebraic actions from number theory, which served as the motivation for the present article. If $R$ is the ring of algebraic integers in a number field $K$, then the multiplicative monoid $R^\times\coloneqq R\setminus\{0\}$ acts on the additive group of $R$ by multiplication, giving rise to an  ample groupoid $\cG_{R^\times\acts R}$ which models the ring C*-algebra of $R$ as defined by Cuntz and Li \cite{CuntzLi1} (see also \cite{Li:ring}).
\begin{theorem*}[{\cite[Corollary E]{BruceLi3}}]
    Let $K_1$ and $K_2$ be algebraic number fields with rings of algebraic integers $R_1$ and $R_2$, respectively. Then, the following statements are equivalent: 
        \begin{enumerate}[\upshape(i)]      
        \item $K_1$ and $K_2$ are isomorphic (equivalently, $R_1\cong R_2$);
        \item the groupoids $\cG_{R_1^\times\acts R_1}$ and $\cG_{R_2^\times\acts R_2}$ are isomorphic;      
        \item the groups $\fg{\cG_{R_1^\times\acts R_1}}$ and $\fg{\cG_{R_2^\times\acts R_2}}$ are isomorphic;
        \item the commutator subgroups $\fg{\cG_{R_1^\times\acts R_1}}'$ and $\fg{\cG_{R_2^\times\acts R_2}}'$ are isomorphic;
         \item the ring C*-algebras $\fA[R_1]$ and $\fA[R_2]$ are isomorphic via a Cartan-preserving *-isomorphism.
    \end{enumerate}
\end{theorem*}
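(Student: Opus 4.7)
The plan is to verify the equivalences cyclically: (i) $\Rarr$ (ii) $\Rarr$ \{(iii), (iv), (v)\}, then each of (iii), (iv), (v) $\Rarr$ (ii), and finally (ii) $\Rarr$ (i). The forward implications are essentially functorial. A ring isomorphism $R_1 \cong R_2$ transports the multiplicative actions and induces an isomorphism of the transformation groupoids, giving (i) $\Rarr$ (ii). Given (ii), one immediately obtains (iii), since groupoid isomorphisms induce isomorphisms of topological full groups; (iv), by restricting to commutator subgroups; and (v), because a groupoid isomorphism induces a Cartan-preserving $*$-isomorphism of reduced groupoid C*-algebras.

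For the reverse implications, I would first check that $\cG_{R^\times \acts R}$ is Hausdorff, minimal, effective, and satisfies the kind of dynamical comparison needed to feed into the standard rigidity machinery. These regularity properties are essentially available for this class of groupoids arising from algebraic actions. Under such hypotheses, Renault's Cartan pair theorem yields (v) $\Rarr$ (ii), and a Matui--Rubin--Li style reconstruction theorem for topological full groups of effective minimal groupoids with comparison yields (iii) $\Rarr$ (ii). For (iv) $\Rarr$ (iii), the strategy is to recover $\fg{\cG_{R^\times \acts R}}$ from $\fg{\cG_{R^\times \acts R}}'$ using simplicity of the commutator subgroup together with an identification of $\fg{\cG_{R^\times \acts R}}$ as a canonical overgroup, e.g.\ as a normalizer inside $\Homeo$ of the unit space.

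The main obstacle is (ii) $\Rarr$ (i): recovering the arithmetic of $R$ from the abstract groupoid $\cG_{R^\times \acts R}$. My strategy would be to first identify the unit space with a canonical number-theoretic completion of $R$ (morally $R \otimes \hat{\Zz}$) described purely in terms of orbit and isotropy data, then reconstruct the dense additive subgroup $R$ as a canonically defined orbit of a distinguished base point, and finally recover the multiplicative $R^\times$-action from the bisections of the groupoid. Assembling these data yields a ring isomorphism $R_1 \cong R_2$, which is equivalent to $K_1 \cong K_2$ via passage to fields of fractions. The hard point is describing $R$ inside the unit space \emph{canonically} using only groupoid-intrinsic data; without such a description one cannot promote a groupoid isomorphism to a ring isomorphism. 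I expect this is where a careful analysis of stabilizer structure and of arithmetic invariants visible in the groupoid (degree, signature, class number, and similar invariants, which are accessible from the K-theory and homology computations of this paper) would play the decisive role.
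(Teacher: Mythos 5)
There is a genuine gap, and it sits exactly where the real content of this theorem lies: the implication (ii) $\Rightarrow$ (i). Your outline of the forward directions and of (v) $\Rightarrow$ (ii) (Renault's Weyl groupoid construction, using that $\cG_{R^\times\acts R}$ is effective) and (iii) $\Rightarrow$ (ii) (Rubin/Matui spatial realization for minimal effective ample groupoids) is fine, and (iv) can be handled the same way -- though not via your ``normalizer inside $\Homeo$ of the unit space'' device, which is circular as stated (from the abstract group $\fg{\cG}'$ you do not yet have a unit space); the standard fix is to apply Rubin's theorem directly to the commutator subgroup, whose action on the Cantor unit space is still locally dense because the groupoid is purely infinite and minimal, and then observe that its groupoid of germs is the original groupoid. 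But for (ii) $\Rightarrow$ (i) you explicitly leave the decisive step open: you must promote an abstract isomorphism of groupoids to an isomorphism of rings, i.e.\ show that the groupoid remembers the algebraic action $R^\times\acts R$ up to conjugacy. It is true that $R$ appears set-theoretically as the orbit of $0$ in the unit space $\ol{R}$, but a groupoid isomorphism only gives a homeomorphism of unit spaces matching orbits; nothing in your sketch forces it to respect the additive structure on that orbit or the multiplicative structure of the acting monoid, and isomorphism of transformation groupoids does \emph{not} in general imply conjugacy of the underlying actions. Establishing this rigidity is precisely the main theorem of the cited paper \cite{BruceLi3}; the present paper does not reprove it, it only quotes the corollary.

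Moreover, your proposed fallback -- recovering $K$ from ``arithmetic invariants visible in the groupoid (degree, signature, class number, \ldots) accessible from the K-theory and homology computations of this paper'' -- provably cannot work. Theorem~\ref{thm:gpdhomology} and Theorem~\ref{thm:LuckLi} show that $\H_*(\cG_{R^\times\acts R})$ and $\K_*(\fA[R])$ depend only on the degree $d$, on whether $K$ has a real embedding, and on $|\mu_K|$. These data do not determine $K$: for instance $\Qz(\sqrt{-5})$ and $\Qz(\sqrt{-6})$ are non-isomorphic totally imaginary quadratic fields with $\mu=\{\pm 1\}$, hence with identical groupoid homology and identical K-theory, and the class number is likewise invisible to these invariants. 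So no argument that factors through the homological/K-theoretic invariants computed here can yield (ii) $\Rightarrow$ (i); one genuinely needs the dynamical reconstruction of the action (and hence of the ring structure) carried out in \cite{BruceLi3}.
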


\subsection{Main results} The above rigidity theorem is grounds for a systematic study of the groups $\fg{\cG_{R^\times\acts R}}$. As explained above, we can use \cite{Li:TFG} to pass from homological information about the groupoid $\cG_{R^\times\acts R}$ to group-theoretic properties of $\fg{\cG_{R^\times\acts R}}$. Informed by this, our first result is on the homology of the groupoids $\cG_{R^\times\acts R}$:

\begin{thm}[Theorem~\ref{thm:gpdhomology}]
\label{thm:A}
For a number field $K$ of degree $d=[K:\Qz]$ with ring of integers $R$, we have $\H_n(\cG_{R^\times\acts R})=0$ for $0\leq n<d$. Moreover,
\begin{enumerate}[\upshape(i)]
\item If $K$ is totally imaginary, then 
\[ \H_n(\cG_{R^\times\acts R}) \cong \begin{cases} 
0 & \text{ if } 0\leq n<d,\\
\Zz & \mbox{ if } n=d, \\
\Zz^{\oplus\infty} \oplus \Zz/|\mu|\Zz & \mbox{ if }n=d+1, \\
\Zz^{\oplus\infty} \oplus (\Zz/|\mu|\Zz)^{\oplus\infty} & \mbox{ if }n \geq d+2,
\end{cases}\]
where $\mu=\mu_K$ is the group of roots of unity in $K$.
\item If $K$ has a real embedding, then 
\[ \H_n(\cG_{R^\times\acts R}) \cong \begin{cases} 
0 & \text{ if } 0\leq n<d,\\
\Zz/2\Zz & \mbox{ if }n=d, \\
(\Zz/2\Zz)^{\oplus\infty} & \mbox{ if }n \geq d+1. \\
\end{cases}\]
\end{enumerate}
\end{thm}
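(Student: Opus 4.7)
The plan is to reduce the computation to a standard Lyndon--Hochschild--Serre spectral sequence by first passing to a Morita equivalent transformation groupoid of a genuine group action. The semigroup $R\rtimes R^\times$ is Ore with enveloping group $K\rtimes K^\times$, and a standard dilation argument identifies $\cG_{R^\times\acts R}$ with a groupoid Morita equivalent to $(K\rtimes K^\times)\ltimes\Az_f$, where $\Az_f$ denotes the finite adele ring and $K\rtimes K^\times$ acts affinely. By Morita invariance of groupoid homology, the problem reduces to computing $\H_n(K\rtimes K^\times, C_c(\Az_f, \Zz))$, and the split extension $1\to K\to K\rtimes K^\times \to K^\times\to 1$ then yields the Lyndon--Hochschild--Serre spectral sequence
\[ E^2_{p,q} = \H_p\bigl(K^\times, \H_q(K, C_c(\Az_f, \Zz))\bigr) \Rightarrow \H_{p+q}(K\rtimes K^\times, C_c(\Az_f, \Zz)). \]

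For the inner coefficients $\H_q(K, C_c(\Az_f, \Zz))$, I would decompose $\Az_f = \varinjlim_I I^{-1}\hat R$ over nonzero ideals $I$, recognize each $C(I^{-1}\hat R, \Zz)$ as the Shapiro-induced permutation $K$-module $\Zz[K/I^{-1}]$, and apply Shapiro's lemma to obtain $\H_q(K, \Zz[K/I^{-1}]) = \H_q(I^{-1}, \Zz) = \Lambda^q_\Zz(I^{-1})$. Taking the colimit over fractional ideals concentrates the inner homology in degrees $0\leq q \leq d$, which in turn concentrates the spectral sequence in the corresponding horizontal strip.

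The remaining task is to compute the outer homology $\H_p(K^\times, \cdot)$ on these coefficients, where $K^\times$ acts on $\Lambda^q_\Zz K$ by multiplication (and hence on $\Lambda^d_\Zz K$ via the norm $N_{K/\Qz}$). Dirichlet's unit theorem combined with unique factorization of ideals gives a splitting $K^\times \cong \mu_K \times \Zz^{r_1+r_2-1} \times \Zz^{(\infty)}$, and a K\"unneth argument applied to this decomposition produces the torsion $\Zz/|\mu|\Zz$- and free $\Zz^{\oplus\infty}$-summands in the final answer. The dichotomy in the theorem reflects the image of the norm map: in the totally imaginary case $N(K^\times)\subseteq \Qz^\times_{>0}$, so torsion comes solely from $\mu_K$; in the real-embedding case the existence of a unit with norm $-1$ forces $2$-torsion throughout and kills the free summands.

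The main obstacle I anticipate is the final bookkeeping in the spectral sequence: verifying the appropriate collapse and controlling any nontrivial differentials, and in particular justifying the vanishing $\H_n(\cG_{R^\times\acts R}) = 0$ for $n<d$, which does not follow from the vanishing of individual $E^2_{p,q}$ entries but rather from a systematic cancellation involving both the outer $K^\times$-action on $\Lambda^q_\Qz K$ and the differentials of the spectral sequence.
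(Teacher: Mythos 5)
Your overall skeleton is the same as the paper's: identify $\H_*(\cG_{R^\times\acts R})$ with the group homology $\H_*(K\rtimes K^*,C_c(\Az_{K,f},\Zz))$, run the LHS spectral sequence for $K\trianglelefteq K\rtimes K^*$, compute the inner homology via Shapiro's lemma along a filtration by induced modules, and finish with a K\"unneth computation over $K^*\cong\mu\times(\text{free abelian of infinite rank})$. But the decisive steps are missing or mis-stated. First, $C(I^{-1}\hat R,\Zz)$ is not a $K$-module at all (translation by $b\in K$ preserves $I^{-1}\hat R$ only when $b\in I^{-1}$), so it cannot be identified with $\Zz[K/I^{-1}]$; the usable decomposition is $C_c(\Az_{K,f},\Zz)\cong\varinjlim_J\Zz[\Az_{K,f}/J\hat R]\cong\varinjlim_J\Zz[K/J]$ over integral ideals $J$ (using $K+J\hat R=\Az_{K,f}$, $K\cap J\hat R=J$), or, as in the paper, one first replaces $K\ltimes\Az_{K,f}$ by the equivalent groupoid $R\ltimes\hat R$ (Proposition~\ref{prop:equivAdditive}). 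Second, and more seriously, Shapiro's lemma only gives the individual terms $\H_q(K,\Zz[K/J])\cong\lwedge^q J$; the heart of the computation is identifying the connecting maps of the colimit and the induced $K^*$-action, which requires a transfer argument (Lemma~\ref{lem:rho}, Theorem~\ref{thm1}). For a positive integer $n$ the connecting map in degree $q$ is multiplication by $n^{d-q}$, so the colimit is $\lwedge^q_{\Qz}K$ for $q<d$ but exactly $\Zz$ for $q=d$, and the $K^*$-action is $a\mapsto\frac{1}{|N_\Qz(a)|}\lwedge^q(a)$, hence $\sign(a)$ in degree $d$ (Theorem~\ref{thm:rings}). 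Your stated coefficients --- $\lwedge^q_\Zz K$ with the multiplication action, and ``$\lwedge^d_\Zz K$ via the norm'' --- are literally different: since $K$ is divisible, $\lwedge^d_\Zz K\cong\Qz$ with $a$ acting by $N_\Qz(a)$, and then $\H_p(K^*,-)$ of that row vanishes too, so your spectral sequence would output $0$ in all degrees, contradicting the theorem.

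Third, your closing worry points in the wrong direction and shows that the key collapsing mechanism is absent: with the correctly normalized action, any integer $n>1$ acts on $\lwedge^q_\Qz K$ by $n^{q-d}\neq 1$, so both invariants and coinvariants of the divisible torsion-free group vanish and $\H_p(K^*,\lwedge^q_\Qz K)=0$ for all $p$ whenever $q<d$ (Proposition~\ref{prop:group.homology.LHS}). Hence the $E^2$-page is concentrated in the single row $q=d$; there are no differentials to control and no extension problems, and $\H_n(\cG_{R^\times\acts R})\cong\H_{n-d}(K^*,\Zz_{\sign})$, so the vanishing for $n<d$ \emph{does} follow from the vanishing of individual $E^2$-entries, not from any cancellation with differentials. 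Finally, in the K\"unneth step, ``a unit with norm $-1$'' need not exist (for many real fields all units of $R$ have norm $+1$); what is needed, and what the paper proves (Lemma~\ref{lem:sign2}), is an element $a\in K^*$ with $\sign(a)=-1$ that generates a direct summand of $K^*$ (built from a prime ideal with a totally positive generator of valuation one), which is what makes the K\"unneth computation with $\Zz_{\sign}$-coefficients work when the number of real embeddings is even and nonzero; when it is odd one instead uses $-1\in\mu$.
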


Our second theorem, building on Theorem A, is on the homology of $\fg{\cG_{R^\times\acts R}}$. In particular, we characterize when $\fg{\cG_{R^\times\acts R}}$ is simple; interestingly, this occurs for all number fields except $\Qz$:
\begin{thm}[Theorem~\ref{thm:TFGhomology}]
Let $K$ be a number field of degree $d=[K :\Qz]$ with ring of integers $R$.
    We have $\H_n(\fg{\cG_{R^\times\acts R}}) = 0$ for $0<n<d$,
    and 
    \[
    \H_d(\fg{\cG_{R^\times\acts R}}) = \begin{cases}
        \Zz & \mbox{if $K$ is totally imaginary}, \\
        \Zz/2\Zz & \mbox{if $K$ has a real embedding}.
    \end{cases}
    \]
    Moreover, $\fg{\cG_{R^\times\acts R}}$ is simple if and only if $K\neq \Qz$, and if $K=\Qz$, then $\fg{\cG_{R^\times\acts R}}^{\rm ab} \cong \Zz/2\Zz$. 
\end{thm}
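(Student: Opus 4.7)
The strategy is to combine Theorem~\ref{thm:A} with Li's homological theorem for topological full groups \cite{Li:TFG}, which refines and extends Matui's conjectural short exact sequence to a full comparison between $\H_*(\cG)$ and $\H_*(\fg{\cG})$ for ample groupoids with comparison. First I would verify that $\cG_{R^\times \acts R}$ satisfies the relevant hypotheses: it is minimal, Hausdorff, topologically principal, and purely infinite, so the required comparison property holds.

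Feeding the vanishing range $\H_n(\cG_{R^\times \acts R}) = 0$ for $0 \leq n < d$ from Theorem~\ref{thm:A} through Li's result then immediately yields $\H_n(\fg{\cG_{R^\times \acts R}}) = 0$ for $0 < n < d$, together with a low-degree isomorphism $\H_d(\fg{\cG_{R^\times \acts R}}) \cong \H_d(\cG_{R^\times \acts R})$. By Theorem~\ref{thm:A} the latter equals $\Zz$ in the totally imaginary case and $\Zz/2\Zz$ when $K$ admits a real embedding, giving the $\H_d$-calculation as claimed.

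For the simplicity assertion, I would split on whether $K = \Qz$. If $K \neq \Qz$, then $d \geq 2$, so the previous paragraph yields $\H_1(\fg{\cG_{R^\times \acts R}}) = 0$, meaning $\fg{\cG_{R^\times \acts R}}$ is perfect. Matui's simplicity criterion---valid for minimal, essentially principal, purely infinite ample groupoids---then gives that the commutator subgroup $\fg{\cG_{R^\times \acts R}}'$ is simple, so $\fg{\cG_{R^\times \acts R}} = \fg{\cG_{R^\times \acts R}}'$ is itself simple. If $K = \Qz$, then $d = 1$ and the previous paragraph computes $\H_1(\fg{\cG_{\Zz^\times \acts \Zz}}) = \Zz/2\Zz$; hence the abelianization is $\Zz/2\Zz$ and the group is not simple.

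The main subtlety I anticipate is the precise extraction of the low-degree vanishing and identification from Li's spectral sequence---its $E^2$-page carries contributions from $\H_0(\cG)$ with $\Zz/2$-type coefficients arising from the sign module of the topological full group, and one must check that these do not disturb the range $n \leq d$---together with verifying the comparison hypothesis of \cite{Li:TFG} and the hypotheses of Matui's simplicity criterion for $\cG_{R^\times\acts R}$. Both amount to known dynamical properties of these algebraic-action groupoids, so this should be a bookkeeping step rather than a new input.
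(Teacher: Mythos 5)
Your proposal is correct and follows essentially the same route as the paper: verify comparison via pure infiniteness of $\cG_{R^\times\acts R}$ (from \cite{BruceLi2}), feed the vanishing and degree-$d$ computation of Theorem~\ref{thm:gpdhomology} through \cite[Corollary~D]{Li:TFG} to get $\H_n(\fg{\cG_{R^\times\acts R}})$ in degrees $n\leq d$, and then settle simplicity via perfectness for $K\neq\Qz$ together with Matui's simplicity theorem for the commutator subgroup, with the $K=\Qz$ case read off from $\H_1\cong\Zz/2\Zz$.
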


The proof of Theorem~\ref{thm:A}---which is the groupoid homology analogue of the K-theory computation for ring C*-algebras---involves analysing the Lyndon--Hochschild--Serre (LHS) spectral sequence attached to the action $K\rtimes K^*\acts\Az_{K,f}$, where $\Az_{K,f}$ is the ring of finite adeles of $K$. This spectral sequence collapses at the $E^2$-page, which parallels the vanishing of the maps for iterated Pimsner--Voiculescu exact sequences in the K-theory setting \cite{CL,LiLuck}. However, in groupoid homology, we can show collapsing directly without using any Cuntz--Li duality type results.

Following the mind of the HK conjecture, this groupoid homology computation can be used as a reference to the computation of K-theory of the ring C*-algebra $\mathfrak{A}[R]$. 
The K-theory of the ring C*-algebras $\fA[R]$ from ring of algebraic integers was computed over a six year period in increasing levels of generalities by Cuntz \cite{Cuntz08}, Cuntz--Li \cite{CL,CL2}, and Li--L\"{u}ck \cite{LiLuck} in the following way. 

\begin{theorem*}[{\cite[Theorem 1.2]{LiLuck}}]
For a ring of integers $R$ in a number field $K$, we have 
\[
\K_*(\fA[R])\cong
    \begin{cases}
        \K_*(C^*(\mu)) \otimes_{\mathbb{Z}} \lwedge^* \Gamma & \text{ if $K$ is totally imaginary,}\\
        \lwedge^* \Gamma & \text{ if $K$ has a real embedding.}\\
    \end{cases}
\]
\end{theorem*}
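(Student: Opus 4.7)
The plan is to lift the groupoid-homology calculation of Theorem~\ref{thm:A} to K-theory, following the philosophy of Matui's HK conjecture. Since $\fA[R]\cong C_r^*(\cG_{R^\times\acts R})$ and $\cG_{R^\times\acts R}$ is (Morita) equivalent to the transformation groupoid for the action $K\rtimes K^*\acts\Az_{K,f}$, one has access to a K-theoretic spectral sequence converging to $\K_*(\fA[R])$ whose $E^2$-page is controlled by the groupoid homology computed in Theorem~\ref{thm:A}. A natural candidate is the Proietti--Yamashita spectral sequence for ample groupoids; alternatively, one can build the spectral sequence from an iterated Pimsner--Voiculescu sequence associated with a filtration of $K^*/\mu$ by finitely generated free abelian subgroups.

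The crucial step is to show that this K-theoretic spectral sequence collapses at the $E^2$-page. The collapse of the corresponding LHS spectral sequence for groupoid homology used in the proof of Theorem~\ref{thm:A} is established by an algebraic degree/parity argument that does not invoke any C*-algebraic input. I would transport this argument to K-theory, either by comparing the two spectral sequences via a natural Chern-type comparison map or by rerunning the same algebraic vanishing reasoning directly at the level of the K-theoretic spectral sequence. This is precisely what substitutes for the Cuntz--Li duality appearing in \cite{CL,LiLuck}.

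With collapse in hand, one solves the extension problem on the $E^\infty$-page to identify $\K_*(\fA[R])$ with the stated expression. The exterior algebra $\lwedge^*\Gamma$ arises from $\H_*(\Gamma;\Zz)\cong\lwedge^*\Gamma$ for the free abelian part $\Gamma$ of $K^*/\mu$; in the totally imaginary case, the inclusion $\mu\subseteq K^*$ induces an embedding $C^*(\mu)\into\fA[R]$ that endows $\K_*(\fA[R])$ with the structure of a $\K_*(C^*(\mu))$-module and supplies the multiplicative splitting needed to obtain the tensor decomposition $\K_*(C^*(\mu))\otimes_\Zz\lwedge^*\Gamma$. In the case of a real embedding, the $\mu$-factor is absent and the abutment reduces directly to $\lwedge^*\Gamma$, with the $2$-torsion appearing in Theorem~\ref{thm:A}(ii) being absorbed by the filtration rather than persisting in the K-theory (it is accounted for by the sign action of a real place on the adelic picture).

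The main obstacle is the collapse step: without Cuntz--Li duality one must carry out the degree-parity and vanishing arguments directly in K-theory, using the precise form of the $E^2$-page pinned down by Theorem~\ref{thm:A}. A secondary difficulty is the extension problem in the totally imaginary case, which is resolved by exhibiting the concrete splitting coming from the roots-of-unity subalgebra $C^*(\mu)\subseteq\fA[R]$.
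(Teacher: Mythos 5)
Your overall strategy (decompose $K^*=\mu\times\Gamma$, run a Kasparov/LHS-type spectral sequence for the crossed product by the free abelian part, force collapse by comparison with the homology spectral sequence, then read off the answer) is the same skeleton as the paper's proof of Theorem~\ref{thm:LuckLi}, but there is a genuine gap at the heart of it: the $E^2$-page of the K-theoretic spectral sequence is \emph{not} controlled by the groupoid homology of Theorem~\ref{thm:A}. The coefficient of that spectral sequence is $\K_*(C^*_r((A\rtimes\mu)\ltimes\overline{A}))$ together with its $\Gamma$-action, and this group contains a free part of rank $\mathrm{rk}\,R(\mu)=|\mu|$ (the ``finite'' part coming from the representation ring of $\mu$) which is invisible to groupoid homology --- the homology only records $\Zz/|\mu|\Zz$-torsion, and in the real-embedding case the homology is pure $2$-torsion in degrees $\geq d$ while the K-theory is torsion free, so the HK heuristic you lean on actually fails for these groupoids. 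The paper has to import the Langer--L\"uck/Li--L\"uck computation of $\K_0(C^*_r((A\rtimes\mu)\ltimes\overline{A}))$ (equation~\eqref{eqn:Ktheory.A.mu}) and then \emph{prove} that $\Gamma$ acts trivially on its free quotient in the totally imaginary case, respectively determine $\ker(\theta_s-\id)$ and $\coker(\theta_s-\id)$ in the real-even case, via an equivariant Bott element and a Pimsner--Voiculescu argument (Lemmas~\ref{lem:realeven.K}, \ref{lem:realeven.AHSS}, \ref{lem:E2.Ktheory.even}). Nothing in your proposal supplies this input; the remark that $C^*(\mu)\subseteq\fA[R]$ gives a $\K_*(C^*(\mu))$-module structure does not determine the coefficient module or the $\Gamma$-action on it, and the claim that the $2$-torsion is ``absorbed by the filtration'' in the real case is exactly the point that needs proof.

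Your collapse step is also only half-viable as stated. Rerunning the degree/parity vanishing ``directly at the level of the K-theoretic spectral sequence'' cannot work, because Bott periodicity forces the K-theoretic $E^2$-page to be spread over infinitely many rows, so the concentration argument from homology has no direct analogue (the paper points this out explicitly). The comparison route does work, but it is more delicate than a ``natural Chern-type map'': in the presence of $\mu$ one must use the \emph{delocalized} Chern character (the groups $\widehat{\mathrm{HH}}$, including the $\mathbb{C}[\mu\setminus\{e\}]$ contributions of Lemma~\ref{lem:E2.homology.mu}), and the deduction of collapse requires knowing beforehand that the K-theoretic $E^2$-terms are torsion free, so that the comparison map into the complexified homology spectral sequence is injective. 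That torsion-freeness is again a consequence of the coefficient computation you skipped; it is also what makes the extension problem trivial, rather than any splitting supplied by $C^*(\mu)$. So the proposal needs, at minimum, the identification of $\K_*(C^*_r((A\rtimes\mu)\ltimes\overline{A}))$ as a $\Gamma$-module before the rest of the argument can be made to run.
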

A crucial ingredient in these proofs is Cuntz--Li duality \cite[Theorem~4.1]{CL}, which allows one to pass between finite and infinite adele rings. 
Our approach given in Theorem~\ref{thm:LuckLi}, which recovers this computation, uses only more common methods and simplifies the iterated Pimsner--Voiculescu arguments of \cite{CL,LiLuck} by using spectral sequences.
It may have applications to more general dynamical systems.

The strategy is as follows. 
First, decompose the multiplicative group $K^*$ as $\mu \times \Gamma$ by a free abelian subgroup $\Gamma \subseteq K^*$, and regard the groupoid C*-algebra $C^*_r((K \rtimes K^* ) \ltimes \Az_{K,f})$ as the crossed product $C^*_r((K \rtimes \mu ) \ltimes \Az_{K,f}) \rtimes \Gamma$. 
In general, there is a spectral sequence computing the K-groups of the crossed product C*-algebra with a free abelian group $\Gamma$. This, called (homological) Kasparov's spectral sequence \cite{Kasparov,SavinienBellissard,barlak,BOS} or the ABC spectral sequence \cite{Meyer2,PY} in the literature, comes from the fact that the Baum--Connes assembly map is an isomorphism for such $\Gamma$.
This spectral sequence is compared to the LHS spectral sequence in groupoid homology, via \emph{Raven's bivariant equivariant Chern character} \cite{Raven}, which is recently revisited by Deeley--Willett \cite{DW} for proving the rational HK-conjecture. 

Raven's Chern character is a vast generalization of the Chern character comparing topological K-theory and cohomology theory, whose domain group is identified with the K-group of the crossed product via the Baum--Connes isomorphism and the range group is identified with the groupoid homology with coefficients in $\Cz$. 
It is an isomorphism after tensoring with $\Cz$. 
As a consequence of its functoriality, it is natural to be able to compare the LHS spectral sequences of the domain and the range groups. The LHS spectral sequence of the domain is nothing but Kasparov's spectral sequence. 
This enables us to apply the concentration of the $E^2$-page of the LHS spectral sequence for groupoid homology in Theorem \ref{thm:A} to determine the higher differential of the K-theory LHS spectral sequence. Note that the K-theory LHS spectral sequence never concentrates in a single degree due to the Bott periodicity.

In this paper, we shall consider more general algebraic actions of semigroups on torsion-free finite rank abelian groups; by duality, these correspond to actions on compact connected abelian groups of finite dimension by local homeomorphisms.

Let $S$ be a cancellative reversible monoid. Assume we have an action $S\acts A$ be injective endomorphisms, where $A$ is a torsion-free finite rank abelian group (see Section~\ref{sec:algactions} for the precise definitions). 
The action $S\acts A$ naturally gives rise to an ample groupoid $\cG_{S\acts A}$ and to a concrete C*-algebra $\fA[S\acts A]$ (see \cite{BruceLi2}). By \cite{BruceLi2}, the groupoid $\cG_{S\acts A}$ is minimal, effective, and purely infinite.

Barlak, Omland, and Stammeier considered C*-algebras from \emph{integral dynamics}, which are certain algebraic actions on $\Zz$ arising from families of pairwise coprime natural numbers \cite{BOS}: Given a set $\Sigma\subseteq\Zz_{>1}$ of pairwise coprime integers, let $S$ be the multiplicative monoid generated by $\Sigma$. Then, the multiplication action $S\acts \Zz$ is an algebraic action. Our next result is the homology analogue of a K-theory conjecture by Barlak, Omland, and Stammeier:

\begin{thm}[Theorem~\ref{thm:homintdyn} and Theorem~\ref{thm:BOShomology}]
If $|\Sigma|=N<\infty$, then we have
    \[
    \H_n(\cG_{S\acts \Zz})\cong \begin{cases}
        \Zz/g_\Sigma \Zz & \text{ if } n=0,\\
        \Zz^{\oplus \binom{N}{n-1}} \oplus (\Zz/g_\Sigma \Zz)^{\oplus \binom{N-1}{n}} & \text{ if } 1\leq n\leq N-1,\\
        \Zz^N & \text{ if } n=N,\\
        \Zz & \text{ if } n=N+1,\\
        0 & \text{ if } n\geq N+2.
    \end{cases}
    \]
If $|\Sigma|=\infty$, then we have
    \[
    \H_n(\cG_{S\acts\Zz}) \cong \begin{cases}
    \Zz/g_\Sigma \Zz & \text{ if } n=0,\\
        \Zz^{\oplus\infty} \oplus (\Zz/g_\Sigma \Zz)^{\oplus \infty} & \text{ if } n\geq 1.
    \end{cases}
    \]
\end{thm}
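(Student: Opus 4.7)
My plan is to mirror the proof of Theorem~A in this simpler setting. Let $\Gamma=S^{-1}S\subset\Qz^\times$ be the group completion of $S$; pairwise coprimality of $\Sigma$ gives $\Gamma\cong\Zz^N$ freely generated by $\Sigma$ (for $N=|\Sigma|<\infty$). Let $P_\Sigma$ be the set of rational primes dividing some element of $\Sigma$ and $X_\Sigma:=\prod'_{p\in P_\Sigma}\Qz_p$ the associated restricted product. The algebraic action $S\acts\Zz$ dilates to an action of $\Zz\rtimes\Gamma$ on $X_\Sigma$, and by the general framework for algebraic actions used in this paper the groupoid $\cG_{S\acts\Zz}$ is Morita equivalent to the transformation groupoid $(\Zz\rtimes\Gamma)\ltimes X_\Sigma$.

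Apply the Lyndon--Hochschild--Serre spectral sequence for the extension $1\to\Zz\to\Zz\rtimes\Gamma\to\Gamma\to 1$:
\[
E^2_{p,q}=H_p\bigl(\Gamma;H_q(\Zz;C_c(X_\Sigma,\Zz))\bigr) \Longrightarrow H_{p+q}(\cG_{S\acts\Zz}).
\]
Since $\Zz$ has cohomological dimension one, only the rows $q=0,1$ are nonzero, and bidegree considerations leave only the differentials $d_2\colon E^2_{p,0}\to E^2_{p-2,1}$ as potentially nontrivial. I would compute the two rows by analyzing the coset decomposition of $X_\Sigma$ under $\hat\Zz_\Sigma:=\prod_{p\in P_\Sigma}\Zz_p$; the $\Sigma$-adic odometer computation yields that $E^2_{p,1}=H_p(\Gamma;\Zz)\cong\Zz^{\binom{N}{p}}$ is free of rank $\binom{N}{p}$ (via the Koszul complex for $\Zz^N$), while $E^2_{p,0}\cong(\Zz/g_\Sigma\Zz)^{\binom{N-1}{p}}$ for $0\leq p\leq N-1$ and $E^2_{N,0}=0$. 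The torsion part comes from the Koszul homology of the sequence $(1-\sigma_1,\ldots,1-\sigma_N)$ in $\Zz$, which generates the ideal $(g_\Sigma)$.

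The key structural observation is that $d_2$ vanishes automatically: its source $E^2_{p,0}$ is a finite torsion group (since $g_\Sigma\neq 0$), while its target $E^2_{p-2,1}$ is free abelian, so any group homomorphism between them is zero. Hence $E^\infty=E^2$, and the resulting extension
\[
0\to(\Zz/g_\Sigma\Zz)^{\binom{N-1}{n}}\to H_n(\cG_{S\acts\Zz})\to\Zz^{\binom{N}{n-1}}\to 0
\]
splits because the quotient is free abelian. A binomial bookkeeping with the convention $\binom{a}{b}=0$ when $b<0$ or $b>a$ gives exactly the stated formula in the finite-$N$ case. The main technical obstacle is not the spectral sequence machinery itself, which is formal, but the preliminary identification of the two rows $E^2_{*,0}$ and $E^2_{*,1}$ as $\Gamma$-modules, which requires a careful analysis of how the $\Gamma$-action on $X_\Sigma$ mixes the cosets of $\hat\Zz_\Sigma$.

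For $|\Sigma|=\infty$, write $\Sigma$ as an increasing union of finite subsets $F_n$. The groupoids $\cG_{S_{F_n}\acts\Zz}$ form an inductive system with colimit $\cG_{S\acts\Zz}$, and since groupoid homology commutes with inductive limits, passing to the limit in the finite-$N$ formula yields the infinite statement: the binomials $\binom{|F_n|}{k}$ grow without bound for every $k\geq 1$, producing the countable direct sums $\Zz^{\oplus\infty}\oplus(\Zz/g_\Sigma\Zz)^{\oplus\infty}$ in every positive degree, while $H_0$ stabilizes at $\Zz/g_\Sigma\Zz$.
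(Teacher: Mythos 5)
Your outline (LHS spectral sequence over $\Gamma\cong\Zz^N$ with only two nonzero rows, Koszul-type computation of $E^2_{*,0}$ and $E^2_{*,1}$) matches the paper's, and your observation that $d_2\colon E^2_{p,0}\to E^2_{p-2,1}$ vanishes because it maps a $g_\Sigma$-torsion group to a free abelian group is correct and even simpler than the paper's argument for degeneration. (A cosmetic point: the dilated group is $\Zz[1/\mathfrak{s}]\rtimes\Gamma$, i.e.\ $\A\rtimes\S$, not ``$\Zz\rtimes\Gamma$'', since $\Gamma$ does not act on $\Zz$ by automorphisms.) The genuine gap is in the extension step. In a homological spectral sequence with increasing filtration, $E^\infty_{n-1,1}$ is the \emph{subgroup} $F_{n-1}\H_n$ and $E^\infty_{n,0}$ is the \emph{quotient} $\H_n/F_{n-1}\H_n$; so the sequence you must split is
\[
0\to \Zz^{\binom{N}{n-1}}\to \H_n(\cG_{S\acts\Zz})\to (\Zz/g_\Sigma\Zz)^{\binom{N-1}{n}}\to 0,
\]
with free subgroup and torsion quotient — the opposite of what you wrote. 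Such an extension does not split for formal reasons (compare $0\to\Zz\xrightarrow{\,g_\Sigma\,}\Zz\to\Zz/g_\Sigma\Zz\to 0$), so your appeal to ``the quotient is free abelian'' collapses, and without settling this extension problem you only determine $\H_n$ up to an ambiguity that would change the stated answer. This is exactly where the paper's main technical work sits: it embeds the torsion subgroupoid $\cG_\tor$ (the $k$-graph groupoid of \cite{BOS}) into $\cG_{S\acts\Zz}$, notes that the skew product $\cG_\tor\times_\varphi\S$ has AF kernel so its LHS-type spectral sequence is concentrated in the $q=0$ row, and then uses the functoriality of the LHS-type spectral sequence for equivariant \'etale correspondences (Theorem~\ref{thm:LHSfunctorial}) together with Lemma~\ref{lem:betaisom} to show that $\H_p(\cG_\tor)\to\H_p(\cG_{S\acts\Zz})$ maps isomorphically onto the quotient $\H_p(\S,\A)$, which is what produces the splitting of \eqref{eqn:ses5} (the homology analogue of \cite[Corollary~4.7]{BOS}).

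The infinite case as you state it is also too quick: the groupoids $\cG_{S_{F_k}\acts\Zz}$ are not nested subgroupoids of $\cG_{S\acts\Zz}$ — their unit spaces $\prod_{p\in\cP_k}\Zz_p$ form a \emph{projective} system — so ``groupoid homology commutes with inductive limits'' is not directly applicable. The paper has to build connecting \'etale correspondences from the pairs (group inclusion, unit-space projection), prove a continuity statement for homology along such correspondences (Proposition~\ref{prop:continuity}), and check that these correspondences are compatible with the split short exact sequences from the finite stages before passing to the limit. So your plan would need both the splitting mechanism above and this correspondence-level continuity argument to go through.
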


Barlak, Omland, and Stammeier considered the so-called torsion subalgebra $C_r^*(\cG_\tor)$ of $C_r^*(\cG_{S\acts\Zz})$ (see Section~\ref{sssec:tor} for details), and they proved that the inclusion $C_r^*(\cG_\tor)\to C_r^*(\cG_{S\acts\Zz})$ splits in K-theory (\cite[Corollary~4.7]{BOS}). We prove an analogous result in groupoid homology using a LHS spectral sequence, which we then use to compute the homology of $\cG_{S\acts\Zz}$. This splitting result in homology requires several technical steps, including a functoriality result for LHS spectral sequences for groupoid homology (see Section~ \ref{sec:LHS}).

The homology of $\cG_\tor$ is relatively easy to compute (see Section~\ref{sssec:kunneth}), whereas Barlak, Omland, and Stammeier conjectured a formula for the K-theory of the C*-algebra $C_r^*(\cG_\tor)$, which they were able to prove only for a few special cases \cite[Conjecture~6.5]{BOS}.
This conjecture is particularly interesting for two reasons: first, it has an equivalent formulation in terms of the K-theory of $C^*(\cG_{S\acts\Zz})$; second, it is related to a conjecture for K-theory of higher-rank graph C*-algebras, see \cite[Section~5]{BOS}. Our final result resolves \cite[Conjecture~6.5]{BOS} in complete generality:
\begin{thm}[Theorem~\ref{thm:BOS}]
For any set $\Sigma\subseteq\Zz_{>1}$ of pairwise relatively coprime numbers with $|\Sigma|\geq 2$, the C*-algebra $C_r^*(\cG_\tor)$ is isomorphic to $\bigotimes_{s\in\Sigma}\cO_s$.
\end{thm}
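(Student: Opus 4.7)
The plan is to exploit the product structure of $\cG_\tor$ that underlies the K\"unneth-type homology decomposition of Section~\ref{sssec:kunneth}. Specifically, I would first establish at the groupoid level that
\[
\cG_\tor \cong \prod_{s\in\Sigma} \cG_\tor^{(s)},
\]
where $\cG_\tor^{(s)}$ is the single-factor torsion groupoid associated to multiplication by $s$. Pairwise coprimality of $\Sigma$ together with the Chinese remainder theorem gives the product decomposition on unit spaces, and compatibility of the $S$-action with this decomposition extends it to the full groupoids; the case $|\Sigma|=\infty$ is handled by writing $\cG_\tor$ as an inductive limit indexed by finite subsets of $\Sigma$.

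Next I would identify each factor $\cG_\tor^{(s)}$ as a standard groupoid model of the Cuntz algebra $\O_s$: it should be recognizable (possibly up to Morita equivalence) as the Deaconu--Renault groupoid of the one-sided full shift on $s$ symbols, equivalently of the multiplication-by-$s$ map on the $s$-adic integers. This yields $C_r^*(\cG_\tor^{(s)}) \cong \O_s$, with amenability of the subgroupoid letting one identify the reduced and full C*-algebras.

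For finite $\Sigma$, nuclearity of each $\O_s$ combined with the standard identification of the C*-algebra of a product of Hausdorff ample groupoids gives
\[
C_r^*(\cG_\tor) \cong \bigotimes_{s\in\Sigma} C_r^*(\cG_\tor^{(s)}) \cong \bigotimes_{s\in\Sigma}\O_s.
\]
For $|\Sigma|=\infty$, one passes to the inductive limit on both sides, using continuity of $C_r^*$ under directed unions of open subgroupoids on the left and the definition of the infinite (minimal) tensor product of Kirchberg algebras on the right. The main obstacle will be the first step: establishing the product decomposition of $\cG_\tor$ as a topological groupoid (rather than only at the level of homology), which requires checking that there is no residual mixing between the $s$-components in the action structure and that the identification is compatible with the topology inherited from $\cG_{S\acts\Zz}$. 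An alternative endgame, should the explicit product decomposition prove delicate, is to verify directly that $C_r^*(\cG_\tor)$ is a unital Kirchberg algebra in the UCT class (nuclearity and UCT from amenability of $\cG_\tor$; simplicity and pure infiniteness from minimality, effectiveness, and pure infiniteness of $\cG_\tor$), compute its K-theory via the groupoid-homology machinery of the paper, and invoke Kirchberg--Phillips classification to match it with $\bigotimes_{s\in\Sigma}\O_s$.
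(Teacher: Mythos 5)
There is a genuine gap, and it sits exactly where you flagged it. The product decomposition $\cG_\tor\cong\prod_{s\in\Sigma}\cG_\tor^{(s)}$ does not follow from the Chinese remainder theorem: CRT does decompose the unit space, $\Delta\cong\prod_{s\in\Sigma}\prod_{p\mid s}\Zz_p$, but the $\Nz^\Sigma$-action defining $\cG_\tor$ does not respect this decomposition. By Lemma~\ref{lem:shift1}, the shift in the $s$-direction is $\sigma^{e_s}(x)=(x-P_{e_s}(x))/s$, where $P_{e_s}(x)\in\{0,\dots,s-1\}$ is the remainder of $x$ modulo $s$; on a component $\prod_{p\mid u}\Zz_p$ with $u\neq s$ this map sends $x_u\mapsto s^{-1}\bigl(x_u-P_{e_s}(x)\bigr)$, which depends on the $s$-component of $x$ through the remainder and also involves the nontrivial automorphism of multiplication by $s^{-1}$. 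So the dynamics mixes the components, the groupoid is not (in any obvious way) a product of one-variable Deaconu--Renault groupoids, and at the C*-level one only gets the twisted presentation $\Mz_{\mathfrak{s}^\infty}\rtimes\Nz^N$ rather than $\bigotimes_{s}(\Mz_{s^\infty}\rtimes\Nz)$. This mixing is precisely why \cite[Conjecture~6.5]{BOS} was open; if the naive product structure held, the conjecture would have been immediate. (A groupoid isomorphism with the product would in fact be strictly stronger than Theorem~\ref{thm:BOS}, being a Cartan-preserving statement, and is not established even by the paper.)

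Your fallback is the correct reduction and is the route the paper takes: $C_r^*(\cG_\tor)$ is a unital UCT Kirchberg algebra by \cite[Corollary~5.2]{BOS}, so by Kirchberg--Phillips the theorem is equivalent to $\K_*(C_r^*(\cG_\tor))\cong(\Zz/g_\Sigma\Zz)^{2^{|\Sigma|-1}}$ in both degrees together with $n[1]=0$ if and only if $g_\Sigma\mid n$. But the phrase \emph{compute its K-theory via the groupoid-homology machinery} conceals the entire difficulty: the HK property of these groupoids is a \emph{consequence} of Theorem~\ref{thm:BOS} combined with Theorem~\ref{thm:homintdyn}, not an input, and the rational comparison via Raven's Chern character used for the ring C*-algebras gives no information here because all relevant groups are $g_\Sigma$-torsion. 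What the paper actually does is rewrite $C^*_r(\cG_\tor)$, up to Morita equivalence, as the section algebra of the mapping-torus field $\mathcal{MT}(\widetilde{B},\beta)$ over $\Tz^N$ (Lemma~\ref{lem:Dirac.dualDirac}), run the Atiyah--Hirzebruch spectral sequence whose $E_2$-page reproduces the homology computation (Lemma~\ref{lem:AHSS.torsion.K}), and then kill the higher differentials and the extension problem by constructing a full-corner projection in $C(\Tz^N,\mathcal{B}_1\otimes\O_{g_\Sigma+1}\otimes\Kz)$ (Lemma~\ref{lem:unital.B}); the resulting unital map from $C(\Tz^N)$, together with a K\"unneth/counting argument (Lemmas~\ref{lem:bundle.Cuntz.AHSS} and~\ref{prp:Kunneth}), forces collapse at $E_2$ and rules out nontrivial extensions, after which the order of $[1]$ is tracked through the Bott element. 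None of these steps is supplied by your outline, and they are exactly the new content of the proof. Your treatment of $|\Sigma|=\infty$ by inductive limits is fine and matches the paper, but only once the finite case is genuinely established.
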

Combining Theorems C and D shows that the groupoid $\cG_{S\acts\Zz}$ has the HK property.

Here we use the cohomological Kasparov spectral sequence, which is a dual of the one used in the study of ring C*-algebras. 
This spectral sequence is nothing but the Atiyah--Hirzebruch (AH) spectral sequence for the mapping torus continuous field $\mathcal{MT}(B,\beta)$ of C*-algebras (Definition \ref{def:mapping.torus}). 
Indeed, the mapping torus and the crossed product have the same K-theory via the Baum--Connes isomorphism.
Unfortunately, the comparison with Raven's Chern character no longer works here, since the $E_2$-terms are the torsion groups $\mathbb{Z}/g\mathbb{Z}$. 
Instead we use a projection-valued global section in $\mathcal{MT}(B,\beta) \otimes \mathcal{O}_{g+1}$ (which is indeed a nonunital continuous field), which is shown to exist in Lemma \ref{lem:unital.B}. 
It gives a $\ast$-homomorphism $C(\mathbb{T}^d) \to C(\mathbb{T}^d, \mathcal{MT}(B,\beta) \otimes \mathcal{O}_{g+1}) $, which compares their K-theory AH spectral sequences. This gives a strong constraint in the K-group of the $C(\mathbb{T}^d, \mathcal{MT}(B,\beta) \otimes \mathcal{O}_{g+1})$. 

\subsection{Structure of the paper}
Section~\ref{sec:prelims} contains preliminaries on groupoid homology and \'etale correspondences, and Section~\ref{sec:LHS} establishes a general functoriality result for LHS type spectral sequences for groupoid homology. Section~\ref{sec:gpdhom} contains general results on homology for the groupoids attached to algebraic actions. In Section~\ref{sec:rings}, we specialize to the case of rings, e.g., rings of algebraic integers, and give several complete homology and K-theory calculations. The final subsection of Section~\ref{sec:rings} contains the new K-theory computation for ring C*-algebras. The final Section~\ref{sec:BOS} contains the homology and K-theory computations for integral dynamics.

\textbf{Acknowledgement.} We thank Jamie Gabe and Xin Li for helpful comments. C. Bruce and T. Takeishi gratefully acknowledge support from a research-in-groups programme at the ICMS in Edinburgh. We also thank the anonymous referee for their careful and detailed reading of this article and for many helpful suggestions.

\section{Preliminaries}
\label{sec:prelims}

\subsection{Group homology}
For background on group homology, we refer the reader to \cite{Brown, Weibel}. Let $G$ and $H$ be groups, and suppose we have a $G$-module $M$ and an $H$-module $N$. If $j\colon G\to H$ and $\pi\colon M\to N$ are group homomorphisms such that $\pi(gm)=j(g)\pi(m)$ for all $g\in G$ and $m\in M$, then we let $(j,\pi)_*\colon \H_*(G,M)\to\H_*(H,N)$ be the induced map in homology, see \cite[Section~III.8]{Brown}.

We shall need the following well-known version of the K\"{u}nneth formula, which  we state here for convenience.
\begin{theorem}[cf. {\cite[Theorem~3.6.3]{Weibel}}]
\label{thm:Kunneth}
Let $G$ and $H$ be groups, $M$ a $G$-module, and $N$ an $H$-module. View $M\otimes N$ as a $G\times H$-module in the canonical way. If either $M$ or $N$ is torsion-free, then there is a short exact sequence 
\begin{equation*}
    0\to \bigoplus_{p+q=n}\H_p(G,M)\otimes\H_q(H,N)\to\H_n(G\times H,M\otimes N)\to\bigoplus_{p+q=n-1}\Tor(\H_p(G,M),\H_q(H,N))\to 0.
\end{equation*}
Moreover, this exact sequence splits if either $M$ or $N$ is $\Zz$-free.
\end{theorem}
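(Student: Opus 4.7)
The plan is to reduce the statement to the classical K\"unneth formula for tensor products of chain complexes of abelian groups (\cite[Theorem~3.6.3]{Weibel}), by building a suitable free resolution over $\Zz[G\times H]$ out of free resolutions over $\Zz G$ and $\Zz H$ separately.

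First, I would choose free resolutions $F_\bullet \to \Zz$ over $\Zz G$ and $F'_\bullet \to \Zz$ over $\Zz H$. Using the ring isomorphism $\Zz[G\times H] \cong \Zz G \otimes_{\Zz} \Zz H$, the total complex $F_\bullet \otimes_\Zz F'_\bullet$ consists of free $\Zz[G\times H]$-modules. Since $F_\bullet$ and $F'_\bullet$ are termwise free abelian and hence flat over $\Zz$, the K\"unneth formula for chain complexes over $\Zz$ shows the total complex has homology concentrated in degree $0$ with value $\Zz\otimes_\Zz \Zz = \Zz$, so it is a free $\Zz[G\times H]$-resolution of the trivial module.

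With this resolution in hand, the standard rearrangement of iterated tensor products gives
\begin{equation*}
(F_\bullet \otimes_\Zz F'_\bullet) \otimes_{\Zz[G\times H]} (M \otimes_\Zz N) \;\cong\; (F_\bullet \otimes_{\Zz G} M) \otimes_\Zz (F'_\bullet \otimes_{\Zz H} N) \;=:\; C_\bullet \otimes_\Zz D_\bullet,
\end{equation*}
so the desired group $\H_n(G\times H, M\otimes N)$ is identified with $\H_n(C_\bullet \otimes_\Zz D_\bullet)$, while $\H_*(C_\bullet) \cong \H_*(G,M)$ and $\H_*(D_\bullet) \cong \H_*(H,N)$ by construction. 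Since each $F_n$ is a free $\Zz G$-module, $C_n = F_n\otimes_{\Zz G} M$ is a direct sum of copies of $M$; hence if $M$ is torsion-free (respectively $\Zz$-free), then $C_\bullet$ is termwise flat (respectively termwise free) as a complex of abelian groups, and analogously for $D_\bullet$ and $N$.

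The final step is to invoke the K\"unneth formula for chain complexes of abelian groups applied to the pair $(C_\bullet, D_\bullet)$: the torsion-free hypothesis on one of $M$, $N$ produces the short exact sequence, while the $\Zz$-freeness hypothesis yields the (noncanonical) splitting. The only step that is not entirely formal bookkeeping is the verification that $F_\bullet \otimes_\Zz F'_\bullet$ resolves $\Zz$; but this follows routinely from the $\Zz$-flatness of the terms via the K\"unneth theorem for complexes. All other identifications are standard manipulations of tensor products of modules over group rings.
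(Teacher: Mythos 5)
The paper itself gives no proof of this statement---it is quoted as known, with the ``cf.'' reference to Weibel's chain-level K\"unneth theorem---so there is no internal argument to compare against; your derivation is exactly the standard argument that the citation encodes. The construction is correct: $F_\bullet\otimes_\Zz F'_\bullet$ is a complex of free $\Zz[G\times H]$-modules via $\Zz[G\times H]\cong\Zz G\otimes_\Zz\Zz H$; it resolves $\Zz$ by the chain-level K\"unneth theorem (its terms are free abelian, and the boundary subgroups, being subgroups of free abelian groups, are automatically flat, which is the hypothesis actually required); and the interchange isomorphism identifies $(F_\bullet\otimes_\Zz F'_\bullet)\otimes_{\Zz[G\times H]}(M\otimes N)$ with $C_\bullet\otimes_\Zz D_\bullet$. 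Applying the chain-level K\"unneth theorem with the flat complex taken to be the one built from the torsion-free module (again, flatness of its boundaries is automatic over $\Zz$) then yields the short exact sequence.

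The one step you should not wave through is the splitting. If, say, $M$ is $\Zz$-free, your reduction makes $C_\bullet$ termwise free, but $D_\bullet$ has terms that are direct sums of copies of $N$, and $N$ is arbitrary. The splitting clause of the algebraic K\"unneth theorem is usually stated either for a single module in degree zero (the universal coefficient theorem) or for \emph{two} termwise-free complexes; the usual proof via retractions onto the cycle subcomplexes needs such a retraction on both sides, and with only $C_\bullet$ free it produces a map to $\H_*(\mathcal{H}(C_\bullet)\otimes D_\bullet)$ rather than to $\bigoplus \H_p(C_\bullet)\otimes \H_q(D_\bullet)$, so the one-sided splitting is not literally what the cited theorem hands you. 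The statement you need is nevertheless true, and the fix is short: choose a bounded-below termwise-free complex $D'_\bullet$ together with a quasi-isomorphism $D'_\bullet\to D_\bullet$; since $C_\bullet$ is bounded below with flat terms, $C_\bullet\otimes D'_\bullet\to C_\bullet\otimes D_\bullet$ is a quasi-isomorphism, and by naturality of the K\"unneth sequence---whose outer terms depend only on homology---the two sequences are isomorphic as extensions, so the splitting for the pair $(C_\bullet,D'_\bullet)$, where both complexes are free, transfers to $(C_\bullet,D_\bullet)$. With that remark (or a citation of a formulation of the splitting valid with only one termwise-free complex over $\Zz$), your proof is complete.
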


\subsection{Ample groupoids and groupoid homology}
In this subsection, we collect some well-known definitions, notation, and terminology around ample groupoids and their homology. Groupoid homology was introduced in \cite{CM}. For ample groupoids -- as defined below -- groupoid homology is particularly interesting, as shown in \cite{Mat12}. 

Let $\cG$ be a topological groupoid with unit space $\cG^{(0)}$ and range and source maps $r,s\colon\cG\to\cG^{(0)}$, respectively. The groupoid $\cG$ is said to be \emph{\'{e}tale} if $r$ and $s$ are local homeomorphisms. An \emph{ample groupoid} is an \'{e}tale groupoid whose unit space is a totally disconnected locally compact Hausdorff space. In this paper, we will only consider Hausdorff ample groupoids.

For a locally compact Hausdorff space $X$, we let $\Zz[X]\coloneqq C_c(X,\Zz)$ denote the group of compactly supported $\Zz$-valued continuous functions on $X$. We write $\Zz X$ instead of $\Zz[X]$ when the meaning is clear.
Recall that a $\cG$-module is a nondegenerate $\Zz\cG$-module, where the ring structure on $\Zz\cG$ is given by the convolution product (see, e.g., \cite{BDGW,Miller}). Note that although $\Zz\cG$ is usually a nonunital ring, any free $\Zz\cG$-module is projective (\cite[Remark~2.9]{BDGW}).

In this paper, we shall work with the following definition of groupoid homology from \cite{BDGW,Li:TFG}. 

\begin{definition}
\label{def:homology}
 Let $\cG$ be a Hausdorff ample groupoid. The \emph{groupoid homology of $\cG$} is
 \begin{equation}
   \H_*(\cG) \coloneqq \Tor_*^{\Zz\cG}(\Zz[\cG^{(0)}], \Zz[\cG^{(0)}]). 
 \end{equation}   
\end{definition}
By \cite[Theorem~2.10]{BDGW} in the case where $\cG^{(0)}$ is $\sigma$-compact and \cite[Theorem~2.5]{Li:TFG} in general, $\H_n(\cG)$ is canonically isomorphic to the $n$-homology group of the chain complex defined in \cite[Section~3]{Mat12}, so Definition~\ref{def:homology} agrees with the definition in \cite[Definition~3.1]{Mat12}.

\subsection{Groupoid correspondences}
Let $\cG$ be a Hausdorff ample groupoid. For the terminology and notation related to $\cG$-modules and groupoid correspondences, we refer to \cite{Miller}, where it is shown that groupoid homology is functorial for groupoid correspondences.

A left action of $\cG$ on a space $X$ consists of a continuous map $r_X\colon X\to \cG^{(0)}$ -- called the \emph{anchor map} -- and a continuous map $\cG\times_{s,r_X} X\to X$, $(\alpha,x)\mapsto \alpha.x$, where
\[
\cG\times_{s,r_X} X\coloneqq \{(\alpha,x)\in \cG\times X: s(\alpha)=r_X(x)\}.
\]
Right actions are defined similarly. We write $\cG\acts X$ to denote an action of $\cG$ on a space $X$. The notions of free, basic, proper, and \'etale are defined in \cite[Section~2]{AKM}. 

\begin{definition}[cf. {\cite[Definition~3.1]{AKM}}]
 Let $\cG$ and $\cH$ be ample groupoids. A space $\Omega$ is called an \emph{\'etale correspondence from $\cG$ to $\cH$} if $\Omega$ carries right and left actions by $\cG$ and $\cH$, respectively, with anchor maps $r_\Omega$ and $s_\Omega$, respectively, such that 
 \begin{itemize}
     \item the actions of $\cG$ and $\cH$ commute;
     \item the right action of $\cH$ on $\Omega$ is free, proper, and \'etale (cf. \cite{AKM}).
 \end{itemize}
An \'etale correspondence $\Omega$ from $\cG$ to $\cH$ is \emph{proper} if the map $\Omega/\cG\to\cH^{(0)}$ induced from $r_\Omega$ is proper. 
\end{definition}

By \cite[Corollary~3.6]{Miller}, each proper \'etale correspondence $\Omega$ from $\cG$ to $\cH$ induces a homomrophism in homology $\H_*(\Omega)\colon\H_*(\cG)\to\H_*(\cH)$.

Let $\cD$ be the category whose objects are given by all triples $(\Gamma,X,\theta)$, where $\Gamma$ is a discrete group, $X$ is a compact Hausdorff space, and $\theta\colon\Gamma\to\Homeo(X)$ is a group homomorphism, and whose morphisms are defined as follows: A morphism from  $(\Gamma,X,\theta)$ to $(\Lambda,Y,\eta)$ is given by a pair $(j,p)$, where $p\colon Y\to X$ is a surjective continuous map and $j\colon\Gamma\to\Lambda$ is a group homomorphism such that $p(j(g)y)=gp(y)$ for all $g\in \Gamma$ and $y\in Y$. Composition of morphisms is defined via composition of functions.

There are two canonical functors from $\cD$ to the category of $\Zz$-graded abelian groups: The first is given on objects by $(\Gamma,X,\theta)\mapsto \H_*(\Gamma,\Zz X)$ and on morphisms by sending a pair $(j,p)$ as above to the map $\H_*(j,p^*)\colon \H_*(\Gamma,\Zz X)\to \H_*(\Lambda,\Zz Y)$, where $p^*$ is given via pre-composition. The second functor is given on objects by $(\Gamma,X,\theta)\mapsto \H_*(\Gamma\ltimes X)$ and on morphism by $(j,p)\mapsto \H_*(\Omega({j,p}))$, where $\Omega(j,p)\colon \Gamma\ltimes X\to \Lambda\ltimes Y$ is the correspondence given by $\Omega(j,p)\coloneqq \Lambda\ltimes Y$ with canonical right action by $\Lambda \ltimes Y$ and left action given by $(g,p(hy))(h,y)=(j(g)h,y)$ for $g \in \Gamma, h \in \Lambda, y \in Y$.
 
The next result is likely well-known to experts. Following the idea from \cite[Lemma~3.52]{Mad}, a proof can be given by combining Theorem~\ref{thm:LHSfunctorial} and Lemma~\ref{lem:technical}.

\begin{proposition}
\label{prop:isomfunctors}
The functors $(X,\Gamma,\theta) \mapsto \H_*(\Gamma, \Zz X)$ and $(X,\Gamma,\theta) \mapsto \H_*(\Gamma \ltimes X)$ are isomorphic. 
\end{proposition}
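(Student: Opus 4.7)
The plan is to apply a Lyndon--Hochschild--Serre (LHS) spectral sequence to the transformation groupoid $\Gamma\ltimes X$, viewed as an extension of the one-object groupoid $\Gamma$ by the unit-bundle groupoid $X$. For a fixed object $(\Gamma,X,\theta)\in\cD$, Theorem~\ref{thm:LHSfunctorial} supplies a convergent first-quadrant spectral sequence
\[
E^2_{p,q}=\H_p(\Gamma,\H_q(X))\Rightarrow \H_{p+q}(\Gamma\ltimes X).
\]
Since $X$ regarded as a trivial groupoid has $\H_0(X)=\Zz X$ and $\H_q(X)=0$ for $q\geq 1$, this spectral sequence collapses on the $q=0$ edge to yield a canonical isomorphism $\alpha_{(\Gamma,X,\theta)}\colon \H_n(\Gamma,\Zz X)\xrightarrow{\cong}\H_n(\Gamma\ltimes X)$ for each $n$.

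To promote the family $\{\alpha_{(\Gamma,X,\theta)}\}$ to a natural isomorphism, fix a morphism $(j,p)\colon(\Gamma,X,\theta)\to(\Lambda,Y,\eta)$ in $\cD$. The pair $(j,p)$ induces a morphism of the underlying groupoid extensions $X\to\Gamma\ltimes X\to\Gamma$ and $Y\to\Lambda\ltimes Y\to\Lambda$, and the resulting map at the level of the total groupoid is realized by the correspondence $\Omega(j,p)$. By Theorem~\ref{thm:LHSfunctorial}, this data produces a morphism of the two LHS spectral sequences whose effect on the abutment is $\H_*(\Omega(j,p))$; by Lemma~\ref{lem:technical}, the induced map on the $q=0$ row of the $E^2$-page is precisely $\H_*(j,p^*)$. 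As both spectral sequences are concentrated on the $q=0$ line, the commutativity of
\[
\begin{tikzcd}
\H_n(\Gamma,\Zz X) \ar[r,"\cong"] \ar[d, "{\H_n(j,p^*)}"'] & \H_n(\Gamma\ltimes X) \ar[d,"{\H_n(\Omega(j,p))}"] \\
\H_n(\Lambda,\Zz Y) \ar[r,"\cong"'] & \H_n(\Lambda\ltimes Y)
\end{tikzcd}
\]
is immediate, yielding the claimed isomorphism of functors.

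The main obstacle is absorbed into the two supporting results: Theorem~\ref{thm:LHSfunctorial} must produce an LHS spectral sequence for the extension $X\to\Gamma\ltimes X\to\Gamma$ that is sufficiently functorial with respect to correspondences of the form $\Omega(j,p)$, and Lemma~\ref{lem:technical} must identify the $E^2$-level map it induces with the expected $\H_*(j,p^*)$ arising from the group-homology functor. Once these two ingredients are in place, the remainder of the argument is just the degeneration of the spectral sequence on the $q=0$ edge together with standard naturality bookkeeping; the argument closely follows the strategy of \cite[Lemma~3.52]{Mad}.
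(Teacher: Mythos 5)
Your proposal is correct and follows essentially the same route the paper indicates: degenerate the LHS type spectral sequence for the canonical cocycle $\Gamma\ltimes X\to\Gamma$ (whose skew product is equivalent to the space $X$, so everything sits on the $q=0$ row), and obtain naturality by combining Theorem~\ref{thm:LHSfunctorial} with Lemma~\ref{lem:technical} applied to the correspondence $\Omega(j,p)$, exactly as in \cite[Lemma~3.52]{Mad}. The only minor points to tidy are that the existence of the spectral sequence itself is \cite[Theorem~3.8]{Mat12} rather than Theorem~\ref{thm:LHSfunctorial}, and one should note that the identification $\H_0((\Gamma\ltimes X)\times_\varphi\Gamma)\cong\Zz X$ carries the $\Gamma$-module structure given by $\theta$.
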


\subsection{Number fields}
\label{sec:ANT}
In this subsection, we briefly setup the notation and basic results on number fields that we will use in Section~\ref{sec:rings}. We refer the reader to \cite{Neu} for a comprehensive treatment or to \cite{Stevenhagen} for a concise overview.

Let $K$ be a number field, i.e., a finite degree field extension of $\Qz$. The degree of $K$ as a vector space over $\Qz$ will be denoted by $[K:\Qz]$. The ring of integers in $K$ is the integral closure of $\Zz$ in $K$, i.e., the subring $R\subseteq K$ defined by
\begin{equation*}
    R\coloneqq \{a\in K : f(a)=0\text{ for some monic, nonzero polynomial }f\in\Zz[x]\}.
\end{equation*}
The additive group of $R$ is isomorphic to $\Zz^{[K:\Qz]}$. 

The torsion subgroup of the multiplicative group $K^\times\coloneqq K\setminus\{0\}$ is the finite cyclic group consisting of the roots of unity in $R$. It will be denoted by $\mu$. By Dirichlet's Unit Theorem (\cite[Theorem~I.7.4]{Neu}), the unit group $R^*$ of $R$ is isomorphic to $\mu\times \Zz^{r+s-1}$, where $r$ is the number of field embeddings of $K$ into $\Rz$ and $s$ is the number of complex conjugate pairs of embeddings of $K$ into $\Cz$.

The quotient $K^*/\mu$ is a free abelian group of countably infinite rank, a well-known fact that we we use several times in this paper. This can be proven using the exact sequence appearing shortly after \cite[Corollary~I.3.9]{Neu} combined with Dirichlet's Unit Theorem.

\section{The Lyndon--Hochschild--Serre type spectral sequence for groupoid homology}
\label{sec:LHS}

Let $\cG$ be an ample groupoid, $\Gamma$ a discrete group, and $\varphi \colon \cG\to \Gamma$ a cocycle (i.e., a groupoid homomorphism). The skew product $\cG\times_\varphi\Gamma$ is the transformation groupoid $\cG \ltimes (\cG^{(0)} \times \Gamma)$ via the $\cG$-action given by the product of the multiplication and $\varphi$. More explicitly, the groupoid which is $\cG\times\Gamma$ as a topological space with groupoid operations defined as follows:
\begin{itemize}
    \item $(\alpha,g)$ and $(\beta,h)$ are composable if $\alpha$ and $\beta$ are composable and $h=g\varphi(\alpha)$, in which case the product is given by $(\alpha,g)(\beta,g\varphi(\alpha))\coloneqq(\alpha\beta,g)$.
    \item $(\alpha,g)^{-1}\coloneqq(\alpha^{-1},g\varphi(\alpha))$.
\end{itemize}
Note that $(\cG\times_\varphi\Gamma)^{(0)}=\cG^{(0)}\times\Gamma$, $r(\alpha,g)=(r(\alpha),g)$, and $s(\alpha,g)=(s(\alpha),g\varphi(\alpha))$.
The group $\Gamma$ acts on $\cG\times_\varphi\Gamma$ by $g.(\alpha,g')=(\alpha,gg')$.

By \cite[Theorem~3.8]{Mat12}, there is an \emph{LHS type spectral sequence}
    \[
E_{pq}^2(\cG,\varphi)=\H_p(\Gamma,\H_q(\cG\times_\varphi\Gamma))\Longrightarrow \H_{p+q}(\cG).
    \]
In this section, we show that the above spectral sequence is functorial for equivariant groupoid correspondences. 

\begin{definition}
    Let $\cG$ and $\cH$ be ample groupoids equipped with cocycles 
    $\varphi \colon \cG \to \Gamma$ and $\rho \colon \cH \to \Lambda$. Let $j \colon \Gamma \to \Lambda$ be a group homomorphism. 
    Let $\Omega \colon \cG \to \cH$ be an \'etale correspondence and let $\tau \colon \Omega \to \Lambda$ be a locally constant map. We say that the pair $(\Omega, \tau)$ is a \emph{($\Gamma$, $\Lambda$)-equivariant correspondence} if 
    \[ \tau(\alpha \omega) = j(\varphi(\alpha)) \tau(\omega) \mbox{ and } \tau(\omega \beta)=\tau(\omega)\rho(\beta)\]
    for all $\alpha \in \cG, \omega \in \Omega, \beta \in \cH$ such that they are composable appropriately. 
\end{definition}

We shall fix the following notation for the remainder of this section. Let $\cG$ and $\cH$ be ample $\sigma$-compact groupoids equipped with cocycles $\varphi \colon \cG \to \Gamma$ and $\rho \colon \cH \to \Lambda$, where $\Gamma$ and $\Lambda$ are discrete groups. Moreover, assume there is a group homomorphism $j \colon \Gamma \to \Lambda$.

We first define the functor $M \mapsto M \times_\varphi \Gamma$ from the category of left $\cG$-modules to $\cG \times_\varphi \Gamma$-modules. Let $M$ be a left $\cG$-module. 
Put $M \times_\varphi\Gamma \coloneqq M \otimes \Zz \Gamma$ as a $\Zz$-module. We define the action of $\Zz[\cG \times_\varphi \Gamma] = \Zz[\cG] \otimes \Zz\Gamma$ (this equality is just as $\Zz$-modules) by 
\[ (\xi \otimes 1_g)(m \otimes 1_h) = \xi|_{\varphi^{-1}(g^{-1}h)} m \otimes 1_g,\]
where $\xi \in \Zz\cG, m \in M$ and $g,h \in \Gamma$. Here, $1_g$ denotes the characteristic function on $\{g\}$. 

\begin{remark}
    We can see that the multiplication in $\Zz[\cG \times_\varphi \Gamma]$ is given by 
\[ (\xi \otimes 1_g)(\eta \otimes 1_h) = (\xi\vert_{\varphi^{-1}(g^{-1}h)}*\eta) \otimes 1_g\]
for $\xi, \eta \in \Zz[\cG]$ and $g,h \in \Gamma$.
\end{remark}

Let $M$ and $N$ be left $\cG$-modules and $\kappa \colon M \to N$ be a $\cG$-module map. Then, we can see that $\kappa \times_\varphi \Gamma \coloneqq \kappa \otimes \id \colon M \times_\varphi \Gamma \to N \times_\varphi \Gamma$ is a $\cG \times_\varphi \Gamma$-module map, so that $M \mapsto M \times_\varphi \Gamma$ is functorial. 
The functor $M \mapsto M \times_\rho \Gamma$ is exact because $\Zz \Gamma$ is $\Zz$-flat. 

We observe the following description for the canonical right $\cG \times_\varphi \Gamma$-module structure of $\Zz[\cG^{(0)} \times \Gamma]$. For $\xi \in \Zz[\cG], f \in \Zz[\cG^{(0)}], g,h \in \Gamma$, we have
\begin{equation}
 \label{eqn:rightmod}   
(f \otimes 1_g)(\xi \otimes 1_h) = 
\begin{cases}
\sum_{k \in \Gamma} f\xi|_{\varphi^{-1}(g^{-1}k)} \otimes 1_k & \mbox{ if } g=h, \\
0 & \mbox{ if } g \neq h.
\end{cases}
\end{equation}
Here, $f\xi|_{\varphi^{-1}(g^{-1}k)}$ denotes the element of $\Zz[\cG^{(0)}]$ resulting from the action of $\xi|_{\varphi^{-1}(g^{-1}k)}$ on $f$.

\begin{lemma} \label{lem:generator}
    Let $M$ be a left $\cG$-module. Then, the $\Zz$-module $\Zz[\cG^{(0)} \times \Gamma] \otimes_{\cG \times_\varphi \Gamma} (M \times_\varphi \Gamma)$ is spanned by 
    \[ \{(f \otimes 1_g) \otimes (m \otimes 1_g) \colon f \in \Zz[\cG^{(0)}], m \in M, g \in \Gamma\}.\]
    In addition, $\Zz[\cG^{(0)} \times \Gamma] \otimes_{\cG \times_\varphi \Gamma} (M \times_\varphi \Gamma)$ is a $\Gamma$-module with the $\Gamma$-action determined by 
    \[ k\cdot (f \otimes 1_g) \otimes (m \otimes 1_g) = (f \otimes 1_{kg}) \otimes (m \otimes 1_{kg})\]
    for $f \in \Zz[\cG^{(0)}], m \in M$ and $k,g \in \Gamma$. 
\end{lemma}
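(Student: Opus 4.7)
The plan is to reduce both claims to statements about elementary tensors, exploiting the fact that the cocycle $\varphi$ is trivial on the units of $\cG$. Since $\Zz[\cG^{(0)} \times \Gamma] \cong \Zz[\cG^{(0)}] \otimes \Zz\Gamma$ and $M \times_\varphi \Gamma = M \otimes \Zz\Gamma$ as $\Zz$-modules, the balanced tensor product in question is $\Zz$-spanned by elementary tensors $(f \otimes 1_g) \otimes (m \otimes 1_h)$ with $f \in \Zz[\cG^{(0)}]$, $m \in M$, and $g,h \in \Gamma$. Part (1) will then follow once I show that any such tensor with $g \neq h$ vanishes.

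To establish this vanishing, I would use a unit-absorption argument. For fixed $f \in \Zz[\cG^{(0)}]$, set $U \coloneqq \supp(f)$ and regard $1_U \otimes 1_g$ as an element of $\Zz[\cG \times_\varphi \Gamma]$ supported on the units $U \times \{g\}$. Using the right-action formula \eqref{eqn:rightmod} together with the fact that $\varphi$ is identically $e$ on $\cG^{(0)}$, a short computation gives $(f \otimes 1_g)(1_U \otimes 1_g) = f 1_U \otimes 1_g = f \otimes 1_g$, so that $1_U \otimes 1_g$ acts as a local unit on the right. On the other hand, the defining left-action formula yields
\[
(1_U \otimes 1_g)(m \otimes 1_h) = 1_U\vert_{\varphi^{-1}(g^{-1}h)}\, m \otimes 1_g,
\]
and when $g \neq h$ the set $\varphi^{-1}(g^{-1}h)$ is disjoint from $\cG^{(0)}$, so this product vanishes. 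Balancing $1_U \otimes 1_g$ across the tensor product then gives $(f \otimes 1_g) \otimes (m \otimes 1_h) = 0$ whenever $g \neq h$, which proves part (1).

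For part (2), I plan to obtain the $\Gamma$-action from the left-translation automorphism $\lambda_k \colon (\alpha, g) \mapsto (\alpha, kg)$ of the skew-product groupoid $\cG \times_\varphi \Gamma$, which is clearly a groupoid automorphism for every $k \in \Gamma$. This induces $\Zz$-linear automorphisms of $\Zz[\cG \times_\varphi \Gamma]$, $\Zz[\cG^{(0)} \times \Gamma]$, and $M \times_\varphi \Gamma$ sending $1_g$ to $1_{kg}$ in the $\Gamma$-factor and fixing the other tensor slot. Compatibility of these three automorphisms with the bimodule actions reduces to the identity $(kg)^{-1}(kh) = g^{-1}h$, so they descend to a well-defined $\Gamma$-action on the balanced tensor product. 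By part (1) this descended action is completely determined by its values on diagonal tensors, and a direct read-off recovers the displayed formula.

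The main obstacle is the unit-absorption step; the rest is bookkeeping once both actions are written out in full. Verifying that $1_U \otimes 1_g$ genuinely serves as a local unit for $f \otimes 1_g$ on the right while annihilating $m \otimes 1_h$ on the left for $h \neq g$ requires carefully unpacking \eqref{eqn:rightmod} and the left-action formula, and it is precisely at this point that the triviality of $\varphi$ on $\cG^{(0)}$ enters essentially.
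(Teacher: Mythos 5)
Your argument is correct and follows essentially the same route as the paper: span the tensor product by elementary tensors, insert a local unit and balance it across the tensor product to kill the off-diagonal terms via the $g\neq h$ vanishing in \eqref{eqn:rightmod}, and get the $\Gamma$-action from a balancedness/compatibility check. The only (harmless) variation is that you use the local unit $1_{\supp f}\otimes 1_g$ on the $\Zz[\cG^{(0)}\times\Gamma]$ side, whereas the paper invokes nondegeneracy of $M$ to produce an idempotent $e\in\Zz[\cG^{(0)}]$ with $em=m$ and moves $e\otimes 1_h$ across in the other direction; likewise your derivation of part (2) from the translation automorphism $(\alpha,g)\mapsto(\alpha,kg)$ of the skew product amounts to the same balanced-bilinear-map verification the paper performs directly.
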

\begin{proof}
    Let $N = \Zz[\cG^{(0)} \times \Gamma] \otimes_{\cG \times_\varphi \Gamma} (M \times_\varphi \Gamma)$. By definition, $N$ is spanned by 
    \[ \{(f \otimes 1_g) \otimes (m \otimes 1_h) \colon f \in \Zz[\cG^{(0)}], m \in M, g,h \in \Gamma\}.\]
    For the first claim, it suffices to show that $(f \otimes 1_g) \otimes (m \otimes 1_h) = 0$ when $g \neq h$. To see this, use nondegeneracy to take an idempotent $e \in \Zz[\cG^{(0)}]$ such that $em=m$. Then, by Equation~\eqref{eqn:rightmod},
    \[ (f \otimes 1_g) \otimes (m \otimes 1_h)
    = (f \otimes 1_g) \otimes ((e \otimes 1_h)(m \otimes 1_h))
    = ((f \otimes 1_g)(e \otimes 1_h)) \otimes (m \otimes 1_h)
    =0, \]
    so that the first claim holds. To see the second claim, it suffices to show that the bilinear map 
    \[ (f \otimes 1_g, m \otimes 1_h) \mapsto (f \otimes 1_{kg}) \otimes (f \otimes 1_{kh})\]
    is balanced. This follows similarly.  
\end{proof}

Recall that a (possibly non-Hasudorff) topological space is said to be locally LCH if every point has a locally compact Hausdorff neighbourhood.
\begin{lemma} \label{lem:flat}
    Let $X$ be a totally disconnected locally LCH space equipped with a left $\cG$-action. 
    Suppose the action $\cG \acts X$ is basic and \'etale. Then, the action $\cG \times_\varphi \Gamma \acts X \times \Gamma$ is basic and \'etale, so that the left $\cG \times_\varphi \Gamma$-module $\Zz[X \times \Gamma] = \Zz[X] \times_\varphi \Gamma$ is flat by \cite[Proposition~2.8]{Miller}. 
\end{lemma}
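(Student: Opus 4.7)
The plan is to define the canonical $\cG \times_\varphi \Gamma$-action on $X \times \Gamma$, check that the resulting $\cG \times_\varphi \Gamma$-module structure on $\Zz[X \times \Gamma]$ agrees with $\Zz[X] \times_\varphi \Gamma$, and then verify that this action is \'etale and basic by reducing to the corresponding properties of $\cG \acts X$. Flatness then follows immediately from \cite[Proposition~2.8]{Miller}.

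For the action, I would take the anchor map $(x, h) \mapsto (r_X(x), h)$ from $X \times \Gamma$ to $\cG^{(0)} \times \Gamma = (\cG \times_\varphi \Gamma)^{(0)}$, and set $(\alpha, g)\cdot (x, g\varphi(\alpha)) = (\alpha \cdot x, g)$ whenever $s(\alpha) = r_X(x)$. A direct computation with characteristic functions, using the convolution rule in $\Zz[\cG \times_\varphi \Gamma] = \Zz[\cG] \otimes \Zz\Gamma$, shows that this action induces exactly the module structure on $\Zz[X] \otimes \Zz\Gamma = \Zz[X] \times_\varphi \Gamma$ defined by the formula $(\xi \otimes 1_g)(m \otimes 1_h) = \xi|_{\varphi^{-1}(g^{-1}h)} m \otimes 1_g$; in particular the support condition $h = g\varphi(\alpha)$ appears naturally from both sides.

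\'Etaleness of the action is immediate since the anchor map is the product of the local homeomorphism $r_X$ (given by the \'etale assumption on $\cG \acts X$) with the identity on the discrete set $\Gamma$. For basicness, the key observation is that the $\Gamma$-coordinate is rigidly tracked by $\varphi$: the action map
\[ (\cG \times_\varphi \Gamma) \times_{s, \mathrm{anchor}} (X \times \Gamma) \to (X \times \Gamma) \times (X \times \Gamma), \quad ((\alpha, g), (x, g\varphi(\alpha))) \mapsto ((\alpha.x, g), (x, g\varphi(\alpha))), \]
factors through the analogous map for $\cG \acts X$ bundled with the $\Gamma$-data $g$, so whichever of the conditions (injectivity, properness, closed image) defining ``basic'' in \cite{Miller} holds for $\cG \acts X$ transfers directly.

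The main obstacle I anticipate is ensuring that the chosen definition of basicness is preserved under this bookkeeping, particularly if it involves Hausdorffness of an orbit relation or a closed embedding condition; but since $\varphi$ provides a bijection on $\Gamma$-coordinates within each source fiber and $\Gamma$ is discrete, no topological pathology can be introduced. With \'etaleness and basicness in hand, flatness of $\Zz[X \times \Gamma]$ as a left $\cG \times_\varphi \Gamma$-module follows from \cite[Proposition~2.8]{Miller}.
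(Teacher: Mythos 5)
Your setup coincides with the paper's: the same action $(\alpha,g)\cdot(x,g\varphi(\alpha))=(\alpha\cdot x,g)$ with anchor map $(x,h)\mapsto(r_X(x),h)$, and \'etaleness is indeed immediate because this anchor is $r_X\times\id_\Gamma$ with $\Gamma$ discrete. The weak point is exactly the step you flag yourself: basicness, which is the only nontrivial content of the lemma. Saying that ``whichever of the conditions (injectivity, properness, closed image) defining basic holds for $\cG\acts X$ transfers directly'' is an assertion rather than an argument, and it also conflates basicness with properness, which in \cite{AKM} is a separate notion that plays no role here. To complete your route you would have to fix the actual definition (the shear map $\cG\ltimes X\to X\times_{\cG\backslash X}X$, $(\alpha,x)\mapsto(\alpha\cdot x,x)$, is a homeomorphism onto the fibred product) and verify it for the skew product: injectivity follows from freeness of the original action, and continuity of the inverse reduces to that for $\cG\acts X$ because $\Gamma$ is discrete and the $\Gamma$-component of the acting element is forced by $\varphi$ (locally constant) to be eventually constant along convergent nets. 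That argument does go through, so your idea is salvageable, but as written the key claim is left unproved.

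For comparison, the paper's proof handles basicness differently: it notes that the skew-product action is free and its anchor is a local homeomorphism, and then uses the characterization that, for such actions, basicness amounts to the orbit projection $q\colon X\times\Gamma\to(\cG\times_\varphi\Gamma)\backslash(X\times\Gamma)$ being a local homeomorphism. Since $q$ is automatically open, only local injectivity must be checked, and this follows from the commutative square relating $q$ to $\bar q\colon X\to\cG\backslash X$: the composite $X\times\Gamma\to X\xrightarrow{\bar q}\cG\backslash X$ is locally injective (the projection and $\bar q$ are local homeomorphisms, the latter by the hypothesis on $\cG\acts X$) and factors through $q$. Either route works, but the paper's uses only the single input that $\bar q$ is a local homeomorphism, whereas yours needs the full homeomorphism condition for the shear map of $\cG\acts X$; if you keep your approach, spell out the definition of basic and the convergence argument above, and drop properness from the discussion.
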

\begin{proof}
    First, the action $\cG \times_\varphi \Gamma \acts X \times \Gamma$ is defined by 
    \[ (\gamma,g)(x,g\varphi(\gamma)) = (\gamma x, g),\]
    where $g \in \cG, x \in X, g \in \Gamma$ and $(\gamma,x)$ is a composable pair. 
    
    It is clear that the action $\cG \times_\varphi \Gamma \acts X \times \Gamma$ is free, and the anchor map $X \times \Gamma \to \cG^{(0)} \times \Gamma$ is a local homeomorphism. Hence, to show that action is basic, it suffices to show that the quotient map $q \colon  X \times \Gamma \to  (\cG \times_\varphi \Gamma) \backslash (X \times \Gamma)$ is a local homeomorphism. It is always open, so it suffices to show that $q$ is locally injective. Let $\bar{q} \colon X \to \cG \backslash X$ is the quotient map, which is a local homeomorphism by assumption. We have the following commutative diagram: 
    \[
    \begin{tikzcd}
        X \times \Gamma \ar[r,"q"] \ar[d] & \cG \times_\varphi \Gamma \backslash X \times \Gamma \ar[d]\\
        X \ar[r,"\bar{q}"] & \cG \backslash X\nospacepunct{.}
    \end{tikzcd}
    \]
    The projection $X \times \Gamma \to X$ and $\bar{q}$ are both local homeomorphisms, so that their composition is locally injective. Hence $q$ is locally injective. 
\end{proof}

Let $\cG$ and $\cH$ be ample groupoids equipped with cocycles $\varphi \colon \cG \to \Gamma$ and $\rho \colon \cH \to \Lambda$. 
Then, a ($\Gamma$, $\Lambda$)-equivariant \'etale correspondence $(\Omega,\tau) \colon \cG \to \cH$ induces the \'etale correspondence 
\[
\Omega \times_\tau \Gamma \colon \cG \times_\varphi \Gamma \to \cH \times_\rho \Lambda
\]
 in the following way: Put $\Omega \times_\tau \Gamma = \Omega \times \Gamma$ as a topological space. The left action is defined by 
 \[ (\alpha, g)(\omega, g\rho(\alpha)) = (\alpha \omega, g)\]
 for $\alpha \in \cG, \omega \in \Omega, g \in \Gamma$ such that $(\alpha,\omega)$ is a composable pair. Similarly, the right action is defined by 
 \[ (\omega,g)(\beta,j(g)\tau(\omega))=(\omega \beta, g)\]
 for $\beta \in \cH, \omega \in \Omega, g \in \Gamma$ such that $(\omega,\beta)$ is a composable pair. From the definition of equivariant correspondences, we see that $\Omega \times_\tau \Gamma$ is a $\cG$-$\cH$-bispace. The space $\Omega \times_\tau \Gamma$ is indeed an \'etale correspondence because of the following lemma: 
\begin{lemma}
    The right action $\Omega \times_\tau \Gamma \curvearrowleft \cH \times_\rho \Lambda$ is free, proper, and \'etale. If $\Omega$ is proper, then so is $\Omega \times_\tau \Gamma$. 
\end{lemma}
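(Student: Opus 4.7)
The plan is to verify the four conditions --- freeness, \'etaleness, and properness of the right action, and then (conditionally) properness of the correspondence --- one at a time, reducing each to the corresponding property of $\Omega \curvearrowleft \cH$ (respectively of $\Omega/\cG \to \cH^{(0)}$), with the extra bookkeeping exploiting that $\Gamma, \Lambda$ are discrete and $\tau$ is locally constant.

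For \emph{freeness}, note that composability forces any acting element $(\beta,\lambda) \in \cH \times_\rho \Lambda$ on $(\omega,g)$ to satisfy $\lambda = j(g)\tau(\omega)$, so if $(\omega,g)(\beta,\lambda) = (\omega,g)$ then $\omega\beta = \omega$; freeness of $\Omega \curvearrowleft \cH$ yields $\beta \in \cH^{(0)}$, whence $(\beta,\lambda)$ is a unit. For \emph{\'etaleness}, the source anchor $(\omega,g) \mapsto (s_\Omega(\omega), j(g)\tau(\omega))$ is, on a small enough compact-open neighbourhood $U \times \{g_0\}$ with $\tau|_U \equiv \tau(\omega_0)$, the product of the local homeomorphism $s_\Omega|_U$ with the constant $j(g_0)\tau(\omega_0)$, hence a local homeomorphism.

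For \emph{properness} of the right action, the crucial observation is that the action preserves the $\Gamma$-coordinate: $(\omega,g)(\beta,j(g)\tau(\omega)) = (\omega\beta,g)$. Thus the $\cH \times_\rho \Lambda$-orbits on $\Omega \times \Gamma$ are exactly $(\cH\text{-orbits on } \Omega) \times \{g\}$, and the quotient map factors through the natural bijection $(\Omega \times \Gamma)/(\cH \times_\rho \Lambda) \cong (\Omega/\cH) \times \Gamma$. Since $\bar q \colon \Omega \to \Omega/\cH$ is a local homeomorphism (by basicness of $\Omega \curvearrowleft \cH$) and $\Gamma$ is discrete, $\bar q \times \id_\Gamma$ is a local homeomorphism, so the action is basic. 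Properness then follows either by the standard equivalence for free \'etale actions on Hausdorff spaces, or directly: a convergent net $(\omega_n, g_n) \to (\omega, g)$ is eventually constant in the $\Gamma$-coordinate, reducing to properness of $\Omega \curvearrowleft \cH$.

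For the \emph{proper correspondence} claim, assume $\Omega/\cG \to \cH^{(0)}$ is proper and take a compact $K \subseteq \cH^{(0)} \times \Lambda$. Since $\Lambda$ is discrete, $K \subseteq K_1 \times F$ with $K_1 \subseteq \cH^{(0)}$ compact and $F \subseteq \Lambda$ finite. Properness of $\Omega$ gives a compact $Q \subseteq \Omega/\cG$ covering the preimage of $K_1$, and openness of the quotient map lets us pick a compact $C_1 \subseteq \Omega$ surjecting onto $Q$. By local constancy of $\tau$ and compactness of $C_1$, $\tau(C_1)$ is finite, so for $\omega \in C_1$ the constraint $j(g)\tau(\omega) \in F$ pins $j(g)$ to a finite subset of $\Lambda$; combined with the fact that within a $\cG \times_\varphi \Gamma$-orbit one has $(\omega,g) \sim (\alpha\omega, g\varphi(\alpha)^{-1})$, this lets one exhibit a finite $F' \subseteq \Gamma$ such that every orbit in the preimage of $K$ meets $C_1 \times F'$. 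The image of the compact set $C_1 \times F'$ in the quotient is then compact and contains the preimage of $K$.

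The main obstacle I expect is this last bookkeeping step: when $\ker j$ is infinite, one must carefully use the twisting by $\varphi$ in the $\cG \times_\varphi \Gamma$-action to cut down the possible $g$-coordinates, modulo the orbit equivalence, to a finite set once representatives of $\omega$ have been pinned to $C_1$. Making this reduction precise --- in particular handling the interaction of $\ker j$ with the stabiliser subgroups of the (not necessarily free) left $\cG$-action on $\Omega$ --- is the technical heart of the argument; the first three properties are essentially formal.
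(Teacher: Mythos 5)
Your treatment of freeness, \'etaleness, and properness of the right $\cH\times_\rho\Lambda$-action is correct and essentially identical to the paper's: the action preserves the $\Gamma$-coordinate, the quotient identifies with $(\Omega/\cH)\times\Gamma$, and since $\bar q\colon\Omega\to\Omega/\cH$ is a local homeomorphism onto a Hausdorff space, so is the quotient map for $\Omega\times_\tau\Gamma$, giving properness of the action.

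The last part, however, has a genuine gap. You read ``$\Omega$ is proper'' as properness of the map $\Omega/\cG\to\cH^{(0)}$ (the paper's displayed definition indeed says this, but it is a typo: $r_\Omega$ is the anchor of the \emph{left} $\cG$-action and only descends to $\Omega/\cH$, not $\Omega/\cG$; the notion actually used here, and in Miller's work the paper cites, is properness of the induced map $f\colon\Omega/\cH\to\cG^{(0)}$, which is exactly what the paper's proof of this lemma invokes). With your reading the statement you are trying to prove is simply false, so the ``bookkeeping'' you flag as the technical heart cannot be completed: take $\cG$ and $\cH$ trivial, $\Omega$ a point, $\Lambda$ trivial, $\Gamma=\Zz$ and $j$ the trivial homomorphism. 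Then $\Omega$ is proper in every sense, but $(\Omega\times_\tau\Gamma)/(\cG\times_\varphi\Gamma)\cong\Zz$ is an infinite discrete space mapping to the one-point space $\cH^{(0)}\times\Lambda$, which is not proper; your hoped-for finite set $F'\subseteq\Gamma$ of $g$-coordinates does not exist because $\varphi(\cG)$ need not see $\ker j$ at all. With the intended definition the claim is immediate and needs none of this: the homeomorphism $(\Omega\times_\tau\Gamma)/(\cH\times_\rho\Lambda)\cong(\Omega/\cH)\times\Gamma$ you already established identifies the induced map to $\cG^{(0)}\times\Gamma$ with $f\times\id_\Gamma$, which is proper whenever $f$ is. That one-line argument is the paper's proof.
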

\begin{proof}
    By deifinition, the right action is free and the anchor map $\Omega \times_\tau \Gamma \to \cH^{(0)} \times \Lambda$ is a local homeomorphism. Moreover, there is a homeomorphism 
    \[ (\Omega \times_\tau \Gamma) / (\cH \times_\rho \Lambda) \to (\Omega / \cH) \times \Gamma,\ [\omega,g]_{\cH \times_\rho \Lambda} \mapsto ([\omega]_\cH,g)\]
    which makes the following diagram commute:
    \[
    \begin{tikzcd}
        & \Omega \times_\tau \Gamma \ar[ld,"q"] \ar[rd, "\bar{q} \times \id"]& \\
        (\Omega \times_\tau \Gamma) / (\cH \times_\rho \Lambda) \ar[rr,"\sim"] & &(\Omega / \cH) \times \Gamma\nospacepunct{,}
    \end{tikzcd}
    \]
    where $q$ and $\bar{q}$ are quotient maps. Hence, $(\Omega \times_\tau \Gamma) / (\cH \times_\rho \Lambda)$ is Hausdorff and $q$ is a local homeomorphism, which implies that the right action is proper. Finally, suppose $\Omega$ is proper, i.e., $f\colon \Omega/ \cH \to \cG^{(0)}$ is a proper map. Then, the map 
    \[(\Omega \times_\tau \Gamma) / (\cH \times_\rho \Lambda) \cong (\Omega / \cH) \times \Gamma \to \cG^{(0)} \times \Gamma\]
    is equal to $f \times \id$, which is a proper map. Hence $\Omega \times_\tau \Gamma$ is proper. 
\end{proof}

For a $(\Gamma, \Lambda)$-equivariant correspondence $(\Omega, \tau)$, let $\Omega_h \coloneqq \tau^{-1}(h)$ for $h \in \Lambda$. Note that $\Zz \Omega = \bigoplus_h \Zz \Omega_h$. 

\begin{lemma} \label{lem:natural}
    Let $M$ be a left $\cH$-module. Then, there is an isomorphism of $\cG \times_\varphi \Gamma$-modules
     \[ 
     (\Ind_\Omega M) \times_\varphi \Gamma \cong \Ind_{\Omega \times_\tau \Gamma} (M \times_\rho \Lambda),\quad  
     (f \otimes m) \otimes 1_g \mapsto (f \otimes 1_g) \otimes (m \otimes 1_{j(g)h})
     \]
     for all $f \in \Zz \Omega_h, m \in M, g \in \Gamma, h \in \Lambda$. 
\end{lemma}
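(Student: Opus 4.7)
The plan is to exhibit the proposed formula as a well-defined $\cG \times_\varphi \Gamma$-module homomorphism and then construct an explicit two-sided inverse, leveraging the $\Lambda$-grading $\Zz\Omega = \bigoplus_{h \in \Lambda} \Zz\Omega_h$ together with an idempotent argument analogous to Lemma~\ref{lem:generator}.

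First, I would define a $\Zz$-linear map $\Phi \colon \Zz\Omega \otimes M \otimes \Zz\Gamma \to \Ind_{\Omega \times_\tau \Gamma}(M \times_\rho \Lambda)$ by extending the assignment $(f \otimes m \otimes 1_g) \mapsto (f \otimes 1_g) \otimes (m \otimes 1_{j(g)h})$ $\Zz$-linearly on each homogeneous summand $\Zz\Omega_h \otimes M \otimes \Zz\Gamma$. To see that $\Phi$ descends to $(\Zz\Omega \otimes_\cH M) \otimes \Zz\Gamma = (\Ind_\Omega M) \times_\varphi \Gamma$, the balancing condition must be checked: for $\beta \in \Zz\cH$ of pure degree $k$ (i.e., supported on $\rho^{-1}(k)$), the elements $(f\beta) \otimes m \otimes 1_g$ and $f \otimes (\beta m) \otimes 1_g$ should have the same image. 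The crucial computation is the identity $(f \otimes 1_g)(\beta \otimes 1_{j(g)h}) = (f\beta) \otimes 1_g$ in $\Zz[\Omega \times_\tau \Gamma]$, which follows by unwinding the right action $(\omega, g)(\beta', \ell) = (\omega\beta', g)$ (valid precisely when $\ell = j(g)\tau(\omega)$) and noting that $f\beta \in \Zz\Omega_{hk}$. Combined with $(\beta \otimes 1_{j(g)h})(m \otimes 1_{j(g)hk}) = \beta m \otimes 1_{j(g)h}$ coming directly from the definition of the module structure on $M \times_\rho \Lambda$, both images collapse to the common element $(f\beta \otimes 1_g) \otimes (m \otimes 1_{j(g)hk})$.

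The $\cG \times_\varphi \Gamma$-equivariance of $\Phi$ is then a direct calculation from the multiplication rule in $\Zz[\cG \times_\varphi \Gamma]$ and the left $\Zz[\cG \times_\varphi \Gamma]$-action on $\Zz[\Omega \times_\tau \Gamma]$. For the inverse, I would first establish that $(f \otimes 1_g) \otimes (m \otimes 1_\ell) = 0$ in $\Ind_{\Omega \times_\tau \Gamma}(M \times_\rho \Lambda)$ whenever $\ell \neq j(g)h$ for $f \in \Zz\Omega_h$: pick an idempotent $e \in \Zz\cH^{(0)}$ with $em = m$, rewrite $m \otimes 1_\ell = (e \otimes 1_\ell)(m \otimes 1_\ell)$, and apply $(f \otimes 1_g)(e \otimes 1_\ell) = (fe \otimes 1_g)\delta_{\ell, j(g)h}$. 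Consequently the target module is $\Zz$-spanned by the image of $\Phi$, and on these spanning elements the formula $(f \otimes 1_g) \otimes (m \otimes 1_{j(g)h}) \mapsto (f \otimes m) \otimes 1_g$ defines the inverse, which is well-defined by an entirely symmetric balancing computation.

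The main obstacle is bookkeeping: the $\Lambda$-grading from $\tau$ must be propagated through every identification, and the crucial "degree constraint" $\ell = j(g)h$ needs to be identified correctly---it is what forces the tensor factors in $M \times_\rho \Lambda$ to land on the particular element $j(g)h$ and it is what makes the balancing computation collapse. Once this constraint is pinned down (which is forced by the very definition of the skew-product correspondence and parallels Lemma~\ref{lem:generator}), the well-definedness, equivariance, and invertibility of $\Phi$ all follow by essentially the same style of computation, and naturality in $M$ is immediate from the formula.
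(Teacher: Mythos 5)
Your proposal is correct and follows essentially the same route as the paper: define the map on homogeneous pieces $\Zz\Omega_h\otimes M\otimes\Zz\Gamma$, verify $\cH$-balancedness via the two identities $(f\otimes 1_g)(\beta\otimes 1_{j(g)h})=(f\beta)\otimes 1_g$ and $(\beta\otimes 1_{j(g)h})(m\otimes 1_{j(g)hk})=\beta m\otimes 1_{j(g)h}$, check equivariance, and invert. The only cosmetic difference is in the inverse: the paper directly writes a balanced bilinear map with a Kronecker delta $\delta_{j(g)h,\ell}$, whereas you first kill the off-diagonal elements by the idempotent trick and then define the inverse on the spanning set, which amounts to the same computation.
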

\begin{proof}
    For $g \in \Gamma$, define a bilinear map
    \[ \Phi_g \colon \Zz \Omega \times M \to \Ind_{\Omega \times_\tau \Gamma} (M \times_\rho \Lambda),\ (f, m) \mapsto (f \otimes 1_g) \otimes (m \otimes 1_{j(g)h}) \]
    for $f \in \Zz \Omega_h, m \in M, h \in \Lambda$. Then, for $\xi \in \Zz \cH$ and $k \in \Lambda$ with $\supp \xi \subseteq \rho^{-1}(k)$, we have $f\xi \in \Zz \Omega_{hk}$, so that 
    \begin{align*}
        &\Phi_g(f\xi, m) = (f \xi \otimes 1_g) \otimes (m \otimes 1_{j(g)hk}) = ((f \otimes 1_g)(\xi \otimes 1_{j(g)h})) \otimes (m \otimes 1_{j(g)hk}), \mbox{ and} \\
        &\Phi_g (f, \xi m) = (f \otimes 1_g) \otimes (\xi m \otimes 1_{j(g)h})
    = (f \otimes 1_g) \otimes ((\xi \otimes 1_{j(g)h})( m \otimes 1_{j(g)hk})). 
    \end{align*}
    Hence, the map $\Phi_g \colon \Zz \Omega \otimes_\cH M \to \Ind_{\Omega \times_\tau \Gamma} (M \times_\rho \Lambda)$
    is well-defined. Now, let 
    \[
    \Phi  \colon (\Ind_\Omega M) \times_\varphi \Gamma \to \Ind_{\Omega \times_\tau \Gamma} (M \times_\rho \Lambda).
    \] be the direct sum of the maps induced from the maps $\Phi_g$ for all $g\in\Gamma$.
    A calculation shows that $\Phi$ is a $\cG \times_\varphi \Gamma$-module map. A similar argument shows that the bilinear map 
    \[
  (\Zz \Omega \otimes \Zz\Gamma) \times  (M \otimes \Zz \Lambda) \to (\Ind_\Omega M) \times_\varphi \Gamma, \quad (f \otimes 1_g, m \otimes 1_l) \mapsto \delta_{j(g)h,l} (f \otimes m) \otimes 1_g 
    \]
    for $f \in \Zz \Omega_h$ is balanced, which induces the inverse of $\Phi$. 
\end{proof}

\begin{definition}
    A left $\cG$-module $M$ is said to be \emph{$\Gamma$-graded} if there is a grading $M = \bigoplus_{g \in \Gamma} M_g$ as $\Zz$-modules and satisfy 
    $fM_h \subseteq M_{gh}$ 
    for $g,h \in \Gamma$ and $f \in \Zz[\cG]$ with $\supp f \subseteq \varphi^{-1}(g)$.  
\end{definition}

Let us mention an example class of graded modules that will play a role in our proof below. An equivariant correspondence $\Omega$ gives rise to a $\Lambda$-graded left $\cG$-module $M=\Zz[\Omega]$ with grading $M_h =\Zz[\Omega_h]$ with respect to the cocycle $j \circ \varphi \colon \cG \to \Lambda$. In particular, $\Zz[\cG^{(n)}]$ is a $\Gamma$-graded left $\cG$-module for $n \geq 1$, where $\cG^{(n)}$ denotes the space of composable $n$-tuples, since $\cG^{(n)}$ is a $\cG$-$\cG$ equivariant correspondence with $\tau(g_1,\dots,g_n)=\varphi(g_1\cdots g_n)$. On the other hand, $\Zz[\cG^{(0)}]$ does not have a natural $\Gamma$-grading.

Fix a projective resolution $P_\bullet \to \Zz[\cG^{(0)}]$ over $\Zz\cG$ and a projective resolution $Q_\bullet \to \Zz[\cH^{(0)}]$ over $\Zz\cH$, where $P_n$ is of the form $\Zz[X_n]$ and is $\Gamma$-graded, and $Q_n$ is of the form $\Zz[Y_n]$ and is $\Lambda$-graded. We can always choose such a resolution because the bar resolution is a projective resolution by \cite[Section~2]{BDGW} (note that we do not require that differentials preserve the grading). Then, $\Ind_\Omega Q_\bullet$ is a flat resolution of $\Ind_\Omega \cH^{(0)}$ by \cite[Proposition~2.8]{Miller}. Now suppose $\Omega$ is proper, 
and let $\kappa \colon \Zz[\cG^{(0)}] \to \Zz[\Omega/\cH] \cong \Ind_\Omega \Zz[\cH^{(0)}]$ be the map induced by the proper map $\Omega/\cH \to \cG^{(0)}$. By projectivity of $P_\bullet$, we can choose a lift $\tilde{\kappa}_\bullet \colon P_\bullet \to \Ind_\Omega Q_\bullet$. 
By applying the skew product functor (which is exact), we obtain the resolutions 
$P_\bullet \times_\varphi \Gamma \to \Zz[\cG^{(0)} \times \Gamma]$ and $\Ind_\Omega Q_\bullet \times_\varphi \Gamma \to \Ind_\Omega \Zz[\cH^{(0)}] \times_\varphi \Gamma \cong \Ind_{\Omega \times_\tau \Gamma} \Zz[ \cH^{(0)} \times_\rho \Lambda]$ by Lemma~\ref{lem:natural}, together with the chain map 
\[ 
\tilde{\kappa}_\bullet \times_\varphi \Gamma \colon P_\bullet \times_\varphi \Gamma 
 \to \Ind_\Omega Q_\bullet \times_\varphi \Gamma \cong \Ind_{\Omega \times \Gamma} (Q_\bullet \times_\rho \Lambda)
 \]
 which is a lift of $\kappa \times_\varphi \Gamma \colon \Zz[\cG^{(0)} \times_\varphi \Gamma] \to \Ind_{\Omega \times_\tau \Gamma} \Zz[ \cH^{(0)} \times_\rho \Lambda]$. 
 They are flat resolutions by Lemma~\ref{lem:flat}. Now we can apply \cite[Theorem~3.5]{Miller} to obtain the homomorphism 
 \[ \H_*(\Omega \times_\tau \Gamma) \colon \H_*(\cG \times_\varphi \Gamma) \to \H_*(\cH \times_\rho \Lambda). \]

In order to obtain the functoriality of the LHS type spectral sequence, let $C_\bullet = (P_\bullet \times_\varphi \Gamma)_{\cG \times_\varphi \Gamma}$ and $C_\bullet'=(Q_\bullet \times_\rho \Lambda)_{\cH \times_\rho \Lambda}$. 
By Lemma~\ref{lem:generator}, $C$ and $C'$ are complexes of $\Gamma$-modules, where $\Gamma$ acts diagonally. 
The map $\H_*(\Omega \times_\tau \Gamma)$ is induced by the 
chain map

\begin{equation}
\label{eqn:kappabar}
\begin{aligned}   
 \bar{\kappa} \colon (P_\bullet \times_\varphi \Gamma)_{\cG \times_\varphi \Gamma} \xrightarrow{(\tilde{\kappa} \times_\varphi \Gamma)_{\cG\times_\varphi\Gamma}} 
(\Ind_\Omega Q_\bullet \times_\varphi \Gamma)_{\cG \times_\varphi \Gamma} 
&\xrightarrow{\sim} (\Ind_{\Omega \times_\tau \Gamma} (Q_\bullet \times_\rho \Lambda))_{\cG \times_\varphi \Gamma}\\
&\xrightarrow{\delta \otimes \id} 
(Q_\bullet \times_\rho \Lambda)_{\cH \times_\rho \Lambda},  
\end{aligned}
\end{equation}
where
\[\delta = \delta_{\Omega \times_\tau \Gamma} \colon \Zz[\cG^{(0)} \times \Gamma] \otimes_{\cG \times_\varphi \Gamma} \Zz[\Omega \times_\tau \Gamma] \to \Zz[\cH^{(0)} \times \Lambda]
\]
is the map from \cite[Proposition~3.3]{Miller}. 
The next lemma implies that $\bar{\kappa}$ is ($\Gamma$, $\Lambda$)-equivariant:
\begin{lemma} \label{lem:delta-equiv}
    The canonical map $\delta$
    is ($\Gamma$, $\Lambda$)-equivariant. 
\end{lemma}
\begin{proof}
    By definition, we have $s_{\Omega \times_\tau \Gamma}(\omega, g) = (s_\Omega(\omega), j(g)\tau(\omega))$ for $(\omega, g) \in \Omega \times_\tau \Gamma$. 
    Hence, for $g\in \Gamma, h \in \Lambda$ and $\xi \in \Zz \Omega_h$, we have
    \begin{equation} 
        (s_{\Omega \times_\tau \Gamma})_*(\xi \otimes 1_g) = (s_\Omega)_*(\xi) \otimes 1_{j(g)h}.
    \end{equation} 
    By Lemma~\ref{lem:generator}, the $\Zz$-module $\Zz[\cG^{(0)} \times \Gamma] \otimes_{\cG \times_\varphi \Gamma} \Zz[\Omega \times_\tau \Gamma]$ is spanned by 
    \[ 
    \{(f \otimes 1_g) \otimes (\xi \otimes 1_g) \colon f \in \Zz[\cG^{(0)}], \xi \in \Zz \Omega_h, g \in \Gamma, h \in \Lambda\}.
    \] 
    By definition of $\delta$, we have 
    \begin{equation}
        \label{eqn:delta}
    \delta((f \otimes 1_{kg}) \otimes (\xi \otimes 1_{kg}))
    = (s_{\Omega \times_\tau \Gamma})_*((f|_{\varphi^{-1}(e)}\xi) \otimes 1_{kg})
    = (s_\Omega)_*(f\xi) \otimes 1_{j(k)j(g)h}
        \end{equation}

    for $k \in \Gamma$, since $f\xi \in \Zz \Omega_h$. Thus we have equivariance. 
\end{proof}

\begin{lemma} \label{lem:coinv}
    Let $M=\bigoplus_g M_g$ be a $\Gamma$-graded left $\cG$-module. Then, 
    the map
    \begin{align*}
        &\Phi \colon \Zz \Gamma \otimes (\Zz[\cG^{(0)}] \otimes_{\cG} M) \to \Zz[\cG^{(0)} \times \Gamma] \otimes_{\cG \times_\varphi \Gamma} (M \times_\varphi \Gamma), \\
        &1_g \otimes f \otimes m \mapsto (f \otimes 1_{gh^{-1}}) \otimes (m \otimes 1_{gh^{-1}})
    \end{align*}
    for $f\in \Zz[\cG^{(0)}]$, $g,h\in\Gamma$, and $m\in M_h$ is an isomorphism of $\Gamma$-modules. In particular, the $\Gamma$-module $\Zz[\cG^{(0)} \times \Gamma] \otimes_{\cG \times_\varphi \Gamma} (M \times_\varphi \Gamma)$ is $\Gamma$-acyclic and
    \[ ((M \times_\varphi \Gamma)_{\cG \times_\varphi \Gamma})_\Gamma \cong M_\cG. \]
\end{lemma}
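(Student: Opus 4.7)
The plan is to construct an explicit inverse to $\Phi$ and then deduce the ``in particular'' statement from basic properties of induced $\Gamma$-modules.

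First, I would verify that $\Phi$ is well-defined and $\Gamma$-equivariant. The grading $M=\bigoplus_h M_h$ lets me extend the formula from homogeneous to arbitrary $m$ by $\Zz$-linearity, so the nontrivial issue is compatibility with the $\Zz\cG$-balancing $f\xi\otimes m = f\otimes \xi m$. Writing $\xi$ as a sum of elements supported on single level sets $\varphi^{-1}(k)$, this reduces to the identity $(f\xi\otimes 1_{gh^{-1}})\otimes(m\otimes 1_{gh^{-1}}) = (f\otimes 1_{gh^{-1}k^{-1}})\otimes(\xi m\otimes 1_{gh^{-1}k^{-1}})$ inside the right-hand side, which follows from \eqref{eqn:rightmod} together with the left action formula. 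Equivariance with respect to $\Gamma$ is then immediate from the description in Lemma~\ref{lem:generator} together with the left translation action on $\Zz\Gamma$.

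Second, I would construct the inverse $\Psi$ as follows: on the spanning set supplied by Lemma~\ref{lem:generator}, define
\[
\Psi\bigl((f\otimes 1_{g'})\otimes(m\otimes 1_{g'})\bigr) = 1_{g'h}\otimes f\otimes m \quad \text{for } m\in M_h,
\]
extend by $\Zz$-linearity, and check well-definedness by verifying that the $\cG\times_\varphi\Gamma$-balancing identity between such diagonal elements is transported to the $\cG$-balancing identity inside $\Zz[\cG^{(0)}]\otimes_\cG M$. Both compositions $\Psi\circ\Phi = \id$ and $\Phi\circ\Psi = \id$ then follow directly from the formulas by cancelling the shifts.

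Finally, for the ``in particular'' statement, observe that $\Zz[\cG^{(0)}]\otimes_\cG M = M_\cG$ by definition of $\cG$-coinvariants, so the isomorphism $\Phi$ identifies the right-hand side with the induced $\Gamma$-module $\Zz\Gamma\otimes M_\cG$ carrying the left translation action. Induced (equivalently, free) $\Gamma$-modules are $\Gamma$-acyclic with coinvariants equal to the untwisted tensor factor, which yields $((M\times_\varphi\Gamma)_{\cG\times_\varphi\Gamma})_\Gamma \cong M_\cG$, as desired. I expect the main subtlety to lie in the first step: one must keep careful track of how the $\Gamma$-grade shifts produced by homogeneous elements of $\Zz\cG$ interact with the two different balancing relations, and choosing the shift $gh^{-1}$ in the definition of $\Phi$ is precisely what makes the two balancings correspond.
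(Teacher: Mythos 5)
Your proposal is correct and takes essentially the same route as the paper's proof: well-definedness of $\Phi$ via a balanced bilinear argument using Equation~\eqref{eqn:rightmod}, an explicit inverse given by the same formula $1_{g'h}\otimes f\otimes m$, and the acyclicity/coinvariants claim deduced from the induced ($\Zz\Gamma$-free) module structure, for which the paper cites \cite[Corollary~III.6.6]{Brown}. The only cosmetic difference is that the paper makes your inverse rigorous by defining it as a balanced bilinear map $(f\otimes 1_g, m\otimes 1_{g'})\mapsto \delta_{g,g'}\,1_{gh}\otimes f\otimes m$ on the full product (so off-diagonal terms are killed and all relations are checked at once), rather than extending from the diagonal spanning set of Lemma~\ref{lem:generator}.
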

\begin{proof}
    First, we show that $\Phi$ is well-defined. Fix $g \in \Gamma$. Then, the bilinear map 
\[\Phi_g \colon \Zz[\cG^{(0)}] \times M \to \Zz[\cG^{(0)} \times \Gamma] \otimes_{\cG \times_\varphi \Gamma} (M \times_\varphi \Gamma), \quad
        (f, m) \mapsto (f \otimes 1_{gh^{-1}}) \otimes (m \otimes 1_{gh^{-1}})
\]
    for $f\in \Zz[\cG^{(0)}]$, $h\in\Gamma$, and $m\in M_h$ is well-defined because $M = \bigoplus_h M_h$. 
    Let $h,k \in \Gamma$, $f \in \Zz[\cG^{(0)}]$, $m \in M_h$, and $\eta \in \Zz[\cG]$ with $\supp \eta \subseteq \varphi^{-1}(k)$. Then, by Equation~\eqref{eqn:rightmod},
    \begin{align*}
        &\Phi_g(f\eta, m) = (f\eta \otimes 1_{gh^{-1}}) \otimes (m \otimes 1_{gh^{-1}})
        = ((f \otimes 1_{gh^{-1}k^{-1}})(\eta \otimes 1_{gh^{-1}k^{-1}})) \otimes (m \otimes 1_{gh^{-1}}), \mbox{ and} \\
        &\Phi_g(f, \eta m) = 
        (f \otimes 1_{gh^{-1}k^{-1}}) \otimes (\eta m \otimes 1_{gh^{-1}k^{-1}})
        =(f \otimes 1_{gh^{-1}k^{-1}}) \otimes ((\eta \otimes 1_{gh^{-1}k^{-1}})(m \otimes 1_{gh^{-1}})), 
    \end{align*}
    which implies that $\Phi_g$ is balanced and thus induces a map from $\Zz \Gamma \otimes (\Zz[\cG^{(0)}] \otimes_{\cG} M)$. Hence, $\Phi$ is the direct sum of the maps induced from $\Phi_g$ for all $g\in\Gamma$. Exactly the same argument shows that the bilinear map 
\[\Zz[\cG^{(0)} \times \Gamma] \times (M \times_\varphi \Gamma) \to \Zz \Gamma \otimes (\Zz[\cG^{(0)}] \otimes_{\cG} M), \quad (f \otimes 1_g, m \otimes 1_{g'}) \mapsto \delta_{g,g'} 1_{gh} \otimes f \otimes m
\]
    for $f\in \Zz[\cG^{(0)}]$, $g,g',h\in\Gamma$, and $m \in M_h$
    is balanced, so that it induces a map from the tensor product $\Zz[\cG^{(0)} \times \Gamma] \otimes_{\cG \times_\varphi \Gamma} M \times_\varphi \Gamma$, which is the inverse of $\Phi$. The latter claim follows from \cite[Corollary~III.6.6]{Brown}.
\end{proof}

The isomorphism $M_\cG \to ((M \times_\varphi \Gamma)_{\cG \times_\varphi \Gamma})_\Gamma$ in Lemma~\ref{lem:coinv} is given by 
\[ f \otimes m \in \Zz[\cG^{(0)}] \otimes_\cG M \mapsto 
1 \otimes (f \otimes 1_e) \otimes (m \otimes 1_e) \in \Zz \otimes_\Gamma (\Zz[\cG^{(0)} \times \Gamma] \otimes_{\cG \times_\varphi \Gamma} (M \times_\varphi \Gamma)). 
\]
Hence, if $F \colon M \to N$ is a $\cG$-module map between $\Gamma$-graded left $\cG$-modules (we do not assume that $F$ preserves the gradings), then the following diagram is commutative: 
\[
\begin{tikzcd}
    M_\cG\ar[r,"\sim"] \ar[d, "F_\cG"] & ((M \times_\varphi \Gamma)_{\cG \times_\varphi \Gamma})_\Gamma \ar[d, "((F\times_\varphi \Gamma)_{\cG \times_\varphi \Gamma})_\Gamma"]\\
    N_\cG\ar[r,"\sim"] & ((N \times_\varphi \Gamma)_{\cG \times_\varphi \Gamma})_\Gamma\nospacepunct{.} 
\end{tikzcd}
\]
In particular, we have $(C_\bullet)_\Gamma \cong (P_\bullet)_\cG$ and $(C_\bullet')_\Gamma = (Q_\bullet)_\cH$. 

\begin{theorem} \label{thm:LHSfunctorial}
    An equivariant correspondence $(\Omega, \tau) \colon \cG \to \cH$ induces a morphism of LHS type spectral sequences $E(\cG,\varphi) \to E(\cH,\rho)$ such that
    \begin{enumerate}[\upshape(i)]
        \item  the map between $E^2$-pages
    \[ \H_p(\Gamma, \H_q(\cG \times_\varphi \Gamma)) \to \H_p(\Lambda, \H_q(\cH \times_\rho \Lambda))\]
    is induced from the $(\Gamma,\Lambda)$-equivariant map 
    \[ \H_*(\Omega \times_\tau \Gamma) \colon \H_*(\cG \times_\varphi \Gamma) \to \H_*(\cH \times_\rho \Lambda); \]
        \item the map between $E^\infty$-pages is $\H_{*}(\Omega) \colon \H_*(\cG) \to \H_*(\cH)$. 
    \end{enumerate}
\end{theorem}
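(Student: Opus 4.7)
The plan is to realise both LHS-type spectral sequences as hyperhomology spectral sequences of complexes of $\Zz\Gamma$- and $\Zz\Lambda$-modules, and then build the morphism between them by lifting the $(\Gamma,\Lambda)$-equivariant chain map $\bar\kappa$ constructed in the paragraph preceding Lemma~\ref{lem:delta-equiv}.

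Concretely, set $C_\bullet \coloneqq (P_\bullet \times_\varphi \Gamma)_{\cG \times_\varphi \Gamma}$. This is a bounded-below complex of $\Zz\Gamma$-modules whose homology computes $\H_*(\cG \times_\varphi \Gamma)$, because $P_\bullet \times_\varphi \Gamma$ is a flat resolution of $\Zz[\cG^{(0)} \times \Gamma]$ by Lemma~\ref{lem:flat}. By Lemma~\ref{lem:coinv}, each term $C_n$ is $\Gamma$-acyclic, and $(C_\bullet)_\Gamma \cong (P_\bullet)_\cG$ computes $\H_*(\cG)$. Choose a Cartan--Eilenberg projective resolution $F_{\bullet,\bullet} \twoheadrightarrow C_\bullet$ over $\Zz\Gamma$ and form the double complex $D_{p,q} = (F_{p,q})_\Gamma$. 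The two hyperhomology spectral sequences both converge to $\H_*(\Tot D)$: the column one collapses at $E^2$ to $\H_*((C_\bullet)_\Gamma) \cong \H_*(\cG)$ by $\Gamma$-acyclicity, while the row one has
\[ E^2_{p,q} = \H_p(\Gamma, \H_q(C_\bullet)) = \H_p(\Gamma, \H_q(\cG \times_\varphi \Gamma)), \]
which is the LHS-type spectral sequence. The same construction applied on the $\cH$-side produces $C'_\bullet$, $F'_{\bullet,\bullet}$, and $D'$.

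By Lemma~\ref{lem:delta-equiv}, $\bar\kappa \colon C_\bullet \to C'_\bullet$ is $\Zz\Gamma$-linear when $C'_\bullet$ is regarded as a $\Zz\Gamma$-module via $j$. Lifting via the comparison theorem for projective resolutions produces a morphism $F_{\bullet,\bullet} \to F'_{\bullet,\bullet}$ of double complexes over $\Zz\Gamma$; composing with the canonical quotient $(F'_{p,q})_\Gamma \twoheadrightarrow (F'_{p,q})_\Lambda$ yields a map $D \to D'$, and hence the desired morphism of spectral sequences. For (i), the induced map on the row $E^2$-page is $\H_p(j, \H_q(\bar\kappa))$, and $\H_q(\bar\kappa) = \H_q(\Omega \times_\tau \Gamma)$ directly by the definition of the latter as the map on homology induced by $\bar\kappa$. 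For (ii), the column spectral sequence identifies the abutment map with $(\bar\kappa)_\Gamma$ followed by the canonical quotient to $\Lambda$-coinvariants; unwinding through Lemma~\ref{lem:natural} and the commutative diagram following Lemma~\ref{lem:coinv}, the three factors composing $\bar\kappa$ descend to the chain map
\[ (P_\bullet)_\cG \xrightarrow{(\tilde\kappa_\bullet)_\cG} (\Ind_\Omega Q_\bullet)_\cG \xrightarrow{\delta_\Omega \otimes \id} (Q_\bullet)_\cH, \]
which represents $\H_*(\Omega)$ by \cite[Theorem~3.5]{Miller}.

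The main obstacle I anticipate is part (ii): one must track each of the three factors comprising $\bar\kappa$ (the chain lift $(\tilde\kappa \times_\varphi \Gamma)_{\cG \times_\varphi \Gamma}$, the natural isomorphism of Lemma~\ref{lem:natural}, and the map $\delta_{\Omega \times_\tau \Gamma} \otimes \id$) through the $\Gamma$-coinvariants functor and, using the $\Gamma$-graded commutativity recorded after Lemma~\ref{lem:coinv}, verify that the composite recovers the chain representative of $\H_*(\Omega)$ from \cite[Section~3]{Miller}. A secondary issue is confirming that the Cartan--Eilenberg hyperhomology spectral sequence constructed here coincides naturally with Matui's LHS sequence from \cite[Theorem~3.8]{Mat12}; a standard filtered-quasi-isomorphism comparison suffices, but it must be carried out in order to promote the statements (i) and (ii) to Matui's original spectral sequence.
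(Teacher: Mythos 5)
Your proposal is correct and takes essentially the same route as the paper: both realise the LHS-type spectral sequence from the complex $C_\bullet=(P_\bullet\times_\varphi\Gamma)_{\cG\times_\varphi\Gamma}$ of $\Gamma$-acyclic $\Gamma$-modules (Lemma~\ref{lem:coinv}) and push the $(\Gamma,\Lambda)$-equivariant chain map $\bar\kappa$ through it, the only cosmetic difference being that you build the spectral sequence from a Cartan--Eilenberg resolution whereas the paper invokes \cite[Proposition~VII.5.6]{Brown} (the double complex $F_\bullet\otimes_\Gamma C_\bullet$). The ``main obstacle'' you flag in (ii) is precisely what the paper settles by a short element computation: using the explicit formulas of Lemmas~\ref{lem:coinv}, \ref{lem:natural}, and \ref{lem:delta-equiv}, the map $\id_\Zz\otimes\delta_{\Omega\times_\tau\Gamma}\otimes\id$ descends on coinvariants to $\delta_\Omega\otimes\id_{Q_\bullet}$, which gives the chain representative of $\H_*(\Omega)$.
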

\begin{proof}
    The LHS type spectral sequence is obtained by applying \cite[Proposition~VII.5.6]{Brown} to $C_\bullet$ and $C_\bullet'$. Since the map $\bar{\kappa}$ from \eqref{eqn:kappabar} is $(\Gamma,\Lambda)$-equivariant, we see from the proof of this proposition that $\bar{\kappa}$ induces a map between the spectral sequences in \cite[Proposition~VII.5.6]{Brown} for $C_\bullet$ and $C_\bullet'$. Moreover, the maps $\H_q(C_\bullet)\to \H_q(C_\bullet')$ between coefficient modules of the homology groups on the $E^2$-page are induced from $\bar{\kappa}$. Since $\H_*(\Omega \times_\tau \Gamma)$ is induced from $\bar{\kappa}$, this shows (i).
    
    It remains to show that the map $\H_*((C_\bullet)_\Gamma) \to \H_*((C'_\bullet)_\Lambda)$ induced by $\bar{\kappa}$ is identified with $\H_*(\Omega)$ via the natural transformation in Lemma~\ref{lem:coinv}. To see this, it suffices to show the map 
    \[ (\Ind_\Omega Q_\bullet)_\cG \to (Q_\bullet)_\cH\]
    induced by $\id_{\Zz} \otimes \delta \otimes \id_{Q_\bullet\times_\rho\Lambda}$ is equal to $\delta_\Omega\otimes\id_{Q_\bullet}$. It is the composition 
    \begin{align*}
        &(\Ind_\Omega Q_\bullet)_\cG  \xrightarrow{\mbox{Lemma~\ref{lem:coinv}}} 
        (((\Ind_\Omega Q_\bullet) \times_\varphi \Gamma)_{\cG \times_\varphi \Gamma})_\Gamma \xrightarrow{\mbox{Lemma~\ref{lem:natural}}} 
        ((\Ind_{\Omega\times_\tau\Gamma}(Q_\bullet \times_\rho \Lambda))_{\cG \times_\varphi \Gamma})_\Gamma \\
        &\xrightarrow{\id \otimes \delta \otimes \id} ((Q_\bullet \times_\rho \Lambda)_{\cH \times_\rho \Lambda})_\Lambda \xrightarrow{\mbox{Lemma~\ref{lem:coinv}}} 
        (Q_\bullet)_\cH. 
    \end{align*}
    For $f \in \Zz[\cG^{(0)}]$, $\xi \in \Zz \Omega_h$, $h \in \Lambda$, and $m \in Q_\bullet$, we have 
    \begin{align*}
        &f \otimes (\xi \otimes m) \mapsto 
        1 \otimes (f \otimes 1_e) \otimes ((\xi \otimes m) \otimes 1_e) \mapsto
        1 \otimes (f \otimes 1_e) \otimes ((\xi \otimes 1_e) \otimes (m \otimes 1_{h}) \\ 
        &\mapsto 1 \otimes ((s_\Omega)_*(f \xi) \otimes 1_{h}) \otimes (m \otimes 1_{h})
        \mapsto (s_\Omega)_*(f \xi) \otimes m, 
    \end{align*}
    where we have used Equation~\eqref{eqn:delta}. Hence, this map is equal to $\delta_\Omega\otimes\id_{Q_\bullet}$. 
\end{proof}

We conclude this section with an example setting where we shall utilize Theorem~\ref{thm:LHSfunctorial} and a technical lemma that will be used in our proof of Theorem~\ref{thm:BOShomology}.

\begin{example}[\'etale homomorphisms]
\label{ex:etalehom}
    Let $f \colon \cG \to \cH$ be an \'etale homomorphism. The correspondence $\Omega_f$ associated to $f$ is $\Omega_f = \cG^{(0)} \times_{\cH^{(0)}} \cH$, and the map $\H_*(\cG) \to \H_*(\cH)$ induced by $f$ coincides with $\H_*(\Omega_f)$, see \cite[Example~3.8]{Miller}. Suppose $f$ is equivariant in the sense that $j \circ \varphi = \rho \circ f$. Then, $\Omega_f$ is an equivariant correspondence via
    \[ \tau \colon \Omega_f \to \Lambda,\ (x,\beta) \mapsto \rho(\beta). \]
    Then, we can see that $\Omega_f \times_\tau \Gamma \cong \Omega_{f \times j}$. In particular, the map $\H_*(\Omega_f \times_\tau \Gamma) \colon \H_*(\cG \times_\varphi \Gamma) \to \H_*(\cH \times_\rho \Lambda)$, which gives the map between $E^2$-pages, is induced by the \'etale homomorphism $f \times j$. 
\end{example}

\begin{definition}
\label{def:equiv}
    Let $X$ and $Y$ be compact spaces and suppose we have partial actions $\Gamma\acts X$ and $\Lambda\acts Y$ (in the sense of, e.g., \cite[Definition~2.1]{Li:IMRN}). Let $j \colon \Gamma \to \Lambda$ and $p \colon Y \to X$. We say that $(p,j)$ is \emph{equivariant} if for all $y\in Y$ and $g\in\Gamma$, if $p(y)$ is contained in the domain of $g$, then $y$ is contained in the domain of $j(g)$ and $p(j(g)y)=gp(y)$. 
\end{definition}

\begin{lemma}
\label{lem:technical}
    Let $X$ and $Y$ be compact spaces and suppose we have partial actions $\Gamma\acts X$ and $\Lambda\acts Y$ (in the sense of, e.g., \cite[Definition~2.1]{Li:IMRN}). Assume there is a morphism between short exact sequences of groups
    \begin{equation}
        \begin{tikzcd}
            1\arrow[r] & \ker\bar{\varphi}\arrow[r]\arrow[d] & \Gamma \arrow[r,"\bar{\varphi}"]\arrow[d,"j"] &\Gamma''\arrow[d,"j''"] \arrow[r] &1 \\
            1\arrow[r]& \ker\bar{\rho}\arrow[r] & \Lambda \arrow[r,"\bar{\rho}"] &\Lambda'' \arrow[r] &1\nospacepunct{,}
        \end{tikzcd}
    \end{equation}
    and there is a continuous surjective map $p\colon Y\to X$ 
    such that $(p,j)$ is equivariant in the sense of Definition \ref{def:equiv}. 
Define cocycles
    \[ \varphi\colon \Gamma\ltimes X\to\Gamma \xrightarrow{\bar{\varphi}} \Gamma'' \mbox{ and } \rho\colon\Lambda\ltimes Y\to\Lambda \xrightarrow{\bar{\rho}} \Lambda''.\]
     Assume that there are subgroups $\Gamma' \leq \ker \bar{\varphi}$ and $\Lambda' \leq \ker\bar{\rho}$ such that the restricted actions $\Gamma'\acts X$ and $\Lambda'\acts Y$ are globally defined and 
     \[ \ker \bar{\varphi} \ltimes X = \Gamma'\ltimes X \quad \text{ and }\quad \ker \bar{\rho} \ltimes Y = \Lambda'\ltimes Y.\]
     Suppose we have $j(\Gamma') \subseteq \Lambda'$. 
     
    Let $\Omega = \Lambda \ltimes Y$ be the $(\Gamma'',\Lambda'')$-equivariant correspondence from $\Gamma\ltimes X$ to $\Lambda \ltimes Y$ with canonical right action by $\Lambda \ltimes Y$, left action given by
    $(g,p(hy))(h,y)=(j(g)h,y)$
    for $g \in \Gamma, h \in \Lambda, y \in Y$, and locally constant map given by $\tau=\rho\colon \Omega\to \Lambda''$. 
    Let $\Omega' = \Omega(j,p) \colon \Gamma' \ltimes X \to \Lambda' \ltimes Y$. 

    Then, the following diagram commutes: 
    \begin{equation}
        \begin{tikzcd}
            \H_*((\Gamma\ltimes X)\times_\varphi\Gamma'') \arrow[rr,"\H_*(\Omega\times_\tau\Gamma'')"] & &\H_*((\Lambda\ltimes Y)\times_\rho\Lambda'')\\
            \H_*(\Gamma'\ltimes X) \arrow[rr,"\H_*(\Omega')"]\arrow[u]&& \H_*(\Lambda'\ltimes Y)\nospacepunct{.}\arrow[u]
        \end{tikzcd}
    \end{equation}
\end{lemma}
\begin{proof}
    Let $\iota_1 \colon \Gamma' \ltimes X \to (\Gamma \ltimes X) \times_\varphi \Gamma''$ and $\iota_2 \colon \Lambda' \ltimes Y \to (\Lambda \ltimes Y) \times_\rho \Lambda''$ be the inclusion maps. Let $\Omega_{\iota_1}$ and $\Omega_{\iota_2}$ denote the correspondences associated to the \'etale maps $\iota_1$ and $\iota_2$, respectively. By \cite[Proposition~3.7]{Miller}, it suffices to show $\Omega_{\iota_2} \circ \Omega' = (\Omega \times_\tau \Gamma'') \circ \Omega_{\iota_1}$. We can directly see that the both sides are isomorphic to the correspondence $\Lambda \ltimes Y \colon \Gamma' \ltimes X \to (\Lambda \ltimes Y) \times_\rho \Lambda''$. 
\end{proof}

We shall use the following continuity property for groupoid homology:
\begin{proposition}
\label{prop:continuity}
Let $X=\varprojlim_{k\geq 1} \{X_k;\pi_{k,k-1}\}$ be the projective limit of compact totally disconnected spaces $X_k$, and suppose we have discrete groups $\Gamma_k$ for $k\geq 1$ together with injective group homomorphisms $j_{k,k+1}\colon\Gamma_k\to\Gamma_{k+1}$ such that $\{\Gamma_k;j_{k,k+1}\}$ is a directed system. Let $\Gamma\coloneqq\varinjlim_k\{\Gamma_k; j_{k,k+1}\}$. Moreover, assume that for each $k$ we have a partial action $\Gamma_k\acts X_k$ and $\Gamma\acts X$ a partial action such that the pairs of maps $(\pi_{k+1,k}, j_{k,k+1})$ is equivariant in the sense of Definition~\ref{def:equiv}. Let $\Omega_k \colon \Gamma_k\ltimes X_k \to \Gamma_{k+1}\ltimes X_{k+1}$ be the correspondence from Lemma~\ref{lem:technical}. Then, there is a canonical isomorphism
\[
\H_*(\Gamma\ltimes X)=\varinjlim_k\{\H_*(\Gamma_k\ltimes X_k);\H_*(\Omega_k)\}.
\]
\end{proposition}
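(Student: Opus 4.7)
The plan is to prove this directly at the level of the Matui chain complex computing groupoid homology. Recall that for an ample Hausdorff groupoid $\cG$ with compact unit space, $\H_*(\cG)$ is the homology of the complex $C_n(\cG)=C_c(\cG^{(n)},\Zz)$ with differential given by the alternating sum of face maps, and for a partial action $\Gamma\acts X$ this decomposes as
\[
C_n(\Gamma\ltimes X) \;=\; \bigoplus_{\mathbf{g}\in\Gamma^n} C(D_{\mathbf{g}},\Zz),
\]
where $D_{\mathbf{g}}\subseteq X$ is the clopen subset on which $\mathbf{g}$ is composable.

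First, I would construct the comparison map. Applying Lemma~\ref{lem:technical} with $(\Gamma_{k+1},X_{k+1})$ replaced by $(\Gamma,X)$ and $(\pi_{k+1,k},j_{k,k+1})$ replaced by $(\pi_{\infty,k},j_{k,\infty})$, one obtains correspondences $\Omega_k^\infty\colon \Gamma_k\ltimes X_k\to \Gamma\ltimes X$. A direct check at the level of correspondences yields $\Omega_{k+1}^\infty\circ \Omega_k\cong \Omega_k^\infty$, producing a compatible family of maps $\H_*(\Gamma_k\ltimes X_k)\to \H_*(\Gamma\ltimes X)$ and hence a canonical homomorphism $\Phi$ from the directed colimit.

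To show $\Phi$ is an isomorphism, the key is to identify $C_\bullet(\Gamma\ltimes X)\cong \varinjlim_k C_\bullet(\Gamma_k\ltimes X_k)$ as chain complexes. On indexing sets, $\Gamma^n=\varinjlim_k \Gamma_k^n$. For each $\mathbf{g}\in\Gamma_k^n$, the compatibility condition~\eqref{eqn:technical} gives an increasing sequence of inclusions $\pi_{\infty,l}^{-1}(D_{j_{k,l}(\mathbf{g})}^{(l)})\subseteq D_{\mathbf{g}}$ for $l\geq k$; by compactness of $D_{\mathbf{g}}$ this union stabilizes and realizes $D_{\mathbf{g}}$ as an inverse limit of clopen subsets at finite level. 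Continuity of $C(-,\Zz)$ on inverse limits of compact totally disconnected spaces then yields
\[
C(D_{\mathbf{g}},\Zz) \;\cong\; \varinjlim_{l\geq k} C\bigl(D_{j_{k,l}(\mathbf{g})}^{(l)},\Zz\bigr).
\]
Combining this with the interchange of directed colimits and direct sums produces the required chain-level identification, with boundary maps compatible by inspection since they are just (alternating sums of) face maps.

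The final and main technical step is to check that the structure maps of this chain-level colimit agree with the maps induced by the correspondences $\Omega_k$ from Lemma~\ref{lem:technical}; this reduces to unpacking \cite[Corollary~3.6]{Miller} and matching the explicit chain-level formula against the definition of $\Omega_k=\Gamma_{k+1}\ltimes X_{k+1}$ with its specific left $\Gamma_k\ltimes X_k$-action determined by $(\pi_{k+1,k},j_{k,k+1})$. Once this identification is in place, exactness of directed colimits in the category of chain complexes of abelian groups yields $\H_n(\Gamma\ltimes X)=\varinjlim_k \H_n(\Gamma_k\ltimes X_k)$. The main obstacle is precisely this last verification: it is mechanical but requires careful bookkeeping through the construction of $\H_*(\Omega_k)$ against the face-map differential on the Matui complex.
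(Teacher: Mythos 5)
Your strategy is essentially the paper's: realize the Matui chain complex of $\Gamma\ltimes X$ as a directed colimit of the finite-level complexes, match the connecting maps with $\H_*(\Omega_k)$, and conclude by exactness of filtered colimits of chain complexes. The genuine gap is exactly the step you defer as ``mechanical bookkeeping'': showing that the chain-level structure maps induce $\H_*(\Omega_k)$. This does not come from ``unpacking \cite[Corollary~3.6]{Miller}'' --- that result only produces the induced map abstractly, via a choice of flat/projective resolutions, and gives no formula on the Matui complex for a general proper correspondence. In the paper this verification is the bulk of the proof: one factors $\Omega_k$ as the composite of the correspondence $\cH_k\to\cK$ given by the action groupoid $\cK=\cH_k\ltimes X_{k+1}\cong\Gamma_k\ltimes X_{k+1}$ (using \eqref{eqn:technical} to define the $\cH_k$-action on $X_{k+1}$) with the correspondence $\Omega_\varphi$ of the \'etale homomorphism $\varphi\colon\cK\to\cH_{k+1}$, $(g,y)\mapsto(j_{k,k+1}(g),y)$, and then invokes the explicit chain-level descriptions available for these two special types of correspondences (\cite[Examples~3.8 and~3.9]{Miller}); the composite of the resulting chain maps is $\Phi_k^{\otimes n}$, where $\Phi_k(\xi)=\xi 1_{X_{k+1}}$, which is precisely the connecting map of the chain-level colimit. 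Without such a factorization (or an equivalent computation with compatible resolutions), the matching you need is asserted rather than proved, and it is the heart of the statement.

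A smaller caveat: your identification $C(D_{\mathbf{g}},\Zz)\cong\varinjlim_{l\geq k} C\bigl(D^{(l)}_{j_{k,l}(\mathbf{g})},\Zz\bigr)$ relies on $D_{\mathbf{g}}=\bigcup_{l}\pi_{\infty,l}^{-1}\bigl(D^{(l)}_{j_{k,l}(\mathbf{g})}\bigr)$, and your correspondences $\Omega_k^\infty$ to the limit groupoid require \eqref{eqn:technical} for the pairs $(\pi_{\infty,k},j_{k,\infty})$; the stated hypotheses only relate consecutive finite levels, so this compatibility between the limit partial action and the finite-level ones must be made explicit (it holds in the intended application, and the paper's proof uses it in the same implicit way when writing an element of $\Zz[\Gamma\ltimes X]$ as $\eta 1_{\Gamma\ltimes X}$ with $\eta$ at a finite level). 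With that compatibility spelled out and the chain-level identification of $\H_*(\Omega_k)$ actually carried out, your argument becomes the paper's proof.
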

\begin{proof}
    Let $\cH_k := \Gamma_k\ltimes X_k$ and 
    let $\Phi_k \colon \Zz \cH_k \to \Zz\cH_{k+1}$ be the $\Zz$-module map defined by 
    $\xi \mapsto \xi1_{X_{k+1}}$. Here, the product is defined via the left action $\Zz \cH_k \acts \Zz\cH_{k+1} = \Zz \Omega_k$. Concretely, 
    for $\xi \in \Zz \cH_k$ and $(h,y) \in \cH_{k+1}$, we have 
    \[ (\Phi_k(\xi))(h,y) = \begin{cases}
        \xi(g,\pi_{k+1,k}(y)) & \mbox{if $h=j(g)$ for some $g \in \Gamma_{k}$}, \\
        0 & \mbox{otherwise}.
    \end{cases}\]
    Here, we used equivariance in the sense of Definition~\ref{def:equiv}. 
    Let $C_\bullet^k = \Zz[\cH_k^\bullet] = \Zz[\cH_k] \otimes_{X_k}\dots \otimes_{X_k} \Zz[\cH_k]$  be the chain complex obtained by taking coinvariants of the bar resolution. Then, we have $\H_*(\cH_k)=\H_*(C_\bullet^k)$. We first claim that 
    $\H_*(\Omega_k) \colon \H_*(\cH_k) \to \H_*(\cH_{k+1})$ is induced from the chain map
    \[ \Phi_k^{\otimes n} =\Phi_k \otimes \dots \otimes \Phi_k \colon \Zz[\cH_k^n] \to \Zz[\cH_{k+1}^n]\]
    for each $n$. Fix $k \in \Nz$ and let $j=j_{k,k+1}$, $\pi=\pi_{k+1,k}$. 
    Let $\cK=\cH_k \ltimes X_{k+1}$ be the action groupoid, 
    where the action $\cH_k \acts X_{k+1}$ is defined by 
    \[ (g,\pi(y))\cdot y = j(g)y\]
    for $y \in X_{k+1}$ and $g \in \Gamma_k$. 
    Note that this groupoid action is well defined because of the equivariance condition from Definition~\ref{def:equiv}. We identify $\cK \cong \Gamma_k \ltimes X_{k+1}$,    
    where the partial action $\Gamma_k \acts X_{k+1}$ is defined using $j$, via the isomorphism given by
    \[ \cH_k \ltimes X_{k+1} \to \Gamma_k \ltimes X_{k+1},\quad 
    (g,\pi(y),y) \mapsto (g,y).\]
    Let $\varphi \colon \cK \to \cH_{k+1}$ be the \'etale homomorphism defined by 
    $(g,y) \mapsto (j(g),y)$ for $(g,y) \in \Gamma_k \ltimes X_{k+1}$. 
    Let $\Omega_\varphi$ be the \'etale correspondence associated to $\varphi$. Then, we can see that $\Omega_\varphi \circ \cK = \Omega_k$, where $\cK$ is considered as an \'etale correspondence $\cH_k \to \cK$. By \cite[Example~3.9]{Miller}, $\H_*(\cK)$ is induced from $\tau_n^* \colon \Zz[\cH_k^n] \to \Zz[\cK^n]$, where 
    \[\tau_n \colon \cK^n \to \cH_k^n,\quad( (g_1,j(g_2\cdots g_n) y), \dots, (g_n,y)) \mapsto 
    ((g_1,g_2\cdots g_n \pi(y)), \dots, (g_n,\pi(y))). \]
    On the other hand, by \cite[Example~3.8]{Miller}, $H_*(\Omega_\varphi)$ is induced from 
    $\varphi_*^n \colon \Zz[\cK^n] \to \Zz[\cH_{k+1}^{n}]$ defined by 
    \[\varphi_*^n(f)((h_1,h_2\cdots h_n y), \dots, (h_n,y)) = \begin{cases}
        f((h_1,h_2\cdots h_n y), \dots, (h_n,y)) & \mbox{if $h_i \in j(\Gamma_k)$ for all $i$}, \\
        0 & \mbox{otherwise}.
    \end{cases}\]
    Then, we can see that $\phi_*^n \circ \tau_n^* = \Phi_k^{\otimes n}$, so that $\H_*(\Omega_k)$ is induced from $\Phi_k^{\otimes \bullet}$. Note that $\Phi_k$ is multiplicative, and we can directly see that $\Phi_k^{\otimes \bullet}$ is indeed a well-defined chain map by using this property. 

    Next, we claim that $\Zz\cH=\varinjlim (\Zz\cH_k, \Phi_k)$, where the inductive limit is taken in the category of abelian groups. To see this, it suffices to show that $\Zz\cH$ is spanned by $\{\eta1_\cH : \eta\in \Zz\cH_k\}$. Clearly $\Zz\cH$ is spanned by the functions supported on $\{j_k(g)\} \times X$ for some $k$ and $g \in \Gamma_k$, where $j_k\colon\Gamma_k\to\Gamma$ is the canonical map. Let $\xi \in \Zz[\cH]$ be such a function. Then, there exist $k\in \Nz$, $g \in \Gamma_k$, and $\tilde{\xi} \in \Zz[X]$ such that 
    \[ \xi(h,x) = \begin{cases}
    \tilde{\xi}(x) & \mbox{ if } h = j_k(g), \\
    0 & \mbox{ if } h \neq j_k(g). 
    \end{cases}\]
    Then, there exist $k' \in \Nz$ and $\tilde{\eta} \in X_{k'}$ such that $\tilde{\xi} = \tilde{\eta} \circ \pi_k$. We may assume that $k=k'$.
    Let $\eta \coloneqq 1_{g} \times \tilde{\eta} \in \Zz \cH_k$, 
    and then we have $\xi = \eta 1_\cH$. 

    Let $C_\bullet = \Zz[\cH^\bullet]$. Then, we have $C_\bullet = \varinjlim (C_\bullet^k, \Phi_k^{\otimes \bullet})$ because tensor products preserve inductive limits. Therefore, continuity for homology of chain complexes yields the desired property.

\end{proof}

\section{Groupoid homology for algebraic actions} 
\label{sec:gpdhom}

\subsection{Groupoids from algebraic actions}
\label{sec:algactions}
In this subsection, we recall some ideas from \cite{BruceLi2}. Let $S$ be a nontrivial cancellative reversible monoid. Recall that $S$ is said to be left (resp. right) reversible if $sS\cap tS\neq\emptyset$ (resp. $Ss\cap St\neq\emptyset$) for all $s,t\in S$, and $S$ is reversible if it is both left and right reversible.

Let $A$ a torsion-free abelian group of finite rank $d\in\Zz_{>0}$ (i.e., $A$ is isomorphic to a subgroup of $\Qz^d$ and $A\otimes_\Zz\Qz\cong \Qz^d$). Let $\sigma\colon S\acts A$ be a faithful algebraic action in the sense of \cite[Definition~2.1]{BruceLi2}, i.e., $\sigma\colon S\to \End_\Zz(A)$ is an injective (unital) monoid homomorphism such that $\sigma_s$ is injective for all $s$.

Since $S$ is right reversible, we can form the group of quotients $\S\coloneqq S^{-1}S$, which is unique up to canonical isomorphism. Moreover, since $S$ is reversible, it is directed with respect to the partial orders defined by $s\mid_r t$ if $t\in Ss$ and $s\mid_l t$ if $t\in sS$ (see \cite[Section~2.3]{CEL} for a discussion of this). 

\begin{remark}
    The partial order $\mid_r$ is used when one take inductive limits over $S$ and the partial order $\mid_l$ is used when one takes projective limits over $S$.
\end{remark}

The action of $S$ on $A$ generates a semilattice of subgroups: Let
\begin{equation*}
    \cC\coloneqq\{\sigma_{s_1}^{-1}\sigma_{t_1}\cdots\sigma_{s_m}^{-1}\sigma_{t_m}A : s_i,t_i\in S, m\in\Zz_{>0}\},
\end{equation*}
where, given $s\in S$ and a subset $X\subseteq A$, we write $\sigma_s^{-1}X\coloneqq\{y\in A : \sigma_s(y)\in X\}$ for the set-theoretic inverse image of $X$ under $\sigma_s$.
Members of $\cC$ are called \emph{$S$-constructible subgroups}, see \cite[Definition~3.6]{BruceLi2}. By \cite[Proposition~3.9]{BruceLi2}, $\cC$ is a closed under taking finite intersections. 

\begin{example}
\label{ex:RxactsR}
Let $R$ be the ring of integers in a number field $K$, and let $R^\times\coloneqq R\setminus\{0\}$ be the multiplicative monoid of $R$. For the multiplication action $R^\times\acts R$, the semilattice $\cC$ is the family of all nonzero ideals of $R$.
\end{example}

We shall assume that our action $\sigma\colon S\acts A$ is exact in the sense of \cite[Definition~4.11]{BruceLi2}, i.e., $\bigcap_{C\in\cC}C=\{0\}$. Since $A$ is torsion-free and of finite rank, it satisfies the finite index property from \cite[Definition~7.1]{BruceLi2}, i.e., $[A:\sigma_sA]<\infty$ for all $s\in S$ (see, e.g., \cite[Exercise~92.5]{Fuchs2}). By \cite[Proposition~7.2]{BruceLi2}, this implies that $[A:C]<\infty$ for all $C\in\cC$, so that $\ol{A}\coloneqq\varprojlim_{C\in\cC}A/C$ is a compact abelian group. Since $S$ is left reversible, the family $\{\sigma_sA : s\in S\}$ is  cofinal in $\cC$, so that $\ol{A}\cong \varprojlim_{s\in S}A/\sigma_sA$. The canonical map $A\to\ol{A}$ is injective by exactness of $\sigma\colon S\acts A$, and we shall identify $A$ with its image in $\ol{A}$. 
Each $\sigma_s$ uniquely extends to a continuous injective group endomorphism $\ol{A}\to\ol{A}$, which we also denote by $\sigma_s$.

Let $\ol{\A}\coloneqq\varinjlim_{s\in S}\{\ol{A};\sigma_s\}$. Then, $\ol{\A}$ is a locally compact abelian group. For each $s\in S$, let $\iota_s\colon \ol{A}\to\ol{\A}$ be the canonical (continuous, injective) map associated with the direct limit, so that $\ol{\A}=\bigcup_{s\in S}\iota_s(\ol{A})$. The maps $\iota_s$ satisfy the following: If $s\mid_r t$, i.e., $t=rs$ for some $r\in S$, then the following diagram commutes:
\begin{equation*}
    \begin{tikzcd}
       \ol{A}\arrow{dr}{\iota_s}\arrow{rr}{\sigma_r} &  &\ol{A}\arrow{dl}{\iota_t} \\
       & \ol{\A}\nospacepunct{.} &
    \end{tikzcd}
\end{equation*}
It is easy to see that $\iota_1(\ol{A})$ is a compact open subgroup of $\ol{\A}$. We shall identify $\ol{A}$ with $\iota_1(\ol{A})$.

Define $\A\coloneqq\varinjlim_{s\in S}\{A;\sigma_s\}$. There is a canonical injective homomorphism $\A\to \ol{\A}$ with dense image equal to $\bigcup_{s\in S}\iota_s(A)$. 
We shall identify $\A$ with its dense image in $\ol{\A}$. By universality of inductive limits, the action $\sigma\colon S\acts\ol{A}$ extends to an action $S\acts\ol{\A}$ by automorphisms. Since $\S$ is the group of quotients of $S$, this gives rise to an action $\tilde{\sigma}\colon\S\acts\ol{\A}$ by automorphisms. This action leaves $\A$ invariant, and restricts to an action by automorphisms $\S\acts\A$, which we also denote by $\tilde{\sigma}$. Note that $\tilde{\sigma}\colon \S\acts\A$ is a globalization of $\sigma\colon S\acts A$ in the sense of \cite[Definition~2.3]{BruceLi2} and $\A=\bigcup_{s\in S}\tilde{\sigma}_s^{-1}(A)$.

Let $A\rtimes S$ be the semi-direct product monoid associated with the action $\sigma\colon S\acts A$. Then, there is a canonical action $A\rtimes S\acts \ol{A}$, which extends to a partial action $\A\rtimes\S\acts\ol{A}$ satisfying $(a,s).x=a+\sigma_s(x)$ for all $x\in A$ and $(a,s)\in A\rtimes S\subseteq \A\rtimes\S$ (cf. \cite[Section~3]{BruceLi2}). Since $A\rtimes S$ generates the group $\A\rtimes\S$, the partial action $\A\rtimes\S\acts\ol{A}$ is characterized by the action of the submonoid $A\rtimes S$.

\begin{definition}[cf. {\cite[Definition~3.29]{BruceLi2}}]
    The groupoid attached to the algebraic action $\sigma\colon S\acts A$ is the partial transformation groupoid $\cG_{S\acts A}\coloneqq(\A\rtimes\S)\ltimes\ol{A}$.
\end{definition}

\begin{example}
Let $S=R^\times\acts A=R$ be the algebraic action from Example~\ref{ex:RxactsR}. Then, $\ol{A}$ is simply the profinite completion of $R$, which can be canonically identified with the compact ring $\ol{R}$ of integral adelels over $K$, and $\ol{\A}$ can be identified with $\Az_{K,f}$, the locally compact ring of finite adeles over $K$. Further, $\S=K^*$ is the multiplicative group of the field $K$, and $\A=K$ (viewed as an additive group). The action $K\rtimes K^*\acts \ol{R}$ is simply the restriction of the canonical action $K\rtimes K^*\acts \Az_{K,f}$, and the groupoid $\cG_{R^\times\acts R}$ is the groupoid model for the ring C*-algebra of $R$ (cf. \cite[Section~7.2.2]{BruceLi2}).
\end{example}

Before we turn to groupoid homology, we need to make some observations.
\begin{remark}
\label{rmk:full}
Since $\ol{\A}=\bigcup_{s\in S}\iota_s(\ol{A})$, the compact open subset $\ol{A}\subseteq \ol{\A}$ is full for the groupoid $(\A\rtimes\S)\ltimes\ol{\A}$, i.e., $(\A\rtimes\S).\ol{A}=\ol{\A}$. 
\end{remark}

We now turn to fullness of $\ol{A}$ viewed as a subset of (the unit space of) the groupoid $\A\ltimes\ol{\A}$.

\begin{lemma}
\label{lem:scrAcapbarA=A}
We have $\A\cap\ol{A}=A$ (intersection taken inside $\ol{\A}$).
\end{lemma}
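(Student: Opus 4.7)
The containment $A \subseteq \A \cap \ol{A}$ is immediate: under the identifications coming from $\iota_1$, the subgroup $A$ sits inside both $\ol{A}$ and $\A$ inside $\ol{\A}$. So the whole task is the reverse containment, and my plan is to exploit the characterization $\A = \bigcup_{s \in S} \tilde{\sigma}_s^{-1}(A)$ recalled just above the lemma.

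Let $x \in \A \cap \ol{A}$. Then there exist $s \in S$ and $a \in A$ with $\tilde{\sigma}_s(x) = a$. Since $x \in \ol{A}$ and $\tilde{\sigma}_s$ extends $\sigma_s \colon \ol{A} \to \ol{A}$, this equality takes place in $\ol{A}$, i.e.\ $\sigma_s(x) = a$. The goal is therefore to upgrade the information ``$\sigma_s(x) \in A$'' to ``$x \in A$''.

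For this I plan to use the canonical isomorphism
\[
\ol{A}/\sigma_s(\ol{A}) \;\cong\; A/\sigma_s A,
\]
induced by the inclusion $A \hookrightarrow \ol{A}$. This follows from the definition $\ol{A} = \varprojlim_{s \in S} A/\sigma_s A$ together with the fact that $\sigma_s(\ol{A})$ is the closure of $\sigma_s A$ in $\ol{A}$ (using continuity of $\sigma_s$, density of $A$, and compactness of $\sigma_s(\ol{A})$). Granted this, $\sigma_s(x) \in \sigma_s(\ol{A})$ maps to $0$ in $\ol{A}/\sigma_s(\ol{A}) \cong A/\sigma_s A$; but $\sigma_s(x) = a \in A$, so $a \in \sigma_s A$. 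Writing $a = \sigma_s(a')$ with $a' \in A$ and using injectivity of $\sigma_s$ on $\ol{A}$ yields $x = a' \in A$.

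The only nonroutine step is the identification $\ol{A}/\sigma_s(\ol{A}) \cong A/\sigma_s A$, which is where the finite-index property $[A:\sigma_s A] < \infty$ and the projective limit structure of $\ol{A}$ are genuinely used; everything else is formal manipulation of the already-established identifications $A \subseteq \ol{A} \subseteq \ol{\A}$ and $\A = \bigcup_s \tilde{\sigma}_s^{-1}(A)$.
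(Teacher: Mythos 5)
Your proof is correct and follows essentially the same route as the paper's: both start from $\sigma_s(x)=a\in A$ for some $s\in S$, use that $\sigma_s(\ol{A})$ is the closure of $\sigma_sA$, deduce $a\in\sigma_sA$, and conclude by injectivity of $\sigma_s$ on $\ol{A}$. Your identification $\ol{A}/\sigma_s(\ol{A})\cong A/\sigma_sA$ is just a repackaging of the paper's key equality $A\cap\overline{\sigma_sA}=\sigma_sA$, so there is no substantive difference.
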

\begin{proof}
Clearly, we have $A\subseteq \A\cap\ol{A}$. Let $a\in \A\cap\ol{A}$. Then, there exists $r\in S$ and $b\in A$ such that $a=\tilde{\sigma}_r^{-1}(b)$. Hence, $b=\sigma_r(a)\in\sigma_r\ol{A}=\overline{\sigma_rA}$ (where $\overline{\sigma_rA}$ denotes the closure of $\sigma_rA$). Thus, $b\in A\cap \overline{\sigma_rA}=\sigma_rA$. Now we can find $c\in A$ such that $b=\sigma_r(c)$, which gives $a=c\in A$.
\end{proof}

\begin{lemma}
\label{lem:scrA+barA=barscrA}
We have $\A+\ol{A}=\ol{\A}$ (sum taken inside $\ol{\A}$).
\end{lemma}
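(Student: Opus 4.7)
My plan is to exploit the two descriptions $\ol{\A}=\bigcup_{s\in S}\iota_s(\ol{A})$ and $\A=\bigcup_{s\in S}\tilde\sigma_s^{-1}(A)$ together with the finite index property, reducing everything to a statement about $\ol{A}$ alone.

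The first step is to identify $\iota_s(\ol{A})$ concretely as $\tilde\sigma_s^{-1}(\ol{A})$ inside $\ol{\A}$. This follows from the compatibility $\iota_t\circ\sigma_r=\iota_s$ for $t=rs$ applied with $s=1$: for $y\in\ol{A}=\iota_1(\ol{A})$ one gets $\tilde\sigma_s(\iota_s(y))=\iota_1(y)=y$, so $\iota_s(y)=\tilde\sigma_s^{-1}(y)$. Hence $\ol{\A}=\bigcup_{s\in S}\tilde\sigma_s^{-1}(\ol{A})$.

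The second step, which is the main content, is to show that for each $s\in S$ we have
\[
\ol{A}=A+\sigma_s(\ol{A}).
\]
Here I use that $\{\sigma_tA : s\mid_l t\}$ is cofinal in $\cC$, so $\ol{A}=\varprojlim_{t\in S}A/\sigma_tA$ admits a canonical projection $\pi_s\colon\ol{A}\twoheadrightarrow A/\sigma_sA$; this projection has kernel exactly the closed subgroup $\sigma_s(\ol{A})$ (since for $t=us$ the transition map $A/\sigma_tA\to A/\sigma_sA$ factors through the quotient killing $\sigma_s$), and the composition $A\hookrightarrow\ol{A}\xrightarrow{\pi_s} A/\sigma_sA$ is just the canonical surjection. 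Therefore $A$ surjects onto $\ol{A}/\sigma_s(\ol{A})$, proving the displayed equality. The verification that $\ker(\pi_s)=\sigma_s(\ol{A})$ is the only place requiring care; it uses the finite-index property (so the quotients are finite and $\sigma_s(\ol{A})$ is automatically closed) and the compatibility of $\sigma_s$ with the projective system.

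For the final step, given $z\in\ol{\A}$, pick $s\in S$ with $z\in\tilde\sigma_s^{-1}(\ol{A})$, set $w\coloneqq\tilde\sigma_s(z)\in\ol{A}$, and apply step two to write $w=a+\sigma_s(y)$ with $a\in A$ and $y\in\ol{A}$. Then
\[
z=\tilde\sigma_s^{-1}(a)+y,
\]
and $\tilde\sigma_s^{-1}(a)\in\tilde\sigma_s^{-1}(A)\subseteq\A$ while $y\in\ol{A}$, giving $z\in\A+\ol{A}$ as required. The reverse inclusion is trivial, so $\A+\ol{A}=\ol{\A}$.
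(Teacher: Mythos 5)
Your proof is correct, but it takes a genuinely different, more hands-on route than the paper. The paper argues softly: $\A$ is dense in $\ol{\A}$ and $\ol{A}$ is compact open, so $\ol{\A}/\ol{A}$ is discrete and the composite $\A\to\ol{\A}\to\ol{\A}/\ol{A}$ has dense, hence full, image, which is precisely the statement $\A+\ol{A}=\ol{\A}$ (the preceding lemma is only used to upgrade this to the isomorphism $\A/A\cong\ol{\A}/\ol{A}$). You instead unwind the limit structures: identify $\iota_s(\ol{A})=\tilde{\sigma}_s^{-1}(\ol{A})$, prove $\ol{A}=A+\sigma_s(\ol{A})$ via $\ker\pi_s=\sigma_s(\ol{A})$, and assemble. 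Your route yields slightly more --- an explicit decomposition $z=\tilde{\sigma}_s^{-1}(a)+y$ and the intermediate fact $\ol{A}=A+\sigma_s(\ol{A})$ --- but at the price of the kernel identification, which you only sketch and which is exactly the inclusion you need ($\ker\pi_s\subseteq\sigma_s(\ol{A})$; the reverse inclusion $\sigma_s(\ol{A})=\overline{\sigma_sA}\subseteq\ker\pi_s$ is the easy half). A clean way to finish it: over the cofinal set $\{su:u\in S\}$ one has $\ker\pi_s\cong\varprojlim_u\sigma_sA/\sigma_{su}A$, and $\sigma_s$ induces compatible isomorphisms $A/\sigma_uA\cong\sigma_sA/\sigma_{su}A$, so $\sigma_s(\ol{A})=\ker\pi_s$; injectivity of $\sigma_s$ and left reversibility (cofinality of $sS$ in $(S,\mid_l)$) are what make this work. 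Two minor cautions in the general, possibly noncommutative, setting of Section~\ref{sec:gpdhom}: the projective system is ordered by $\mid_l$, so the relevant indices are $t=su$ rather than $t=us$; and the identity $\tilde{\sigma}_s\circ\iota_s=\iota_1$ underlying your first step rests on how the extended action is defined on the inductive limit (immediate when $S$ is abelian, as in all the paper's applications, but worth a word in general). Neither issue affects the correctness of your approach.
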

\begin{proof}
Consider the inclusion $\A/A\to\ol{\A}/\ol{A}$ induced from the inclusion $\A\to\ol{\A}$. It has dense image because $\A$ is dense in $\ol{\A}$. Since $\ol{A}$ is compact open in $\ol{\A}$, the quotient $\ol{\A}/\ol{A}$ is discrete. Hence, $\A/A\to\ol{\A}/\ol{A}$ is surjective; this map injective by Lemma~\ref{lem:scrAcapbarA=A} and is thus an isomorphism. The claim now follows.
\end{proof}

\begin{proposition}
\label{prop:equivAdditive}
The inclusion $A\ltimes\ol{A}\hookrightarrow\A\ltimes\ol{\A}$ is an equivalence of groupoids. Consequently, the canonical map $\H_*(\incl,\incl)\colon \H_*(A,\Zz\ol{A})\to \H_*(\A,\Zz\ol{\A})$ is an isomophism.
\end{proposition}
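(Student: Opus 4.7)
The plan is to establish the groupoid equivalence by verifying the two standard criteria, and then derive the homological consequence by a Shapiro-type argument that makes the stated canonical map transparent.

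For the groupoid equivalence, the inclusion $A\ltimes\ol{A}\hookrightarrow\A\ltimes\ol{\A}$ is a clopen inclusion of \'etale groupoids, so it suffices to check (i) that $\ol{A}\subseteq\ol{\A}$ is full for the $\A\ltimes\ol{\A}$-action, and (ii) that the restriction $(\A\ltimes\ol{\A})|_{\ol{A}}$ agrees with the image of $A\ltimes\ol{A}$. Condition (i) is exactly the content of Lemma~\ref{lem:scrA+barA=barscrA}, since the orbit of $\ol{A}$ under the translation action of $\A$ is $\A+\ol{A}=\ol{\A}$. For (ii), take $(a,x)\in \A\ltimes\ol{\A}$ with $x\in\ol{A}$ and $a+x\in\ol{A}$. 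Because $\ol{A}$ is a (compact open) subgroup of $\ol{\A}$, we get $a=(a+x)-x\in\ol{A}$; combined with $a\in\A$, Lemma~\ref{lem:scrAcapbarA=A} forces $a\in A$. Hence the restricted groupoid coincides with $A\ltimes\ol{A}$, and the inclusion is a groupoid equivalence in the usual sense.

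For the homological consequence, I would use that the $\A$-set structure of $\ol{\A}$ decomposes along cosets. Lemmas~\ref{lem:scrAcapbarA=A} and \ref{lem:scrA+barA=barscrA} together give a canonical isomorphism of $\A$-sets $\A/A\cong \ol{\A}/\ol{A}$, so for any section $c\mapsto a_c$ of $\A\to\A/A$ one obtains a disjoint-union decomposition $\ol{\A}=\bigsqcup_{c\in\A/A}(a_c+\ol{A})$. Extension by zero then yields an isomorphism of $\A$-modules
\[
\Zz\ol{\A}\;\cong\;\bigoplus_{c\in\A/A}a_c\cdot \Zz\ol{A}\;\cong\;\Ind_A^\A\,\Zz\ol{A}.
\]
Applying Shapiro's lemma gives $\H_*(A,\Zz\ol{A})\cong \H_*(\A,\Ind_A^\A\Zz\ol{A})\cong \H_*(\A,\Zz\ol{\A})$, and because the Shapiro isomorphism is induced by the pair consisting of the subgroup inclusion $A\hookrightarrow\A$ and the canonical inclusion $\Zz\ol{A}\hookrightarrow\Ind_A^\A\Zz\ol{A}=\Zz\ol{\A}$, it coincides by naturality with $\H_*(\incl,\incl)$.

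The main obstacle is ensuring that the isomorphism produced by the proof actually matches the canonical functorial map named in the statement, rather than some auxiliary isomorphism coming from an abstract Morita equivalence. Going through Shapiro's lemma rather than passing to groupoid homology and back via (an extension to the locally compact case of) Proposition~\ref{prop:isomfunctors} bypasses this issue cleanly, since every arrow in the Shapiro argument is built directly from the two inclusion maps. As a sanity check, one can alternatively proceed by invoking that any groupoid equivalence induces an isomorphism in groupoid homology via the equivalence correspondence and then identifying each side of $\H_*(A\ltimes\ol{A})\cong\H_*(\A\ltimes\ol{\A})$ with the corresponding group homology; the two approaches agree.
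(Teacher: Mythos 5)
Your argument for the first claim is the same as the paper's: fullness of $\ol{A}$ comes from Lemma~\ref{lem:scrA+barA=barscrA}, and your computation that $(\A\ltimes\ol{\A})_{\ol{A}}^{\ol{A}}=A\ltimes\ol{A}$ (using that $\ol{A}$ is a subgroup together with Lemma~\ref{lem:scrAcapbarA=A}) is exactly how the paper invokes that lemma. For the homological consequence, however, you take a genuinely different route. The paper deduces it from the groupoid equivalence: it identifies $\H_*(A,\Zz\ol{A})$ and $\H_*(\A,\Zz\ol{\A})$ with the homologies of the transformation groupoids via Proposition~\ref{prop:isomfunctors} (with the locally compact case handled by the cited result of Farsi--Kumjian--Pask--Sims) and then applies Matui's theorem that restriction to a full clopen subset induces an isomorphism in groupoid homology. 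You instead stay entirely in group homology: the coset decomposition $\ol{\A}=\bigsqcup_{c\in\A/A}(a_c+\ol{A})$, which follows from the same two lemmas (indeed $\A/A\cong\ol{\A}/\ol{A}$ is precisely how Lemma~\ref{lem:scrA+barA=barscrA} is proved), gives $\Zz\ol{\A}\cong\Ind_A^{\A}\Zz\ol{A}$ as $\A$-modules, and Shapiro's lemma--whose isomorphism is by construction induced by the pair of inclusions--yields $\H_*(\incl,\incl)$ being an isomorphism directly. Both arguments are correct; yours has the advantage of sidestepping the passage to groupoid homology (and hence the compact-versus-locally-compact issue in Proposition~\ref{prop:isomfunctors}) and of making it immediate that the isomorphism obtained is the canonical map named in the statement, while the paper's route fits the proposition into the correspondence/groupoid-homology machinery it uses throughout and makes the second claim a formal consequence of the first. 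Note also that the Shapiro-style mechanism you use is consonant with how the paper later analyzes $\H_*(A,\Zz\ol{A})$ via the modules $M_s$, so your proof is very much in the spirit of the surrounding section even though it differs from the proof given for this particular proposition.
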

\begin{proof}
By Lemma~\ref{lem:scrA+barA=barscrA}, $\ol{A}$ is a full subset of the unit space of $\A\ltimes\ol{\A}$. Thus, it suffices to show that $A\ltimes\ol{A}=(\A\ltimes\ol{\A})_{\ol{A}}^{\ol{A}}$, which follows from Lemma~\ref{lem:scrAcapbarA=A}. The second claim follows from Proposition~\ref{prop:isomfunctors} combined with \cite[Proposition~3.5]{Mat12} (see also \cite[Lemma~4.3.]{FKPS}).
\end{proof}

\subsection{Homology computations}

We want to compute the groupoid homology $\H_*(\cG_{S\acts A})$, which is canonically identified with the group homology $\H_*(\A\rtimes \S,\Zz\ol{\A})$ (see Proposition~\ref{prop:isomfunctors}). The general strategy is to compute $\H_q(\A,\Zz\ol{\A})$ and $\H_p(\S,\H_q(\A,\Zz\ol{\A}))$ and then use the Lyndon--Hochschild--Serre spectral sequence (see \cite[Theorem~VII.6.3]{Brown}). Here, the $\S$-module structure on $\H_q(\A,\Zz\ol{\A})$ is defined as follows (see \cite[Section~III.8]{Brown}): for $s\in \S$ let $\alpha_s\colon \Zz\ol{\A}\to\Zz\ol{\A}$ by $\alpha_s(f)\coloneqq f\circ \tilde{\sigma}_s^{-1}$; then, 
\begin{equation*}
    \S\ni s\mapsto \H_*(\tilde{\sigma}_s,\alpha_s)\in\Aut_\Zz(\H_q(\A,\Zz\ol{\A})).
\end{equation*}

Let $\Iz\coloneqq\{[A:C] : C\in\cC\}$, and let $\Zz[\Iz^{-1}]$ be the unital subring of $\Qz$ generated by $\{\frac{1}{k} : k\mid n\text{ for some }n\in\Iz\}$. Note that since $S$ is reversible, it suffices to invert the indices $[A:\sigma_sA]$ for $s\in S$. 
By \cite[Lemma~3.21]{BruceLi3}, the canonical map $A\otimes \Zz[\Iz^{-1}]\to \A\otimes \Zz[\Iz^{-1}]$ is an isomorphism.

The main result of this subsection is the following.
\begin{theorem}
\label{thm1}
For every $q\geq 0$, there is a canonical embedding of $\Upsilon \colon \H_q(\A,\Zz\ol{\A}) \to (\lwedge^qA)\otimes\Zz[\Iz^{-1}]$ such that 
\begin{equation*}
\lwedge^qA\leq \Upsilon(\H_q(\A,\Zz\ol{\A}))\leq (\lwedge^qA)\otimes\Zz[\Iz^{-1}].
\end{equation*}
In particular, we have a canonical isomorphism 
\[ \Upsilon \otimes \id_{\Zz[\Iz^{-1}]} \colon \H_q(\A,\Zz\ol{\A}) \otimes \Zz[\Iz^{-1}]\xrightarrow{\cong} (\lwedge^qA)\otimes\Zz[\Iz^{-1}].\]
Under this identification, the action $\theta\colon \S\acts \H_q(\A,\Zz\ol{\A})$ is characterized by the property that 
\begin{equation*}
(\Upsilon \otimes \id_{\Zz[\Iz^{-1}]})\circ (\theta_s\otimes\id_{\Zz[\Iz^{-1}]})=\left( \lwedge^q(\sigma_s)\otimes\frac{1}{[A:\sigma_sA]}\right) \circ (\Upsilon \otimes \id_{\Zz[\Iz^{-1}]})\quad\text{ for all }s\in S.
\end{equation*}
\end{theorem}
Note that $\lwedge^qA=0$ for $q>d$.

For each $s\in S$, let $M_s := \Zz[A/\sigma_sA]$. The projection map $\pi_s\colon \ol{A}\to A/\sigma_sA$ induces an inclusion map $\pi_s^*\colon M_s\to\Zz \ol{A}$ by $\pi_s^*(f)=f\circ\pi_s$. We shall often identify $M_s$ with its image in $\Zz \ol{A}$.
For $s,t\in S$ with $s\mid_l t$, let $\pi_{s,t}\colon A/\sigma_tA\to A/\sigma_sA$ be the canonical projection map, and let $\pi_{s,t}^*\colon M_s\to M_t$ by $\pi_{s,t}^*(f)\coloneqq f\circ\pi_{s,t}$. Then, 
\begin{equation}
 \label{eqn:lim1}   
\H_*(A,\Zz\ol{A})\cong \varinjlim_{s\in S}\{\H_*(A,M_s);\H_*(\id_A,\pi_{s,t}^*)\}.
\end{equation}
For $s\in S$, the canonical map $\H_*(A,M_s)\to \H_*(A,\Zz\ol{A})$ associated with the direct limit decomposition in \eqref{eqn:lim1} is given by $\H_*(\id_A,\pi_s^*)$. In order to understand the limit in \eqref{eqn:lim1} better, we shall use Shapiro's lemma as is done in a more general setting in \cite{Scar}.

For each $s\in S$, let $\ind_s\colon \Zz\to \Zz\ol{A}$ by $\ind_s(1)=1_{\sigma_s\ol{A}}$. Note that the image of $\ind_s$ is contained in $M_s$. 
Given $s,t\in S$ with $s\mid_l t$, we also let $\rho_{s,t}\colon \H_*(A)\to \H_*(A)$ be the unique group homomorphism such that the following diagram commutes:
\begin{equation}
\label{eqn:rhodef}
    \begin{tikzcd}
    \H_*(A)\arrow[rr,"\rho_{s,t}"] \arrow{d}[left]{\cong}[right]{\H_*(\sigma_s \vert^{\sigma_sA},\id)} & &\H_*(A)\arrow{d}[left]{\cong}[right]{\H_*(\sigma_t\vert^{\sigma_tA},\id)}\\
        \H_*(\sigma_sA) \arrow{d}{\H_*(\incl,\ind_s\vert^{M_s})} & &\H_*(\sigma_tA)\arrow{d}{\H_*(\incl,\ind_t\vert^{M_t})} \\
        \H_*(A,M_s)\arrow{rr}{\H_*(\id,\pi_{s,t}^*)} & & \H_*(A,M_t).
    \end{tikzcd}
\end{equation}
Here, the vertical maps in the bottom half are the isomorphisms from Shapiro's lemma (see, e.g., \cite[Proposition~III.6.2~and~Exercise~2~of~Section~III.8]{Brown}).
Then, there is a canonical isomorphism
\begin{equation}
\label{eqn:isomoflimits}
    \H_*(A,\Zz\ol{A})\cong\varinjlim_{s\in S}\{\H_*(A);\rho_{s,t}\}.
\end{equation}

For each $s\in S$, the canonical map $\rho_{s,\infty}\colon \H_*(A)\to \H_*(A,\Zz\ol{A})$ arising from the direct limit decomposition in \eqref{eqn:isomoflimits} is given by 
\begin{equation}
\label{eqn:rhoinfty}
    \rho_{s,\infty}=\H_*(\id,\pi_s^*)\circ\H_*(\incl,\ind_s\vert^{M_s})\circ\H_*(\sigma_s\vert^{\sigma_sA},\id)=\H_*(\sigma_s,\ind_s).
\end{equation}

\begin{lemma}
\label{lem:rho}
For all $s\in S$, we have $\H_*(\sigma_s,\id)\circ\rho_{1,s}=[A:\sigma_sA]\id_{\H_*(A)}$. 
\end{lemma}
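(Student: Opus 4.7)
The plan is to unpack the definition of $\rho_{1,s}$ from \eqref{eqn:rhodef} and exploit the fact that $\H_*(\incl,\ind_s|^{M_s})$ is Shapiro's isomorphism. Reading off \eqref{eqn:rhodef} with the indices $(s,t)$ set to $(1,s)$, and noting that $\sigma_1 = \id_A$, $M_1 = \Zz$, and $\ind_1|^{M_1} = \id_\Zz$ make the left-hand column of the diagram the identity, commutativity collapses to
\begin{equation*}
\H_*(\incl,\ind_s|^{M_s}) \circ \H_*(\sigma_s|^{\sigma_sA},\id) \circ \rho_{1,s} = \H_*(\id_A,\pi_{1,s}^*).
\end{equation*}

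Since $M_s = \Zz[A/\sigma_sA] = \Ind_{\sigma_sA}^A \Zz$, the map $\H_*(\incl,\ind_s|^{M_s})\colon \H_*(\sigma_sA,\Zz) \to \H_*(A,M_s)$ is Shapiro's isomorphism. Inverting it, composing with $\H_*(\incl,\id)$, and using the factorisation $\sigma_s = \incl \circ \sigma_s|^{\sigma_sA}$ together with the functoriality of group homology in the pair $(j,\pi)$ will yield
\begin{equation*}
\H_*(\sigma_s,\id) \circ \rho_{1,s} = \H_*(\incl,\id) \circ \H_*(\incl,\ind_s|^{M_s})^{-1} \circ \H_*(\id_A,\pi_{1,s}^*).
\end{equation*}

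The principal step---and the closest thing to an obstacle---is the identification of $\H_*(\incl,\id) \circ \H_*(\incl,\ind_s|^{M_s})^{-1}\colon \H_*(A,M_s) \to \H_*(A,\Zz)$ with $\H_*(\id_A,\textup{aug}_s)$, where $\textup{aug}_s\colon M_s \to \Zz$ is the augmentation sending every basis element $[a]$ to $1$. I will verify this at the chain level: take a free $\Zz A$-resolution $F_\bullet \to \Zz$, which is automatically a free $\Zz[\sigma_sA]$-resolution since $[A:\sigma_sA] < \infty$. The canonical isomorphism $F_\bullet \otimes_A M_s \cong F_\bullet \otimes_{\sigma_sA} \Zz$ implementing Shapiro's lemma intertwines the quotient map $F_\bullet \otimes_{\sigma_sA} \Zz \to F_\bullet \otimes_A \Zz$, which induces $\H_*(\incl,\id)$, with $\id_{F_\bullet} \otimes \textup{aug}_s$. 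This is the standard compatibility between Shapiro's isomorphism and corestriction in group homology, and is essentially the only non-formal ingredient in the argument.

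Finally, since $\pi_{1,s}^*(1) = \sum_{a \in A/\sigma_sA}[a]$ and $\textup{aug}_s$ sums coefficients, we have $\textup{aug}_s \circ \pi_{1,s}^* = [A:\sigma_sA] \cdot \id_\Zz$. Combining the two displays above then gives
\begin{equation*}
\H_*(\sigma_s,\id) \circ \rho_{1,s} = \H_*(\id_A,\textup{aug}_s \circ \pi_{1,s}^*) = [A:\sigma_sA] \cdot \id_{\H_*(A)},
\end{equation*}
which is the claim.
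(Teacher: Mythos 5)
Your argument is correct, and it shares the paper's skeleton: both read off the defining diagram \eqref{eqn:rhodef} at $(1,s)$ to get $\H_*(\incl,\ind_s\vert^{M_s})\circ\H_*(\sigma_s\vert^{\sigma_sA},\id)\circ\rho_{1,s}=\H_*(\id,\pi_{1,s}^*)$ and then cancel the Shapiro isomorphism. Where you diverge is in how the index appears. The paper brings in the transfer map $\res^A_{\sigma_sA}$, quoting from the proof of \cite[Proposition~2.4]{Scar} the identity $\H_*(\id,\pi_{1,s}^*)=\H_*(\incl,\ind_s\vert^{M_s})\circ\res^A_{\sigma_sA}$, and then applies corestriction together with $\ccor\circ\res=[A:\sigma_sA]\,\id$ from \cite[Proposition~III.9.5(ii)]{Brown}. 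You bypass the transfer entirely: you factor $\H_*(\sigma_s,\id)$ through the inclusion $\sigma_sA\hookrightarrow A$, identify corestriction under Shapiro with the map induced by the augmentation $\textup{aug}_s\colon M_s\to\Zz$, and finish with the elementary count $\textup{aug}_s\circ\pi_{1,s}^*=[A:\sigma_sA]\,\id_\Zz$. In effect you re-derive the special (trivial-coefficient) case of $\ccor\circ\res=[G:H]$ by hand, which makes the proof more self-contained at the cost of a little extra verification. Two small remarks: your ``only non-formal ingredient'' is in fact formal---since $\textup{aug}_s\circ\ind_s=\id_\Zz$ and $A$-equivariance holds, functoriality of $\H_*(j,\pi)$ for composable pairs gives $\H_*(\id_A,\textup{aug}_s)\circ\H_*(\incl,\ind_s\vert^{M_s})=\H_*(\incl,\id)$ directly, so the chain-level check (while correct) is not needed; and the freeness of a free $\Zz A$-module over $\Zz[\sigma_sA]$ holds for any subgroup, so the appeal to $[A:\sigma_sA]<\infty$ there is superfluous (the finite index is only needed to make $\textup{aug}_s\circ\pi_{1,s}^*$ well defined as multiplication by the index).
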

\begin{proof}
Let $\res^{A}_{\sigma_sA}\colon\H_*(A)\to\H_*(\sigma_sA)$ be the transfer map (see, e.g., \cite[Section~III.9]{Brown}). By \eqref{eqn:rhodef}, we have 
\begin{equation*}   \H_*(\incl,\ind_s\vert^{M_s})\circ\H_*(\sigma_s\vert^{\sigma_sA},\id)\circ\rho_{1,s}=\H_*(\id,\pi_{1,s}^*)=\H_*(\incl,\ind_s\vert^{M_s})\circ\res^{A}_{\sigma_sA},
\end{equation*}
where the second equality is from the proof of \cite[Proposition~2.4]{Scar}. Since $\H_*(\incl,\ind_s\vert^{M_s})$ is invertible, the above equation implies that 
\begin{equation}
\label{eqn:rho}
  \H_*(\sigma_s\vert^{\sigma_sA},\id)\circ\rho_{1,s}=\res^{A}_{\sigma_sA}. 
\end{equation}

We let $\ccor_{\sigma_sA}^{A}\colon\H_*(\sigma_sA)\to\H_*(A)$ denote the corestriction map, i.e., the map induced from the inclusion $\sigma_sA\to A$. We have $\ccor_{\sigma_sA}^{A}\circ \H_*(\sigma_s\vert^{\sigma_sA},\id)=\H_*(\sigma_s,\id)$, so applying $\ccor_{\sigma_sA}^{A}$ to both sides of \eqref{eqn:rho} and using \cite[Proposition~III.9.5(ii)]{Brown} gives us $\H_*(\sigma_s,\id)\circ\rho_{1,s}=[A:\sigma_sA]\id_{\H_*(A)}$.
\end{proof}

\begin{remark}
For the case $S=\Nz$, Lemma~\ref{lem:rho} is a homology analogue of \cite[Lemma~3.2]{CV}. In addition, up to this point, we have not used that $A$ is torsion-free in any essential way.
\end{remark}

For each $s\in S$, put $\tilde{\rho}_{s,\infty}\coloneqq\rho_{s,\infty}\otimes\id_{\Zz[\Iz^{-1}]}$, and for $s,t\in S$ with $s\mid_l t$, define $\tilde{\rho}_{s,t}\coloneqq\rho_{s,t}\otimes\id_{\Zz[\Iz^{-1}]}$.
 
\begin{proposition}
\label{prop:invertingrho}
 The map $\tilde{\rho}_{1,\infty}\colon \H_*(A)\otimes\Zz[\Iz^{-1}] \to\H_*(A,\Zz\ol{A}) \otimes\Zz[\Iz^{-1}]$ is an isomorphism.
\end{proposition}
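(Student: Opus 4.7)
My approach is to combine the direct limit description \eqref{eqn:isomoflimits} with Lemma~\ref{lem:rho} and the fact that each $\sigma_s$ becomes invertible on $A$ after inverting $\Iz$. Since tensor product commutes with directed colimits, tensoring \eqref{eqn:isomoflimits} with $\Zz[\Iz^{-1}]$ yields
\[
\H_*(A, \Zz\ol{A}) \otimes \Zz[\Iz^{-1}] \;\cong\; \varinjlim_{s \in S}\bigl\{\H_*(A) \otimes \Zz[\Iz^{-1}];\; \tilde{\rho}_{s,t}\bigr\},
\]
under which $\tilde{\rho}_{1,\infty}$ is identified with the canonical map into the colimit from the $s=1$ term. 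It therefore suffices to show that every transition map $\tilde{\rho}_{s,t}$ is an isomorphism.

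I would first show $\tilde{\rho}_{1,s}$ is an isomorphism for every $s \in S$. Lemma~\ref{lem:rho} gives
\[
\H_*(\sigma_s, \id) \circ \rho_{1,s} \;=\; [A : \sigma_sA]\cdot \id_{\H_*(A)},
\]
and since $[A:\sigma_sA] \in \Iz$, the right-hand side becomes invertible after tensoring with $\Zz[\Iz^{-1}]$. Thus $\tilde{\rho}_{1,s}$ is an isomorphism as soon as $\H_*(\sigma_s,\id) \otimes \id$ is. For this I would invoke the identification $\H_*(A) \cong \lwedge^*A$ valid for torsion-free abelian $A$ of finite rank (which follows by writing $A$ as the filtered colimit of its finitely generated subgroups, since both $\H_*$ and $\lwedge^*$ commute with such colimits), under which $\H_*(\sigma_s,\id)$ corresponds to $\lwedge^*\sigma_s$. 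Now $\sigma_s \colon A \to A$ is injective with image of finite index $[A:\sigma_sA]$, so $\sigma_s \otimes \id_\Qz$ on $A \otimes \Qz$ has determinant of absolute value $[A:\sigma_sA]$ and is invertible with inverse whose entries have denominators dividing $[A:\sigma_sA]\in\Iz$. Hence $\sigma_s$ is invertible on $A \otimes \Zz[\Iz^{-1}]$, and consequently so is $\lwedge^*\sigma_s$ on $\lwedge^*A \otimes \Zz[\Iz^{-1}]$.

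To conclude, for any $s \mid_l t$ the directed-system compatibility $\tilde{\rho}_{1,t} = \tilde{\rho}_{s,t} \circ \tilde{\rho}_{1,s}$ combined with the previous paragraph forces $\tilde{\rho}_{s,t}$ to be an isomorphism as well. All transition maps in the directed system are isomorphisms, so $\tilde{\rho}_{1,\infty}$ is one. I do not anticipate any genuine obstacle in this argument: everything reduces to Lemma~\ref{lem:rho} together with the elementary linear-algebra observation that an injective endomorphism of a finite-rank abelian group with cokernel of order $n$ becomes invertible once $n$ is inverted. The only step requiring some care is the identification $\H_*(A) \cong \lwedge^*A$ in this possibly non-finitely-generated finite-rank torsion-free setting, which is legitimate by the commutation of group homology with filtered colimits over finitely generated subgroups of $A$.
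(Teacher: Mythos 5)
Your overall route is the same as the paper's: tensor the direct limit description \eqref{eqn:isomoflimits} with $\Zz[\Iz^{-1}]$, use Lemma~\ref{lem:rho} to see that the maps $\tilde{\rho}_{1,s}$ (hence all connecting maps $\tilde{\rho}_{s,t}$) become invertible, and conclude that the structure map $\tilde{\rho}_{1,\infty}$ into the colimit is an isomorphism; you merely spell out the invertibility of $\H_*(\sigma_s,\id)\otimes\id_{\Zz[\Iz^{-1}]}$, which the paper leaves implicit via the identification $\H_*(A)\cong\lwedge^*A$. The one step that is not correct as written is your justification of that invertibility: in this section $A$ is only torsion-free of finite rank, not finitely generated, so $|\det(\sigma_s\otimes\id_\Qz)|$ need not equal $[A:\sigma_sA]$ (for $A=\Zz[1/2]$ and $\sigma_s$ multiplication by $2$ the index is $1$ while the determinant is $2$), and the appeal to "entries" and Cramer's rule presupposes a choice of $\Zz$-basis that $A$ does not have. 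The conclusion you want is nevertheless true, and the repair is shorter than the faulty argument: since $A/\sigma_sA$ is finite of order $n\coloneqq[A:\sigma_sA]\in\Iz$, its exponent divides $n$, so $nA\subseteq\sigma_sA$; hence $\sigma_s\otimes\id_{\Zz[\Iz^{-1}]}$ is surjective, and it is injective because $\sigma_s$ is injective and $\Zz[\Iz^{-1}]$ is flat. With that substitution (and the base-change isomorphism $\lwedge^q A\otimes\Zz[\Iz^{-1}]\cong\lwedge^q_{\Zz[\Iz^{-1}]}(A\otimes\Zz[\Iz^{-1}])$ you implicitly use), your proof is complete and essentially coincides with the paper's.
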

\begin{proof}
Since tensor products commute with direct limits, we have a canonical isomorphism
\begin{equation*}
    \H_*(A,\Zz\ol{A}) \otimes\Zz[\Iz^{-1}]\cong\varinjlim_{s\in S}\left\{\H_*(A)\otimes\Zz[\Iz^{-1}];\tilde{\rho}_{s,t}\right\}.
\end{equation*}
By Lemma~\ref{lem:rho}, $\H_*(\sigma_s, \id)\otimes\id_{\Zz[\Iz^{-1}]}$ and $\tilde{\rho}_{1,s}$ are invertible with 
\begin{equation} \label{eqn:invertrho}
    \tilde{\rho}_{1,s}^{-1}=\frac{1}{[A:\sigma_sA]}\H_*(\sigma_s, \id)\otimes\id_{\Zz[\Iz^{-1}]}
\end{equation}
 for every $s\in S$. Hence, every connecting map $\tilde{\rho}_{s,t}$ is invertible, so that $\tilde{\rho}_{1,\infty}$ must be invertible.
\end{proof}

\begin{proof}[Proof of Theorem~\ref{thm1}]
Let $\iota\coloneqq \H_*(\incl,\incl)\colon \H_*(A,\Zz\ol{A})\to \H_*(\A,\Zz\ol{\A})$ be the the isomorphism from Proposition~\ref{prop:equivAdditive}.
The canonical inclusion $\H_*(A)\hookrightarrow \H_*(\A,\Zz\ol{\A})$ is given by the composition
\begin{equation*}
    \lwedge^*(A) \cong \H_*(A)\xrightarrow{\rho_{1,\infty}}\H_*(A,\Zz\ol{A})\xrightarrow{\iota} \H_*(\A,\Zz\ol{\A}),
\end{equation*}
where the first isomorphism is from \cite[Theorem~V.6.4(i)]{Brown}.

Using Proposition~\ref{prop:invertingrho}, we obtain a canonical inclusion
\begin{align*}
  \Upsilon \colon \H_*(\A,\Zz\ol{\A})\xrightarrow{\iota^{-1}}\H_*(A,\Zz\ol{A})&  \xrightarrow{\incl}\H_*(A,\Zz\ol{A}) \otimes\Zz[\Iz^{-1}]
  \xrightarrow{\tilde{\rho}_{1,\infty}^{-1}} \H_*(A) \otimes\Zz[\Iz^{-1}]\cong \lwedge^*(A) \otimes\Zz[\Iz^{-1}].
\end{align*}

It remains to compute the $S$-action on $\H_*(\A,\Zz\ol{\A})$ given by $\theta_s = \H_*(\tilde{\sigma}_s,\alpha_s)$ for $s \in S$.
For each $s\in S$, we have the following commutative diagram:
\begin{equation*}
    \begin{tikzcd}
    \H_*(\A,\Zz\ol{\A})\otimes\Zz[\Iz^{-1}]\arrow{rrr}{\H_*(\tilde{\sigma}_s,\alpha_s)\otimes\id_{\Zz[\Iz^{-1}]}}  & & & \H_*(\A,\Zz\ol{\A})\otimes\Zz[\Iz^{-1}] \\
       \H_*(A,\Zz\ol{A})\otimes\Zz[\Iz^{-1}]\arrow[u,"{\iota\otimes \id_{\Zz[\Iz^{-1}]}}"]  & & & \H_*(A,\Zz\ol{A})\otimes\Zz[\Iz^{-1}] \arrow[u,"{\iota\otimes\id_{\Zz[\Iz^{-1}]}}"]\\
       \H_*(A)\otimes\Zz[\Iz^{-1}]\arrow{u}{\tilde{\rho}_{1,\infty}}\arrow{rrr}{\theta_s\otimes\id} & & & \H_*(A)\otimes\Zz[\Iz^{-1}]\arrow{u}{\tilde{\rho}_{1,\infty}}\nospacepunct{.}
    \end{tikzcd}
\end{equation*}
Observe that
\[
 \H_*(\tilde{\sigma}_s, \alpha_s) \circ \iota \circ \H_*(\sigma_1, \ind_1)
    = \iota \circ \H_*(\sigma_s, \ind_s).
\]
Since $\rho_{t,\infty}=\H_*(\sigma_t, \ind_t)$ for all $t\in S$ by \eqref{eqn:rhoinfty}, we have   
\begin{align*}
\theta_s\otimes\id&=\tilde{\rho}_{1,\infty}^{-1}\circ(\iota\otimes \id_{\Zz[\Iz^{-1}]})^{-1}\circ\H_*(\tilde{\sigma}_s,\alpha_s)\otimes\id_{\Zz[\Iz^{-1}]}\circ (\iota\otimes \id_{\Zz[\Iz^{-1}]})\circ \tilde{\rho}_{1,\infty}\\
&=(\tilde{\rho}_{s,\infty}\circ\tilde{\rho}_{1,s})^{-1}\circ (\iota\otimes \id_{\Zz[\Iz^{-1}]})^{-1}\circ (\iota \circ \H_*(\sigma_s, \ind_s)\otimes \id_{\Zz[\Iz^{-1}]}) \\
&=(\tilde{\rho}_{s,\infty}\circ\tilde{\rho}_{1,s})^{-1}\circ\tilde{\rho}_{s,\infty}=\tilde{\rho}_{1,s}^{-1}=\frac{1}{[A:\sigma_sA]}\H_*(\sigma_s,\id)\otimes\id_{\Zz[\Iz^{-1}]}.
\end{align*}
For the last equality, we have used Equation~\eqref{eqn:invertrho}. 
It remains to observe that under the identification $\H_*(A)\cong \lwedge^*(A)$, $\H_*(\sigma_s,\id)$ is taken to $\lwedge^*(\sigma_s)$, see \cite[Theorem~V.6.4(i)]{Brown}.
\end{proof}

\subsection{The case \texorpdfstring{$A \cong \Zz^d$}{A coong Zd}}

Let us specialize to the case $A \cong \Zz^d$ to prepare for Sections \ref{sec:rings} and \ref{sec:BOS}. We identify $S$ with its image in $\End_\Zz(A)\cong \M_d(\Zz)$, and we identify $\Zz$ with $\Zz1_d\subseteq \M_d(\Zz)$.

\begin{proposition}
\label{prop:Z[1/n]}
In addition to the standing assumptions of this section, assume $A \cong \Zz^d$ for some $d\geq 1$. Recall that $\H_*(A)\cong\lwedge^*A$. Then,
\begin{enumerate}[\upshape(i)]
\item For each $0\leq q<d$, we have
\[
\lwedge^qA\otimes\Zz[1/n]\leq \H_q(A,\Zz\ol{A})=\H_q(\A,\Zz\ol{\A})
\] 
for all $n\in S\cap \Zz_{>0}$. 
In particular, if $n\in S$ for all $n\in\Iz$, then $\H_q(\A,\Zz\ol{\A})=(\lwedge^q A)\otimes\Zz[\Iz^{-1}]$.
\item For $q>d$, we have $\H_q(\A,\Zz\ol{\A})=0$.
\item If $S\cap \Zz_{>0}$ is cofinal in $S$ with respect to $\mid_r$, then $\rho_{1,\infty}\colon \H_d(A)\to\H_d(\A, \Zz\ol{\A})$ is an isomorphism.
\end{enumerate}
\end{proposition}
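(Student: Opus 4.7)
The plan is to bootstrap Theorem~\ref{thm1} through the explicit direct limit realization $\H_q(A,\Zz\ol{A}) \cong \varinjlim_{s\in S}\{\H_q(A);\rho_{s,t}\}$ from \eqref{eqn:isomoflimits}. The engine will be the formula $\tilde{\rho}_{1,s}^{-1} = [A:\sigma_sA]^{-1}\,\H_*(\sigma_s,\id)$ from \eqref{eqn:invertrho}. For $s \in S \cap \Zz_{>0}$, the endomorphism $\sigma_s$ is scalar multiplication by $s$ on $A \cong \Zz^d$, so $\lwedge^q(\sigma_s)=s^q\,\id$ and $[A:\sigma_sA]=s^d$, whence $\tilde{\rho}_{1,s}$ restricts to multiplication by $s^{d-q}$ on $\lwedge^q A \otimes \Zz[\Iz^{-1}]$. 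All three parts of the proposition will reduce to this single computation.

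Part (ii) is then immediate: for $q>d$ we have $\lwedge^q A=0$, so Theorem~\ref{thm1} sandwiches $\H_q(\A,\Zz\ol{\A})$ between two zero groups. For the first inclusion of part (i), I will fix $n\in S\cap\Zz_{>0}$ and $q<d$; since $S$ is a monoid, each power $n^k$ lies in $S$, and the computation above yields $\tilde{\rho}_{1,n^k}=n^{k(d-q)}\,\id$ on $\lwedge^q A\otimes\Zz[\Iz^{-1}]$. Factoring $\tilde{\rho}_{1,\infty}=\tilde{\rho}_{n^k,\infty}\circ\tilde{\rho}_{1,n^k}$ then shows that each integer class $\rho_{n^k,\infty}(\alpha)\in\H_q(A,\Zz\ol{A})$ embeds into $\lwedge^q A\otimes\Zz[\Iz^{-1}]$ as $\alpha/n^{k(d-q)}$ via Theorem~\ref{thm1}. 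Since $d-q\geq 1$, letting $k$ range over $\Zz_{>0}$ exhausts $\lwedge^q A\otimes\Zz[1/n]$. The second half of (i) follows: if $\Iz\subseteq S$, then every finite product of elements of $\Iz$ belongs to $S\cap\Zz_{>0}$, and the union of the corresponding $\lwedge^q A\otimes\Zz[1/m]$'s already recovers the upper bound $\lwedge^q A\otimes\Zz[\Iz^{-1}]$ of Theorem~\ref{thm1}.

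For part (iii), cofinality of $S\cap\Zz_{>0}$ in $S$ lets me compute the direct limit in \eqref{eqn:isomoflimits} along this cofinal subsystem. Specialising the engine to $q=d$ gives $\tilde{\rho}_{1,n}=n^{0}=1$ for every $n\in S\cap\Zz_{>0}$, so each $\rho_{1,n}$ is already the identity on $\H_d(A)\cong\Zz$ before tensoring with $\Zz[\Iz^{-1}]$. All transition maps along the cofinal subsystem are therefore identities, forcing $\H_d(A,\Zz\ol{A})\cong\Zz$ with $\rho_{1,\infty}$ the canonical identification. The main piece of bookkeeping I expect to require care is reconciling the order conventions: the direct system in \eqref{eqn:isomoflimits} is indexed by $(S,\mid_l)$, whereas the hypothesis in (iii) is phrased via $\mid_r$. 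In the applications that follow, $S$ is commutative and so the two orders coincide and this subtlety evaporates; in general, the fact that integer scalars are central in $\End_\Zz(A)$ should still allow one to realise the cofinal subsystem inside $(S,\mid_l)$. Beyond this ordering bookkeeping, no deep obstacle appears in the argument.
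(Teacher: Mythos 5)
Your argument is correct and is essentially the paper's own proof: Lemma~\ref{lem:rho} gives $\rho_{1,n}=n^{d-q}\,\id$ on $\H_q(A)$ for $n\in S\cap\Zz_{>0}$, and all three parts then follow from Theorem~\ref{thm1} together with the direct-limit description \eqref{eqn:isomoflimits}, exactly as you describe (the paper handles (ii) via vanishing of $\H_q(A,-)$ for $q>d$ rather than your sandwich, a negligible difference). Your closing remark about $\mid_l$ versus $\mid_r$ concerns a point the paper passes over silently; as you note, in all applications $S$ is commutative, so the two orders coincide and nothing is lost.
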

\begin{proof}
The equality $\H_q(A,\Zz\ol{A})=\H_q(\A,\Zz\ol{\A})$ follows from Proposition~\ref{prop:equivAdditive}.

(i): We have $[A:nA]=n^d$ and $\lwedge^q(n)=n^q$, so by Theorem~\ref{thm1}, we have $\theta_s \otimes\id_{\Zz[\Iz^{-1}]} = n^{q-d}$, so that $\theta_s = n^{q-d}$. This implies that $\H_q(\A,\Zz\ol{\A})$ is divisible by $n$ since $q<d$, so that we have $\lwedge^qA\otimes\Zz[1/n]\leq \H_q(\A,\Zz\ol{\A})$. 

(ii) is immediate because $\lwedge^q A = 0$. 

(iii): By Lemma~\ref{lem:rho}, we have $\rho_{1,n}=n^{d-d}\id_{\H_d(A)}=\id_{\H_d(A)}$ for all $n\in S\cap \Zz_{>0}$. Thus, $\H_d(\A,\Zz\ol{\A})\cong \varinjlim_{s\in S}\{\H_d(A);\rho_{s,t}\}\cong\varinjlim_{n\in S\cap \Zz_{>0}}\{\H_d(A);\rho_{n,m}\} =\H_d(A)=\Zz$.
\end{proof}

Let us close this section by showing that the conclusion in part (iii) of Proposition~\ref{prop:Z[1/n]} holds whenever $S$ contains $\Zz_{>0}$ by the proof of Proposition~\ref{prop:Z[1/n]}.

\begin{proposition}   
\label{prop:cofinal}
In addition to the standing assumptions of this section, assume $A\cong\Zz^d$ for some $d\geq 1$. If $\Zz_{>0}\subseteq S$, then the canonical map $\varinjlim_{n\in \Zz_{>0}}\{\H_q(A); \rho_{n,m}\}\to\varinjlim_{s\in S}\{\H_q(A); \rho_{s,t}\}$ is an isomorphism for all $q \geq 0$.
\end{proposition}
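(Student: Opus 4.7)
Denote the left- and right-hand sides by $L_0 := \varinjlim_{n\in\Zz_{>0}}\{\H_q(A); \rho_{n,m}\}$ and $L := \varinjlim_{s\in S}\{\H_q(A); \rho_{s,t}\} \cong \H_q(\A, \Zz\ol\A)$, with canonical map $\iota \colon L_0 \to L$. The key observation is that $\Zz_{>0}$ is in general \emph{not} cofinal in $S$ with respect to $\mid_l$ (examples with non-scalar monoid generators of $S$ show this), so we cannot reduce the statement to a cofinality argument. The plan is instead to embed both $L_0$ and $L$ canonically into $\lwedge^q A \otimes \Qz$ and verify they coincide there. Since $\Zz_{>0}\subseteq S$ implies $\Iz \subseteq \Zz_{>0} \subseteq S$ and hence $\Zz[\Iz^{-1}] = \Qz$, Proposition~\ref{prop:invertingrho} identifies $L \otimes \Qz$ with $\lwedge^q A \otimes \Qz$ via $\tilde\rho_{1,\infty}$; under this identification, Lemma~\ref{lem:rho} combined with the injectivity of $\lwedge^q(\sigma_s)$ on $\lwedge^q A$ yields $\tilde\rho_{s,\infty}(x) = \lwedge^q(\sigma_s)(x)/[A:\sigma_sA]$ for every $s \in S$.

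For the subsystem, specializing to $n \in \Zz_{>0}$, where $\sigma_n = n\cdot\id_A$, $\H_q(\sigma_n) = n^q\,\id$, and $[A:nA] = n^d$, Lemma~\ref{lem:rho} gives $\rho_{1,n} = n^{d-q}\,\id$ on the torsion-free group $\H_q(A) = \lwedge^q A$. Chaining via $\rho_{n,m}\rho_{1,n}=\rho_{1,m}$ then yields $\rho_{n,m}=(m/n)^{d-q}\,\id$ for all $n\mid m$ in $\Zz_{>0}$. Consequently, $L_0$ identifies with the subgroup $\{x/n^{d-q} : x \in \lwedge^q A,\ n \geq 1\} \subseteq \lwedge^q A \otimes \Qz$, which equals $\lwedge^q A \otimes \Qz$ for $q < d$, equals $\lwedge^q A$ for $q = d$, and is $0$ for $q > d$; the canonical map $\H_q(A) \to L_0$ at level $n$ sends $x \mapsto x/n^{d-q}$.

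For the full system, the case $0 \leq q < d$ follows immediately from Proposition~\ref{prop:Z[1/n]}(i): the condition $\Iz \subseteq S$ gives $L = \lwedge^q A \otimes \Zz[\Iz^{-1}] = \lwedge^q A \otimes \Qz = L_0$. For $q = d$, the identification gives $\rho_{s,\infty}(x) = \sign(\det\sigma_s)\cdot x \in \lwedge^d A$ for every $s \in S$ and $x\in\lwedge^d A$, so $L \subseteq \lwedge^d A$; combined with $\rho_{1,\infty}(\lwedge^d A) \subseteq L$, this yields $L = \lwedge^d A = L_0$. For $q > d$ both sides vanish. Finally, $\iota$ sends $[x,n] \mapsto \rho_{n,\infty}(x) = x/n^{d-q}$, which under the identifications is precisely the inclusion $L_0 \hookrightarrow L$ as subgroups of $\lwedge^q A \otimes \Qz$; since these subgroups coincide, $\iota$ is an isomorphism. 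The main subtlety lies in the $q = d$ case, where the a priori rational formula $\lwedge^d(\sigma_s)(x)/[A:\sigma_sA]$ simplifies to the integer $\sign(\det\sigma_s)\cdot x$ precisely because $[A:\sigma_sA] = |\det\sigma_s|$, ensuring integrality.
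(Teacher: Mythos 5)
Your proposal is correct, but it takes a genuinely different route from the paper's proof. You compute both limits explicitly: using Lemma~\ref{lem:rho}, Proposition~\ref{prop:invertingrho} and Proposition~\ref{prop:Z[1/n]} (all logically prior to this proposition, so there is no circularity), you embed $\varinjlim_{n\in\Zz_{>0}}\H_q(A)$ and $\varinjlim_{s\in S}\H_q(A)$ into $\lwedge^q A\otimes\Qz$ via $\tilde{\rho}_{1,\infty}^{-1}$, identify both with $\lwedge^q A\otimes\Qz$ for $q<d$, with $\lwedge^d A$ for $q=d$ (the key point being $[A:\sigma_sA]=|\det\sigma_s|$, so each $\rho_{s,\infty}$ acts by $\sign(\det\sigma_s)$ there), and with $0$ for $q>d$, and then check that the canonical map is the inclusion of equal subgroups; all of these steps hold up. The paper argues quite differently, and --- contrary to your opening remark that one \emph{cannot} reduce to a cofinality argument --- it is precisely a cofinality argument, just not inside $S$: one enlarges $S$ to $T=\S\cap\End_\Zz(A)$ and uses the adjugate $\tilde{a}=a^{-1}|\det a|\in T$ to see that $|\det a|\in aT\cap Ta$ for every $a\in T$, so that $\Zz_{>0}$ is cofinal in $T$; since $\Zz_{>0}\subseteq S\subseteq T$, both $\Zz_{>0}$ and $S$ are cofinal in $T$, and the two limits are identified with $\varinjlim_{t\in T}\H_q(A)$, giving the isomorphism by a two-out-of-three argument. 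The paper's proof is shorter, uniform in $q$, requires no case analysis and no knowledge of the actual value of either limit; your proof buys an explicit description of both limits, in effect re-proving Proposition~\ref{prop:Z[1/n]}(iii) without its cofinality hypothesis (which is exactly the role this proposition plays in Theorem~\ref{thm:rings}), at the cost of leaning on the heavier homological computations of the section.
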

\begin{proof}
For $a\in \End_\Zz(A)\cong \M_d(\Zz)$ with $\det(a)\neq 0$, let $\tilde{a}\in\End_\Zz(A)$ be such that $a\tilde{a}=\tilde{a}a=|\det(a)|$.

Put $T\coloneqq \S\cap\End_\Zz(A)$. 
First, we claim that for any $a \in T$, we have $\tilde{a} \in T$. To see this, we see that $\tilde{a} \in \End_\Zz(A)$ by definition, and we have $\tilde{a} = a^{-1}|\det(a)| \in \S$ by assumption.  
In particular, $T$ is a reversible monoid: Given $a,b\in T$, we have 
$|\det(a)| \in aT$ and $|\det(b)| \in bT$, so that $|\det(ab)| \in aT \cap bT$. Similarly, we can show $|\det(ab)| \in Ta \cap Tb$. 

Since $|\det(a)| \in Ta$ for any $a \in T$, the set of natural numbers $\{|\det(t)| : t\in T\}$ is cofinal in $T$, so that  $\Zz_{>0}$ is cofinal in $T$. 
Consider the following diagram:
\[
\begin{tikzcd}
   \varinjlim_{s\in S}\{\H_q(A); \rho_{s,t}\} \arrow[rr]& & \varinjlim_{s\in T}\{\H_q(A); \rho_{s,t}\} \\
   &\varinjlim_{n\in \Zz_{>0}}\{\H_q(A); \rho_{n,m}\}\nospacepunct{.}\arrow[lu]\arrow[ru] & 
\end{tikzcd}
\]
The arrow on the lower right is an isomorphism by cofinality of $\Zz_{>0}$ in $T$. Since $\Zz_{>0}\subseteq S\subseteq T$, $S$ is cofinal in $T$, so that the top arrow is also an isomorphism. 
\end{proof}
Note that $\Zz_{>0}$ is not necessarily cofinal in $S$, which is why we work with the monoid $T$ in the proof of Proposition~\ref{prop:cofinal}.

\section{Algebraic actions from finite rank rings}
\label{sec:rings}

\subsection{Rings of finite rank}
Let $R$ be a unital ring whose additive group is isomorphic to $\Zz^d$ for some $d\in\Zz_{>0}$. For instance, $R$ could be the ring of algebraic integers in a number field, matrices over such a ring, or the integral group ring of a finite group.
Let
\begin{equation*}
    R^\times\coloneqq\{a\in R : ax=x \text{ implies } x=0 \text{ for all } x\in R\}
\end{equation*}
be the muliplicative monoid of left regular elements in $R$. Note that $R^\times$ is automatically cancellative because $R$ is of finite rank. Let $S\subseteq R^\times$ be a reversible submonoid such that $\Zz_{>0}\subseteq S$. Then, $S\acts R$ is a faithful, exact algebraic action satisfying the finite index property. To see exactness, note that $nR$ is an $R^\times$-constructible subgroup for every $n\in \Zz_{>0}$, and $\bigcap_{n\geq 1}nR=\{0\}$.

We shall identify $R$ with its image in the $\Qz$-algebra $\A\coloneqq R\otimes \Qz$. Under this identification, $S$ is a submonoid of the group of units $\A^*$ of $\A$. Put $\S\coloneqq S^{-1}S$. For every $a\in \A$, there exists $n\in\Zz_{>0}$ such that $na\in R$. Thus, $(R^\times)^{-1}R^\times=R^\times(R^\times)^{-1}=\A^*$. In particular, the monoid $R^\times$ is reversible. 

We shall apply our general results to $A=R$ and our action $S\acts R$. For $a\in \S$, let $N_\Qz(a)\in\Qz^*$ be the determinant of the $\Qz$-linear isomorphism $\A\to\A$ given by $x\mapsto ax$. For $a\in S$, we have $N_\Qz(a)=[R:aR]$ (this can be shown by considering the Smith Normal Form).

Theorem~\ref{thm1}, Proposition~\ref{prop:Z[1/n]}, 
and Proposition~\ref{prop:cofinal} give the following:
\begin{theorem}
\label{thm:rings}
For every $q\geq 0$, we have a canonical identification
\begin{equation*}
  \H_q(\A,\Zz\ol{\A})\cong \begin{cases} \lwedge_\Qz ^q \A & \text{ if } q <d, \\
  \lwedge_\Zz ^d R \cong \Zz & \text{ if } q=d, \\
  0 & \text{ if }q > d, 
  \end{cases}
 \end{equation*}
  and under this identification, the action $\theta\colon \S\acts \H_q(\A,\Zz\ol{\A})$ is characterized by
\begin{equation*}
    \theta_a=\frac{1}{|N_\Qz(a)|}\lwedge^q(a)\quad (a\in\S),
\end{equation*} 
where $\lwedge^q(a)$ is the exterior power of the multiplication-by-$a$ map on $\A$.
\end{theorem}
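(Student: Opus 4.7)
The plan is to deduce this theorem by combining Theorem~\ref{thm1}, Proposition~\ref{prop:Z[1/n]}, and Proposition~\ref{prop:cofinal}, after translating the invariants in Theorem~\ref{thm1} into the ring-theoretic quantities appearing in the statement. The first step is to verify the standing hypotheses of Section~\ref{sec:gpdhom}: the discussion at the start of this section establishes that $S\acts R$ is a faithful exact algebraic action satisfying the finite index property, with $R\cong \Zz^d$ and $\Zz_{>0}\subseteq S$. The key ring-theoretic identification is that for $s\in S$, viewing $\sigma_s$ as the left-multiplication endomorphism of $R\cong\Zz^d$, the index $[R:\sigma_s R]$ equals $|\det_\Zz(L_s)|=|N_\Qz(s)|$, by the standard description of the index of a full-rank sublattice as the absolute value of the determinant of any integral matrix representing the inclusion.

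For $q>d$, the vanishing $\H_q(\A,\Zz\ol{\A})=0$ is immediate from Proposition~\ref{prop:Z[1/n]}(ii). For $q<d$, I would note that $\Iz$ contains $[R:nR]=n^d$ for every $n\in\Zz_{>0}\subseteq S$, which forces $\Zz[\Iz^{-1}]=\Qz$; hence Proposition~\ref{prop:Z[1/n]}(i) yields
\[
\H_q(\A,\Zz\ol{\A})=\lwedge^q R\otimes\Zz[\Iz^{-1}]=\lwedge^q R\otimes\Qz\cong\lwedge^q_\Qz\A.
\]
For $q=d$, the cofinality hypothesis of Proposition~\ref{prop:Z[1/n]}(iii) is supplied by Proposition~\ref{prop:cofinal}, and Lemma~\ref{lem:rho} shows that the connecting maps $\rho_{1,n}=n^{d-d}\id$ in the cofinal subsystem over $\Zz_{>0}$ are all the identity, so the direct limit collapses to $\H_d(R)=\lwedge^d_\Zz R\cong\Zz$.

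For the $\S$-action, Theorem~\ref{thm1} gives, after tensoring with $\Zz[\Iz^{-1}]$,
\[
\theta_s\otimes\id_{\Zz[\Iz^{-1}]}=\lwedge^q(\sigma_s)\otimes\tfrac{1}{[R:\sigma_s R]}=\lwedge^q(s)\otimes\tfrac{1}{|N_\Qz(s)|}
\]
for every $s\in S$. In the case $q<d$ this equality already takes place in $\H_q(\A,\Zz\ol{\A})=\lwedge^q_\Qz\A$, so the formula transfers directly; in the case $q=d$ the right-hand side is a sign, hence preserves the subgroup $\lwedge^d_\Zz R$ (consistent with $\theta_s$ being an automorphism of $\Zz$). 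The extension from $S$ to all of $\S=S^{-1}S$ is automatic since $\theta$ is a group action and the formula $a\mapsto\tfrac{1}{|N_\Qz(a)|}\lwedge^q(a)$ is multiplicative on $\A^*$. No step here is a serious obstacle: the main content already resides in Theorem~\ref{thm1} and the computation $[R:sR]=|N_\Qz(s)|$ is the only new ingredient.
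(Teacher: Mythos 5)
Your proposal is correct and takes essentially the same route as the paper, which states Theorem~\ref{thm:rings} as an immediate consequence of Theorem~\ref{thm1}, Proposition~\ref{prop:Z[1/n]}, and Proposition~\ref{prop:cofinal}, with precisely the translations you supply ($[R:sR]=|N_\Qz(s)|$, $\Zz[\Iz^{-1}]=\Qz$ since $n^d\in\Iz$ for all $n\in\Zz_{>0}$, and multiplicative extension of the action formula from $S$ to $\S$). The only minor wording issue is that Proposition~\ref{prop:cofinal} does not literally assert cofinality of $\Zz_{>0}$ in $S$ (it compares the two limits through the larger monoid $T$), but it does deliver exactly the conclusion of Proposition~\ref{prop:Z[1/n]}(iii) that your $q=d$ step needs, so the argument stands.
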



Let $\sign\colon \S\to\{\pm 1\}$ be given by $\sign(a)\coloneqq\frac{N_\Qz(a)}{|N_\Qz(a)|}$, and denote by $\Zz_{\sign}$ the $\S$-module $\Zz$ on which $\S$ acts via $\sign$.

\begin{proposition}\label{prop:group.homology.LHS}
We have 
\begin{equation*}
    \H_p(\S,\H_q(\A,\Zz\ol{\A}))=\begin{cases}
    0 & \text{ if } 0\leq q<d,\\
    \H_p(\S,\Zz_{\sign}) & \text{ if } q=d,\\
    0 & \text{ if } q>d.
    \end{cases}
\end{equation*}
\end{proposition}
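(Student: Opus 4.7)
The plan is to dispatch each case using Theorem~\ref{thm:rings}. The case $q > d$ is immediate since the coefficient module vanishes. For $q = d$, the identification proceeds via the determinant: $\lwedge^d(a)$ acts on the one-dimensional $\Qz$-space $\lwedge^d_\Qz \A$ as multiplication by $\det(L_a) = N_\Qz(a)$, so $\theta_a = N_\Qz(a)/|N_\Qz(a)| = \sign(a)$. This action preserves the $\Zz$-lattice $\lwedge^d_\Zz R \cong \Zz$ and gives an $\S$-equivariant identification $\H_d(\A, \Zz\ol{\A}) \cong \Zz_{\sign}$, which yields the middle case of the proposition.

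The substantive case $0 \leq q < d$ I plan to handle by exploiting centrality of positive integers in $\S$. Since $R$ is unital, $\Zz \subseteq Z(R)$, so $\Qz \subseteq Z(\A)$ and hence $\Zz_{>0} \subseteq Z(\S)$. Set $M := \lwedge^q_\Qz \A$. For any integer $n > 1$, Theorem~\ref{thm:rings} combined with $|N_\Qz(n)| = n^d$ and $\lwedge^q(n) = n^q \id_M$ gives $\theta_n = n^{q-d} \id_M$. I will then invoke the standard fact that for a central element $g \in Z(G)$ and a $G$-module $N$, multiplication by $g$ (which is $G$-equivariant by centrality) induces the identity on $\H_*(G, N)$; this is the specialization of \cite[Proposition~III.8.1]{Brown} to the pair $(c_g, g\cdot\,) = (\id_G, g\cdot\,)$ when $g$ is central. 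Applied with $n \in Z(\S)$, this shows that multiplication by the scalar $n^{q-d}$ acts as the identity on $\H_p(\S, M)$.

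To conclude, I note that $\H_p(\S, M)$ inherits a $\Qz$-vector space structure from $M$: since the $\S$-action is $\Qz$-linear, scalar multiplication by any $\lambda \in \Qz^*$ is an $\S$-equivariant automorphism of $M$, inducing an automorphism of $\H_p(\S, M)$. For $q < d$ and $n \geq 2$, the rational $n^{q-d} - 1$ is nonzero, so its vanishing action on $\H_p(\S, M)$ forces $\H_p(\S, M) = 0$, as desired. No serious obstacle is anticipated; the only subtlety worth noting is the centrality of $\Zz_{>0}$ in $\S$, which is automatic because $\Zz$ lies in the center of any unital ring regardless of whether $R$ itself is commutative.
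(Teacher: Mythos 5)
Your proposal is correct, and the cases $q=d$ and $q>d$ are handled exactly as in the paper (read off from Theorem~\ref{thm:rings}). For the main case $0\leq q<d$, your route is genuinely different from the paper's, although both start from the same numerical input: pick an integer $n\geq 2$ in $\S$ (central because $\Zz$ lies in the centre of the unital ring), note $\theta_n=n^{q-d}\id$ on $M=\lwedge^q_\Qz\A$. The paper then computes $\H_k(\gp{n},M)=0$ directly --- the coinvariants vanish by divisibility of $M$ and the invariants vanish because $\theta_n$ has no nonzero fixed points --- and feeds this into the Lyndon--Hochschild--Serre spectral sequence for the normal subgroup $\gp{n}\trianglelefteq\S$ to kill $\H_*(\S,M)$. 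You instead apply \cite[Proposition~III.8.1]{Brown} to the central element $n$, so that the module endomorphism $\theta_n$ induces the identity on $\H_p(\S,M)$, and then use that $\H_p(\S,M)$ carries a $\Qz$-vector space structure (the $\S$-action on $M$ is $\Qz$-linear, so rational scalars act by induced automorphisms, additively in the scalar) to conclude that the nonzero rational $n^{q-d}-1$ acts as zero, forcing $\H_p(\S,M)=0$. Your argument avoids the subgroup spectral sequence and the explicit invariant/coinvariant computation, so it is arguably slicker and more elementary; the paper's version has the mild advantage of exhibiting the vanishing already at the level of the cyclic subgroup $\gp{n}$, which makes the collapse mechanism transparent. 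Both proofs use centrality of $n$ in $\S$ in an essential way (you for equivariance and Brown's lemma, the paper for normality of $\gp{n}$), and that point is correctly justified in your write-up.
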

\begin{proof}
First, suppose $0\leq q<d$. Choose $n\in\S\cap\Zz_{>1}$. Then, $\theta_n(x)=n^{q-d}x$ for all $x\in\lwedge_\Qz^q\A$. Note that $0<n^{q-d}<1$ because $q<d$. We have 
\begin{equation*}
\H_k(\gp{n},\lwedge_\Qz^q\A)=
\begin{cases}
(\lwedge_\Qz^q\A)_{\gp{n}}& \text{ if } k=0,\\
(\lwedge_\Qz^q\A)^{\gp{n}}& \text{ if } k=1,\\
0 & \text{ if } k>1,
\end{cases}
\end{equation*}
(see, for instance, \cite[Example~6.1.4]{Weibel}). Here, $\gp{n}=n^\Zz$. 
The group 
\[
(\lwedge_\Qz^q\A)_{\gp{n}}=\lwedge_\Qz^q\A/\langle(1-n^{k(q-d)})x : x\in \lwedge_\Qz^q\A, k\in\Zz\rangle
\]
is trivial because $\lwedge_\Qz^q\A$ is divisible and $(1-n^{q-d})\lwedge_\Qz^q\A\leq \langle(1-n^{k(q-d)})x : x\in \lwedge_\Qz^q\A, k\in\Zz\rangle$. 
The group 
\[
(\lwedge_\Qz^q\A)^{\gp{n}}=\{x\in \lwedge_\Qz^q\A : \theta_n(x)=x\}
\]
is trivial because $\theta_n$ has no fixed points except $0$. Since $\gp{n}\leq \S$ is a normal subgroup, the Lyndon--Hochschild--Serre spectral sequence now implies that $\H_*(\S,\lwedge_\Qz^q\A)=0$ for all $0\leq q<d$.

By Theorem~\ref{thm:rings}, we have $\H_d(\A,\Zz\ol{\A})=\lwedge_\Zz^d\A=\Zz$ and $\theta_a=\sign(a)$.
\end{proof}

\begin{corollary} \label{cor:numberfields}
The Lyndon--Hochschild--Serre spectral sequence for $\H_*(\A\rtimes\S,\Zz\ol{\A})$ collapses at the $E^2$-page, and for all $n\geq 0$, we have
\begin{equation*}   \H_n(\A\rtimes\S,\Zz\ol{\A})\cong \H_{n-d}(\S,\Zz_{\sign}).
\end{equation*}
\end{corollary}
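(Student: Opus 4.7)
The plan is to read off both conclusions directly from Proposition~\ref{prop:group.homology.LHS} by the standard ``collapse of a single nonzero row'' argument. Recall that the LHS type spectral sequence in question has
\[
E^2_{p,q}=\H_p(\S,\H_q(\A,\Zz\ol{\A}))\Longrightarrow \H_{p+q}(\A\rtimes\S,\Zz\ol{\A}),
\]
and by Proposition~\ref{prop:group.homology.LHS} the only possibly nonzero row is $q=d$, on which $E^2_{p,d}=\H_p(\S,\Zz_{\sign})$.

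First I would note that the differential on the $r$th page has bidegree $(-r,r-1)$, so $d^r\colon E^r_{p,q}\to E^r_{p-r,q+r-1}$ necessarily sends the row $q=d$ into the row $q=d+r-1$, which is zero for $r\geq 2$. Likewise any differential into the row $q=d$ comes from the row $q=d-r+1<d$, which is also zero. Hence all higher differentials vanish and the spectral sequence collapses at $E^2$; that is, $E^\infty_{p,d}=E^2_{p,d}=\H_p(\S,\Zz_{\sign})$ and $E^\infty_{p,q}=0$ for $q\neq d$.

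Since only one diagonal $p+q=n$ contains a nonzero term, there are no extension issues to resolve: the filtration of $\H_n(\A\rtimes\S,\Zz\ol{\A})$ has at most one nonzero quotient, namely $E^\infty_{n-d,d}$. Reindexing gives
\[
\H_n(\A\rtimes\S,\Zz\ol{\A})\cong E^\infty_{n-d,d}=\H_{n-d}(\S,\Zz_{\sign}),
\]
with the convention that $\H_k(\S,\Zz_{\sign})=0$ for $k<0$, which also recovers the vanishing $\H_n(\A\rtimes\S,\Zz\ol{\A})=0$ for $n<d$. There is essentially no obstacle here: the only substantive input is Proposition~\ref{prop:group.homology.LHS}, and the collapse follows purely from the bidegree of the differentials.
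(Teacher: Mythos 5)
Your proof is correct and is exactly the argument the paper intends: the corollary is stated without proof precisely because, given Proposition~\ref{prop:group.homology.LHS}, the $E^2$-page is concentrated in the single row $q=d$, so collapse and the absence of extension problems are immediate from the bidegree of the differentials, as you argue.
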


\subsection{Rings of algebraic integers}
Let us now specialize to the case of rings of algebraic integers in number fields.

\begin{lemma}
\label{lem:sign1}
Let $K$ be a number field. 
For $a\in K^*$, we have 
\begin{equation}
    \sign(a)=\frac{N_\Qz(a)}{|N_\Qz(a)|}=
        \prod_{w\in V_{K,\Rz}}\sign(w(a)),
\end{equation}
where $V_{K,\Rz}$ is the set of real embeddings of $K$.
\end{lemma}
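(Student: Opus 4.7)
The plan is to unwind the definition of $N_\Qz(a)$ as the determinant of the multiplication-by-$a$ map on $K = R \otimes \Qz$, relate it to the product over all complex embeddings, and then observe that complex embeddings contribute positively in conjugate pairs, so the sign is governed only by the real embeddings.

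More concretely, I would recall the classical identity
\[
N_\Qz(a) = N_{K/\Qz}(a) = \prod_{\sigma \colon K \hookrightarrow \Cz} \sigma(a),
\]
where the product runs over all field embeddings of $K$ into $\Cz$. This identity is standard and follows from the fact that the characteristic polynomial of the $\Qz$-linear map $x \mapsto ax$ on $K$ factors over $\Cz$ as $\prod_\sigma (T - \sigma(a))$. Partitioning the embeddings into the real ones $w \in V_{K,\Rz}$ and the pairs $\{\sigma,\bar{\sigma}\}$ of complex conjugate non-real embeddings, I would then write
\[
N_\Qz(a) = \Bigl(\prod_{w \in V_{K,\Rz}} w(a)\Bigr) \cdot \Bigl(\prod_{\{\sigma,\bar\sigma\}} \sigma(a)\,\overline{\sigma(a)}\Bigr) = \Bigl(\prod_{w \in V_{K,\Rz}} w(a)\Bigr) \cdot \prod_{\{\sigma,\bar\sigma\}} |\sigma(a)|^2.
\]

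Since each factor $|\sigma(a)|^2$ is a positive real number, the sign of $N_\Qz(a)$ coincides with the sign of $\prod_{w \in V_{K,\Rz}} w(a)$, which factors as $\prod_w \sign(w(a)) \cdot \prod_w |w(a)|$. Dividing by $|N_\Qz(a)|$ eliminates all the positive magnitudes and yields the claimed formula. There is essentially no obstacle here beyond citing (or invoking) the product formula for the norm via embeddings; the lemma is a one-line consequence once this is in hand.
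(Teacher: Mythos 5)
Your proposal is correct and follows essentially the same route as the paper: both invoke the norm formula $N_\Qz(a)=\prod_{w} w(a)$ over all complex embeddings (the paper cites Neukirch, Proposition I.2.6) and then pair each non-real embedding with its conjugate, whose product $|w(a)|^2$ is positive, so that only the real embeddings contribute to the sign.
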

\begin{proof}
By \cite[Proposition~I.2.6]{Neu}, we have $N_\Qz(a)=\prod_{w\in V_K}w(a)$, where $V_K$ is the set of field embeddings of $K$ into $\Cz$. Let $V_{K,\Cz}$ denote the set of field embeddings $w$ of $K$ into $\Cz$ such that $w(K)\not\subseteq\Rz$. Now we have 
\[
  \frac{N_\Qz(a)}{|N_\Qz(a)|}=\prod_{w\in V_{K,\Cz}}\frac{w(a)}{|w(a)|} \prod_{w\in V_{K,\Rz}}\frac{w(a)}{|w(a)|}=\prod_{w\in V_{K,\Rz}}\frac{w(a)}{|w(a)|},
\]
where we define the empty product to be $1$. Here, we used that for each complex embedding $w\in V_{K,\Cz}$, there is a unique conjugate embedding $\ol{w}\in V_{K,\Cz}$ satisfying $w(a)\ol{w}(a)=|w(a)|^2$ for all $a\in K$.
\end{proof}

\begin{lemma}
\label{lem:sign2}
 Given a number field $K$ that has a real embeddings, there exists $a\in K^*$ such that $\sign(a)=-1$ and $a^\Zz$ is a summand of $K^*$.
\end{lemma}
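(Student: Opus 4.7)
My plan is to take $a = -1 \in K^*$. Two things must then be verified: that $\sign(-1) = -1$, and that the cyclic subgroup $(-1)^\Zz = \langle -1\rangle$ is a direct summand of $K^*$.

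For the sign computation, I would apply Lemma~\ref{lem:sign1} directly. Since every real embedding $w$ sends $-1$ to $-1$, we get
\begin{equation*}
    \sign(-1) = \prod_{w \in V_{K,\Rz}} \sign(w(-1)) = (-1)^{|V_{K,\Rz}|},
\end{equation*}
which equals $-1$ under the hypothesis that $K$ has an odd number of real embeddings. In particular, $\sign\colon K^* \to \{\pm 1\}$ is surjective.

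For the summand property, the surjectivity of $\sign$ gives a short exact sequence of abelian groups
\begin{equation*}
    1 \to \ker(\sign) \to K^* \xrightarrow{\sign} \{\pm 1\} \to 1.
\end{equation*}
The homomorphism $\{\pm 1\} \to K^*$ sending $\epsilon \mapsto \epsilon$ is well-defined (as $(-1)^2 = 1$ in $K^*$) and it is a section of $\sign$ by the previous step. Therefore the sequence splits, yielding $K^* = \ker(\sign) \oplus \langle -1\rangle$; in particular $(-1)^\Zz$ is a direct summand of $K^*$.

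The main point is to resist looking for an $a$ of infinite order: the lemma as stated allows $a$ to have finite order, and the canonical choice $a=-1$ works as soon as $|V_{K,\Rz}|$ is odd. Consequently there is no real obstacle beyond unpacking the definitions; no Dirichlet unit theorem or global structure of $K^*$ is needed for this lemma, although such facts (giving $K^* \cong \mu_K \oplus \Gamma$ with $\Gamma$ free abelian) would provide an alternative route to the same conclusion.
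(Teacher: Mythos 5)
Your argument is internally sound as far as the literal wording goes: with an odd number of real embeddings, Lemma~\ref{lem:sign1} gives $\sign(-1)=(-1)^{|V_{K,\Rz}|}=-1$, and the tautological section $\epsilon\mapsto\epsilon$ of the surjection $\sign\colon K^*\to\{\pm1\}$ does split off $\langle -1\rangle$ as a direct summand, so $a=-1$ satisfies the statement if one allows $a$ to be torsion.

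However, this degenerate choice misses the substantive content of the lemma, which is what the paper actually proves and needs: an element $a$ of \emph{infinite} order, so that $a^\Zz\cong\Zz$ is an infinite cyclic summand. The paper's proof fixes a prime $\mfp$, uses \cite[Lemma~3.5]{Bruce} to find a totally positive $b\in K^*$ with $v_\mfp(b)=1$, and sets $a=-b$; then $v_\mfp\colon K^*\to\Zz$ restricts to an isomorphism on $a^\Zz$, so $K^*=a^\Zz\oplus\ker v_\mfp$ with $a^\Zz\cong\Zz$, while $\sign(a)=(-1)^{|V_{K,\Rz}|}=-1$ by Lemma~\ref{lem:sign1}. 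The infinite order is exactly what the application requires: in the proof of Theorem~\ref{thm:gpdhomology} the lemma is invoked for fields with an even, nonzero number of real embeddings (where $\sign(-1)=+1$, so no root of unity can serve), the subgroup $\Lambda=\{\pm1\}\times a^\Zz$ is formed as an internal direct product inside $K^*$, and $\H_t(a^\Zz,\Zz_{\sign})$ is computed as the homology of an infinite cyclic group acting by $-1$ ($\Zz/2\Zz$ in degree $0$ and $0$ in higher degrees) --- all of which fails if $a$ has order $2$. So your closing advice to ``resist looking for an $a$ of infinite order'' is precisely backwards: producing an infinite-order $a$ with $\sign(a)=-1$ is the real point, and the $\sign$-splitting argument cannot yield it; one needs an input such as the valuation/totally-positive-element argument the paper uses (or weak approximation).
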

\begin{proof}
Pick $w \in V_{K,\mathbb{R}}$. By \cite[Theorem I.3.4]{Neu}, there is $a \in K^*$ such that $w(a) <0$ and $w'(a) >0$ for any $w' \in V_{K,\mathbb{R}}$. By Lemma~\ref{lem:sign1}, this means that $\mathrm{sign}(a) = -1$, and hence $\mathrm{sign}$ is surjective. Recall that the multiplicative group $K^*$ of $K$ is isomorphic to the direct product $\mu \times \Gamma$, where $\mu$ is the group of roots of unity and $\Gamma$ is a free abelian group (Subsection \ref{sec:ANT}). If $|V_{K,\mathbb{R}}|$ is even, then $\mathrm{sign}(\mu)=\{ + 1\}$, and hence $\mathrm{sign}|_{\Gamma}$ is surjective. If $|V_{K,\mathbb{R}}|$ is odd, in which case $\mathrm{sign}(\mu) = \{ \pm 1\}$, we may assume that $\mathrm{sign}|_{\Gamma}$ is surjective by replacing $\Gamma = \langle a_1,a_2,a_3,\cdots \rangle$ with $\Gamma ' \coloneqq \langle -a_1,a_2,a_3,\cdots \rangle$ if necessary. 

Pick $b \in \Gamma$ such that $\mathrm{sign}(b) = -1$. Then, since $\Gamma$ is free abelian, we can take the primitive part $a \in \Gamma$ of $b$, i.e., the unique element $a \in \Gamma$ such that $b = m a$ for some integer $m \ge 1$ and $a$ is not divisible by any integer greater than $1$. Then $\mathrm{sign}(a)$ must be $-1$, and $a^{\mathbb{Z}}$ is a direct summand.
\end{proof}

\begin{theorem} 
\label{thm:gpdhomology}
For a number field $K$ of degree $d=[K:\Qz]$ with ring of integers $R$, we have the following:
\begin{enumerate}[\upshape(i)]
\item If $K$ is totally imaginary, then 
\[ \H_n(\cG_{R^\times\acts R}) \cong \begin{cases} 
0 & \text{ if } 0\leq n<d,\\
\Zz & \mbox{ if } n=d, \\
\Zz^{\oplus\infty} \oplus \Zz/|\mu|\Zz & \mbox{ if }n=d+1, \\
\Zz^{\oplus\infty} \oplus (\Zz/|\mu|\Zz)^{\oplus\infty} & \mbox{ if }n \geq d+2,
\end{cases}\]
where $\mu=\mu_K$ is the group of roots of unity in $K$.
\item If $K$ has a real embedding, then 
\[ \H_n(\cG_{R^\times\acts R}) \cong \begin{cases} 
0 & \text{ if } 0\leq n<d,\\
\Zz/2\Zz & \mbox{ if }n=d, \\
(\Zz/2\Zz)^{\oplus\infty} & \mbox{ if }n \geq d+1. \\
\end{cases}\]
\end{enumerate}
\end{theorem}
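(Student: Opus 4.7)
The plan is to apply Corollary~\ref{cor:numberfields} to reduce the computation to the group homology $\H_{n-d}(K^*, \Zz_{\sign})$ and then to analyze this via the K\"unneth formula after using the decomposition $K^* \cong \mu_K \times \Gamma$, in which $\Gamma$ is a free abelian group of countably infinite rank; such a decomposition is a standard fact from algebraic number theory (combining Dirichlet's unit theorem with the embedding of $K^*$ modulo units into the free abelian group of fractional ideals). The vanishing $\H_n(\cG_{R^\times\acts R}) = 0$ for $0 \leq n < d$ is immediate from the degree shift in Corollary~\ref{cor:numberfields}.

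For case (i), $V_{K,\Rz} = \emptyset$, so $\sign$ is trivial by Lemma~\ref{lem:sign1} and the computation reduces to $\H_*(K^*, \Zz)$. Applying Theorem~\ref{thm:Kunneth} with trivial coefficients, using the standard formulas $\H_{2k-1}(\mu_K, \Zz) = \Zz/|\mu_K|\Zz$ and $\H_{2k}(\mu_K, \Zz) = 0$ for $k \geq 1$, together with $\H_q(\Gamma, \Zz) = \lwedge^q \Gamma \cong \Zz^{\oplus \infty}$ for $q \geq 1$, yields the claimed formula; the Tor terms in the K\"unneth sequence vanish because each $\lwedge^q \Gamma$ is $\Zz$-free.

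For case (ii), $K$ has a real embedding, so $\mu_K = \{\pm 1\}$ and $\sign(-1) = (-1)^{|V_{K,\Rz}|}$ by Lemma~\ref{lem:sign1}. The key idea is to choose the decomposition $K^* = \mu_K \times \Gamma$ so that $\Zz_{\sign}$ factors as a tensor product $\Zz_{\sign|_{\mu_K}} \otimes \Zz_{\sign|_\Gamma}$ in which exactly one of the two tensor factors carries a nontrivial action. If $|V_{K,\Rz}|$ is odd, then $\sign|_{\mu_K}$ is surjective, and the identity $\ker(\sign)\cdot \mu_K = K^*$ allows us to choose a section of $K^* \to K^*/\mu_K$ with image in $\ker(\sign)$, making $\sign|_\Gamma$ trivial. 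If $|V_{K,\Rz}|$ is even and positive, then $\sign|_{\mu_K}$ is trivial, and a weak-approximation argument analogous to the proof of Lemma~\ref{lem:sign2} produces a basis element $a \in \Gamma$ with $\sign(a) = -1$; splitting $\Gamma = \Gamma' \times \langle a \rangle$ then makes $\sign|_{\Gamma'}$ trivial.

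With such a decomposition in hand, the K\"unneth formula reduces the problem to standard sign-twisted group homology computations: for $\Zz/2\Zz$ acting on $\Zz$ by $-1$, the homology is $\Zz/2\Zz$ in every even degree and $0$ in every odd degree; for $\Zz$ acting on $\Zz$ by $-1$, the homology is $\Zz/2\Zz$ in degree $0$ and vanishes elsewhere; and for a free abelian group $\Gamma''$ of countably infinite rank with trivial action one has $\H_q(\Gamma'', \Zz) = \lwedge^q \Gamma''$. Routine bookkeeping then gives $\H_0(K^*, \Zz_{\sign}) = \Zz/2\Zz$ and $\H_k(K^*, \Zz_{\sign}) = (\Zz/2\Zz)^{\oplus \infty}$ for $k \geq 1$, matching the claim after the shift by $d$. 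The main technical obstacle will be the case analysis in~(ii) to arrange the K\"unneth decomposition; once this is set up, the remaining homological computations are essentially formal.
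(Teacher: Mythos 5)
Your proposal is correct and follows essentially the same route as the paper's proof: reduce to $\H_{*-d}(K^*,\Zz_{\sign})$ via Corollary~\ref{cor:numberfields}, split $K^*$ as $\mu\times\Gamma$ (respectively $\Lambda\times\Gamma$ with $\Lambda=\mu\times a^{\Zz}$) so that the sign character is carried by a single small factor, and finish with the K\"unneth formula and the standard homology of cyclic groups with (possibly twisted) $\Zz$ coefficients, with the same three-case split into totally imaginary, odd, and even nonzero $|V_{K,\Rz}|$. The two points you leave as sketches --- producing $a$ with $\sign(a)=-1$ generating a summand in the even case, and adjusting the complementary basis (multiplying by $a$) so that $\sign$ is trivial on $\Gamma'$ --- are exactly the steps the paper handles via Lemma~\ref{lem:sign2} and a short explicit basis modification, so there is no substantive gap.
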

\begin{proof}
First, suppose $K$ is totally imaginary. In this case, the action $K^* \acts \Zz_\sign$ is the trivial action by Lemma~\ref{lem:sign1}. 
Since $K^*/\mu$ is free abelian, we have $K^*\cong \mu\times (K^*/\mu)$ and $\H_*(K^*/\mu)\cong\bigwedge^*(K^*/\mu)$, where the homology group here denotes group homology with $\Zz$-coefficients (see \cite[Theorem~V.6.4(i)]{Brown} for the latter claim). Further, note that $K^*/\mu$ is abstractly isomorphic to $\Zz^{\oplus\infty}$, see Section~\ref{sec:ANT}. By the K\"unneth formula (Theorem~\ref{thm:Kunneth}), we have 
\begin{align*} 
\H_n(K^*, \Zz_\sign) &= \H_n(\mu \times (K^*/\mu))=\bigoplus_{p+q=n} \H_p(\mu) \otimes \H_q(K^*/\mu) \\
&= \H_n(K^*/\mu) \oplus \bigoplus_{p \colon\mbox{odd}} (\Zz/|\mu|\Zz) \otimes \H_{n-p}(K^*/\mu), 
\end{align*}
so that the claim (i) holds by Corollary~\ref{cor:numberfields}. Next, suppose the number of real embeddings of $K$ is odd. In this case, $\mu=\{\pm 1\}$ and $\sign(-1)=-1$. 
Let $\Gamma = \ker \sign \subseteq K^*$. Then, $\Gamma \cong K^*/\mu$ and $K^*=\mu \times \Gamma$. 
By K\"unneth's formula (Theorem~\ref{thm:Kunneth}), we have 
\[ \H_n(K^*, \Zz_\sign) = \bigoplus_{p+q=n} \H_p(\mu, \Zz_\sign) \otimes \H_q(K^*/\mu). \]
By \cite[Theorem~6.2.2]{Weibel}, we can see that 
\[ 
\H_p(\mu, \Zz_\sign) = 
\begin{cases} \Zz/2\Zz & \mbox{if $p$ is even}, \\
0 & \mbox{if $p$ is odd}, 
\end{cases} 
\]
so that the claim (ii) is true in this case by Corollary~\ref{cor:numberfields}. Finally, suppose the number of real embeddings of $K$ is even and nonzero. 
In this case, $\mu=\{\pm 1\}$ and $\sign(-1)=1$. 
By Lemma~\ref{lem:sign2}, there exists $a \in K^*$ with $\sign(a)=-1$ such that $a^\Zz$ is a summand of $K^*$. 
Let $\Lambda = \{\pm 1\} \times a^\Zz$. 
Then, there exists a free abelian subgroup of infinite rank $\Gamma \subseteq K^*$ such that $K^* = \Lambda \times \Gamma$. 
We can assume that $\sign(\Gamma)=1$. To see this, let $\{a_i\}_{i=1}^\infty$ be a basis of $\Gamma$. For each $i$, let $b_i = a_i$ if $\sign(a_i)=1$, and let $b_i = aa_i$ if $\sign(a_i) = -1$. Then, $\{b_i\}_{i=1}^\infty$ still generates a free abelian subgroup $\tilde{\Gamma}$ such that $K^*=\Lambda \times \tilde{\Gamma}$, and we have $\sign(\tilde{\Gamma})=1$.
We calculate $\H_p(\Lambda, \Zz_\sign)$. 
By the K\"unneth formula (Theorem~\ref{thm:Kunneth}), we have the following short exact sequence: 
\[ 0 \to \bigoplus_{s+t=p} \H_s(\mu) \otimes \H_t(a^\Zz, \Zz_\sign) \to \H_p(\Lambda,\Zz_\sign) \to 
\bigoplus_{s+t=p-1} \Tor (\H_s(\mu), \H_t(a^\Zz, \Zz_\sign)) \to 0.\]
Since
\begin{align}
\H_t(a^\Zz, \Zz_\sign) \cong \begin{cases} \Zz/2\Zz & \mbox{ if }t=0, \\
0 & \mbox{ if } t>0, \end{cases} 
\label{eqn:Z.sign.homology}
\end{align}
we have 
\[ 
\H_p(\Lambda,\Zz_\sign) \cong 
\begin{cases}
\H_0(\mu) \otimes \Zz/2\Zz & \mbox{if $p=0$}, \\
\H_p(\mu) \otimes \Zz/2\Zz & \mbox{if $p$ is odd}, \\
\Tor(\H_{p-1}(\mu), \Zz/2\Zz) & \mbox{if $p$ is even and $p>0$}.
\end{cases}
\]
Thus, $\H_p(\Lambda,\Zz_\sign) \cong \Zz/2\Zz$ for all $p \geq 0$. Since 
\[ \H_n(K^*, \Zz_\sign) = \bigoplus_{p+q=n} \H_p(\Lambda, \Zz_\sign) \otimes \H_q(K^*/\Lambda) \]
by K\"unneth's formula (Theorem~\ref{thm:Kunneth}), the claim (ii) is true in this case by Corollary~\ref{cor:numberfields}. 
\end{proof}

\subsection{Application to topological full groups}

We now combine our results on homology with \cite{Li:TFG}. First, we consider integral group homology.

\begin{theorem} 
\label{thm:TFGhomology}
Let $K$ be a number field of degree $d=[K :\Qz]$ with ring of integers $R$.
    We have $\H_n(\fg{\cG_{R^\times\acts R}}) = 0$ for $0<n<d$,
    and 
    \[
    \H_d(\fg{\cG_{R^\times\acts R}}) = \begin{cases}
        \Zz & \mbox{if $K$ is totally imaginary}, \\
        \Zz/2\Zz & \mbox{if $K$ has a real embedding}.
    \end{cases}
    \]
    Moreover, $\fg{\cG_{R^\times\acts R}}$ is simple if and only if $K\neq \Qz$, and if $K=\Qz$, then $\fg{\cG_{R^\times\acts R}}^{\rm ab} \cong \Zz/2\Zz$. 
\end{theorem}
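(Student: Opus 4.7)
The plan is to read off Theorem \ref{thm:TFGhomology} from Theorem \ref{thm:gpdhomology} by invoking the results of \cite{Li:TFG} that link the homology of a topological full group to the homology of its underlying ample groupoid. First I would record that $\cG_{R^\times\acts R}$ is a Hausdorff ample groupoid that is minimal, effective (topologically principal), and purely infinite; all of this is established in \cite{BruceLi2}. Purely infinite minimal ample groupoids automatically satisfy the comparison hypothesis used in \cite{Li:TFG}, so Li's theorem applies.

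Li's strong form of Matui's AH-conjecture then yields isomorphisms
\[
    \H_n(\fg{\cG_{R^\times\acts R}}) \cong \H_n(\cG_{R^\times\acts R}) \quad \text{for all } n\geq 1,
\]
where the vanishing of $\H_0(\cG_{R^\times\acts R})$ given by Theorem \ref{thm:gpdhomology} (since $d \geq 1$) removes the torsion correction that would otherwise appear on the abelianization. Substituting the values from Theorem \ref{thm:gpdhomology} then gives $\H_n(\fg{\cG_{R^\times\acts R}}) = 0$ for $1\leq n < d$ and the stated value of $\H_d(\fg{\cG_{R^\times\acts R}})$ in each of the two cases.

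For the simplicity statement, I would invoke the theorem of Matui (in the purely infinite setting, also due to Nekrashevych) that, for a minimal, effective, purely infinite Hausdorff ample groupoid $\cG$, the commutator subgroup $\fg{\cG}'$ is simple. Consequently $\fg{\cG_{R^\times\acts R}}$ is simple if and only if its abelianization vanishes. Since $\fg{\cG_{R^\times\acts R}}^{\rm ab} = \H_1(\fg{\cG_{R^\times\acts R}})$, the first part of the proof shows that this abelianization is zero precisely when $d\geq 2$, i.e.\ when $K \neq \Qz$. When $K = \Qz$, we have $d=1$ and Theorem \ref{thm:gpdhomology}(ii) gives $\H_1(\cG_{R^\times\acts R}) \cong \Zz/2\Zz$, which by the homology isomorphism above produces $\fg{\cG_{R^\times\acts R}}^{\rm ab} \cong \Zz/2\Zz$.

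The step requiring the most care is the correct invocation of Li's theorem: one must identify which formulation in \cite{Li:TFG} gives an outright isomorphism $\H_n(\fg{\cG}) \cong \H_n(\cG)$ for all $n \geq 1$ rather than only a three-term exact sequence, and verify that the vanishing of $\H_0(\cG_{R^\times\acts R})$ supplied by Theorem \ref{thm:gpdhomology} is what allows this improvement. Everything else is a routine translation between the group-theoretic and groupoid-homological statements.
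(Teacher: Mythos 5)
Your route is essentially the paper's: establish vanishing of $\H_n(\cG_{R^\times\acts R})$ for $n<d$ and the value at $n=d$ via Theorem~\ref{thm:gpdhomology}, transfer this to $\fg{\cG_{R^\times\acts R}}$ using Li's results (with comparison supplied by pure infiniteness, \cite[Theorem~4.22]{BruceLi2}), and settle the simplicity statement via Matui's theorem \cite[Theorem~4.16]{Mat15} that the commutator subgroup of the topological full group of a purely infinite minimal ample groupoid is simple, so that simplicity is equivalent to vanishing of the abelianization; for $K=\Qz$ the abelianization is $\H_1(\cG_{R^\times\acts R})\cong\Zz/2\Zz$. All of this matches the paper's proof.

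The one point you must repair is your statement of the input from \cite{Li:TFG}. It is not true, and Li does not prove, that $\H_n(\fg{\cG})\cong\H_n(\cG)$ for \emph{all} $n\geq 1$ as soon as $\H_0(\cG)=0$: the homology of $\fg{\cG}$ is the homology of an infinite loop space attached to the groupoid homology spectrum, and above the first non-vanishing degree of $\H_*(\cG)$ the two sides differ in general (indeed, comparing Theorem~\ref{thm:gpdhomology} with Theorem~\ref{thm:acyclic} already shows they cannot agree integrally in all degrees). The correct statement, which is \cite[Corollary~D]{Li:TFG} and is what the paper invokes, requires the stronger hypothesis $\H_i(\cG)=0$ for all $0\leq i\leq d-1$ (supplied by Theorem~\ref{thm:gpdhomology}) and concludes only that $\H_n(\fg{\cG})=0$ for $1\leq n\leq d-1$ together with $\H_d(\fg{\cG})\cong\H_d(\cG)$. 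Since this is precisely the range you actually use, your argument goes through once the tool is quoted with its true hypothesis and conclusion; as written, the justification overstates both.
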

\begin{proof}
    By Corollary~\ref{cor:numberfields}, we have $\H_n(\cG_{R^\times\acts R}) = 0$ for $0\leq n<d$ and 
    \begin{align*}
        &\H_d(\cG_{R^\times\acts R}) = \H_0(K^*, \Zz)
        = \Zz/ \langle 1- \sign (a) \colon a \in K^*\rangle  \\
        &= \begin{cases}
        \Zz & \mbox{if $K$ is totally imaginary,} \\
        \Zz/2\Zz & \mbox{if $K$ has a real embedding}.
    \end{cases}
    \end{align*}
    By \cite[Theorem 4.22]{BruceLi2}, the groupoid $\cG_{R^\times\acts R}$ is purely infinite in the sense of \cite[Definition~4.9]{Mat15}, so in particular, $\cG_{R^\times\acts R}$ has comparison.
     Now the claim follows from Theorem~\ref{thm:gpdhomology} and \cite[Corollary~D]{Li:TFG}. 

    By \cite[Theorem~4.10]{BruceLi2}, the groupoid $\cG_{R^\times\acts R}$ is minimal. Since $\cG_{R^\times\acts R}$ is purely infinite and minimal, the commutator subgroup of $\fg{\cG_{R^\times\acts R}}$ is simple by \cite[Theorem~4.16]{Mat15}, which yields the second claim.
\end{proof}

Now we turn to rational group homology.

\begin{theorem}
\label{thm:acyclic}
Let $K$ be a number field of degree $d=[K :\Qz]$ with ring of integers $R$.
\begin{enumerate}[\upshape(i)]
    \item If $K$ is totally imaginary, then 
    \begin{equation}
    \H_n(\fg{\cG_{R^\times\acts R}},\Qz)\cong
    \begin{cases}
        0 &  \text{ if } 1\leq n<d,\\
        \Qz & \text{ if } n=d,\\
        \Qz^{\oplus\infty} & \text{ if } n>d.
    \end{cases}
\end{equation}
    \item If $K$ has a real embedding, then $ \H_n(\fg{\cG_{R^\times\acts R}},\Qz)=0$ for all $n\geq 1$.
\end{enumerate}

In particular, the group $\fg{\cG_{R^\times\acts R}}$ is rationally acyclic if and only if $K$ has a real embedding.  
\end{theorem}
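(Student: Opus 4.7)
The strategy is to deduce Theorem~\ref{thm:acyclic} from Theorem~\ref{thm:gpdhomology} via a rational version of the topological-full-group homology comparison of \cite{Li:TFG}. As already used in the proof of Theorem~\ref{thm:TFGhomology}, the groupoid $\cG_{R^\times \acts R}$ is purely infinite and minimal by \cite[Theorem~4.22]{BruceLi2}, and in particular has dynamical comparison. What one wants (rationally) from \cite{Li:TFG} is the clean statement
\[
\H_n(\fg{\cG_{R^\times \acts R}}, \Qz) \;\cong\; \H_n(\cG_{R^\times \acts R}) \otimes_{\Zz} \Qz \qquad (n \geq 1),
\]
which is a rational refinement of the low-degree integer isomorphism underlying Theorem~\ref{thm:TFGhomology} (where, in the integral setting, one could only describe $\H_n$ in the range $0<n\le d$).

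Granting this reduction, the result is pure bookkeeping from Theorem~\ref{thm:gpdhomology}. In the real-embedding case~(ii), every group $\H_n(\cG_{R^\times \acts R})$ is either zero or a (possibly countably infinite) direct sum of copies of $\Zz/2\Zz$, and since $(\Zz/2\Zz)\otimes\Qz=0$ we obtain $\H_n(\fg{\cG_{R^\times \acts R}},\Qz)=0$ for every $n\ge 1$, giving rational acyclicity. In the totally imaginary case~(i), the torsion summands $\Zz/|\mu|\Zz$ are annihilated by $\otimes\Qz$, while the free parts contribute $\Qz$ in degree $d$ and $\Qz^{\oplus\infty}$ in each degree $n>d$. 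The ``if and only if'' characterisation is then automatic, since $\H_d(\fg{\cG_{R^\times \acts R}},\Qz)=\Qz\neq 0$ whenever $K$ is totally imaginary, whereas all positive-degree rational homology vanishes in the real case.

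The main obstacle is securing the rational comparison in \emph{every} degree. If only the integer version in a bounded range is directly available from \cite{Li:TFG}, the fallback is to reprove the isomorphism rationally by running the LHS spectral sequence with $\Qz$-coefficients: Proposition~\ref{prop:group.homology.LHS} shows that after $\otimes\Qz$ only the row $q=d$ survives, yielding a concentrated $E^2$-page $\H_p(\S,\Qz_{\sign})$ that collapses to the identification of Corollary~\ref{cor:numberfields} tensored with $\Qz$. For such a clean rationalised $E^2$-page, the rational HK-style arguments in the spirit of \cite{DW}, combined with the integer comparison of \cite{Li:TFG} in degrees $n\le d$ (which already captures all the torsion in $\H_d(\cG_{R^\times\acts R})$), are enough to upgrade to the desired rational isomorphism in all degrees.
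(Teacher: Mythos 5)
Your overall route is the paper's: establish that $\cG_{R^\times\acts R}$ is minimal and has comparison (via pure infiniteness from \cite{BruceLi2}), quote the rational result of \cite{Li:TFG}, and then read off the answer from Theorem~\ref{thm:gpdhomology}. The final groups you list are correct. However, the key input is mis-stated. What \cite[Corollary~C]{Li:TFG} gives for a minimal ample groupoid with comparison is that $\H_*(\fg{\cG_{R^\times\acts R}},\Qz)$ is the rational homology of an infinite loop space, i.e.\ the free graded-commutative $\Qz$-algebra on $\bigoplus_{n\geq 1}\H_n(\cG_{R^\times\acts R})\otimes\Qz$ --- not a degreewise isomorphism $\H_n(\fg{\cG},\Qz)\cong\H_n(\cG)\otimes\Qz$. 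The degreewise statement is false for general groupoids (e.g.\ rational homology concentrated in a single even degree forces nonzero classes in all multiples of that degree on the full-group side). In the present situation the distinction happens not to matter: below degree $d$ everything vanishes, in degree $d$ only the generators $\Qz$ (totally imaginary case) or nothing (real case) appear since products live in degrees $\geq 2d$, and in every degree $>d$ the generator space is already $\Qz^{\oplus\infty}$, so adding products does not change the isomorphism type. You should carry out exactly this small check with the free graded-commutative algebra rather than asserting the degreewise isomorphism.

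Your fallback paragraph does not repair this and should be dropped: the rational HK statement of \cite{DW} compares groupoid homology with the K-theory of $C^*_r(\cG)$ and says nothing about the homology of $\fg{\cG}$, and the low-degree integral comparison from \cite{Li:TFG} used in Theorem~\ref{thm:TFGhomology} (which requires the vanishing $\H_n(\cG)=0$ for $n<d$ and only reaches degree $d$) cannot be bootstrapped to all degrees by an LHS spectral sequence argument. With the citation corrected as above, the argument is precisely the paper's proof.
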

\begin{proof}
As explained in the proof of Theorem~\ref{thm:TFGhomology}, the groupoid $\cG_{R^\times\acts R}$ is minimal and has comparison.
 The statement now follows from Theorem~\ref{thm:gpdhomology} and \cite[Corollary~C]{Li:TFG}.
\end{proof}

\subsection{Comparison of K-theory and groupoid homology}
In this section, we study some spectral sequences associated to the dynamical system $\mathscr{A} \rtimes \mathscr{S} \acts \overline{\mathscr{A}}$. 
As a consequence, we get an isomorphism between groupoid homology and K-theory of the reduced crossed product.
This section recovers the computation of Cuntz--Li in \cite[Section 5]{CL} and Li--L\"{u}ck in \cite[Theorem 1.2]{LiLuck} without using the Cuntz--Li duality of finite and infinite adeles from \cite{CL}.

A key ingredient is the use of Raven's Chern character. In Raven’s thesis \cite{Raven}, for a countable discrete group $G$, two $G$-equivariant bivariant homology theories, namely 
\[
    \mathrm{KK}_*^G((X,A),Y), \quad \text{and} \quad  \mathrm{HH}^*_G((X,A), Y), 
\]
are considered. Here, we may take $(X,A)$ to be a pair of $G$-CW-complexes (a $G$-CW-complex $X$ together with a $G$-subcomplex $A$), and $Y$ to be a locally compact $G$-space.
Moreover, Raven's Chern character
\[
    \mathop{\widehat{\mathrm{ch}}_R} \colon \mathrm{KK}_*^G((X,A),Y) \to \widehat{\mathrm{HH}}{}^*_G((X,A), Y), \quad 
    \widehat{\mathrm{HH}}{}_G^{*} ((X,A) , Y) \coloneqq \bigoplus_{[\gamma ]   \in  \mathcal{C}(G_{\rm tor}) }  \mathrm{HH}_{Z(\gamma )}^{*} ((X^\gamma , A^\gamma)  ,  Y^\gamma  ). 
\]
is constructed, and is proved to be a natural isomorphism after tensoring with $\mathbb{C}$. Here, $\mathcal{C}(G_{\rm tor})$ denotes the set of conjugacy classes of torsion elements in $G$, $Z(\gamma) \coloneqq \{ g \in G \mid g\gamma = \gamma g\}$ denote the centralizer group of $\gamma$, and $X^\gamma \coloneqq \{ x \in X \mid \gamma \cdot x=x\}$ denote the $\gamma$-fixed point subspace. 

This Chern character have various good functorial and natural properties. 
Moreover, Deeley--Willett \cite{DW} proved that, by taking the universal proper $G$-space $\underline{E}G$ (cf.\ Remark \ref{rmk:EG} below) as the first entry, there are isomorphisms
\begin{align}
    \begin{split}
    \mathrm{KK}^G_*(\underline{E}G,Y) \cong {}&{} \mathrm{K}_*(C_0(Y) \rtimes G), \\
    \widehat{\mathrm{HH}}{}_G^*(\underline{E}G,Y) \cong {}&{} \mathrm{H}_{-*}(\widehat{Y} \rtimes G\,; \mathbb{C}) \cong  \bigoplus_{[\gamma] \in \mathcal{C}(G_{\mathrm{tor}})}\H_{-*}(Y^\gamma \rtimes Z(\gamma); \mathbb{C}),
    \end{split}
\label{eqn:Raven.BC}
\end{align}
under a certain assumption as follows. 
First, the above isomorphism in K-theory is given by the Baum--Connes isomorphism under the assumption that $G$ satisfies the property $\gamma=1$ in the sense of \cite[Section 5]{Kasparov} and \cite[Definition 8.6]{MN}, e.g.\ if $G$ satisfies the Haagerup property \cite{HK}. 
Indeed,  $\mathrm{KK}^G_*(\underline{E}G,Y) \cong \varinjlim_{Z \subset \underline{E}G}\mathrm{KK}^G_*(C_0(Z), C_0(Y))$ is the domain of the Baum--Connes assembly map. 
Meanwhile, the second isomorphism in homology is proved in \cite[Proposition 2.16]{DW} under a certain assumption on $Y$ that is satisfied if $Y$ is totally disconnected. 
In conclusion, the isomorphism of $\mathop{\widehat{\mathrm{ch}}{}_R}$ after tensoring with $\mathbb{C}$ is nothing else than the rational HK isomorphism. 

For the reader’s convenience, we collect in Appendix \ref{section:Raven} the results needed in this paper that are either written in \cite{Raven,DW} or basic but whose proofs are not written down explicitly in the literature.

\subsubsection{Comparison of Lyndon--Hochschild--Serre type spectral sequences}
We will compare the spectral sequences of bivariant K-theory and homology groups, whose $E_2$-pages agree with that of the Lyndon--Hochschild--Serre spectral sequence, via Raven's Chern character. 
In this subsubsection, let $G = N \rtimes \Gamma$, where $N$ and $\Gamma$ are countable groups such that $\Gamma$ is torsion-free. 

\begin{remark}\label{rmk:EG}
For s countable discrete group $G$, $\underline{E}G$ denotes the universal space for proper $G$-actions in the sense of \cite[Definition 2.3]{Luck} (in which it denotes $\underline{J}G$). 
Such a $G$-space is characterized by a universal property uniquely up to $G$-homotopy equivalence. We recall some properties of $\underline{E}G$. 
\begin{itemize}
    \item[(i)] There is a proper $G$-CW-complex that models $\underline{E}G$ (\cite[Lemma 3.3]{Luck}, see also \cite[Definition 1.8]{Luck}). 
    \item[(ii)] A proper $G$-CW-complex models $\underline{E}G$ if and only if its $K$-fixed point subspace is contractible for any finite subgroup $K$ (\cite[Theorem 1.9]{Luck}). 
    \item[(iii)] By (ii), the universal free proper $G$-space $EG$ has the same model as $\underline{E}G $ if $G$ is torsion-free. 
    \item[(iv)] By (ii), for a subgroup $H \leq G$, a proper $G$-CW-complex that models $\underline{E}G$ also models $\underline{E}H$ by restricting the $G$-action to $H$.
\end{itemize}
\end{remark}

Let $X = \underline{E}G$, and let $Y$ be a locally compact $G$-space. 
In the next subsubsection, we will apply the theory to the case that $\Gamma = \mathscr{S}$ and $N = \mathscr{A}$ are the groups as in the notations of Section \ref{sec:algactions} and $Y = \overline{\mathscr{A}}$ as in the notation of Section \ref{sec:algactions}.

Let us consider a simplicial set $\mathbf{E}\Gamma  $, whose $k$-simplices are $(\mathbf{E}\Gamma)_k = \Gamma^{k+1}$ and the face maps are given by 
\[
    {d}_i \colon \Gamma^{k+1} \to \Gamma^k, \quad {d}_i(g_0,\cdots, g_k) = (g_0,\cdots, g_ig_{i+1}, \cdots, g_k) \text{\ \ for $0 \leq i \leq k-1 $}
\]
and ${d}_{k}(g_0,\cdots,g_k)=(g_0,\cdots,g_{k-1})$ (we omit the definition of the degeneracy map since it is not used later).
Let $E\Gamma$ denote the fat geometric realization $E\Gamma = \| \mathbf{E}\Gamma \|$ of $\mathbf{E}\Gamma $, which is $\Gamma$-equivariantly homotopy equivalent to the standard geometric realization $|\mathbf{E}\Gamma |$ (see e.g.~\cite[Appendix A]{Segal}), and let $E_n\Gamma$ denote the $n$-skeleton of $E\Gamma$. More explicitly, 
\[ 
    E_n\Gamma = \bigg(\bigsqcup_{0 \leq k \leq n} \Gamma^{k+1} \times \Delta_k\bigg) /( \text{$(d_i (s_0,\cdots,s_k), (t_0,\cdots,t_{k-1})) \sim ((s_0,\cdots,s_k), \delta_i (t_0,\cdots,t_{k-1}))$}),
\]
where $\Delta_k \coloneqq \{ (t_0,\cdots,t_k) \in \mathbb{R}^{k+1}_{\geq 0} : \sum t_i =1\}$ and
\begin{align*}
    \delta_i \colon \Delta_{k-1} \to \Delta_{k}, \quad \delta_i (t_0,\cdots, t_{k-1}) = (t_0,\cdots, t_{i-1},0,t_i, \cdots , t_{k-1}) \text{\ \ for $0 \leq i \leq k $}.
\end{align*} 
We remark that the canonical inclusion
\[
    \bigsqcup_{0 \leq k \leq n-1} \Gamma^{k+1} \times \Delta_k \hookrightarrow \bigsqcup_{0 \leq k \leq n} \Gamma^{k+1} \times \Delta_k
\]
induces a bijective map from $E_{n-1}\Gamma$ to the $(n-1)$-skeleton of $E_n\Gamma$ and 
\begin{align}
    E_n \Gamma \setminus E_{n-1}\Gamma = \Gamma^{n+1} \times \mathop{\mathrm{int}} \Delta_n.\label{eqn:EG.cell}
\end{align}
The group $\Gamma$ acts on $\Gamma^{k+1}$ by $g\cdot (g_0,\cdots,g_k) = (gg_0,g_1,\cdots,g_k)$. Since ${d}_i$ is $\Gamma$-equivariant, each $E_n\Gamma$ is a free proper $\Gamma$-CW-complex. Indeed, $E\Gamma$ is contractible, and hence models the universal free proper $\Gamma$-space.

Therefore, if a $G$-CW-complex $\underline{E}G$ models the universal example of free proper $G$-spaces, so does the direct product $\underline{E}G \times E\Gamma$ with another $G$-space $E\Gamma$ whose $\gamma$-fixed point subspace $(E\Gamma)^\gamma =E\Gamma$ is contractible for any $\gamma \in G_{\mathrm{tor}}$. 
Set 
\begin{align}
    \begin{split}
    \mathrm{DK}_{pq}^1 \coloneqq & \mathrm{KK}_{p+q}^{G } (\underline{E}G \times E_p\Gamma  , Y),\\
    \mathrm{EK}_{pq}^1 \coloneqq & \mathrm{KK}_{p+q}^{G } (\underline{E}G \times (E_p\Gamma, E_{p-1}\Gamma) , Y),\\
    \widehat{\mathrm{DH}}{}_{pq}^1 \coloneqq & \widehat{\mathrm{HH}}{}^{-p-q}_{G} (\underline{E}G \times E_p\Gamma  , Y),\\
    \widehat{\mathrm{EH}}{}_{pq}^1 \coloneqq & \widehat{\mathrm{HH}}{}^{-p-q}_{G } (\underline{E}G \times (E_p\Gamma, E_{p-1}\Gamma) , Y).
    \end{split}\label{eqn:exact.couple.LHS}
\end{align}
Then the long exact sequences for pairs $(\underline{E}G \times E_p\Gamma, \underline{E}G \times  E_{p-1}\Gamma)$ (\cite[Propositions 4.6.8, 6.5.6]{Raven}) make both $(\mathrm{DK}_{pq}^1 ,\mathrm{EK}_{pq}^1)$ and $(\widehat{\mathrm{DH}}{}_{pq}^1 ,\widehat{\mathrm{EH}}{}_{pq}^1)$ exact couples, which associate the homological spectral sequences. Moreover, by \cite[Corollary 7.3.4]{Raven}, Raven's Chern character gives a morphism of exact couples
\begin{align}
    \mathop{\mathrm{ch}_R^e} \colon (\mathrm{DK}_{pq}^1 ,\mathrm{EK}_{pq}^1) \to \bigoplus_{k \in \mathbb{Z}_{\geq 0}} (\widehat{\mathrm{DH}}{}_{p+2k,q}^1 , \widehat{\mathrm{EH}}{}_{p+2k,q}^1). \label{eqn:comparison.LHS}
\end{align} 

\begin{lemma}
    Assume that $E\Gamma$ is modeled by a finite dimensional $\Gamma$-CW-complex. Then, the spectral sequences associated to the exact couples \eqref{eqn:exact.couple.LHS} converges to $\mathrm{KK}_{*}^{G } (E\Gamma \times \underline{E}G  , Y) \cong \mathrm{KK}_{*}^{G } (\underline{E}G  , Y)$ and $\widehat{\mathrm{HH}}{}^{-*}_{G} (E\Gamma \times \underline{E}G  , Y) \cong \widehat{\mathrm{HH}}{}^{-*}_{G} ( \underline{E}G  , Y)$ respectively. 
\end{lemma}
\begin{proof}
By the finite dimensionality of a model of $E\Gamma$, we have $E^2_{pq} \cong 0$ for $p > \dim E\Gamma$. 
Thus, the morphisms $i_2^{pq} \colon \mathrm{DK}^2_{pq} \to \mathrm{DK}^2_{p+1,q-1}$ and $i_2^{pq} \colon \widehat{\mathrm{DH}}{}^2_{pq} \to \widehat{\mathrm{DH}}{}^2_{p+1,q-1}$ of the derived exact couples associated to \eqref{eqn:exact.couple.LHS} are isomorphisms if $p \geq \dim X$. 
Hence, by \cite[Corollary 3.25]{McCleary}, they converge to $\mathrm{KK}_{*}^{G } (X \times \underline{E}G  , Y)$ and $\widehat{\mathrm{HH}}{}^{*}_{G} (X \times \underline{E}G  , Y)$ respectively. 
\end{proof}

The following lemma follows the line of \cite[Proposition 3.2]{PY}.
Here, we regard $X \times \Gamma$ as a $G$-space by $hn \cdot (x,g) = (hn \cdot x, hg)$. Note that $[gn,x]  \mapsto  (gn x, g)$ gives an isomorphism of $G$-spaces $G \times _N X \cong \Gamma \times X \cong  X \times \Gamma $. This $G$-space is equipped with another right action commuting with the left $G$-action, say $X \times \Gamma \curvearrowleft \Gamma : \lambda$, given by $\lambda_h(x,g) \coloneqq (x,gh)$. The induced action on $G \times_N X$ is $\lambda_h([gn,x])  = [gnh,h^{-1}x]$. 
Via the isomorphisms given by Lemma \ref{lemma:induction.tKK.HH}, the groups $\mathrm{KK}^{N}_n(X,Y) \cong \mathrm{KK}_n^G(G \times_N X,Y)$ and $\widehat{\mathrm{HH}}{}_{N}^n(X,Y) \cong \widehat{\mathrm{HH}}{}_G^n(G \times_N X,Y)$ are equipped with the structure of right $\mathbb{Z}[\Gamma]$-modules, which is explicitly given by $g \mapsto \mathrm{KK}_n^G(\lambda_g,Y)$ and $g \mapsto \widehat{\mathrm{HH}}{}^n_G(\lambda_g,Y)$. 
By definition, these actions are compatible via Raven's Chern character. 
As in the definition of a model of $E\Gamma$ below \eqref{eqn:EG.cell}, $\Gamma$ acts on $\Gamma^{k+1}$ by $g \cdot (g_0,\cdots,g_k) = (gg_0,g_1,\cdots,g_k)$ for $g ,g_0,\cdots,g_k\in \Gamma$. This induces the $G$-action via the quotient homomorphism $G \to \Gamma$. This induces the left $\mathbb{Z}[\Gamma]$-module structure on $R[\Gamma^{k+1}]$, for a commutative ring $R$, explicitly given by $g \cdot \delta_{g_0,\cdots,g_k} = \delta_{gg_0,g_1,\cdots,g_{k}}$.    

\begin{lemma}\label{lem:KK.HH.LHS}
    For any proper  $G$-CW-complex $X$ and any $G$-space $Y$, we have isomorphisms
    \begin{align*}
        \mathrm{KK}^G_n(X \times \Gamma^{k+1}, Y) \cong {}&   \mathrm{KK}^{N}_n(X,Y) \otimes_{\mathbb{Z}[\Gamma]}  \mathbb{Z}[\Gamma^{k+1}] , \\
        \widehat{\mathrm{HH}}{}_G^n(X \times \Gamma^{k+1}, Y) \cong {}&   \widehat{\mathrm{HH}}{}_{N}^n(X,Y) \otimes_{\mathbb{C}[\Gamma]} \mathbb{C}[\Gamma^{k+1}]
    \end{align*} 
    that is compatible with Raven's Chern character.
    Moreover, for any $\Gamma$-equivariant map $f \colon \Gamma^{k+1} \to \Gamma^{l+1}$, the induced maps $\mathrm{KK}^G(f,Y)$ and $\widehat{\mathrm{HH}}{}^n_G(f,Y)$ are identical with $\id \otimes_{\mathbb{Z}[\Gamma]} f_* $ and $\id \otimes_{\mathbb{C}[\Gamma]} f_* $ respectively.
\end{lemma}
\begin{proof}
    In the proof, let the pair $(\mathrm{F}^G, R)$ be either $(\mathrm{KK}_n^G, \mathbb{Z})$ or $(\widehat{\mathrm{HH}}{}_G^n ,\Cz)$. Let $\mu_{g_0,\cdots,g_k} \colon \Gamma \to \Gamma^{k+1}$ denote the $\Gamma$-equivariant map $g \mapsto (gg_0,g_1,\cdots,g_n)$. 
    Then the map
    \begin{align*}
    \mathrm{F}^{N}(X,Y) \otimes R[\Gamma^{n+1}] \to {}& \mathrm{F}^{G} (X \times \Gamma^{k+1}  , Y), \\
    \delta_{g_0,\cdots,g_k} \otimes \xi \mapsto {}& \mathrm{F}^G(\id_X \times \mu_{g_0,\cdots,g_k}, Y) (i_{N}^{G} \xi),
    \end{align*}
    factors through the quotient $  \mathrm{F}^{N}(X,Y) \otimes_{R[\Gamma]} R[\Gamma^{k+1}]$ since 
    \[ 
        \mathrm{F}^{G}(\mu_{gg_0,g_1,\cdots,g_k} \times \id_X ,Y) (i_{N}^G\xi) = \mathrm{F}^{G}(\mu_{g_0,g_1,\cdots,g_k} \times \id_X,Y) \circ  \mathrm{F}^{G}(\lambda_g,Y) (i_{N}^G \xi). 
    \]
    The induced map decomposes to the direct product of 
    \[
    \mathrm{F}^G(X \times \Gamma  , Y) \otimes_{R[\Gamma]} R[\Im \mu_{g_0,\cdots,g_k}]
    \to 
    \mathrm{F}^G(X \times \Im \mu_{g_0,\cdots,g_k},Y),
    \]
    each of which is an isomorphism by definition. Moreover, if $f \colon \Gamma^{n+1} \to \Gamma^{m+1}$ is an $\Gamma$-equivariant map, then   
\[
    \mathrm{F}^G(\id_X \times f ,Y ) \circ \mathrm{F}^{G}(\id_X \times \mu_{(g_0,g_1,\cdots,g_n)},Y) (i_{N}^G\xi) = \mathrm{F}^{G}(\id_X \times \mu_{f(g_0,g_1,\cdots,g_n)},Y)(i_{N}^G\xi),
\]
    which means that $\mathrm{F}^G(f \times \id_X,Y) $ is identified with $f_* \otimes_{R[\Gamma]} \id$. Finally, by Lemma \ref{lemma:induction.tKK.HH}, the induced maps are compatible with Raven's Chern character. 
\if0
    For either $\mathrm{F} = \mathrm{KK}$ and $R=\Zz$ or $\mathrm{HH}$ and $R=\Cz$, the isomorphisms 
    \[
    \mathrm{F}^G(X \times \Gamma^{n+1},Y) \cong \mathrm{F}^{N} (X \times \Gamma^n, Y) \cong \mathrm{F}^{N} (X, Y) \otimes R[\Gamma^{n}] \cong \mathrm{F}^{N} (X, Y) \otimes_{R[\Gamma]} R[\Gamma^{n+1}]
    \]
    follow from Lemma \ref{lemma:induction.tKK.HH} and \eqref{eqn:disjoint.union}. 
    The functoriality with respect to an $\Gamma$-map follows from \eqref{eqn:nonproper.functoriality.tKK.HH}. This identification makes the diagram
    \[ 
    \xymatrix{
    \mathrm{F}^G(X \times \Gamma^{n+1} , Y) \ar[r] \ar[d]^{r_G^{N}} & \mathrm{F}^{N} (X,Y) \otimes_{R[\Gamma]} R[\Gamma^{n+1}] \\
    \mathrm{F}^{N} (X \times \Gamma^{n+1} , Y) \ar[r]^{\cong } & \mathrm{F}^{N}(X,Y) \otimes_R R[\Gamma^{n+1}] \ar@{->>}[u]
    }
    \]
    commutes, where $r_G^{N}$ denotes the forgetful map of the action of $\Gamma$. Now, the maps $\mathrm{F}^G(f,Y)$ and $\id \otimes f_*$ are identified with respect to this isomorphism since $\mathrm{F}^{N}(f,Y) = \id \otimes_R f_*$.
\fi
\end{proof}

\begin{remark}\label{rmk:action}
    We give an explicit description of the above $\Gamma$-action on $\mathrm{KK}^N_n(X,Y)$ and $\widehat{\mathrm{HH}}{}_N^n(X,Y)$ defined above, using the explicit descriptions of the isomorphisms $i_N^G$ and $r_G^N$ in the proof of Lemma \ref{lemma:induction.tKK.HH}. Let $\tau \colon \Gamma \curvearrowright X$ and $\nu \colon \Gamma \curvearrowright Y$ denote the $\Gamma$-actions.  
    
    First, the right action on $\mathrm{KK}^N_n(X,Y)$ is given by
    \begin{align*}
        g \cdot [M,f,\xi] ={}&{} r_G^N \circ \mathrm{KK}^G_*(\lambda_g,Y) \circ i_N^G [M,f,\xi] \\
        ={}&{} r_G^N [G \times_N M, \lambda_g \circ (\id_G \times f), \operatorname{Ind}_N^G \xi ] \\
        ={}&{} [g^{-1}N \times_N M,  \lambda_g \circ (\id_G \times f)|_{g^{-1}N \times_N M} ,   \operatorname{Ind}_N^G \xi |_{g^{-1}N \times_N (M \times Y)}]\\
        ={}&{} [{}_{g^{-1}}M,\tau_{g^{-1}} \circ f, (\id_M \times \nu_g)^*\xi], 
    \end{align*}
    where we write ${}_{g^{-1}}M$ for the manifold $M$ which is equipped with the $N$-action given by $(n,x) \mapsto gng^{-1}x$. The last isomorphism is induced from the $N$-equivariant diffeomorphism ${}_{g^{-1}}M \cong g^{-1}N \times _N M$ given by $x \mapsto [g^{-1},x]$. 
    Here we used $\lambda_g \circ (\id_G \times_Nf)[hn,x] = [hng,g^{-1}f(x)]$, and hence $(\lambda_g \circ (\id_G \times_Nf))^{-1}(N \times_N X) = g^{-1}N \times_N X$. 
    
    Second, we determine the right $\Gamma$-action on $\widehat{\mathrm{HH}}{}_N^n(X,Y)$. 
    Let $P_{X}$ and $P_Y$ be projective resolutions of $R\Gamma_c\mathbb{C}_{X}$ and $R\Gamma_c \mathbb{C}_Y$ as complexes of $\mathbb{C}[N]$-modules.
    By \cite[Example 6.5.3 (a)]{Raven}, the $G$-actions give rise to $[\nu_g] \in \End_{\mathbf{K}^+(\mathbb{C}[N])} (P_Y)$ and $[\tau_g] \in \End_{\mathbf{K}^+(\mathbb{C}[N])} (P_{X})$ respectively. 
    Since the right $\Gamma$-action $\lambda$ on $G \times_N X$ is given by $\lambda_g[hn,x]=[hng,g^{-1}x]$, by regarding $\mathbb{C}[G] \otimes_{\mathbb{C}[N]} P_{X}$ as a projective resolution of $R\Gamma_c\mathbb{C}_{G \times_N X}$, we have $\widehat{\mathrm{HH}}{}_G^*(\lambda_g,Y) = ([R_g] \otimes [\tau_{g^{-1}}] ) \circ \cdot $, where $[R_g](\delta_h) = \delta_{hg}$. 
    For $\varphi \in \Hom_{\mathbf{K}^+(\mathbb{C}[N])}(P_{X} ,P_Y)$, we have
    \begin{align*}
    g \cdot \varphi \coloneqq {}&{} r_G^N \circ \widehat{\mathrm{HH}}{}_G^*(\lambda_g,Y) \circ i_N^G (\varphi) = \eta_{X \setminus A } \circ ([R_g]\otimes [\tau_{g^{-1}}]) \circ (\id_{\mathbb{C}[G]} \otimes \varphi)  \circ \mu_{Y}
    \end{align*}
    with respect to Raven's notation (Remark \ref{rmk:Raven.opposite}). For $s \in P_X^m$, we have
    \begin{align*}
    {}&{}(\eta_{X \setminus A } \circ ([R_g] \otimes [\tau_{g^{-1}}]) \circ (\id_{\mathbb{C}[G]} \otimes \varphi ) \circ \mu_{Y})(s) \\
    = {}&{} (([R_g] \otimes [\tau_{g^{-1}}]) \circ (\id \otimes \varphi) \circ \mu_Y)(\delta_e \otimes s) =  ((\id \otimes \varphi) \circ \mu_Y)(\delta_{g} \otimes [\tau_{g^{-1}}]s)\\
    ={}&{}  \mu_{Y}(\delta_{g} \otimes (\varphi \circ [\tau_{g^{-1}}](s))) = ([\tau_{g^{-1}}]\circ \varphi \circ [\nu_g]) (s),
\end{align*}
    This shows that $g \cdot \varphi = [\tau_{g^{-1}}]\circ \varphi \circ [\nu_g]$. 

\if0
    For the latter use in Remark \ref{rmk:action.same}, we give an observation. If $N$ is torsion-free, and has a subgroup $N'$ that is invariant under the action of $s \in \Gamma$, then the isomorphism 
    \[
        \mathrm{HH}^*_N(\alpha,N \times_{N'}Y) \circ i_{N'}^N \circ j_* \colon  \mathrm{HH}_{N'}^{*}(X, Y) \to \mathrm{HH}_N^* (X, N \times_{N'}Y)
    \]
    given in Lemma \ref{lem:induction.2} is $\Gamma$-equivariant for any proper $N \rtimes \Gamma$-CW-complex $X$ and any compact $N' \rtimes \Gamma$-space $Y$. To see this, recall that it is computed in \eqref{eqn:induction2.HH} that
    \[
    \mathrm{HH}^*_N(\alpha,N \times_{N'}Y) \circ i_{N'}^N \circ j_*(\varphi) = [\alpha] \circ (\id \otimes (\varphi \circ j_*)) \circ \mu_{N \times _{N'} Y}
    \]
    for $\varphi \in \mathrm{HH}_{N'}^*(X,Y)$. Take a c-soft resolutions $S_{X}^\bullet $ of $\mathbb{C}_X$ and $S_{N \times_N' Y}$ of $\mathbb{C}_{N \times_{N'} Y}$ as $N \rtimes \Gamma$-sheaves, and regard $S_{Y}^\bullet \coloneqq S_{N \times_{N'} Y}|_{Y}$ as a $c$-soft resolution of $\mathbb{C}_Y$ as a $N' \rtimes \Gamma$-sheaf. Then, we have that $\alpha^*$, $j_*$, and $\mu_{N \times_{N'} Y}$ all commutes with $\tilde{\tau}_g$ or $\tilde{\nu}_g$. That is, we have
    \begin{align*}
    {}&{}\tilde{\tau}_{g^{-1}} \circ \alpha^* \circ (\id \otimes (\varphi \circ j_*)) \circ \mu_{N \times _{N'} Y} \circ \tilde{\nu}_g \\
    ={}&{} [\alpha] \circ (g^{-1} \otimes \tau_{g^{-1}}) \circ (\id_{\mathbb{C}[N]} \otimes  \varphi \circ j_*) \circ (g \otimes \tilde{\nu}_g) \circ \mu_{N \times _{N'} Y} \\
    ={}&{} [\alpha] \circ (g^{-1} \circ \id_{\mathbb{C}[N]} \circ g) \otimes (\tilde{\tau}_{g^{-1}} \circ \varphi \circ (\tilde{\nu}_g|_{Y}) ) \circ j_* \circ \nu_{N \times_{N'} Y}.
    \end{align*}
    We remark that, the same equivariance is still valid even when the subgroup $N'$ is not a $\Gamma$-invariant, if $s \in \Gamma$ satisfies $sNs^{-1} \subset N$. This is the case that will be used in Remark \ref{rmk:action.same}.
\fi
\end{remark}

\begin{lemma}\label{lem:LHS.homology}
    The $E^2$-page $\widehat{\mathrm{EH}}{}^2_{pq}$of the spectral sequence  associated to the exact couple $(\widehat{\mathrm{DH}}{}_{pq}^1 , \widehat{\mathrm{EH}}{}_{pq}^1)$ is isomorphic to $ \H_p(\Gamma, \widehat{\mathrm{HH}}{}^{-q}_N(\underline{E}G,Y))$.
\end{lemma}
\begin{proof}
    Let $K_p\coloneqq \{ (t_0,\cdots, t_p) \in \Delta_p : t_i \geq 1/(2p+2) \text{ for any $i$} \} \cong \Delta_p$ and let $U_p \coloneqq E_{p}\Gamma \setminus (\Gamma^{p+1} \times K_p)$.  
    By \eqref{eqn:EG.cell}, we have that $\overline{U}_p$ is homotopy equivalent to $E_{p-1}\Gamma$.
    By the excision axiom for the pair $(E_p\Gamma \times \underline{E}G ,  \overline{U}_p \times \underline{E}G)$ and an open subset $ U_p \times \underline{E}G \subset \overline{U}_p \times \underline{E}G$, we have that
    \begin{align}
    \begin{split}
    \widehat{\mathrm{EH}}{}^1_{pq} \cong {}&  \widehat{\mathrm{HH}}{}^{-p-q}_{G} (\underline{E}G \times \Gamma^{p+1} \times (K_p, \partial K_p), Y )\\
    \cong {}& \widehat{\mathrm{HH}}{}^{-q}_{N \rtimes \Gamma} (\underline{E}G \times \Gamma^{p+1}, Y )\\
    \cong {}& \widehat{\mathrm{HH}}{}^{-q}_{N} (\underline{E}G, Y )  \otimes_{\mathbb{C}[\Gamma]}  \mathbb{C}[\Gamma^{p+1}] .
    \end{split}\label{eqn:E1page.tensor}
    \end{align}
    The derivation $d^1_{pq} \colon \widehat{\mathrm{EH}}{}^1_{p,q} \to \widehat{\mathrm{EH}}{}^1_{p-1,q}$ is the boundary map $\delta$ of the long exact sequence for the triple $(E_{p}\Gamma,E_{p-1}\Gamma, E_{p-2}\Gamma)$ (see e.g. \cite[Proposition II.3.2]{Rudyak}). 
    By definition of $E_n\Gamma$, the following diagram commutes;
    \[
    \xymatrix@C=8ex{
    \widehat{\mathrm{HH}}{}^{-p-q}_{G } (\underline{E}G \times  (E_p\Gamma , E_{p-1}\Gamma) , Y) \ar[r]^\delta  \ar[d]^\cong  & \widehat{\mathrm{HH}}{}^{-p-q+1}_{G } (\underline{E}G \times  (E_{p-1}\Gamma ,  E_{p-2}\Gamma) , Y) \ar[d]^\cong \\
    \widehat{\mathrm{HH}}{}^{-q}_{G } (\underline{E}G \times \Gamma^{p+1}, Y) \ar[r]^{\sum_i (-1)^i (({d}_i \times \id_{\underline{E}G})_*) } \ar[d]^\cong  & \widehat{\mathrm{HH}}{}^{-q}_{G } (\underline{E}G \times \Gamma^{p}, Y) \ar[d]^\cong \\
    \widehat{\mathrm{HH}}{}^{-q}_{N} (\underline{E}G, Y) \otimes_{\mathbb{C}[\Gamma]} \mathbb{C}[\Gamma^{p+1}]  \ar[r]^{\sum_i (-1)^i ({d}_i)_* \otimes \id) } & \widehat{\mathrm{HH}}{}^{-q}_{N} (\underline{E}G, Y)  \otimes_{\mathbb{C}[\Gamma]} \mathbb{C}[\Gamma^{p+1}] .
    }
    \]
    Indeed, by a standard argument in cellular homology theory (see for example \cite[Subsection 2.2]{Hatcher}),  the restriction of the boundary map to the each component, namely 
    \begin{align*}
    \delta \colon \widehat{\mathrm{HH}}{}_G^{-p-q}(\underline{E}G \times{}&{} (\Gamma \cdot (g_0,\cdots,g_n)) \times (\Delta_{p},\partial \Delta_p), Y) \\
    {}&{}\to \widehat{\mathrm{HH}}{}_G^{-p-q+1}(\underline{E}G \times (\Gamma \cdot (g_0,\cdots,\hat{g_i},\cdots,g_n)) \times (\Delta_{p-1}, \partial \Delta_{p-1}), Y),
    \end{align*}
    is given by $(-1)^i$ under a canonical identification of the domain and the range with $\widehat{\mathrm{HH}}{}_G^{-q}(\underline{E}G,Y)$, which verifies the commutativity of the top square. 
    The horizontal map below is nothing else than the differential of the group chain complex with coefficient in $\widehat{\mathrm{HH}}{}^{-q}_{N} (\underline{E}G, Y)$. 
\end{proof}

\begin{remark}\label{rmk:DW.free}
    Apply Lemma \ref{lem:LHS.homology} to the case where $N$ is trivial and $Y$ is totally disconnected, in which we have $\widehat{\mathrm{HH}}{}_\Gamma^* = \mathrm{HH}_\Gamma^*$. In this case, since $EG=E\Gamma$ is non-equivariantly contractible, the $E^1$-page concentrates on the $q=0$ row and $\widehat{\mathrm{EH}}{}^1_{p0} \cong \mathbb{C}Y \otimes_{\mathbb{C}[\Gamma]} \mathbb{C}[\Gamma^{p+1}]$. Thus, the spectral sequence collapses at the $E^2$-page, which yields $\mathrm{HH}^{-p}_\Gamma(E\Gamma , Y) \cong \mathrm{H}_p(\Gamma, \mathbb{C}Y)$. This proves the isomorphism \eqref{eqn:Raven.BC} in this case. This is in fact a simplification of the discussion in \cite[Appendix A]{DW} (in particular Lemma A.9) in a restricted setting. Viewing the isomorphism in this way makes the following two facts transparent. First, the isomorphism in \eqref{eqn:Raven.BC} is natural with respect to the second variable $Y$. That is, for a $\Gamma$-map $f \colon Y_1 \to Y_2$, we have
    \begin{align*}
    \xymatrix{
        \mathrm{HH}^{-*}_\Gamma (E\Gamma ,Y_2) \ar[r]^{\text{\eqref{eqn:Raven.BC}}} \ar[d]^{\mathrm{HH}^{-*}_\Gamma (E\Gamma, f)} & \mathrm{H}_*(\Gamma, \mathbb{C}Y_2) \ar[d]^{\mathrm{H}_*(\id_\Gamma, f^*)} \\
        \mathrm{HH}^{-*}_\Gamma (E\Gamma , Y_1) \ar[r]^{\text{\eqref{eqn:Raven.BC}}} & \mathrm{H}_*(\Gamma, \mathbb{C}Y_1). 
        }
    \end{align*}

Second, the above construction of the isomorphism \eqref{eqn:Raven.BC} enables us to compare the $\Gamma$-action considered in Remark \ref{rmk:action} with a natural one on $\mathrm{H}_*(N,\mathbb{C}Y)$.
    Here we assume that $N$ is torsion-free, $G\coloneqq N \rtimes \Gamma$, and $Y$ is a totally disconnected $N \rtimes \Gamma$-space. 
    Consider the modified version of the above spectral sequence, given by $\widehat{\mathrm{DH}}{}_{pq}^1 \coloneqq \widehat{\mathrm{HH}}{}_N^{-p-q}(\underline{E}G \times E_pN , Y)$ and $\widehat{\mathrm{EH}}{}^1_{pq} \coloneqq \widehat{\mathrm{HH}}{}_N^{-p-q}(\underline{E}G \times (E_pN , E_{p-1}N) , Y)$, which converges to $\widehat{\mathrm{HH}}{}_N^{-*}(\underline{E}G \times EN, Y) \cong \widehat{\mathrm{HH}}{}_N^{-*}(EN,Y)$. 
    By letting $\Gamma$ act on $EN$ by a canonical action that preserves each skeleton, and on $\underline{E}G \times EN$ diagonally, we obtain a $\Gamma$-action on this exact couple. Since $\underline{E}G$ is non-equivariantly contractible, the $E^1$-page concentrates on the $q=0$ row and, by Lemma \ref{lem:KK.HH.LHS}, 
    \[
        \widehat{\mathrm{EH}}{}^1_{p0} \cong \widehat{\mathrm{HH}}{}^0_N(N^{p+1} \times \underline{E}G,Y) \cong \widehat{\mathrm{HH}}{}^{0}(\underline{E}G,Y) \otimes_{\mathbb{C}[N]}\mathbb{C}[N^{p+1}] \cong \mathbb{C}Y \otimes_{\mathbb{C}[N]}\mathbb{C}[N^{p+1}].
    \]
    Therefore, the $\Gamma$-action on $\widehat{\mathrm{HH}}{}_\Gamma^{-*}(\underline{E}G\times EN,Y)$ is induced from that on $\mathbb{C}Y \otimes_{\mathbb{C}[N]}\mathbb{C}[N^{*+1}]$. We now make this story more explicit.
    
    Let the $\Gamma$-action on $\mathbf{E}_pN = N^{p+1}$ be given by $\sigma_g(n_0,\cdots,n_p) \coloneqq (\sigma_g(n_0),\cdots,\sigma_g(n_p))$. This action is compatible with the structure maps of the simplicial set $\mathbf{E}N$. Consequently, it gives rise to a continuous action $\sigma \colon \Gamma \curvearrowright EN$ that preserves each skeleton $E_pN$. 
    Let $\tau$ denote the $G$-action on $\underline{E}G$. 
    By Remark \ref{rmk:action}, $\Gamma$ acts on this $E^1$-page by $[(\tau_g \times \sigma_g)^{-1}] \circ \cdot \circ [\alpha_g]$, which is identified with $[\sigma_g^{-1}] \circ \cdot \circ [\alpha_g]$ acting on $\widehat{\mathrm{HH}}{}_N^0(N^{p+1},Y)$ via the $N$-equivariant map $\pr_{N^{p+1}} \colon N^{p+1} \times \underline{E}G \to N^{p+1}$. Through the above isomorphism $\widehat{\mathrm{HH}}{}^0_N(N^{p+1} ,Y) \cong \mathbb{C}Y \otimes_{\mathbb{C}[N]}\mathbb{C}[N^{p+1}]$, this action corresponds to  
    \[
        g \cdot (\xi \otimes \delta_{n_0,\cdots, n_p}) = \alpha_{g}^* (\xi) \otimes \delta_{\sigma_{g^{-1}}(n_0), \cdots, \sigma_{g^{-1}}(n_p)}. 
    \]
    This action agrees with the $\Gamma$-action naturally induced on the standard resolution $C_\bullet(N,\mathbb{C}Y)$, namely the action used to define $\mathrm{H}_*(\sigma_g,\alpha_g)$ considered in Theorem \ref{thm1}. 
\end{remark}

\begin{remark}\label{rmk:action.commute.KK}
    We also prove that the $\Gamma$-action on $\mathrm{KK}^*_N(\underline{E}G,Y)$ is compatible with the canonical action on $\K_*(C^*_r(N \ltimes Y))$, induced by $\nu_g^* \rtimes \Ad(g^{-1}) \colon C_0(Y) \rtimes N \to C_0(Y) \rtimes N$, via the Baum--Connes isomorphism $\mu_N$. Notice that the map $[M,f,\xi] \mapsto [{}_{g^{-1}}M,\tau_{g^{-1}} \circ f , (\id _M\times \nu_g)^*\xi] $ is decomposed into the following three steps; 
    \[
    \mathrm{KK}_*^N(X,Y) \to \mathrm{KK}_*^N({}_{g^{-1}}X,{}_{g^{-1}}Y) \xrightarrow{\mathrm{KK}_*^N({}_{g^{-1}}X,\nu_g)} \mathrm{KK}_*^N({}_{g^{-1}}X,Y) \xrightarrow{\mathrm{KK}_*^N(\tau_g^{-1},Y)} \mathrm{KK}_*^N(X,Y),
    \]
    where the first map is given by $[M,f,\xi] \mapsto [{}_{g^{-1}}M,f,\xi]$. Now, the following diagram commutes; 
    \[
    \xymatrix@C=10ex{
    \mathrm{KK}_*^N(X,Y) \ar[r] \ar[d]^{\mu_N} & \mathrm{KK}_*^N({}_{g^{-1}}X,{}_{g^{-1}}Y) \ar[r]^{\mathrm{KK}_*^N({}_{g^{-1}}X,\nu_g)} \ar[d]^{\mu_N}& \mathrm{KK}_*^N({}_{g^{-1}}X,Y) \ar[r]^{\mathrm{KK}_*^N(\tau_g^{-1},Y)} \ar[d]^{\mu_N}& \mathrm{KK}_*^N(X,Y)\ar[d]^{\mu_N} \\
    \mathrm{K}_*(C^*_r(N \ltimes Y)) \ar[r] & \mathrm{K}_*(C^*_r(N \ltimes {}_{g^{-1}}Y)) \ar[r]^{(\nu_g^* \rtimes \id_N)_* } & \mathrm{K}_*(C^*_r(N \ltimes Y)) \ar@{=}[r] & \mathrm{K}_*(C^*_r(N \ltimes Y)).
    }    
    \]
    The commutativity of the middle and the right squares follows from basic properties of the assembly map $\mu_N$ (\cite[Proposition 5.3]{KasSka} for the middle, and \cite[pp.\ 178--179]{KasSka} for the right). For the left diagram, notice that the domain group $\mathrm{KK}^N_*(X,Y)$, the target group $\mathrm{K}_*(C^*_r(N\ltimes Y))$, and the assembly map $\mu_N$ between them depend only on the isomorphism class of the defining data $(N,X,Y)$. The first and second columns from the left correspond via the isomorphism of defining data
    \[
        (\Ad(g^{-1}),\id,\id)\colon (N,X,Y)\cong (N,{}_{g^{-1}}X,{}_{g^{-1}}Y).
    \]
    Therefore, the commutativity of the left diagram is a tautology. Moreover, since the two left horizontal maps are explicitly given by $[M,f,\xi]\mapsto[{}_{g^{-1}}M,f,\xi]$ and $(\id\rtimes \Ad(g^{-1}))\colon C_0(Y)\rtimes N\to C_0(Y)\rtimes N$, the two horizontal compositions give the action of $g \in \Gamma$ on $\mathrm{KK}_*^N(X,Y)$ and $\mathrm{K}_*(C^*_r(N \ltimes Y))$ respectively.
\end{remark}

\subsubsection{K-theory computation: torsion-free case}
Let us go back to the example coming from number fields. Let $R$ be the ring of algebraic integers in a number field $K$.
We again assume that $A = R$ and $\Zz_{>0} \subseteq S \subseteq R^\times$ is an abelian submonoid. 
Recall that $\mathscr{A} \coloneqq \varinjlim_{s \in S} \{ A,\sigma_s\}$ and $\mathscr{S} \coloneqq S^{-1}S$ as are introduced in Subsection \ref{sec:algactions}. 
We first work in the case that $S$ is, and hence $\mathscr{S}$ is, torsion-free. 
In this case, Raven's localized Chern character $\mathop{\mathrm{ch}_R^e}$ is an isomorphism after tensoring with $\mathbb{C}$.

\begin{theorem}[{\cite[Section 5]{CL}}]\label{theorem:Ktheory.CuntzLi}
    The K-theory of $C^*_r((\mathscr{A} \rtimes \mathscr{S}) \ltimes \overline{\mathscr{A}})$ is isomorphic to
    \[
    \K_*(C^*_r((\mathscr{A} \rtimes \mathscr{S}) \ltimes \overline{\mathscr{A}})) \cong 
        \begin{cases}
            \bigoplus_{k \in \mathbb{Z}}\lwedge_{\mathbb{Z}}^{*+2k} \mathscr{S}, & \text{if $\mathrm{sign}(S) = \{ 1\}$}, \\
            \bigoplus_{k \in \mathbb{Z}}\lwedge_{\mathbb{Z}}^{*+2k} \mathscr{S}^\perp \otimes_{\mathbb{Z}} (\mathbb{Z}/2\mathbb{Z}), & \text{if $\mathrm{sign}(S) = \{ \pm 1\}$}. 
        \end{cases}
    \]
    Here, in the latter case, $\mathscr{S}^\perp$ is a subgroup of $\mathscr{S}$ such that $\mathrm{sign}(\mathscr{S}^\perp)=\{1\}$ and $\mathscr{S}/\mathscr{S}^\perp \cong \mathbb{Z}$. 
\end{theorem}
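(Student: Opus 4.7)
The approach is to use Raven's localized bivariant Chern character to compare the Kasparov/ABC LHS-type spectral sequence
\[
E^2_{pq}(\K) = \H_p(\mathscr{S}, \K_q(C^*_r(\overline{\mathscr{A}} \rtimes \mathscr{A}))) \Rightarrow \K_{p+q}(C^*_r(\cG_{S \acts R}))
\]
with the homology LHS spectral sequence
\[
E^2_{pq}(\H) = \H_p(\mathscr{S}, \H_q(\overline{\mathscr{A}} \rtimes \mathscr{A})) \Rightarrow \H_{p+q}(\cG_{S \acts R})
\]
of Lemma~\ref{lem:LHS.homology}, and to transfer the $E^2$-collapse of the latter (Corollary~\ref{cor:numberfields}) to the former. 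Under the present torsion-free hypothesis on $G = \mathscr{A} \rtimes \mathscr{S}$, only the localized component $\mathrm{ch}_R^e$ of Raven's Chern character survives, and it induces an isomorphism of spectral sequences after $\otimes_\Zz \Cz$ via the comparison set up in \eqref{eqn:exact.couple.LHS}--\eqref{eqn:comparison.LHS}.

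By Proposition~\ref{prop:group.homology.LHS} with $\Cz$ coefficients, $E^2_{pq}(\H)$ is concentrated on the single line $q = d$, where it equals $\H_p(\mathscr{S}, \Cz_{\sign})$; in particular $E^r(\H)$ collapses at $E^2$. Transferring via $\mathrm{ch}_R^e$ and invoking Bott periodicity $\K_q = \K_{q+2}$, one finds that $E^2_{pq}(\K) \otimes \Cz$ is concentrated on the lines $q \equiv d \pmod 2$ (two lines per $\Zz/2$-period) and that all higher differentials vanish rationally. Consequently
\[
\K_n(C^*_r(\cG_{S \acts R})) \otimes_\Zz \Cz \cong \bigoplus_{p \,\equiv\, n-d \,(2)} \H_p(\mathscr{S}, \Cz_{\sign}).
\]

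To upgrade this to an integral statement, treat the two cases separately. If $\sign(S) = \{1\}$, then $\Zz_{\sign}$ is the trivial module and $\H_p(\mathscr{S}, \Zz) = \lwedge^p \mathscr{S}$; I compute $\K_*(C^*_r(\overline{\mathscr{A}} \rtimes \mathscr{A}))$ by the same comparison technique applied to the subgroup $\mathscr{A}$ (combined with Theorem~\ref{thm:rings}) and verify that the $\mathscr{S}$-module structure keeps the K-theory $E^2$-page torsion-free, so that rational collapse forces integral collapse and $\K_*(C^*_r(\cG_{S \acts R})) \cong \lwedge^*_\Zz \mathscr{S}$. If $\sign(S) = \{\pm 1\}$, choose $a \in S$ with $\sign(a) = -1$ and split $\mathscr{S} = \mathscr{S}^\perp \times \langle a \rangle$ with $\mathscr{S}^\perp = \ker(\sign)$. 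Applying the sign-trivial case to $\mathscr{S}^\perp$ gives $\K_*(B') \cong \lwedge^* \mathscr{S}^\perp$ for $B' \coloneqq C^*_r(\overline{\mathscr{A}} \rtimes \mathscr{A} \rtimes \mathscr{S}^\perp)$; the induced automorphism $a_*$ on $\K_*(B')$ is multiplication by $\sign(a)=-1$ (established rationally via Raven and Theorem~\ref{thm:rings}, then promoted integrally by torsion-freeness of $\K_*(B')$), so the Pimsner--Voiculescu exact sequence for $C^*_r(\cG_{S \acts R}) = B' \rtimes_a \Zz$ reduces to short exact sequences yielding $\K_n(C^*_r(\cG_{S \acts R})) \cong \lwedge^n \mathscr{S}^\perp \otimes_\Zz \Zz/2\Zz$.

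The main technical obstacle is the integral collapse in the sign-trivial case: Raven's theorem provides rational collapse for free, but the $\Zz$-statement requires controlling the $\mathscr{S}$-module structure on $\K_*(C^*_r(\overline{\mathscr{A}} \rtimes \mathscr{A}))$ and verifying torsion-freeness of the entire $E^2$-page of the K-theory spectral sequence. This is precisely where the Cuntz--Li duality between finite and infinite adeles is invoked in \cite{CL,LiLuck}; in the present approach it is replaced by the spectral-sequence comparison furnished by Raven's Chern character together with the homology collapse of Corollary~\ref{cor:numberfields}.
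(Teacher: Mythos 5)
Your overall strategy coincides with the paper's: rational collapse of the Kasparov LHS spectral sequence is obtained by comparison with the homology LHS spectral sequence via Raven's localized Chern character (the paper's Lemma~\ref{thm:EK.E2.collapse}), and in the case $\sign(S)=\{1\}$ the integral statement follows because the $E^2$-page is torsion-free, so integral collapse is forced and there is no extension problem (Lemma~\ref{lem:HK.even}). Where you genuinely diverge is the case $\sign(S)=\{\pm1\}$: you determine the automorphism $a_*$ on $\K_*(B')\cong\lwedge^*\mathscr{S}^\perp$ directly, arguing it is $-\id$ rationally via equivariance of Raven's Chern character together with Theorem~\ref{thm:rings}, and then promoting to $\Zz$ by torsion-freeness; the paper instead introduces the auxiliary index-two subgroup $\mathscr{S}'=\langle a_1^2,a_2,\dots\rangle$, uses injectivity of $\K_*(C^*_r((\mathscr{A}\rtimes\mathscr{S}^\perp)\ltimes\overline{\mathscr{A}}))\to\K_*(C^*_r((\mathscr{A}\rtimes\mathscr{S}')\ltimes\overline{\mathscr{A}}))$ and a Pimsner--Voiculescu sequence to get $\alpha_*^2=\id$, and then an $\Ext_{\Zz[\Zz/2\Zz]}(\Zz_{\sign},\Zz_{\sign})$-vanishing argument on the associated graded to conclude $\alpha_*=-\id$. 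Your route is shorter, but note what it leans on: you need Raven's Chern character (and the Baum--Connes identification) to be natural under the \emph{outer} automorphism of the pair $(\mathscr{A}\rtimes\mathscr{S}^\perp,\overline{\mathscr{A}})$ induced by conjugation by $a$; the naturality statements recorded in the paper are in the space variable for a fixed group, so this extension, while standard, must be said. The paper's detour avoids exactly this by staying integral and only using equivariance of the spectral sequence under the residual $\Zz/2$-action.

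Two further points need attention. First, a slip: you cannot have both $\mathscr{S}^\perp=\ker(\sign)$ and $\mathscr{S}=\mathscr{S}^\perp\times\langle a\rangle$ with $a$ of infinite order, since $\ker(\sign)$ has index $2$; the correct object (and the one in the statement) is a complement of $\langle a\rangle$ on which $\sign$ is trivial, and $a$ must be chosen as part of a basis of $\mathscr{S}$ (not an arbitrary element of negative sign) so that $\langle a\rangle$ is a direct summand --- with $\mathscr{S}^\perp=\ker(\sign)$ the final formula would even have the wrong rank. Second, the step you flag as "verify that the $\mathscr{S}$-module structure keeps the K-theory $E^2$-page torsion-free" is precisely the place where a rational Chern character for the subgroup $\mathscr{A}$ is not enough: one needs the \emph{integral} identification of $\K_q(C^*_r(\mathscr{A}\ltimes\overline{\mathscr{A}}))$, as an $\mathscr{S}$-module, with $\bigoplus_k\H_{q+2k}(\mathscr{A}\ltimes\overline{\mathscr{A}};\Zz)$ (divisible pieces plus $\Zz_{\sign}$). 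The paper supplies this in Lemma~\ref{lem:Raven.Chern.torus} by passing through the groupoid equivalence to $A\ltimes\overline{A}$, writing everything as a limit over the finite quotients $A/\sigma_s A$, and identifying the map with the classical Chern character of $\Tz^d$, which is integrally an isomorphism because the Todd class of the torus is trivial. Your sketch names the obstacle correctly but does not supply this mechanism; without it (or an equivalent inductive-limit computation keeping track of the $\mathscr{S}$-action), the integral collapse and the determination of $a_*$ are not yet justified.
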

Indeed, in \cite[Section 5]{CL}, the K-group computation is divided into 4 cases; (1) $|V_{K,\mathbb{R}}| =0$, (2) $|V_{K,\mathbb{R}}|$ odd and $|\{w \in V_{K,\mathbb{R}} : w (b)<0\}|$ even for any $ b \in \mathscr{S}$, (3) $|V_{K,\mathbb{R}}|$ odd and $|\{w \in V_{K,\mathbb{R}} : w(b)<0\}|$ odd for some $ b \in \mathscr{S}$, and (4) $|V_{K,\mathbb{R}}| \geq 2$ even. By Lemma~\ref{lem:sign1}, $\sign(\mathscr{S}) = \{ +1\}$ in cases (1), (2) and $\sign(\mathscr{S}) = \{ \pm 1\}$ in cases (3), (4).

\begin{lemma}\label{thm:EK.E2.collapse}
    The spectral sequence $\mathrm{EK}_{pq}^r \otimes \mathbb{C}$ collapses at the $E^2$-page. 
\end{lemma}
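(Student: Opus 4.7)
The plan is to transport the collapsing of the HH-side spectral sequence along Raven's Chern character, exploiting the torsion-freeness of $G = \mathscr{A} \rtimes \mathscr{S}$.

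First, I would combine Lemma~\ref{lem:LHS.homology} with Proposition~\ref{prop:group.homology.LHS} to identify the HH $E^2$-page. Since $\mathbb{C}$ is $\mathbb{Z}$-flat, the vanishing $\H_q(\mathscr{A}, \Zz\overline{\mathscr{A}}) = 0$ for $q \neq d$ survives tensoring with $\mathbb{C}$, and we obtain
\[
\mathrm{EH}^2_{pq} \otimes \mathbb{C} \cong \begin{cases} \H_p(\mathscr{S}, \mathbb{C}_{\sign}) & \text{if } q = d,\\ 0 & \text{otherwise.}\end{cases}
\]
Because this is concentrated in the single row $q = d$, every differential $d^r_{pq}\colon \mathrm{EH}^r_{pq} \to \mathrm{EH}^r_{p-r, q+r-1}$ with $r \geq 2$ either departs from the zero region (when $q \neq d$) or lands in it (when $q = d$ and $q + r - 1 > d$). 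Hence the HH spectral sequence collapses at $E^2$ for purely dimensional reasons.

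Next, I would invoke Raven's localized Chern character \eqref{eqn:comparison.LHS}. Because $G$ is torsion-free in the present case, the only torsion conjugacy class contributing to \eqref{eq:range.Raven.torsion} is that of the identity, so $\mathop{\mathrm{ch}_R^e}$ coincides with $\widehat{\mathrm{ch}}_R$ and is thus an isomorphism after tensoring with $\mathbb{C}$ by Raven's theorem. As this is a morphism of exact couples, applying it at the $E^1$-level yields isomorphisms $\mathrm{EK}^r_{pq} \otimes \mathbb{C} \cong \bigoplus_{k \geq 0} \mathrm{EH}^r_{p+2k,q} \otimes \mathbb{C}$ at every page $r \geq 1$, compatibly with differentials.

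Finally, since the shift in \eqref{eqn:comparison.LHS} affects only the horizontal coordinate $p$, the K-theoretic $E^2$-page $\mathrm{EK}^2_{pq} \otimes \mathbb{C}$ inherits the concentration in the row $q = d$ from the HH-side calculation. The same elementary degree count then forces every higher differential on the K-side to vanish, which gives the desired collapsing. The one point requiring care will be to verify that $\mathop{\mathrm{ch}_R^e}$ is a genuine morphism of exact couples rather than merely a natural transformation on absolute groups; however, this is built into Raven's construction of the Chern character on pairs and its compatibility with the long exact sequences attached to \eqref{eqn:exact.couple.LHS}.
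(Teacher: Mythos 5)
Your steps (1) and (2) follow the paper's own route: the homological spectral sequence is identified, via Lemma~\ref{lem:LHS.homology} and Proposition~\ref{prop:group.homology.LHS}, with the LHS spectral sequence whose $E^2$-page is concentrated in the single row $q=d$, hence collapses; and Raven's localized Chern character --- which is an isomorphism after $\otimes\,\Cz$ because $G=\mathscr{A}\rtimes\mathscr{S}$ is torsion-free, and which is a morphism of the exact couples \eqref{eqn:exact.couple.LHS} --- gives isomorphisms of all pages $r\geq 1$ compatible with the differentials. At that point you are already done: under this identification $d^r$ on $\mathrm{EK}^r_{pq}\otimes\Cz$ corresponds to a direct sum of differentials of the collapsed $\mathrm{EH}$-spectral sequence, hence vanishes for all $r\geq 2$.

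Your step (3), however, is not correct as stated, and it is the step you actually use to conclude. The K-theoretic $E^2$-page is \emph{not} concentrated in the row $q=d$: since $\mathrm{EK}^1_{pq}=\mathrm{KK}^G_{p+q}(EG\times(E_p\mathscr{S},E_{p-1}\mathscr{S}),\overline{\mathscr{A}})$, Bott periodicity gives $\mathrm{EK}^1_{p,q}\cong\mathrm{EK}^1_{p,q+2}$ compatibly with $d^1$, so $\mathrm{EK}^2_{pq}\otimes\Cz\cong\H_p(\mathscr{S};\Zz_{\sign})\otimes\Cz$ for \emph{every} $q\equiv d\ (\mathrm{mod}\ 2)$; this is exactly the computation in the proof of Lemma~\ref{lem:HK.even}, and the introduction states explicitly that the K-theory LHS spectral sequence never concentrates in a single degree. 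The even shift coming from the direct sum over $k$ in the Chern character lives in the total degree (i.e.\ in the $q$-coordinate for fixed filtration degree $p$), so it does not remove this $2$-periodicity. Consequently the ``elementary degree count'' fails on the K-side for odd $r\geq 3$: for instance $d^3\colon\mathrm{EK}^3_{p,q}\to\mathrm{EK}^3_{p-3,q+2}$ connects two rows of the same parity, both a priori nonzero (e.g.\ in the totally imaginary case). The repair is immediate and is the paper's argument: discard the concentration claim and deduce the vanishing of all higher differentials directly from the page-wise isomorphism, established in your step (2), with the spectral sequence $\mathrm{EH}^r_{pq}$, which collapses at $E^2$.
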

\begin{proof}
    Recall Remark \ref{rmk:DW.free}, in which it is proved that the action $\mathscr{S} \curvearrowright \mathrm{HH}_{\mathscr{A}}^{*}(\underline{E}(\mathscr{A} \times \mathscr{S}) , \overline{\mathscr{A}})$ given in Lemma \ref{lem:KK.HH.LHS} agrees with that on $\mathrm{H}_*(\mathscr{A} \rtimes \overline{\mathscr{A}} ;\mathbb{C})$ considered in Theorem \ref{thm1}, under the identification $\mathrm{H}_*(\mathscr{A} \rtimes \overline{\mathscr{A}} ;\mathbb{C}) \cong \mathrm{H}_*(\mathscr{A} ; \mathbb{C}\overline{\mathscr{A}})$ (Proposition \ref{prop:isomfunctors}). By Proposition \ref{prop:group.homology.LHS} and Lemma \ref{lem:LHS.homology}, the spectral sequence $\widehat{\mathrm{EH}}{}_{pq}^r$ collapses at the $E^2$-page. Since Raven's Chern character \eqref{eqn:comparison.LHS} induces a morphism of spectral sequence, a diagram chasing argument shows that this collapsing inherits to $\mathrm{EK}_{pq}^r \otimes \Cz$.
\end{proof}

\begin{lemma}\label{lem:Raven.Chern.torus}
Raven's localized Chern character
\[ 
    \mathop{\mathrm{ch}_R^e} \colon \K_q(C^*_r( \mathscr{A} \ltimes \overline{\mathscr{A}} )) \cong \KK^{\mathscr{A}}_q(\underline{E}\mathscr{A},\overline{\mathscr{A}})  \to \bigoplus_{k \in \mathbb{Z}}\mathrm{HH}_{\mathscr{A}}^{-q+2k}(\underline{E}\mathscr{A},\overline{\mathscr{A}}) \cong \bigoplus_{k \in \mathbb{Z}}\H_{q+2k}(\mathscr{A} \ltimes \overline{\mathscr{A}} ;\mathbb{C})
    \]
is injective and has the image $\bigoplus_{k\in\Zz}\H_{q + 2k}(\mathscr{A} \ltimes \overline{\mathscr{A}} ;\mathbb{Z}) $. 
\end{lemma}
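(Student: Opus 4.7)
The plan is to reduce to the case $R \cong \Zz^d$ acting by translation on $\overline{R}$ using the groupoid equivalence from Proposition~\ref{prop:equivAdditive}, and then to identify both sides of Raven's Chern character via explicit Koszul-type chain complexes arising from the standard $R$-equivariant CW structure on $ER = \Rz^d$.

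First, the inclusion $R \ltimes \overline{R} \hookrightarrow \mathscr{A} \ltimes \overline{\mathscr{A}}$ is an equivalence of \'etale groupoids, since $\overline{R}$ is a full compact open subset of $\overline{\mathscr{A}}$ with $(\mathscr{A} \ltimes \overline{\mathscr{A}})|_{\overline{R}}^{\overline{R}} = R \ltimes \overline{R}$. Both the K-theory of the reduced groupoid C*-algebra and groupoid homology are groupoid-equivalence invariants, and Raven's bivariant Chern character is natural with respect to the associated induction isomorphisms provided by Lemma~\ref{lemma:induction.tKK.HH}. It therefore suffices to prove the statement for the pair $(R,\overline{R})$.

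Second, filtering by skeleta of $\Rz^d = ER$ with the standard cubical $R$-equivariant CW structure yields the morphism of spectral sequences \eqref{eqn:comparison.LHS}. Since $\overline{R}$ is totally disconnected, $\K_0(C(\overline{R})) = \Zz\overline{R}$, $\K_1(C(\overline{R})) = 0$, and $\H_q(\overline{R}; \Cz) = 0$ for $q > 0$. The homology $E^2$-page is thus concentrated in the row $q=0$ with entries $\H_p(R, \Cz\overline{R})$, and the LHS spectral sequence collapses, yielding $\H_n(R \ltimes \overline{R}; \Cz) \cong \H_n(R, \Cz\overline{R})$ by Lemma~\ref{lem:LHS.homology}. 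The K-theory $E^2$-page is concentrated in even $q$-rows with entries $\H_p(R, \Zz\overline{R})$; since $R$ is free abelian of rank $d$ and $\Rz^d$ has $d+1$ orbits of cells, the filtered complex is a Koszul complex of length $d$ whose homology is $\H_\bullet(R, \Zz\overline{R})$. Hence the Kasparov spectral sequence also collapses at $E^2$ with vanishing extension problems, giving $\K_q(C(\overline{R}) \rtimes R) \cong \bigoplus_{p \equiv q \pmod 2} \H_p(R, \Zz\overline{R})$.

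Third, by naturality of Raven's Chern character with respect to the cellular filtration, on $E^2$-pages $\mathop{\mathrm{ch}_R^e}$ is identified with the coefficient-change map $\H_p(R, \Zz\overline{R}) \to \H_p(R, \Cz\overline{R})$ induced by $\Zz \hookrightarrow \Cz$. By Theorem~\ref{thm:rings}, $\H_p(R, \Zz\overline{R})$ is torsion-free, so this map is injective with image the integral lattice $\H_p(R, \Zz\overline{R}) \subset \H_p(R, \Zz\overline{R}) \otimes_\Zz \Cz \cong \H_p(R, \Cz\overline{R})$. Summing over $p \equiv q \pmod 2$ and translating back through Propositions~\ref{prop:isomfunctors} and \ref{prop:equivAdditive} to $\H_{q+2k}(\mathscr{A} \ltimes \overline{\mathscr{A}}; \Zz)$ yields the claim.

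The main obstacle lies in the last paragraph: showing that, after the K-theory spectral sequence collapse, Raven's localized Chern character is literally the coefficient inclusion $\Zz \hookrightarrow \Cz$ on group homology. Verifying this requires tracing Raven's cell-by-cell construction and using that on a zero cell it coincides with the classical Chern isomorphism $\K^0(\mathrm{pt}) = \Zz \to \Cz = \mathrm{HH}^0(\mathrm{pt};\Cz)$, whence naturality with respect to the cellular filtration propagates the identification to higher cells. If a direct chain-level identification proves cumbersome, an alternative is to combine the rational isomorphism of $\widehat{\mathrm{ch}}_R \otimes \Cz$ with the torsion-freeness and rank computations above, pinning down the integral image via multiplicativity of $\mathop{\mathrm{ch}_R^e}$ under an external Kasparov product with generators of $\K_*(C^*(R))$, whose Chern character is classically integral.
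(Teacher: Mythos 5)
There is a genuine gap, and it sits exactly where you flagged it: the passage from a graded-level ($E^2$-page) identification of $\mathrm{ch}_R^e$ to the assertion about its \emph{image}. Even if both spectral sequences collapse and $\mathrm{ch}_R^e$ induces the coefficient-change map $\H_p(R,\Zz\overline{R})\to\H_p(R,\Cz\overline{R})$ on every graded piece, this only controls the associated graded of the image of a filtration-compatible map, not the image itself; integrality can fail by ``leakage'' into deeper filtration steps with denominators. The standard example is $\Cz P^2$: its Atiyah--Hirzebruch spectral sequence degenerates and the Chern character is the coefficient inclusion on each graded piece, yet $\mathrm{ch}([H]-1)=x+\tfrac{1}{2}x^2$ (equivalently, on the K-homology side the nontrivial Todd class enters), so the image is \emph{not} the integral lattice. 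Thus the heart of the lemma --- that the image is precisely $\bigoplus_k\H_{q+2k}(\A\ltimes\overline{\A};\Zz)$ --- is never established; your fallback via ``multiplicativity against generators of $\K_*(C^*(R))$, whose Chern character is classically integral'' is gesturing at the correct special fact (integrality of the Chern character of the torus $\Tz^d$) but is not carried out. There are also two smaller unjustified steps: collapse of the K-theory spectral sequence does not follow from the Koszul shape of the $E^1$-complex, since vanishing of the odd rows only kills the even differentials while $d_3,d_5,\dots$ preserve the parity of $q$ and are not excluded; and ``vanishing extension problems'' is not automatic, because the terms $\H_p(R,\Zz\overline{R})$ are torsion-free but not free abelian (e.g.\ $\H_0\cong\A$), so the extensions need not split for formal reasons.

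For comparison, the paper avoids the spectral-sequence route entirely in this lemma: after the same reduction via Proposition~\ref{prop:equivAdditive}, it writes $\overline{A}=\varprojlim_s A/\sigma_s A$ and realizes $\mathrm{ch}_R^e$ as the inductive limit over $s\in S$ of the Chern characters $\KK^{A}_q(EA,A/\sigma_sA)\to\bigoplus_k\mathrm{HH}^{-q+2k}_{A}(EA,A/\sigma_sA)$; by induction (Lemma~\ref{lemma:induction.tKK.HH}) each of these is the ordinary homological Chern character of the torus $\Tz^d=EA/\sigma_sA$, which is injective with image exactly $\H_*(\Tz^d;\Zz)$ because the Todd class of $\Tz^d$ is trivial. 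That torus-specific integrality is precisely the missing ingredient in your argument; if you insist on the cellular/spectral-sequence approach, you would still need to import it (for instance by the same reduction to the finite quotients $A/\sigma_sA$, which would simultaneously justify collapse), at which point the paper's direct limit argument is both shorter and complete.
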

\begin{proof}
    First, by $\mathscr{A} + \overline{A} = \overline{\mathscr{A}}$ and $\mathscr{A} \cap \overline{A} =A$ (Lemmas \ref{lem:scrAcapbarA=A} and \ref{lem:scrA+barA=barscrA}), we obtain an isomorphism of $\mathscr{A}$-spaces $\mathscr{A} \times_A \overline{A} \to \overline{\mathscr{A}}$ given by the addition. Now we can apply Lemma \ref{lem:induction.2} to get the commutative diagram 
    \[
    \xymatrix{
    \mathrm{KK}_q^{\mathscr{A}}(\underline{E}\mathscr{A},\overline{\mathscr{A}})
    \ar[r]^{\mathrm{ch}_R^e}  \ar[d]^\cong_{\text{Lemma \ref{lem:induction.2}}} & 
    \bigoplus_k \mathrm{HH}_{\mathscr{A}}^{-q+2k}(\underline{E}\mathscr{A}, \overline{\mathscr{A}}) \ar[d]^\cong_{\text{Lemma \ref{lem:induction.2}}}  \\
\mathrm{KK}_q^{A}(\underline{E}\mathscr{A},\overline{A})
    \ar[r]^{\mathrm{ch}_R^e}  & 
    \bigoplus_k \mathrm{HH}_{A}^{-q+2k}(\underline{E}\mathscr{A},\overline{A})  .
    }
    \]
    Since $\underline{E}\mathscr{A}$ models $\underline{E}A$ as $A$-space (Remark \ref{rmk:EG} (iv)), this identifies Raven's Chern character under consideration with $\mathop{\mathrm{ch}_R^e} \colon \KK^{A}_q(\underline{E}A,\overline{A}) \to \bigoplus_{k \in \mathbb{Z}}\mathrm{HH}_{A}^{-q+2k}(\underline{E}A,\overline{A})$. Note that $\mathbb{R}^d$ models $\underline{E}A$ for $A\cong \mathbb{Z}^d$. Next, by the commutativity of the diagrams
    \begin{gather*}
    \xymatrix{
    \varinjlim_s \KK^A_*(\underline{E}A,A/\sigma_s(A)) \ar[r]^\cong \ar[d] & \varinjlim_s \K_*(C(A/\sigma_s(A)) \rtimes A) \ar[d]^\cong \\
    \KK^A_*(\underline{E}A,\overline{A}) \ar[r]^\cong & \K_*(C(\overline{A}) \rtimes A)
    }
    \\ 
    \xymatrix{
    \varinjlim_s \mathrm{KK}_*^A(\underline{E}A,A/\sigma_s(A)) \ar[r]^{\mathrm{ch}_R^e} \ar[d] & \varinjlim_s \bigoplus_{k}\mathrm{HH}^{-*+2k}_A(\underline{E}A,A/\sigma_s(A)) \ar[d] \\
    \mathrm{KK}_*^A(\underline{E}A,\overline{A}) \ar[r]^{\mathrm{ch}_R^e} &\bigoplus_k \mathrm{HH}^{-*+2k}_A(\underline{E}A,\overline{A}),
    }    \\ 
    \xymatrix{
    \varinjlim_s \mathrm{HH}^{-*}_A(\underline{E}A,A/\sigma_s(A)) \ar[r]^\cong \ar[d] & \varinjlim_s \H_*(A,\mathbb{C}[A/\sigma_s(A)]) \ar[d]^\cong \\
    \mathrm{HH}^{-*}_A(\underline{E}A,\overline{A}) \ar[r]^\cong & \H_*(A,\mathbb{C}[\overline{A}]),
    }
    \end{gather*}
    each of which is given in \cite[Proposition 5.3]{KasSka}, Remark \ref{rmk:Raven.natural.cohomological}, and Remark \ref{rmk:DW.free} respectively, it turns out that Raven's Chern character is identified with the inductive limit of  
    \[ 
    \mathop{\mathrm{ch}_R^e} \colon \KK^{A}_q(\underline{E}A,A/\sigma_s(A)) \to  \bigoplus_{k \in \mathbb{Z}} \mathrm{HH}_{A}^{-q+2k}(\underline{E}A,A/\sigma_s(A)).
    \]
    Again by Lemma \ref{lem:induction.2}, there is a commutative diagram
    \[
    \xymatrix{
    \KK^{A}_*(\underline{E}A,A/\sigma_s(A)) \ar[r]^{\mathrm{ch}^e_R} \ar[d]^\cong_{\text{Lemma \ref{lem:induction.2}}} & \bigoplus_k \mathrm{HH}^{*+2k}_A(\underline{E}A, A/\sigma_s(A)) \ar[d]^\cong_{\text{Lemma \ref{lem:induction.2}}} &\\
    \KK^{\sigma_s(A)}_*(\underline{E}A, \mathrm{pt}) \ar[r]^{\mathrm{ch}^e_R} & 
    \bigoplus_k \mathrm{HH}^{*+2k}_{\sigma_s(A)}(\underline{E}A, \mathrm{pt}) .
    }
    \]
    The above discussion, and the fact that $\underline{E}A$ models $\underline{E}\sigma_s(A)$, reduces the problem to proving that 
    \[
    \Im \Big( \mathop{\mathrm{ch}_R^e} \colon \mathrm{KK}^{N}_*(\underline{E}N,\mathrm{pt}) \to \bigoplus_k \mathrm{HH}_{N}^{*+2k}(\underline{E}N , \mathrm{pt})\Big)  =  \bigoplus_k \mathrm{H}_{-*+2k}(N , \mathbb{Z})
    \]
    via the isomorphism \eqref{eqn:Raven.BC} when $N=\mathbb{Z}^d$. 
    To see this, as in Remark \ref{rmk:DW.free}, consider the spectral sequence \eqref{eqn:exact.couple.LHS}, for $\Gamma = N$ and trivial normal subgroup, for both KK- and HH-theories, and compare them via Raven's Chern character. Then, by Lemma \ref{lem:KK.HH.LHS}, the image of $\mathop{\mathrm{ch}_R^e} \colon \mathrm{EK}{}^1_{p0} \to \widehat{\mathrm{EH}}{}_{p0}^1$ is identified with 
    \[
    \mathbb{Z} \otimes_{\mathbb{Z}[N]}\mathbb{Z}[N^{p+1}] \subset \mathbb{C} \otimes_{\mathbb{C}[N]} \mathbb{C}[N^{p+1}],
    \]
    which agrees with the standard chain complex $C_{\bullet}(N, \mathbb{Z})$ computing the group homology of $N$. By Lemma \ref{lem:LHS.homology}, which states that the $d^1$-differential also agrees with the standard bar differential, we also obtain $\Im (\mathop{\mathrm{ch}_R^e} \colon \mathrm{EK}{}^2_{p0} \to \widehat{\mathrm{EH}}{}^2_{p0}) = \mathrm{H}_p(N,\mathbb{Z})$ as subgroups of $\widehat{\mathrm{EH}}{}^2_{p0} \cong \mathrm{H}_p(N,\mathbb{C})$. Since both $\mathrm{EK}{}^1_{p0} $ and $ \widehat{\mathrm{EH}}{}_{p0}^1$ concentrates on the $q=0$ row, these spectral sequences collapse at the $E^2$ page and no extension issues arise. Finally, we obtain that
    \[
    \Im (\mathop{\mathrm{ch}_R^e} \colon \mathrm{EK}{}^2_{p0} \to \widehat{\mathrm{EH}}{}^2_{p0}) = \Im (\mathop{\mathrm{ch}_R^e} \colon \mathrm{EK}{}^\infty_{p0} \to \widehat{\mathrm{EH}}{}^\infty_{p0}) = \Im \Big( \mathop{\mathrm{ch}_R^e} \colon \mathrm{KK}{}^{N}_p(\underline{E}N,\mathrm{pt}) \to \bigoplus_k\widehat{\mathrm{HH}}{}^{-p+2k}_{N}(\underline{E}N,\mathrm{pt})\Big) 
    \]
    is canonically identified with $\bigoplus_k \mathrm{H}_{p+2k}(N,\mathbb{Z})$ as desired.
\end{proof}

\begin{lemma}\label{lem:HK.even}
If $\sign (\mathscr{S}) = \{ +1\}$, then there is an isomorphism
\[ 
    \K_n(C^*_r((\mathscr{A} \rtimes \mathscr{S}) \ltimes \overline{\mathscr{A}})) \cong \bigoplus_{k \in \mathbb{Z}}\lwedge^{n-d+2k}_{\mathbb{Z}} \mathscr{S}.
\]
\end{lemma}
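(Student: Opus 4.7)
The plan is to exploit the K-theory Lyndon--Hochschild--Serre spectral sequence $\mathrm{EK}^r_{pq}$ from \eqref{eqn:exact.couple.LHS} associated to the split extension $\mathscr{A} \hookrightarrow \mathscr{A} \rtimes \mathscr{S} \twoheadrightarrow \mathscr{S}$. Since $G = \mathscr{A} \rtimes \mathscr{S}$ is solvable (both factors being abelian) and hence amenable, the Baum--Connes assembly map identifies the abutment with $\K_{p+q}(C^*_r((\mathscr{A} \rtimes \mathscr{S}) \ltimes \overline{\mathscr{A}}))$. The argument used in the proof of Lemma~\ref{lem:LHS.homology} adapts verbatim to identify the $E^2$-page as
\[ \mathrm{EK}^2_{pq} = \H_p(\mathscr{S},\, \K_q(C^*_r(\mathscr{A} \ltimes \overline{\mathscr{A}}))). \]

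To compute this page I would invoke Lemma~\ref{lem:Raven.Chern.torus}, whose Chern character isomorphism is $\mathscr{S}$-equivariant by naturality, yielding an $\mathscr{S}$-module isomorphism $\K_q(C^*_r(\mathscr{A} \ltimes \overline{\mathscr{A}})) \cong \bigoplus_{k \geq 0} \H_{q+2k}(\mathscr{A},\,\Zz\overline{\mathscr{A}})$. Theorem~\ref{thm:rings} splits the analysis by degree: $\H_m(\mathscr{A},\,\Zz\overline{\mathscr{A}})$ is $\lwedge^m_{\Qz}\mathscr{A}$ for $m<d$, is $\Zz_{\sign}$ for $m=d$, and vanishes for $m>d$. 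For $m<d$ any $n \in \Zz_{>1} \subseteq \mathscr{S}$ acts on the $\Qz$-vector space $\lwedge^m_{\Qz}\mathscr{A}$ by the scalar $n^{m-d}\neq 1$, and the argument from Proposition~\ref{prop:group.homology.LHS} kills $\H_p(\mathscr{S}, \lwedge^m_{\Qz}\mathscr{A})$ for all $p$. Under the hypothesis $\sign(\mathscr{S}) = \{+1\}$, $\Zz_{\sign}$ is the trivial module $\Zz$; and in the torsion-free case $\mathscr{S}$ embeds into $K^*/\mu_K$, which is the direct sum of a finitely generated free abelian group (Dirichlet's unit theorem) and a subgroup of the free abelian fractional-ideal group $\bigoplus_{\mfp}\Zz$. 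Hence $\mathscr{S}$ is itself free abelian, and $\H_p(\mathscr{S},\Zz) \cong \lwedge^p\mathscr{S}$. This gives
\[ \mathrm{EK}^2_{pq} = \begin{cases} \lwedge^p\mathscr{S} & \text{if } q \leq d \text{ and } q \equiv d \pmod{2}, \\ 0 & \text{otherwise.} \end{cases} \]

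Next I would promote the rational collapse from Lemma~\ref{thm:EK.E2.collapse} to an integral one. The differential $d^r$ shifts the $q$-degree by $r-1$: when $r$ is even it lands in a zero region, so $d^r=0$ automatically. When $r \geq 3$ is odd, an induction keeps $\mathrm{EK}^r = \mathrm{EK}^2$, so $d^r$ is a map between free abelian groups, and since $d^r \otimes \mathbb{C}$ vanishes, so does $d^r$. Hence $\mathrm{EK}^\infty = \mathrm{EK}^2$, and as each $\mathrm{EK}^\infty_{pq}$ is free abelian the spectral-sequence filtration on $\K_n$ splits. Summing yields
\[ \K_n \cong \bigoplus_{p \equiv n-d \pmod{2}} \lwedge^p\mathscr{S}, \qquad \K_* \cong \lwedge^*_{\Zz}\mathscr{S}. \]

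The main obstacle is this final integral-collapse step. Raven's Chern character is only a rational equivalence, so directly one obtains only rational collapse of $\mathrm{EK}^r$. The point that unlocks the integral statement is that, when $\sign(\mathscr{S}) = \{+1\}$, the $E^2$-page is torsion-free, allowing rational vanishing of differentials to be lifted to integral vanishing; this argument fails in the opposite sign case, where $2$-torsion appears and separate cohomological-spectral-sequence techniques are required.
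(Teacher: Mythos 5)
Your argument is essentially the paper's own proof of Lemma~\ref{lem:HK.even}: identify $\mathrm{EK}^2_{pq}\cong\H_p(\mathscr{S},\K_q(C_0(\overline{\mathscr{A}})\rtimes\mathscr{A}))$, use Lemma~\ref{lem:Raven.Chern.torus} together with Proposition~\ref{prop:group.homology.LHS} (and Theorem~\ref{thm:rings}) to see that this page is the free abelian group $\lwedge^p_{\Zz}\mathscr{S}$ when $q\equiv d\pmod 2$ and zero otherwise, lift the rational collapse of Lemma~\ref{thm:EK.E2.collapse} to an integral one via injectivity of $\mathrm{EK}^2_{pq}\to\mathrm{EK}^2_{pq}\otimes\Cz$ on the torsion-free $E^2$-page, and note the absence of extension problems. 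The only slip is the restriction ``$q\le d$'' in your $E^2$-page description: by Bott periodicity $\K_q(C^*_r(\mathscr{A}\ltimes\overline{\mathscr{A}}))$ is $2$-periodic in $q$, so the page is nonzero for \emph{every} $q\equiv d\pmod 2$ (as in the paper); this does not affect your parity, differential, or extension arguments, and your final formula already drops it.
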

\begin{proof}
By Lemma \ref{lem:Raven.Chern.torus} and Proposition \ref{prop:group.homology.LHS}, the $E^2$-page $\mathrm{EK}_{pq}^2$ is determined as 
\begin{align*}
    \mathrm{EK}^2_{pq} \cong{}& \H_p(\mathscr{S} ; \K_{q}(C_0(\overline{\mathscr{A}}) \rtimes \mathscr{A})) \cong \H_p\Big( \mathscr{S} ; \bigoplus_{k \in \mathbb{Z}}\H_{q+2k}(\mathscr{A} \ltimes \overline{\mathscr{A}} ;\mathbb{Z}) \Big)\\
    \cong {}&
\begin{cases}
    \H_p(\mathscr{S}; \Zz_{\sign}) & \text{ if $q-d$ is even,}\\
    0 & \text{ if $q-d$ is odd.}
\end{cases} 
\end{align*}
By the assumption of $\sign(S) = \{+1\}$, the group $\mathscr{S}$ acts on the coefficient $\Zz_\sign $ trivially, and hence the right hand side becomes the free abelian group $\H_{p}(\mathscr{S};\Zz) \cong \lwedge^p \mathscr{S}$. 
This implies that the map of spectral sequences $\mathrm{EK}^2_{pq} \to \mathrm{EK}^2_{pq} \otimes \mathbb{C}$ is injective. 
Since the spectral sequence $\mathrm{EK}^r_{pq} \otimes \mathbb{C}$ collapses at the $E^2$-page by Theorem \ref{thm:EK.E2.collapse}, an iterated diagram chase argument shows that $d^r_{pq} =0$ for $\mathrm{EK}^r_{pq}$. 
Moreover, since each component $\mathrm{EK}_{pq}^2$ is a free $\mathbb{Z}$-module, there is no extension problem. This shows the desired isomorphism
\[ \K_n(C^*_r(\mathscr{A} \rtimes \mathscr{S}) \ltimes \overline{\mathscr{A}}) \cong \bigoplus_{p+q=n} \mathrm{EK}_{pq}^2 \cong  \bigoplus_{k \in \mathbb{Z}}\lwedge^{n-d+2k} \mathscr{S}. \qedhere  \]
\end{proof}

\begin{proof}[Proof of Theorem \ref{theorem:Ktheory.CuntzLi}]
The remaining part is the same as \cite{CL}.
The case of $\sign (S) =\{ +1 \}$ is already proved in Lemma \ref{lem:HK.even}.
In the case of $\sign (S) =\{ \pm 1 \}$, pick a basis $\{ a_1, a_2, \cdots, \}$ of $\mathscr{S}$ in the way that $\sign (a_1) = -1$ and $\mathscr{S}^\perp\coloneqq \langle a_2, a_3, \cdots \rangle \subset \mathscr{S}$. Set $\mathscr{S}'\coloneqq \langle a_1^2, a_2,\cdots \rangle $. 
Apply Lemma \ref{lem:HK.even} to the dynamical system $(\mathscr{A} \rtimes \mathscr{S}^\perp) \ltimes \overline{\mathscr{A}}$ and$ (\mathscr{A} \rtimes \mathscr{S}') \ltimes \overline{\mathscr{A}}$. 
Then we obtain that 
\begin{align*} 
    \K_n(C^*_r((\mathscr{A} \rtimes \mathscr{S}^\perp) \ltimes \overline{\mathscr{A}})) &{}  \cong \bigoplus_{k\in \Zz}\H_{n-d+2k}(\mathscr{S}^\perp ; \Zz), \\
        \K_n(C^*_r((\mathscr{A} \rtimes \mathscr{S}') \ltimes \overline{\mathscr{A}})) &{}  \cong \bigoplus_{k \in \Zz}\H_{n-d+2k}(\mathscr{S}' ; \Zz).
\end{align*} 
In particular, since the morphism $\bigoplus_{k\in \Zz}\H_{n-d+2k}(\mathscr{S}^\perp ; \Zz) \to \bigoplus_{k\in \Zz}\H_{n-d+2k}(\mathscr{S}' ; \Zz)$ is injective, the morphism $\K_n(C^*_r((\mathscr{A} \rtimes \mathscr{S}^\perp) \ltimes \overline{\mathscr{A}})) \to \K_n(C^*_r((\mathscr{A} \rtimes \mathscr{S}') \ltimes \overline{\mathscr{A}}))$ induced from the inclusion is injective (recall that a filtered map inducing the injective map of the subquotients is itself injective). Now, the Pimsner--Voiculescu exact sequence for the restriction of  $\alpha \colon a_1^\Zz \curvearrowright C^*_r((\mathscr{A} \rtimes \mathscr{S}^\perp) \ltimes \overline{\mathscr{A}}))$ to $a_1^{2 \mathbb{Z}}$ is 
\[
    \xymatrix{
    \K_0(C^*_r((\mathscr{A} \rtimes \mathscr{S}^\perp) \ltimes \overline{\mathscr{A}})) \ar[r]^{\id - \alpha^2_* } & \K_n(C^*_r((\mathscr{A} \rtimes \mathscr{S}^\perp) \ltimes \overline{\mathscr{A}})) \ar@{^{(}->}[r] & \K_n(C^*_r((\mathscr{A} \rtimes \mathscr{S}') \ltimes \overline{\mathscr{A}})) \ar[d] \\
    \K_1(C^*_r((\mathscr{A} \rtimes \mathscr{S}') \ltimes \overline{\mathscr{A}})) \ar[u] & \K_n(C^*_r((\mathscr{A} \rtimes \mathscr{S}^\perp)) \ltimes \overline{\mathscr{A}}) \ar@{_{(}->}[l] & \K_n(C^*_r((\mathscr{A} \rtimes \mathscr{S}^\perp)) \ltimes \overline{\mathscr{A}}) \ar[l]_{\id -\alpha^2_*}.
    }
\]
Hence $\id - \alpha^2_* =0$, that is, $a_1^2$ acts on $\K_*(C^*_r((\mathscr{A} \rtimes \mathscr{S}^\perp) \ltimes \overline{\mathscr{A}}))$ by the identity.  In other words, the action of $a_1^\Zz$ on $\K_*(C^*_r((\mathscr{A} \rtimes \mathscr{S}^\perp) \ltimes \overline{\mathscr{A}}))$ factors through $\mathbb{Z}/2\mathbb{Z}$. 
Moreover, the isomorphism 
\[
\mathrm{Gr}(\K_*(C^*_r((\mathscr{A} \rtimes \mathscr{S}^\perp) \ltimes \overline{\mathscr{A}})) \cong E_{pq}^\infty,
\]
where the left hand side is the graded module associated to the filtration of $\K_*(C^*_r((\mathscr{A} \rtimes \mathscr{S}^\perp) \ltimes \overline{\mathscr{A}}) \cong \KK_*^{\mathscr{A} \rtimes \mathscr{S}^\perp}(\underline{E}(\mathscr{A} \rtimes \mathscr{S}^\perp), \overline{\mathscr{A}})$ coming from the exact couple \eqref{eqn:exact.couple.LHS}, is equivariant under the $a_1^{\mathbb{Z}/2 \mathbb{Z}}$-action, and the right hand side is the direct sum of copies of $\Zz_{\sign}$ as $\mathbb{Z}[\mathbb{Z}/2\mathbb{Z}]$-modules. 
Since any extension of $\Zz_{\sign}$ by $\Zz_{\sign}$ is trivial, i.e., $\mathop{\mathrm{Ext}}^1_{\Zz[\mathbb{Z}/2\mathbb{Z}]}(\Zz_{\sign},\Zz_{\sign}) =0$, we obtain that $\alpha_* =-\id$.
Finally, the Pimsner--Voiculescu exact sequence for  $\alpha \colon a_1^\Zz \curvearrowright C^*_r((\mathscr{A} \rtimes \mathscr{S}^\perp) \ltimes \overline{\mathscr{A}}))$ shows the desired isomorphism. 
\end{proof}

\subsubsection{K-theory computation: torsion case}

Next we consider the case that $\mathscr{S} = \mu \times \Gamma$, where $\Gamma$ is free abelian and $\mu $ is the group of roots of unity. 
Here we revisit the result of Li--L\"uck \cite{LiLuck} from our standpoint.
\begin{theorem}[{\cite[Theorem 1.2]{LiLuck}}]\label{thm:LuckLi}
    If $\mathscr{S}=\mu \times \Gamma$, we have 
    \[ 
    \K_*(C^*_r((\mathscr{A} \rtimes \mathscr{S}) \ltimes \overline{\mathscr{A}})) \cong 
    \begin{cases}
        \bigoplus_{k \in \mathbb{Z}}\K_0(C^*_r(\mu)) \otimes_{\mathbb{Z}} \lwedge^{*+2k} \Gamma & \text{ if $K$ is totally imaginary,}\\
        \bigoplus_{k \in \mathbb{Z}}\lwedge^{*+2k} \Gamma & \text{ if $K$ has a real embedding.}\\
    \end{cases}
    \]
\end{theorem}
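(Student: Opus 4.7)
The plan is to imitate the structure of Theorem~\ref{theorem:Ktheory.CuntzLi}, now accommodating torsion in $\mathscr{S}$. First I would decompose $\mathscr{S} = \mu \times \Gamma$ with $\Gamma$ a free abelian group of countable rank, so that
\[
C^*_r((\mathscr{A}\rtimes\mathscr{S})\ltimes\overline{\mathscr{A}}) \cong B \rtimes \Gamma, \qquad B \coloneqq C^*_r((\mathscr{A}\rtimes\mu)\ltimes\overline{\mathscr{A}}).
\]
Since $\Gamma$ is a torsion-free abelian group, the Baum--Connes conjecture holds with $\gamma = 1$, and the Kasparov/LHS spectral sequence from \eqref{eqn:exact.couple.LHS} applies:
\[
\mathrm{EK}^2_{pq} = \H_p(\Gamma;\K_q(B)) \Longrightarrow \K_{p+q}(B\rtimes\Gamma).
\]
The two tasks are then to identify $\K_*(B)$ together with its $\Gamma$-action, and to force $E^2$-collapsing with vanishing of extensions.

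For the totally imaginary case, $\sign \equiv 1$ on all of $K^*$ by Lemma~\ref{lem:sign1}, so $\Gamma$ will act trivially on $\K_*(B)$ modulo Bott shifts. To compute $\K_*(B)$ I would combine the equivalence $(A \rtimes \mu) \ltimes \overline{A} \sim (\mathscr{A} \rtimes \mu) \ltimes \overline{\mathscr{A}}$ extending Proposition~\ref{prop:equivAdditive} (noting $\mu\overline{A} = \overline{A}$) with continuity of K-theory and Green's imprimitivity, reducing to an inductive limit of $\K_*(C(\mathbb{T}^d) \rtimes \mu)$ for the multiplicative $\mu$-action on $\mathbb{T}^d = \overline{A}/\sigma_s(A)$. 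In the totally imaginary case the induced $\mu$-action on $\K_*(\mathbb{T}^d)$ is trivial, so an equivariant Künneth argument yields $\K_*(B) \cong \K_*(C^*(\mu))$ concentrated in degree $d \bmod 2$. Collapsing of $\mathrm{EK}^r$ then follows as in Lemma~\ref{thm:EK.E2.collapse}: Raven's localized Chern character induces a morphism of spectral sequences into $\mathrm{EH}^r$, which collapses at $E^2$ by Corollary~\ref{cor:numberfields}; since $\K_*(C^*(\mu))$ is free abelian, the $E^2$-terms $\lwedge^p\Gamma \otimes \K_q(C^*(\mu))$ are free, injectivity of $\mathrm{EK}^2 \hookrightarrow \mathrm{EK}^2 \otimes \Cz$ forces integral collapsing, and freeness of each bidegree eliminates extension problems, giving $\K_*(C^*(\mu)) \otimes_\Zz \lwedge^*\Gamma$.

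For the real embedding case the sign character is nontrivial on $\mathscr{S}$, and the strategy is a Pimsner--Voiculescu reduction to the sign-trivial situation. Pick $a \in \mathscr{S}$ with $\sign(a) = -1$ --- from $\mu = \{\pm 1\}$ when $|V_{K,\Rz}|$ is odd, and from $\Gamma$ via Lemma~\ref{lem:sign2} otherwise --- and split off $a^\Zz$. The complementary subsystem carries $\sign \equiv 1$, so the totally imaginary analysis applies to it; the $a^{\Zz/2\Zz}$-action on the resulting K-theory is $-\id$ by vanishing of $\mathrm{Ext}^2_{\Zz[\Zz/2\Zz]}(\Zz_\sign,\Zz_\sign)$, exactly as in the proof of Theorem~\ref{theorem:Ktheory.CuntzLi}. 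The Pimsner--Voiculescu sequence for $a^\Zz$ then collapses to yield $\lwedge^*\Gamma$, with the $\K_*(C^*(\mu))$-factor killed by the sign twist. The hard part will be identifying $\K_*(B)$ integrally in the presence of torsion from $\mu$: the localized Chern character of Lemma~\ref{lem:Raven.Chern.torus} no longer captures $\K_*(B)$, so one must either compute $\K_*(C(\mathbb{T}^d)\rtimes\mu)$ directly via equivariant K-theory of the torus or incorporate the delocalized summands of $\widehat{\mathrm{ch}}_R$ into the spectral sequence comparison, while carefully tracking how the sign character interacts with both the $\mu$- and $\Gamma$-factors.
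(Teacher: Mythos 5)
Your skeleton (write the algebra as a crossed product by $\Gamma$, run the Kasparov/LHS spectral sequence, and force integral collapsing by comparing with the collapsed homology spectral sequence via Raven's Chern character) is the same as the paper's, but the two steps you dispose of by an ``equivariant K\"unneth argument'' and a Pimsner--Voiculescu reduction are exactly where the content lies, and as proposed they fail. The $\mu$-action on $\K_*(\Tz^d)\cong\lwedge^*\Zz^d$ is \emph{not} trivial when $K$ is totally imaginary (already $-1\in\mu$ acts by $(-1)^q$ on $\lwedge^q$), and $\K_*(C(\Tz^d)\rtimes\mu)$, hence $\K_*(B)$, is not $\K_*(C^*(\mu))$ concentrated in degree $d\bmod 2$: by \eqref{eqn:Ktheory.A.mu}, quoted from \cite[Corollary~4.10]{LiLuck} (which rests on \cite{LangerLuck}), the group $\K_0(C^*_r((A\rtimes\mu)\ltimes\overline{A}))$ contains the divisible summands $\bigoplus_{0\leq 2k<d}\lwedge^{2k}_{\Qz}\mathscr{A}$ in addition to a free part of rank $\rank R(\mu)$. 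The paper does not re-derive this; it imports it and then disposes of the divisible part via $\H_p(\Gamma;K_{\mathrm{div}})=0$. Your proposal assumes this hard input in an incorrect form, and in addition, since $G=\mathscr{A}\rtimes\mathscr{S}$ now has torsion, the \emph{localized} Chern character comparison with the spectral sequence of Corollary~\ref{cor:numberfields} is no longer rationally faithful; the collapsing argument has to be run against the delocalized spectral sequence $\widehat{\mathrm{EH}}{}^r_{pq}$ built from $\underline{E}G$ and shown to collapse in Lemma~\ref{lem:E2.homology.mu}, a point you only gesture at in your closing sentence.

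The real-embedding case cannot be handled by ``splitting off $a^{\Zz}$'' either. When $|V_{K,\Rz}|$ is odd the sign is carried by the torsion element $-1\in\mu$, so there is no infinite cyclic summand to which Pimsner--Voiculescu applies; the paper instead uses that $\Gamma$ acts trivially on $K_{\mathrm{fin}}^\mu\cong\Zz$ because it contains the image of $\widetilde{R}(\mu)$ with finite index. When $|V_{K,\Rz}|$ is even and nonzero, the conclusion $\alpha_*=-\id$ that you import from the torsion-free proof (via $\mathrm{Ext}^2_{\Zz[\Zz/2\Zz]}(\Zz_{\sign},\Zz_{\sign})=0$) is false in the presence of $\mu$: Lemmas~\ref{lem:realeven.K} and \ref{lem:realeven.AHSS} show that a sign $-1$ generator $s\in\Gamma$ acts on the free quotient $K_{\mathrm{free}}$ with $\ker(\theta_s-\id)\cong\Zz$ and $\coker(\theta_s-\id)\cong\Zz$, so it is not $-\id$, and establishing this requires the equivariant Bott element / Dirac argument of Lemma~\ref{lem:realeven.K} together with \cite[Lemma~2.1]{CL2}, followed by the group-homology computation of Lemma~\ref{lem:E2.Ktheory.even}. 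This determination of the $\Gamma$-action on the $\mu$-torsion part of the K-theory, which you defer as ``the hard part'', is precisely the new content of the paper's proof; without it and without the correct input \eqref{eqn:Ktheory.A.mu}, the $E^2$-page of your spectral sequence is not identified and the argument does not close.
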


In this case, the localized Chern character is no longer rationally isomorphic, but the delocalized part is well controlled.
\begin{lemma}\label{lem:Raven.algaction}
For $G=\mathscr{A} \rtimes \mathscr{S}$ or $\mathscr{A} \rtimes \mu$, we have
\begin{align*}
    \widehat{\mathrm{HH}}{}_{G}^{*} (\underline{E}G, \overline{\mathscr{A}} ) \cong \mathrm{H}_{-*}(G \ltimes \overline{\mathscr{A}} ; \mathbb{C}) \oplus 
    \begin{cases} 
        \mathbb{C}[\mu \setminus \{e\}] & \text{if $*=0$, } \\ 
        0 & \text{otherwise}. 
    \end{cases}
\end{align*}
\end{lemma}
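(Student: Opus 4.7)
My starting point is the definition~\eqref{eq:range.Raven.torsion},
\[
\widehat{\mathrm{HH}}^*_G(\underline{E}G,\overline{\A})=\bigoplus_{[\gamma]\in\mathcal{C}(G_{\rm tor})}\mathrm{HH}^*_{Z(\gamma)}(\underline{E}G^\gamma,\overline{\A}^\gamma),
\]
which splits the computation into (i) parametrising $\mathcal{C}(G_{\rm tor})$, (ii) handling the localised $\gamma=e$ summand, and (iii) computing and summing the non-trivial delocalised summands.

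For (i), since $\A$ is torsion-free, any finite-order $(a,s)\in G$ satisfies $s\in\mu$; conversely, if $s\in\mu\setminus\{1\}$ has order $n$, the identity $1+s+\cdots+s^{n-1}=0$ in $K$ implies $(a,s)^n=(0,1)$ for every $a\in\A$. The conjugation formula $(b,1)(a,s)(b,1)^{-1}=(a+(1-s)b,s)$ together with the invertibility of $1-s\in K^*$ shows that all $(a,s)$ with fixed $s\ne 1$ are $G$-conjugate, so $\mathcal{C}(G_{\rm tor})$ is naturally in bijection with $\mu$ in either case for $G$, with $1\in\mu$ labelling the identity class. Step (ii) is then immediate from the Deeley--Willett identification recalled earlier applied to $Z(1)=G$, $\underline{E}G^1=\underline{E}G$, $\overline{\A}^1=\overline{\A}$, which yields the first summand $\H_{-*}(\overline{\A}\rtimes G;\Cz)$.

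For (iii), represent a non-trivial class by $\gamma=(0,s)$ with $s\in\mu\setminus\{1\}$. Because $1-s$ is invertible in $K$, the fixed locus $\overline{\A}^\gamma=\{0\}$ is a single point with trivial $Z(\gamma)$-action, and a direct computation gives $Z(\gamma)=\{0\}\times H$ with $H=\mu$ or $H=\S$. Since $\gamma$ is central in $Z(\gamma)$, the fixed subspace $\underline{E}G^\gamma$ is a model for $\underline{E}Z(\gamma)=\underline{E}H$, and so the delocalised summand is $\mathrm{HH}^*_H(\underline{E}H,\mathrm{pt})$. For $H=\mu$ finite, $\underline{E}\mu$ is a point and this reduces to $\Ext^*_{\Cz[\mu]}(\Cz,\Cz)=\H^*(\mu;\Cz)$, which equals $\Cz$ concentrated in degree zero by semisimplicity of $\Cz[\mu]$. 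For $H=\S=\mu\times\Gamma$, I reduce to the previous case via the induction isomorphism of Lemma~\ref{lemma:induction.tKK.HH} applied to the inclusion $\mu\hookrightarrow\S$, exploiting that the coefficient point carries the trivial $\Gamma$-action to collapse the free-abelian factor. Summing the $|\mu|-1$ identical contributions over $[\gamma]\in\mu\setminus\{1\}$ then produces $\Cz[\mu\setminus\{e\}]$ in degree zero, as claimed.

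The main obstacle I anticipate is the reduction $\S\leadsto\mu$ in step (iii): a naive application of Deeley--Willett to $\mathrm{HH}^*_\S(\underline{E}\S,\mathrm{pt})$ would deliver $\H_{-*}(\S;\Cz)\cong\lwedge^{-*}\Gamma\otimes\Cz$, which is non-zero in infinitely many negative degrees. Collapsing this down to a single copy of $\Cz$ in degree zero requires genuine use of the induction formula of Lemma~\ref{lemma:induction.tKK.HH} together with the crucial observation that $\overline{\A}^\gamma$ is a single point, so that the $\Gamma$-action on the coefficient side is trivial and the contribution of the free-abelian factor is killed before the Chern-character comparison is made. This is where the delicate bookkeeping of the proof must lie.
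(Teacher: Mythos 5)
Your parametrization of $\mathcal{C}(G_{\rm tor})$ and your identification of the localized summand via \cite[Corollary 3.2]{DW} agree with the paper, and for $G=\A\rtimes\mu$ your delocalized computation (centralizer $\mu$, fixed space of $\overline{\A}$ a point, $\H^*(\mu;\Cz)\cong\Cz$ in degree $0$) is complete and essentially the paper's argument. The genuine gap is the case $G=\A\rtimes\S$. There you correctly find $Z(\gamma)=\{0\}\times\S$ (all of $\S$ centralizes $\zeta$ because $\S$ is abelian; injectivity of $1-\zeta$ only kills the translation part), and you then propose to collapse $\mathrm{HH}^*_{\S}(\underline{E}\S,\mathrm{pt})$ to a single $\Cz$ in degree $0$ by inducing along $\mu\hookrightarrow\S$ via Lemma~\ref{lemma:induction.tKK.HH}. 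That lemma applies only to induced spaces of the form $X\times_\mu\S$, and any such space is a disjoint union of $[\S:\mu]$ copies of $X$ (i.e.\ $X\times\Gamma$), whereas $(\underline{E}G)^\gamma\simeq\underline{E}\S\simeq E\Gamma$ is connected; so the reduction is not available, and triviality of the $\Gamma$-action on the coefficient point does not help. In fact the value you dismiss as naive is the correct one: since $(\underline{E}G)^\gamma$ is a model for $\underline{E}Z(\gamma)=\underline{E}\S$ and $\overline{\A}^\gamma=\{0\}$, the same Deeley--Willett identification used for the localized part gives $\mathrm{HH}^{*}_{\S}(\underline{E}\S,\mathrm{pt})\cong\H_{-*}(\S;\Cz)\cong\lwedge^{-*}(\Gamma\otimes\Cz)$ for each $\zeta\in\mu\setminus\{e\}$, which is not concentrated in degree $0$ once $\Gamma\neq 1$; no bookkeeping removes the $\Gamma$-factor, so this step cannot be completed.

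You should also be aware that this is not merely a defect of your write-up: the paper's own proof asserts $Z(\zeta)=\mu$ for both groups, justified only by freeness of the $\mu$-action on $A\setminus\{0\}$, which excludes nonzero translations but not the central factor $\Gamma\leq\S$; so the published argument establishes the displayed formula only for $G=\A\rtimes\mu$, and for $G=\A\rtimes\S$ the delocalized part should read $\bigoplus_{\zeta\in\mu\setminus\{e\}}\H_{-*}(\S;\Cz)$. This corrected form is also what Raven's Chern character forces rationally (the total must match $\K_*\otimes\Cz$, which by Li--L\"uck is $|\mu|$ copies of $\lwedge^*(\Gamma\otimes\Cz)$, only one of which comes from the localized part), and it is the abutment consistent with the $E^2$-page of Lemma~\ref{lem:E2.homology.mu}. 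The case actually needed downstream, namely the coefficient system of the spectral sequence \eqref{eqn:exact.couple.LHS.refinedHH} obtained by induction along $\ker(G\to\Gamma)=\A\rtimes\mu$, is the case $G=\A\rtimes\mu$, where your argument is correct.
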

\begin{proof}
By the right equation of \eqref{eqn:Raven.BC}, we have the isomorphism 
\[
\widehat{\mathrm{HH}}{}_{G}^{*} (\underline{E}G, \overline{\mathscr{A}} ) \cong \mathrm{H}_{-*} \Big( G , \mathbb{C}\Big[ \widehat{\overline{\mathscr{A}}} \Big] \Big) \cong \bigoplus_{[\gamma] \in \mathcal{C}(G_{\mathrm{tor}})} \mathrm{H}_{-*}(Z(\gamma), \mathbb{C}[\overline{\mathscr{A}}{}^\gamma]),
\]
where $ \widehat{\overline{\mathscr{A}}} \coloneqq \{ (g,a) \in G_{\mathrm{tor}} \times \overline{\mathscr{A} } :  ga=a \}$. In particular, the `localized' component corresponding to $[e] \in \cC(G_{\mathrm{tor}})$ is isomorphic to the groupoid homology $\mathrm{H}_{-*}(G , \mathbb{C}[\overline{\mathscr{A}}]) \cong \mathrm{H}_{-*}(\overline{\mathscr{A}} \rtimes G ; \mathbb{C})$. 

Any torsion element $g = a \cdot s$ in $\mathscr{A} \rtimes \mu$ or $\mathscr{A} \rtimes \mathscr{S}$ is conjugate to some $\zeta \in \mu$ since the element $\zeta = (-b) \cdot  g \cdot b$, where $b = a(1-s)^{-1} \in \mathscr{A}$, fixes $ 0 \in \mathscr{A}$.  
The centralizer subgroup $Z(\zeta) $ of $\zeta \in \mu \setminus \{e\}$ is $\mu$ since a multipricative group $\mu$ of an integral domain acts on $A \setminus \{0\}$ freely. By the same reason, the fixed point set of the action of $\zeta$ on $\overline{\mathscr{A}}$ is $\overline{\mathscr{A}}{}^\zeta = \{ 0 \}$. 
In conclusion, we have
\begin{align*}
    \bigoplus_{[\gamma] \in \mathcal{C}(G_{\mathrm{tor}}) \setminus \{ [e] \}} \mathrm{H}_{-*}(Z(\gamma), \mathbb{C}[\overline{\mathscr{A}}{}^\gamma]) ={}&{} \bigoplus_{\zeta \in \mu \setminus \{e\}} \mathrm{H}_*(Z(\zeta) , \mathbb{C}[\overline{\mathscr{A}}{}^\zeta] ) 
    \cong  \bigoplus_{\zeta \in \mu \setminus \{ e\} } \mathrm{H}_*(\mu,  \mathbb{C})\\ \cong {}&{} 
    \begin{cases}
        \bigoplus_{\zeta \in \mu \setminus \{e\}}\mathbb{C} & \text{ if $*=0$, } \\ 
        0 & \text{otherwise.} 
    \end{cases}
\end{align*}
This finishes the proof. 
\end{proof}

\begin{lemma}\label{lem:homology.A.mu}
We have
\[
    \mathrm{H}_n((\mathscr{A} \rtimes \mu ) \ltimes \overline{\mathscr{A}} ; \mathbb{C}) \cong   (\lwedge^n_{\mathbb{C}} (A \otimes \mathbb{C}) )_{\mu}.
\]
\end{lemma}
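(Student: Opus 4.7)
The plan is to apply the Lyndon--Hochschild--Serre spectral sequence for the extension $1 \to \mathscr{A} \to \mathscr{A} \rtimes \mu \to \mu \to 1$, exploiting the fact that $\mu$ is a finite group so its homology with $\mathbb{C}$-vector-space coefficients is concentrated in degree zero. First I would invoke Proposition~\ref{prop:isomfunctors} (with $\mathbb{C}$ coefficients, which follows from the $\mathbb{Z}$-coefficient statement by flatness of $\mathbb{C}$ over $\mathbb{Z}$) to translate the groupoid homology $\mathrm{H}_n((\mathscr{A} \rtimes \mu) \ltimes \overline{\mathscr{A}}; \mathbb{C})$ into the group homology $\mathrm{H}_n(\mathscr{A} \rtimes \mu, \mathbb{C}\overline{\mathscr{A}})$, where $\mathbb{C}\overline{\mathscr{A}} \coloneqq \mathbb{Z}\overline{\mathscr{A}} \otimes_{\mathbb{Z}} \mathbb{C}$.

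Next I would apply Theorem~\ref{thm1}. Tensoring with $\mathbb{C}$ collapses the sandwich $\lwedge^q A \leq \mathrm{H}_q(\mathscr{A}, \mathbb{Z}\overline{\mathscr{A}}) \leq (\lwedge^q A) \otimes \mathbb{Z}[\mathcal{I}^{-1}]$ to equalities (both bounds become $\lwedge^q A \otimes \mathbb{C}$), giving
\[
\mathrm{H}_q(\mathscr{A}, \mathbb{C}\overline{\mathscr{A}}) \cong \lwedge^q_{\mathbb{C}}(A \otimes \mathbb{C}).
\]
Under this identification, Theorem~\ref{thm1} further says the $\mathscr{S}$-action is $s \mapsto |N_{\mathbb{Q}}(s)|^{-1} \lwedge^q(s)$. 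Since $|N_{\mathbb{Q}}(\zeta)| = 1$ for every root of unity $\zeta \in \mu$, the restricted $\mu$-action on $\lwedge^q_{\mathbb{C}}(A \otimes \mathbb{C})$ is just the natural exterior-power action.

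The LHS spectral sequence then takes the form
\[
E^2_{pq} = \mathrm{H}_p\bigl(\mu, \lwedge^q_{\mathbb{C}}(A \otimes \mathbb{C})\bigr) \Longrightarrow \mathrm{H}_{p+q}\bigl(\mathscr{A} \rtimes \mu, \mathbb{C}\overline{\mathscr{A}}\bigr).
\]
Since $\mu$ is finite and the coefficient module is a $\mathbb{C}[\mu]$-module in which $|\mu|$ is invertible, Maschke's theorem gives $\mathrm{H}_p(\mu, V) = 0$ for all $p \geq 1$ and $\mathrm{H}_0(\mu, V) = V_\mu \cong V^\mu$ via the averaging idempotent. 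Consequently the spectral sequence collapses onto the $p = 0$ column, yielding the asserted isomorphism. There is no real obstacle here: the argument is a direct combination of Theorem~\ref{thm1} with the classical vanishing of finite-group homology in invertible characteristic.
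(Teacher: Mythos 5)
Your proposal is correct and follows essentially the same route as the paper: the paper also runs the LHS spectral sequence for the semidirect product by the finite group $\mu$ with complex coefficients, uses vanishing of higher $\mu$-homology to collapse onto the $p=0$ column, and identifies the coefficient module and the (restricted) $\mu$-action via Theorem~\ref{thm:rings} (the ring specialization of Theorem~\ref{thm1}), exactly as you do after tensoring the sandwich with $\mathbb{C}$.
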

\begin{proof}
    The LHS spectral sequence for the semidirect product $(\mathscr{A}  \ltimes \overline{\mathscr{A}} ) \rtimes \mu$ converges to $\mathrm{H}_n((\mathscr{A} \rtimes \mu ) \ltimes \overline{\mathscr{A}} ; \mathbb{Z})$ and its $E^2$-page is 
    \[ 
    E^2_{pq} \cong \mathrm{H}_p(\mu, \mathrm{H}_q(\mathscr{A}  \ltimes \overline{\mathscr{A}} ;\mathbb{C} )) \cong 
    \begin{cases}
        \mathrm{H}_q(\mathscr{A}  \ltimes \overline{\mathscr{A}} ;\mathbb{C} )_\mu & \text{ if $p=0$}, \\
        0 & \text{ otherwise.}
    \end{cases}
    \]
    This, together with Theorem \ref{thm:rings}, shows the lemma.
\end{proof}

\begin{remark}\label{rmk:homology.A.mu.action}
Lemma \ref{lem:homology.A.mu} also determines the $\Gamma$-action on $\mathrm{H}_n((\mathscr{A} \rtimes \mu) \ltimes \overline{\mathscr{A}} , \mathbb{C}) \cong (\lwedge^n_{\mathbb{C}} (A \otimes \mathbb{C}))_\mu \cong (\lwedge^n_{\mathbb{C}} (A \otimes \mathbb{C}))^\mu$ to be the restriction of the $\mathscr{S}$-action on $\mathrm{H}_n(\mathscr{A}, \mathbb{C}\overline{\mathscr{A}}) \cong \lwedge^n_{\mathbb{C}} (A \otimes \mathbb{C})$ considered in Theorem \ref{thm1}. Via the identification in Remark \ref{rmk:DW.free} (and the proof of Lemma \ref{thm:EK.E2.collapse}), this in turn determines the $\Gamma$-action on $\mathrm{HH}_{\mathscr{A} \rtimes \mu}^*(\underline{E}(\mathscr{A} \rtimes \mu), \overline{\mathscr{A}})$ in the sense of Lemma \ref{lem:KK.HH.LHS}.

Here we again make use of the notions used around Theorem \ref{thm1}, in particular those taken from \cite{Brown}. Consider the morphism
\[
    \mathrm{H}_*(\id_{\mathscr{A} \rtimes \mu} ,1 \otimes \id_{\mathbb{C}\overline{\mathscr{A}}}) \colon \mathrm{H}_*(\mathscr{A} \rtimes \mu  , \mathbb{C}\overline{\mathrm{\mathscr{A}}}) \to \mathrm{H}_* (\mathscr{A} \rtimes \mu, \mathbb{C}\mu \otimes \mathbb{C}\overline{\mathscr{A}})
\]
in group homology, where $\mathscr{A}$ and $\mu$ act on $\mu$ by the trivial and the left regular action respectively, and $1 = \sum_{g \in \mu} \delta_{g} \in \mathbb{C}\mu$. 
By the functoriality of the LHS spectral sequences (Theorem \ref{thm:LHSfunctorial}), this induces the morphism of $E^2$-pages as
\[
    \mathrm{H}_*(\id_\mu, \mathrm{H}_*(\id_{\mathscr{A}} ,1 \otimes \id_{\mathbb{C}\overline{\mathscr{A}}})) \colon \mathrm{H}_p(\mu,\mathrm{H}_q(\mathscr{A}, \mathbb{C} \overline{\mathscr{A}})) \to \mathrm{H}_p(\mu,\mathrm{H}_q(\mathscr{A}, \mathbb{C}\mu \otimes \mathbb{C}\overline{\mathscr{A}} )).
\]
 Since $\mathscr{A}$ acts on $\mathbb{C}\mu$ trivially, we have $\mathrm{H}_q(\mathscr{A}, \mathbb{C}\mu \otimes \mathbb{C}\overline{\mathscr{A}} ) \cong  \mathbb{C}\mu \otimes\mathrm{H}_q(\mathscr{A}, \mathbb{C}\overline{\mathscr{A}} )$ as $\mu$-modules. By Shapiro's lemma, the $E^2$-page of the LHS spectral sequence on the target side is $E^2_{0q} \cong \mathrm{H}_q(\mathscr{A} , \mathbb{C}\overline{\mathscr{A}}) $ and $E^2_{pq} \cong \{0\}$ for $p \neq 0$, and in particular the spectral sequence collapse at the $E^2$-page. Thus, the above morphism at $E^2_{0q}$ is identical with the inclusion 
\[
    \mathrm{H}_q(\mathscr{A}, \mathbb{C}\overline{\mathscr{A}}  )_\mu \hookrightarrow  (\mathbb{C}\mu \otimes \mathrm{H}_q(\mathscr{A}, \mathbb{C}\overline{\mathscr{A}}))_\mu \cong \mathrm{H}_q(\mathscr{A}, \mathbb{C}\overline{\mathscr{A}}) ,
\]
whose image is nothing else than $(\lwedge_\mathbb{C}^q (A \otimes \mathbb{C} ))^\mu$. 
In conclusion, $ \mathrm{H}_*(\mathscr{A} \rtimes \mu  , \mathbb{C}\overline{\mathrm{\mathscr{A}}})$ is identified with the subgroup $\mathrm{H}_*(\mathscr{A},\mathbb{C}\overline{\mathscr{A}})^\mu$ of $\mathrm{H}_* (\mathscr{A} \rtimes \mu, \mathbb{C}\mu \otimes \mathbb{C}\overline{\mathscr{A}}) \cong \lwedge_{\mathbb{C}}^n(A \otimes \mathbb{C})$ via $\mathrm{H}_*(\id_{\mathscr{A} \rtimes \mu} ,1 \otimes \id_{\mathbb{C}\overline{\mathscr{A}}}) $.

The $\mathscr{S}$-action on $\mathrm{H}_*(\mathscr{A}, \mathbb{C}\overline{\mathrm{\mathscr{A}}})$ in Theorem \ref{thm1}, given by $\mathrm{H}_*(\tilde{\sigma}_s,\alpha_s)$, extends to both $ \mathrm{H}_*(\mathscr{A} \rtimes \mu  , \mathbb{C}\overline{\mathrm{\mathscr{A}}})$ and $\mathrm{H}_* (\mathscr{A} \rtimes \mu, \mathbb{C}\mu \otimes \mathbb{C}\overline{\mathscr{A}})$ by $\mathrm{H}_*(\tilde{\sigma}_s\rtimes \id_\mu,\alpha_s)$ and $\mathrm{H}_*(\tilde{\sigma}_s \rtimes \id_\mu,\id _{\mathbb{C}\mu} \otimes \alpha_s)$ respectively. By the functoriality of the morphisms in group homology of the form $\mathrm{H}_*(\sigma,\alpha)$, we obtain that $\mathrm{H}_*(\id_{\mathscr{A} \rtimes \mu} ,1 \otimes \id_{\mathbb{C}\overline{\mathscr{A}}}) $ intertwines the $\mathscr{S}$-actions. In conclusion, the $\mathscr{S}$-action on $ \mathrm{H}_*(\mathscr{A} \rtimes \mu  , \mathbb{C}\overline{\mathrm{\mathscr{A}}})$ is the restriction of that on $\mathrm{H}_*(\mathscr{A}, \mathbb{C}\overline{\mathscr{A}})$ considered in Theorem \ref{thm1}. 
\end{remark}
\if0
Let $\widehat{\mathrm{EH}}{}^r_{pq}$ denote the LHS spectral sequence (cf.~\eqref{eqn:exact.couple.LHS}) defined by 
\begin{align}
\begin{split}
    \widehat{\mathrm{DH}}{}_{pq}^1 \coloneqq & \widehat{\mathrm{HH}}{}^{-p-q}_{G} (\underline{E}G \times E_p\Gamma , \overline{\mathscr{A}}),\\
    \widehat{\mathrm{EH}}{}_{pq}^1 \coloneqq & \widehat{\mathrm{HH}}{}^{-p-q}_{G } ( \underline{E}G \times ( E_p\Gamma, E_{p-1}\Gamma ) , \overline{\mathscr{A}}).    
\end{split}\label{eqn:exact.couple.LHS.refinedHH}
\end{align}
\fi

\begin{lemma}\label{lem:E2.homology.mu}
The spectral sequence $\widehat{\mathrm{EH}}{}_{pq}^r$, for $N=\mathscr{A} \rtimes \mu$ and $\Gamma$ as above, collapses at the $E^2$-page to $\widehat{\mathrm{EH}}{}^2_{pq}$ for $q \neq 0,d$ and 
\[
    \widehat{\mathrm{EH}}{}^2_{p0} \cong \mathbb{C}[\mu \setminus \{e\}] \otimes  \lwedge^p_{\mathbb{C}} (\Gamma \otimes \mathbb{C}), \quad 
    \widehat{\mathrm{EH}}{}^2_{pd} \cong 
    \begin{cases}
    \lwedge^p_{\mathbb{C}} (\Gamma \otimes \mathbb{C}) &\text{ if $K$ is totally imaginary, } \\
    0 & \text{ if $K$ has a real embedding. }
    \end{cases}
    \]
\end{lemma}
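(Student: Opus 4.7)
My plan is to derive the $E^2$-page by a direct-sum decomposition argument, and then deduce collapse from the fact that the summands are each supported in a single row.

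First I would run the argument of Lemma~\ref{lem:LHS.homology} verbatim, with the group $\mathscr{S}$ replaced by $\Gamma$ and $\mathrm{HH}$ replaced by $\widehat{\mathrm{HH}}$. The same excision-and-induction identifications yield
\[ \widehat{\mathrm{EH}}{}^2_{pq} \;\cong\; \H_p\bigl(\Gamma,\,\widehat{\mathrm{HH}}{}^{-q}_H(\underline{E}H,\overline{\mathscr{A}})\bigr), \]
where $H=\mathscr{A}\rtimes \mu$ and the $\Gamma$-module structure on the coefficient is induced by conjugation inside $G=\mathscr{A}\rtimes \mathscr{S}$; since $\mathscr{S}$ is abelian, this is the trivial action on $\mu$ and the multiplication action on $\mathscr{A}$.

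Next, the preceding lemma (applied to $H$) together with Lemma~\ref{lem:homology.A.mu} gives a canonical decomposition of $\Gamma$-modules
\[ \widehat{\mathrm{HH}}{}^{-q}_H(\underline{E}H,\overline{\mathscr{A}}) \;=\; (\lwedge^q_{\Cz}(A\otimes \Cz))^\mu \;\oplus\; \delta_{q,0}\,\Cz[\mu\setminus\{e\}]. \]
On the delocalized summand the $\Gamma$-action is trivial, contributing $\Cz[\mu\setminus\{e\}]\otimes \lwedge^p(\Gamma\otimes \Cz)$ at $(p,0)$ and zero elsewhere. For the localized summand I would decompose $A\otimes \Cz=\bigoplus_{w\in V_K}\Cz_w$ by the field embeddings $w\colon K\to \Cz$; by Theorem~\ref{thm:rings}, $\Gamma$ acts on each $\mu$-invariant wedge line $\Cz_S$ (indexed by $S\subseteq V_K$ of size $q$) by the character $\chi_S(\gamma)=\prod_{w\in S}w(\gamma)/|N_{K/\Qz}(\gamma)|$.

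The key step is a character computation. Testing $\chi_S$ on a positive rational integer $n\in \Zz_{>0}$ (whose image generates a nontrivial element of $\Gamma\cong K^*/\mu$) gives $\chi_S(n)=n^{q-d}$, which equals $1$ for all $n$ only when $q=d$; in that case $S=V_K$ forces $\chi_S=\sign$, which is trivial on $\Gamma$ precisely when $K$ is totally imaginary. In both sub-cases of the real-embedding case one obtains $\widehat{\mathrm{EH}}{}^2_{pd}=0$: when the number of real embeddings is even, $\sign$ descends to a nontrivial character of $K^*/\mu$, so it is nontrivial on any choice of $\Gamma$; when the number of real embeddings is odd, $d$ is odd and $(-1)\in\mu$ acts on $\lwedge^d_{\Cz}(A\otimes \Cz)$ by $(-1)^d=-1$, so $(\lwedge^d_{\Cz}(A\otimes \Cz))^\mu=0$ to begin with. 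The standard vanishing $\H_p(\Gamma,\Cz_\chi)=0$ for a nontrivial character $\chi$ of a free abelian group (K\"unneth reduces this to the rank-one case, where multiplication by $1-\chi(t)$ on $\Cz_\chi$ is an isomorphism) then yields the stated formulas.

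For collapse, the decomposition above is natural in $q$ and $\Gamma$-equivariant (it arises from the canonical splitting of $\widehat{\mathrm{HH}}$ into contributions indexed by conjugacy classes in $G_{\tor}$), so the entire spectral sequence splits as the direct sum of a localized subspectral sequence (concentrated in row $q=d$ when $K$ is totally imaginary, identically zero otherwise) and a delocalized subspectral sequence (concentrated in row $q=0$). A spectral sequence supported in a single row admits no nontrivial higher differential, so collapse at $E^2$ follows. I expect the principal technical obstacle to be verifying that the LHS-style identification of the $E^2$-page genuinely carries over from $\mathrm{HH}$ to $\widehat{\mathrm{HH}}$: since $\widehat{\mathrm{HH}}$ is built from a sum over torsion conjugacy classes, one has to check that each torsion stratum is compatible with the excision step used in the proof of Lemma~\ref{lem:LHS.homology}.
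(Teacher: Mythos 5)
Your computation of the $E^2$-page follows essentially the paper's route: identify $\widehat{\mathrm{EH}}{}^2_{pq}$ with $\H_p\bigl(\Gamma,\widehat{\mathrm{HH}}{}^{-q}_{\A\rtimes\mu}(\cdot,\ol{\A})\bigr)$ by the $\widehat{\mathrm{HH}}$-analogue of Lemma~\ref{lem:LHS.homology}, feed in the coefficient computation (the lemma preceding Lemma~\ref{lem:homology.A.mu} together with Lemma~\ref{lem:homology.A.mu}), and kill all rows outside $q=0,d$ by the same character/divisibility argument as Proposition~\ref{prop:group.homology.LHS}; your sign analysis at $q=d$ (odd case: $(\lwedge^d_{\Cz}(A\otimes\Cz))^\mu=0$; even case: $\sign$ nontrivial on $\Gamma$) matches the paper's. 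Where you genuinely diverge is the collapse. The paper deduces it by noting that the $E^2$-page is already consistent with the complexified homology groups of Theorem~\ref{thm:gpdhomology}, whereas you split the entire exact couple \eqref{eqn:exact.couple.LHS.refinedHH} according to torsion conjugacy classes, so that the localized and delocalized subspectral sequences are each concentrated in a single row and admit no higher differentials. Your mechanism is arguably more robust: it avoids comparing infinite-dimensional graded pieces with a previously computed abutment, at the price of the compatibility check you flag (that each torsion stratum passes through the excision and induction steps of Lemma~\ref{lem:LHS.homology}). That check does go through here, and for the same reason both identifications are painless: every torsion element of $G$ is conjugate into $\mu$, is central in $\A\rtimes\mu$ up to the $(1-\zeta)$-twist, and acts trivially on the $E_p\Gamma$ factor, so fixed-point sets are simply $(\,\cdot\,)^\zeta\times E_p\Gamma$; the paper suppresses exactly the same verification.

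Two small points you should tighten. First, a positive rational integer $n$ need not lie in the chosen complement $\Gamma$; your test is legitimate because on the $\mu$-invariant lines the character $\chi_S$ is trivial on $\mu$ (and $|N_\Qz(\zeta)|=1$), hence descends to $K^*/\mu\cong\Gamma$, where the class of $n$ gives $\chi_S=n^{q-d}\neq 1$ for $q<d$ and $\chi_S=\sign$ for $q=d$ --- say this explicitly rather than parenthetically. Second, triviality of the $\Gamma$-action on the delocalized summand deserves a sentence: conjugation by $\Gamma$ fixes each class $[\zeta]$, $\zeta\in\mu\setminus\{e\}$, and acts trivially on $\mathrm{HH}^0_{\mu}(\mathrm{pt},\mathrm{pt})\cong\Cz$ because the $\zeta$-fixed data is equivariantly a point; this is what yields $\widehat{\mathrm{EH}}{}^2_{p0}\cong\Cz[\mu\setminus\{e\}]\otimes\lwedge^p_{\Cz}(\Gamma\otimes\Cz)$, together with the vanishing of the localized contribution at $q=0$ (the character $1/|N_\Qz|$ is nontrivial), which you correctly subsume in the $q<d$ case.
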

\begin{proof}
By Lemmas \ref{lem:LHS.homology}, \ref{lem:Raven.algaction}, \ref{lem:homology.A.mu}, and Remark \ref{rmk:homology.A.mu.action}, the $E^2$-page of this spectral sequence is 
\begin{align*}
    \widehat{\mathrm{EH}}{}^2_{pq} \cong \mathrm{H}_p(\Gamma, \widehat{\mathrm{HH}}{}^{-q}_{\mathscr{A} \rtimes \mu}(\underline{E}G , \overline{\mathscr{A}})) \cong {}&{}  \mathrm{H}_p (\Gamma, \mathrm{H}_{q}(\overline{\mathscr{A}} \rtimes (\mu \ltimes \mathscr{A}), \mathbb{C})) \oplus 
    \begin{cases}
        \mathrm{H}_{p}(\Gamma , \mathbb{C}[\mu \setminus \{e\}]) & \text{ if $q=0$, } \\
        0 & \text{ otherwise}.
    \end{cases} \\
   \cong  {}&{}\mathrm{H}_p (\Gamma, (\lwedge_{\mathbb{C}}^q (A \otimes \mathbb{C}))_\mu ) \oplus 
    \begin{cases}
        \mathbb{C}[\mu \setminus \{e\}] \otimes \lwedge_{\mathbb{C}}^p (\Gamma \otimes \mathbb{C}) & \text{ if $q=0$, } \\
        0 & \text{ otherwise}.
    \end{cases}
\end{align*}
The first direct summand is the complexification of the calculation in Proposition \ref{prop:group.homology.LHS}, and hence is isomorphic to $\mathrm{H}_p(\Gamma , \mathbb{C}_{\mathrm{sign}})$ if $q=d$ and to $0$ otherwise. 
If $\mathrm{sign} \colon \Gamma \to \{ \pm 1\}$ is non-trivial, then $\mathrm{H}_p(\Gamma , \mathbb{C}_{\mathrm{sign}}) \cong 0$. 
Indeed, by decomposing $\Gamma $ to $s^{\mathbb{Z}} \oplus \Gamma^\perp$ in the way that $\mathrm{sign}|_{\Gamma^\perp}$, we obtain the K\"unneth isomorphism
\[
    \mathrm{H}_p(\Gamma , \mathbb{C}_{\mathrm{sign}}) \cong \bigoplus_{k}\mathrm{H}_k(s^{\mathbb{Z}} , \mathbb{C}_{\mathrm{sign}}) \otimes \mathrm{H}_{p-k}(\Gamma^\perp, \mathbb{C}) \cong 0,
\]
where the last isomorphism follows from $\mathrm{H}_*(s^{\mathbb{Z}} , \mathbb{C}_{\mathrm{sign}}) \cong 0$ that is verified in \eqref{eqn:Z.sign.homology}.
In summary, the first direct summand can be non-trivial only if $q=d$ and $\mathrm{sign} \colon \Gamma \to \{ \pm 1\}$ is trivial, in which case it is isomorphic to $\mathrm{H}_p(\Gamma , \mathbb{C})$. 
Finally, since the exact couple $(\widehat{\mathrm{DH}}, \widehat{\mathrm{EH}})$ respects the direct sum decomposition over $\cC(G_{\mathrm{tor}})$, no non-trivial higher differentials are possible. 
\end{proof}

Based on the K-theory computation of the group C*-algebra of the groups of the form $\mathbb{Z}^n \rtimes \mathbb{Z}/m$ by Langer--L\"{u}ck \cite{LangerLuck}, Li--L\"{u}ck shows in \cite[Corollary~4.10]{LiLuck} that $\K_1(C^*_r((A \rtimes \mu) \ltimes \overline{A}) ) \cong 0$ and 
\begin{align}
\begin{split}
    \K_0 (C^*_r((A  \rtimes \mu) \ltimes & \overline{A} )) \cong  
    K_{\mathrm{fin}}^\mu 
    \oplus K_{\mathrm{inf}},\\
    K_{\mathrm{inf}} \cong 
    \bigoplus_{0 \leq 2k < d} \big( \lwedge^{2k }_{\mathbb{Q}} \mathscr{A} \big)_\mu
    \oplus {}&
    \begin{cases} 
    \lwedge^d_{\mathbb{Z}} A \cong \mathbb{Z} & \text{ if $d$ is even, } \\ 0 & \text{ if $d$ is odd.}
    \end{cases}
\end{split}
    \label{eqn:Ktheory.A.mu}
\end{align}
Here, $K_{\mathrm{inf}}, K_{\mathrm{fin}}^\mu \subset \K_0(C_r^*(\mu \ltimes A ))$ are the subgroups defined in \cite[Section 2]{LiLuck}. 
The group $K_{\mathrm{inf}}$ consisting of $\K_0$-classes $\xi$ such that there is $n \in \mathbb{Z}_{\geq 1}$ such that $n \xi$ is contained in the image of the map of K-groups induced by the inclusion $ C^*_r(A \ltimes \overline{A}) \to C^*_r((A \rtimes \mu) \ltimes \overline{A})$. The group $K_{\mathrm{fin}}^\mu$ is a free abelian group whose rank is equal to that of $\widetilde{R}(\mu)$, the complement of the trivial representation in the representation ring $R(\mu)$. By definition, the action of $\Gamma$ on this K-group preserves the subgroup $K_{\mathrm{inf}}$. Hereafter, we regard $K_{\mathrm{fin}}^\mu $ as a $\Gamma$-space through the identification 
\[ 
    K_{\mathrm{fin}}^\mu \cong \K_0(C^*_r((A  \rtimes \mu) \ltimes  \overline{A} )) / K_{\mathrm{inf}}.
\]
For the latter use, we introduce a different decomposition of the K-group. Let $K_{\mathrm{div}} \subset K_{\mathrm{inf}}$ denote the subgroup of divisible elements, which is the direct summand $\bigoplus_{0\leq 2k <d} (\lwedge^{2k}_{\mathbb{Q}}\mathscr{A})_\mu$ and is obviously preserved by the $\Gamma$-action, and let 
\[ 
K_{\mathrm{free}} \coloneqq \K_0(C^*_r((A  \rtimes \mu) \ltimes  \overline{A} )) / K_{\mathrm{div}} \cong K_{\mathrm{fin}}^\mu \oplus K_{\mathrm{inf}}/K_{\mathrm{div}} \cong K_{\mathrm{fin}}^\mu \oplus \mathbb{Z}.
\]
The exact sequence $0 \to K_{\mathrm{div}} \to \K_0((C^*_r((A  \rtimes \mu) \ltimes  \overline{A} )) \to K_{\mathrm{free}} \to 0$ non-equivariantly splits.

\begin{lemma}\label{lem:Kdiv}
    For any $p \in \mathbb{Z}_{\geq 0}$, we have 
    \[
    \H^p(\Gamma,K_{\mathrm{div}}) \cong 0, \quad \H^p(\Gamma, K_{\mathrm{free}}) \cong \H^p(\Gamma , \K_0(C^*_r(A \rtimes \mu) \ltimes \overline{A})).
    \]
\end{lemma}
\begin{proof}
As is considered in \cite[Theorem 0.1 (ii)]{LangerLuck}, the induced morphism in K-theory by the inclusion $k \colon  C^*_r(A \ltimes \overline{A} ) \to C^*_r((A \rtimes \mu) \ltimes \overline{A})$ factors through the coinvariant as
\[
    \bar{k}_* \colon \K_0 (C^*_r(A \ltimes \overline{A}))_\mu  \to \K_0 (C^*_r((A \rtimes \mu) \ltimes \overline{A})).
\]
Indeed, for any $[p] \in \K_0(C^*_r(A \ltimes \overline{A}))$ and $s \in \mu$, we have $k_*[p]=k_*(s_*[p])$ since the two projections $s \cdot p $ and $p$ are conjugate by the unitary $u_s$ in $C^*_r((A \rtimes \mu) \ltimes \overline{A})$. 
By Morita invariance of C*-algebra K-theory, \eqref{eqn:Raven.BC}, Lemma \ref{lem:Raven.Chern.torus}, and Theorem \ref{thm:rings}, we have 
\[ 
    \K_0(C^*_r(A \ltimes \overline{A})) \cong\K_0(C^*_r(\mathscr{A} \ltimes \overline{\mathscr{A}})) \cong  \bigoplus_{k \in \Zz} \mathrm{H}_{2k}(\mathscr{A}, \mathbb{Z}\overline{\mathscr{A}}) \cong \bigoplus_{0 \leq 2k <d}\lwedge^{2k}_{\mathbb{Q}}(\mathscr{A} \otimes_{\mathbb{Z}} \mathbb{Q} ) \oplus 
    \begin{cases}
        \lwedge^d_{\mathbb{Z}} A & \text{ if $d$ is even,}\\
        0 & \text{ if $d$ is odd.}
    \end{cases}
\]
By Remark \ref{rmk:DW.free}, \ref{rmk:action.commute.KK}, and compatibility of the $\mathscr{S}$-actions via $\widehat{\operatorname{ch}}_R$ (a remark above Lemma \ref{lem:KK.HH.LHS}), the above isomorphism is $\mathscr{S}$-equivariant, and the $\mathscr{S}$-action on the right group is given in Theorem \ref{thm:rings}. 

An element of the first direct summand $\bigoplus_{0 \leq 2k <d}\lwedge^{2k}_{\mathbb{Q}}(A \otimes_{\mathbb{Z}} \mathbb{Q} ) $, which is divisible, is sent to $K_{\mathrm{div}}$ by $\bar{k}_*$. By definition of $K_{\mathrm{div}}$, it is surjective. It is proved in \cite[Proposition 4.9]{LiLuck} that it is also injective after taking coinvariant. 
In conclusion, we have $K_{\mathrm{div}} \cong \bigoplus_{0 \leq 2k <d}\lwedge^{2k}_{\mathbb{Q}}(A \otimes_{\mathbb{Z}} \mathbb{Q} )_\mu$, not only as group but also as $\Gamma$-modules. Now, by (the proof of) Proposition \ref{prop:group.homology.LHS}, we obtain $\H^p(\Gamma,\K_{\mathrm{div}}) \cong 0$. The long exact sequence of group cohomology (\cite[Proposition III.6.1]{Brown}) yields the second claim.  
\end{proof}

To end the proof of Theorem \ref{thm:LuckLi}, we determine the action of $\Gamma$ on $K_{\mathrm{fin}}^\mu$. 
When $|V_{K,\mathbb{R}}|$ is odd, we have $\mu = \{\pm 1\}$ and hence $\widetilde{R}(\mu) \cong \Zz$. In this case, $\Gamma$ acts on $K_{\mathrm{fin}}^\mu \cong \Zz$ identically, since $K_{\mathrm{fin}}^\mu$ contains the image of $\widetilde{R}(\mu) \subset \K_0(C^*_r(\mu))$ via the inclusion $C^*_r(\mu) \to C^*_r((A  \rtimes \mu) \ltimes  \overline{A} )$ as a finite index subgroup (\cite[Theorem 0.1 (ii)]{LangerLuck}).
We need an additional discussion only in the case that $|V_{K,\mathbb{R}}|$ is even.

\begin{lemma}\label{lem:realeven.K}
    Assume that $|V_{K,\mathbb{R}}|$ is even. Let $U \subset \hat{A} \cong \mathbb{T}^d$ be an open neighborhood of the origin that is $\mu$-equivariantly homeomorphic to the open disk. Then the map $(\iota \rtimes \mu)_* \colon \K_0(C_0(U) \rtimes \mu) \to \K_0(C(\overline{A}) \rtimes (A \rtimes \mu))$ induced from the inclusion $\iota \coloneqq C_0(U) \hookrightarrow C(\hat{A}) =C^*_rA \hookrightarrow
    C(\overline{A}) \rtimes A$ is split injective. 
\end{lemma}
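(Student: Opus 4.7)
The plan is to identify $\K_0(C_0(U) \rtimes \mu)$ via the equivariant Thom isomorphism and match its image against the Li--L\"uck decomposition \eqref{eqn:Ktheory.A.mu} of the target, with Raven's delocalized Chern character detecting the finite part.

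Since $|V_{K,\Rz}|$ is even, $d = [K:\Qz]$ is even and $\mu$ acts on $U \cong \Rz^d$ linearly with $0$ as the unique fixed point; this action is complex-linear in the totally imaginary case and antipodal when $\mu = \{\pm 1\}$. Equivariant Bott periodicity (complex, respectively real with a sign twist) yields $\K_0(C_0(U) \rtimes \mu) \cong R(\mu)$, free abelian of rank $|\mu|$, with $\K_1(C_0(U) \rtimes \mu) = 0$. Under $(\iota \rtimes \mu)_*$, the trivial representation $[1_\mu]$ corresponds to the Bott class, whose top localized Chern character is a generator of $(\lwedge^d(A\otimes\Cz))^\mu \cong \Cz$ (cf.\ Lemma~\ref{lem:homology.A.mu}); I expect this Bott class to hit a generator of the direct summand $\lwedge^d_\Zz A \cong \Zz \subseteq K_{\mathrm{inf}}$ appearing in \eqref{eqn:Ktheory.A.mu}.

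For the augmentation ideal $\widetilde{R}(\mu) \subset R(\mu)$, I would apply Raven's delocalized Chern character $\widehat{\mathrm{ch}}_R$. For each $\zeta \in \mu \setminus \{e\}$, both fixed-point sets coincide with the singleton $\{0\}$: for $U$ this holds by construction, and for $\overline{A}$ it follows because multiplication by the nonzero element $1 - \zeta \in R$ is injective on the profinite $R$-module $\overline{A}$. Naturality of $\widehat{\mathrm{ch}}_R$ then identifies the delocalized summands $\Cz[\mu \setminus \{e\}]$ of both K-groups via the identity, producing a rational isomorphism $\widetilde{R}(\mu) \otimes \Cz \xrightarrow{\cong} K_{\mathrm{fin}}^\mu \otimes \Cz$ between free abelian groups of equal rank $|\mu| - 1$.

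Finally, I would upgrade to an integral splitting using the Langer--L\"uck description \cite{LangerLuck} of the generators of $\K_0(C^*_r(A \rtimes \mu))$, which arise from representations of $\mu$ localized at the fixed point $\{0\} \in \hat{A}$ and hence factor canonically through $\K_0(C_0(U) \rtimes \mu)$. Together with the first two steps, this places the image of $R(\mu)$ as the direct summand $\Zz[\beta] \oplus K_{\mathrm{fin}}^\mu$ of $\K_0(C(\overline{A}) \rtimes (A \rtimes \mu))$, where $[\beta]$ is the generator of $\lwedge^d_\Zz A$; the desired splitting is then the projection onto this summand. The principal obstacle is precisely this final integral upgrade: the delocalized Chern character alone yields only the rational statement, and passing to $\Zz$-coefficients requires tracking that the Langer--L\"uck generators of $K_{\mathrm{fin}}^\mu$ match, up to unit, the natural basis of $\widetilde{R}(\mu)$ indexed by $\mu \setminus \{e\}$, so that the map is represented by an integer matrix of determinant $\pm 1$ rather than merely of nonzero determinant.
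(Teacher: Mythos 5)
Your proposal follows a genuinely different route from the paper, but it stops short exactly where the real difficulty lies, so as written it has a gap. The delocalized Chern character argument only yields a \emph{rational} isomorphism $\widetilde{R}(\mu)\otimes\Cz \to K_{\mathrm{fin}}^\mu\otimes\Cz$, and rational injectivity between free abelian groups of equal rank does not give split injectivity over $\Zz$: you need the image to be a direct summand, i.e.\ the relevant integer matrix to have determinant $\pm 1$, and this is precisely what you defer to the ``integral upgrade'' without proving it. That this step is genuinely delicate is illustrated by the very result you would invoke: Langer--L\"uck show that the image of $\widetilde{R}(\mu)$ under the closely related map $C^*_r(\mu)\to C^*_r((A\rtimes\mu)\ltimes\ol{A})$ is only a \emph{finite-index} subgroup of $K_{\mathrm{fin}}^\mu$, so an assertion that natural generators ``match up to unit'' cannot be taken for granted and is not supplied by any of the ingredients you list. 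In short, your outline reduces the lemma to an unproven integral matching statement rather than establishing it.

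The paper avoids this issue entirely by producing an explicit one-sided inverse at the level of KK-theory: it builds a $\mu$-equivariant Dirac-type class $\beta\in\KK^\mu(C(\ol{A})\rtimes A,\Cz)$ (using that $d$ is even and $\mu$ preserves orientation, so the Clifford/spinor data exist equivariantly), and observes that the Kasparov product of $\iota_*$ with $\beta\rtimes\mu$ restricts on $C_0(U)\rtimes\mu$ to the equivariant Bott element, hence a KK-equivalence by equivariant Bott periodicity. Thus $\cdot\otimes(\beta\rtimes\mu)$, composed with the inverse Bott isomorphism, is a genuine left inverse of $(\iota\rtimes\mu)_*$, and split injectivity follows integrally with no rank-counting or Chern-character input. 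If you want to salvage your approach, you would need to replace the rational comparison by an honest retraction (or an explicit computation showing the composite $R(\mu)\to \K_0(C(\ol{A})\rtimes(A\rtimes\mu))\to R(\mu)$, for some concretely defined second map, is the identity), which is essentially what the paper's Dirac element accomplishes.
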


\begin{proof} 
Note that, since $|V_{K,\mathbb{R}}|$ is assumed to be even, we have that $\mathrm{sign}(\mu) = \{ +1\}$ (discussed in the proof of Theorem \ref{thm:gpdhomology}), and $d=[K:\mathbb{Q}] = |V_{K,\mathbb{R}}| +2|V_{K,\mathbb{C}}|$ is even.  
Let us fix a basis $A = \langle a_1,\cdots, a_d\rangle$ and identify $A$ with $\mathbb{Z}^d$. Take a $\mu$-invariant inner product $\langle \cdot, \cdot \rangle \colon \mathbb{Z}^d \times \mathbb{Z}^d \to \mathbb{R}$ and let $(g^{ij})_{1 \leq i,j \leq d}$ denote the inverse matrix of $(\langle a_i,a_j \rangle)_{i,j}$. Consider unbounded self-adjoint operators 
    \[
    X_j \colon \ell^2\mathbb{Z}^d \to \ell^2 \mathbb{Z}^d, \quad (X_j\phi )(x_1,\cdots,x_d) = x_j \phi(x_1,\cdots,x_d). 
    \]
    Passing through the Fourier transform $\ell^2A \to L^2\hat{A}$, each $X_j$ is identified with 
    \[
    i \frac{\partial}{ \partial \xi_j} \colon L^2(\mathbb{T}^d) \to L^2(\mathbb{T}^d),
    \]    
    where $\hat A\cong\mathbb{T}^d\coloneqq\mathbb{R}^d/(2\pi\mathbb{Z})^d$ via the basis ${a_1,\dots,a_d}$, and $\xi_1,\dots,\xi_d$ denote the standard coordinates on $\mathbb{T}^d$.
    Note that the inner product $\langle \cdot,\cdot \rangle$ gives $\mu$-equivariant identifications
    \[
     \hat{A} \cong \Hom(A,\mathbb{R})/\Hom(A,2 \pi \mathbb{Z}) \cong A_{\mathbb{R}} / \check{A}, \quad \text{where} \     A_{\mathbb{R}} \coloneqq A \otimes_{\mathbb{Z}} \mathbb{R}, \ \check{A} \coloneqq \{ b \in A_{\mathbb{R}} \mid \langle b,A\rangle \subset 2\pi \mathbb{Z} \}.
     \]
     The Riemannian metric on $A_{\mathbb{R}}$ given by $\langle \cdot, \cdot \rangle$ induces that on $\hat{A}$, which is preserved by the $\mu$-action. Since $\mathrm{sign}(\mu)=1$, this $\mu$-action does not change the orientation of $\hat{A}\cong \mathbb{T}^d$.

    Let $\mathbb{C}\ell_A$ be the Clifford algebra, i.e., the C*-algebra generated by elements $\mathfrak{c}(a)$ for $a \in A$ with the relations $\mathfrak{c}(a) \mathfrak{c}(b) + \mathfrak{c}(b) \mathfrak{c}(a) =-2 \langle a,b \rangle $ and $\mathfrak{c}(a)^* = - \mathfrak{c}(a)$ for any $a,b \in A$. Since $d$ is even, there is a $2^{d/2}$-dimensional Hilbert space $S_A$ such that $\mathbb{C}\ell_A \cong \End(S_A)$. 
    This $S$ is equipped with the $\mathbb{Z}/2$-grading so that $\mathfrak{c}(a)$ acts by an odd operator. 
    The $\mu$-action on $\mathbb{C}\ell_A$ given by $\mu \cdot \mathfrak{c}(a) \coloneqq \mathfrak{c}(\mu \cdot a)$ is a priori implemented by a projective representation $\lambda \colon \mu \to \mathcal{U}(S_A)/\mathbb{T}$. Since $\mathrm{sign}(\mu)=\{+1\}$, the $\mu$-action on $A_{\mathbb{R}}$ preserves the orientation, and hence $\lambda(\mu) $ preserves $\mathbb{Z}/2$-grading. 
    Moreover, since $\mu$ is a finite cyclic group, and hence satisfies $\mathrm{H}^2(\mu , \mathbb{T}) \cong 0$, this $\lambda$ lifts to a genuine unitary representation preserving the $\mathbb{Z}/2$-grading.

    The Dirac operator on $\hat{A} \cong \mathbb{T}^d$ with respect to the above Riemannian metric is an odd $\mu$-invariant unbounded self-adjoint operator defined by 
    \[
    D \coloneqq  \sum_{j,k =1}^d g^{jk} \cdot \mathfrak{c}(a_j)  \cdot \frac{\partial}{\partial \xi_k} \colon L^2(\mathbb{T}^d, S_A) \to L^2(\mathbb{T}^d, S_A). 
    \]
    This is an elliptic operator on a compact manifold, and hence is a Fredholm operator. 
    Going back to the Fourier dual $\ell^2A \otimes \mathbb{C}\ell_A$, this $D$ corresponds to the odd self-adjoint operator 
    \[
    C \coloneqq  \sum_{j,k=1}^dg^{jk} \cdot \mathfrak{c}(a_j)  \cdot iX_k \colon \ell^2(\mathbb{Z}^d ,S_A)  \to \ell^2(\mathbb{Z}^d , S_A). 
    \]
    We consider the $\ast$-representation $\pi \colon C(\overline{A}) \rtimes A \to B(\ell^2A)$ arising from the covariant pair consisting of the multiplication representation of $C(\overline{A}) \subset C_b(A)$ on $\ell^2A$ and the left regular representation of $A$ on $\ell^2A$. Then, the bounded transform $C(1+C^2)^{-1/2}$ commutes with $C^*_r(A)$ modulo compact operators, and commutes with $C(\overline{A})$. 
    That is, we obtain an element of Kasparov's KK-group
\[
    \alpha \coloneqq [\ell^2(A, S_A), \pi, C(1+C^2)^{-1/2}] \in \KK^\mu (C(\overline{A}) \rtimes A, \mathbb{C}).
\]
    The composition with $\iota$ in the statement of the lemma is 
    \[
    [\iota] \otimes_{C(\overline{A}) \rtimes A } \alpha  = [L^2(U,S_A), m \circ \iota, PD(1+D^2)^{-1/2}P] \in \KK^\mu(C_0(U),\mathbb{C}),
    \]
    where $m \colon C(\hat{A}) \to B(L^2(\hat{A}))$ denotes the multiplication representation and $P$ denotes the projection onto $L^2(U,S_A)$. 
    By \cite[Proposition 10.8.8]{HigsonRoe}, this KK-class is identical with the one aligned with the Dirac operator on $U$, which is nothing else than the equivariant Bott class on $U \cong \mathbb{R}^d$. Hence, by Kasparov's equivariant Bott periodicity theorem (\cite[{Theorem 7}]{Kasparov1}), it is a $\mu$-equivariant KK-equivalence. 
    Applying Kasparov's descent functor (\cite[Theorem 3.4]{Kasparov}), which we write $\cdot \rtimes \mu$ in this paper, we obtain non-equivariant KK-classes
    \begin{gather*}
    \alpha \rtimes \mu \in  \KK ((C(\overline{A}) \rtimes A) \rtimes \mu, \mathbb{C} \rtimes \mu ) = \KK (C(\overline{A}) \rtimes (A \rtimes \mu) , C^*_r(\mu )),\\
    [\iota] \rtimes \mu = [\iota \rtimes \mu] \in  \KK (C_0(U) \rtimes \mu, (C(\overline{A}) \rtimes A) \rtimes \mu ) .
    \end{gather*}
    By the compatibility of the descent with the Kasparov product, we have that 
    \[
    ([\iota] \rtimes \mu ) \otimes_{(C(\overline{A}) \rtimes A) \rtimes \mu}(\alpha \rtimes \mu) = ([\iota] \otimes_{C(\overline{A}) \rtimes A} \alpha) \rtimes \mu 
    \]
    is a KK-equivalence, with the KK-inverse $\beta \rtimes \mu$, where $\beta \in \mathrm{KK}^\mu(\mathbb{C},C_0(U))$ is the $\mathrm{KK}^\mu$-inverse of $[\iota] \otimes_{C(\overline{A}) \rtimes A} \alpha$. Now, the Kasparov product with $(\alpha \otimes_{\mathbb{C}} \beta) \rtimes \mu$ induces a homomorphism $\mathrm{K}_0(C(\overline{A}) \rtimes (A \rtimes \mu)) \to \K_0( C_0(U) \rtimes \mu)$, which is a right inverse of $(\iota \rtimes \mu)_*$.     
\end{proof}
Since $\K_0(C_0(U) \rtimes \mu) \cong R(\mu) \cong \widetilde{R}(\mu) \oplus \mathbb{Z}$ by the equivariant Bott periodicity as above, Lemma \ref{lem:realeven.K}, \eqref{eqn:Ktheory.A.mu}, and a rank comparison shows that 
\[ 
     \K_0(C^*_r((A \rtimes \mu) \ltimes \overline{A})) \cong \Im (\iota \rtimes \mu)_* \oplus K_{\mathrm{div}}, 
\]
that is, $K_{\mathrm{free}} \cong \Im (\iota \rtimes \mu)_*$. 
This identification determines the $\Gamma$-action on $K_{\mathrm{free}}$ as in the following lemma. For $s \in \Gamma$, we write $\alpha_s \coloneqq \sigma_s^* \colon C_0(\overline{\A}) \to C_0(\overline{\A})$. The $\Gamma$-action on $\K_0(C^*_r((A \rtimes \mu) \ltimes \overline{A})) $ is given by $\theta_s \coloneqq (\alpha_s \rtimes (\sigma_s \rtimes \id_\mu))_*$.

\begin{remark}\label{rmk:Im.indep}
Hereafter, we take $U$ in Lemma \ref{lem:realeven.K} to be the image of a small $\mu$-invariant star-shaped neighborhood of the origin in $A_{\mathbb{R}}$.
One reason is that, for such $U$, the subgroup $\Im(\iota \rtimes \mu)_*$ is independent of the choice of $U$.
Indeed, for two such neighborhoods $U$ and $V$, choose another $\mu$-invariant star-shaped neighborhood $W$ such that $\overline{W} \subset U \cap V$.
The $C^*$-algebras $C_0(U \setminus W)$ and $C_0(V \setminus W)$ are $\mu$-equivariantly contractible via the radial reparametrization $f(x) \mapsto f(tx)$.
Thus, the upper horizontal morphisms in the commutative diagram
\[
    \xymatrix{
    \K_0(C_0(U) \rtimes \mu) \ar[rd]_{(\iota \rtimes \mu)_*} & \K_0(C_0(W) \rtimes \mu) \ar[l] \ar[r] \ar[d]^{(\iota \rtimes \mu)_*} & \K_0(C_0(V) \rtimes  \mu) \ar[ld]^{(\iota \rtimes \mu)_*} \\
    &\K_0(C(\overline{A}) \rtimes (A \rtimes \mu) ),&
    }
\]
that are induced by inclusions, are isomorphisms.
This shows the desired independence of $\Im(\iota \rtimes \mu)_*$.
\end{remark}

\begin{lemma}\label{lem:realeven.AHSS}
The following hold.
\begin{enumerate}[\upshape(i)]
    \item If $K$ is totally imaginary, then $\Gamma$ acts trivially on $K_{\mathrm{free}}$. 
    \item If $|V_{K,\mathbb{R}}|$ is nonzero and even, then, for $s \in \Gamma$, we have
    \begin{itemize}
        \item $\theta_s|_{K_{\mathrm{free}}} = \id$ if $\mathrm{sign} (s) =1$, and
        \item $\ker ( \theta_s|_{K_{\mathrm{free}}} -\id) \cong \mathbb{Z}$ and $\coker ( \theta_s|_{K_{\mathrm{free}}} -\id) \cong \mathbb{Z}$ if $\mathrm{sign}(s) =-1$. 
    \end{itemize}
\end{enumerate}
\end{lemma}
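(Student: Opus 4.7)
The plan is to analyse $\theta_s$ on the rank-$2$ subgroup $K_{\mathrm{free}} \cong R(\mu)$ obtained via the equivariant Bott isomorphism $\K_0(C_0(U) \rtimes \mu) \cong R(\mu)$ of Lemma~\ref{lem:realeven.K}. The action on this subgroup is controlled by $\sigma_s$ viewed as a $\mu$-equivariant $\Rz$-linear automorphism of the tangent space $T_0 \hat A \cong K \otimes \Rz$; since $\sigma_s^*$ is an $R(\mu)$-module endomorphism of the free rank-one module $\K_0(C_0(U) \rtimes \mu) = R(\mu) \cdot \beta$, it acts as multiplication by a single element $\chi(\sigma_s) \in R(\mu)$, and identifying $\chi$ is the heart of both parts.

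For part~(i), the totally imaginary hypothesis gives $K \otimes \Rz \cong \Cz^{r_2}$ with a $\mu$-equivariant $\Cz$-structure induced by the complex embeddings. For $s \in \Gamma$, $\sigma_s$ is diagonal with entries $w_i(s) \in \Cz^*$ and hence lies in the path-connected group $(\Cz^*)^{r_2}$ of $\mu$-equivariant $\Cz$-linear automorphisms. A straight-line homotopy to the identity through this group, combined with homotopy invariance of equivariant $\K$-theory, yields $\chi(\sigma_s) = 1$ and hence $\theta_s = \id$ on $K_{\mathrm{free}}$.

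For part~(ii), I would first construct $s$: Lemma~\ref{lem:sign2} supplies $a \in K^*$ with $\sign(a) = -1$ and $a^\Zz$ a direct summand of $K^*$, and since $|V_{K,\Rz}|$ even gives $\sign|_\mu = 1$, the splitting $K^* = a^\Zz \oplus X$ forces $\mu \subseteq X$, so the image $s$ of $a$ in $\Gamma = K^*/\mu$ has $\sign(s) = -1$ and generates a direct summand. To pin down $\chi(\sigma_s) \in R(\mu) = \Zz \cdot 1 \oplus \Zz \cdot t$ I would combine two constraints. First, the non-equivariant augmentation $R(\mu) \to \Zz$ sends $\chi(\sigma_s)$ to the orientation sign of $\sigma_s$ on $K \otimes \Rz$, namely $\sign(s) = -1$. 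Second, since $\sign(s^2) = 1$ puts $\sigma_{s^2}$ in the identity component of $GL_+(K \otimes \Rz)^\mu$ (path-connected because $-\id \in \mu$ is central), the same homotopy argument as in~(i) yields $\chi(\sigma_s)^2 = \chi(\sigma_{s^2}) = 1$ in $R(\mu)$. The only elements of $R(\mu)$ satisfying both are $-1$ and $-t$; I would identify the correct one as $-t$ via an equivariant Lefschetz-type fixed-point computation at $0$ in the one-point compactification of $K \otimes \Rz$. With $\chi(\sigma_s) = -t$, $\theta_s$ has matrix $\begin{pmatrix} 0 & -1 \\ -1 & 0 \end{pmatrix}$ in the basis $\{1, t\}$ of $R(\mu)$, so $\theta_s - \id$ is a rank-$1$ integer matrix whose entries have greatest common divisor $1$, giving $\ker(\theta_s - \id) \cong \Zz$ and $\coker(\theta_s - \id) \cong \Zz$.

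The hard part will be ruling out the alternative $\chi(\sigma_s) = -1$, that is, distinguishing a ``plain'' orientation-reversal from one twisted by the sign character of $\mu$. This is a refined equivariant orientation invariant rather than an ordinary one, and pinning it down should require exploiting that $\sigma_s$ acts non-trivially on the set of real embeddings through the signs of $w_i(s)$, which is precisely the feature that a scalar map realising $\chi = -1$ would lack.
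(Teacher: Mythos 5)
Your part (i) is essentially the paper's own argument: the complex structure on $K\otimes\Rz$ makes $\hat\sigma_s$ $\mu$-equivariantly homotopic to the identity, hence trivial on $\K_0(C_0(\Rz^d)\rtimes\mu)$ and therefore on $K_{\mathrm{free}}$. (Minor point: a straight-line homotopy in $(\Cz^*)^{r_2}$ can pass through $0$ when some $w_i(s)$ is a negative real number; you should invoke connectedness of the group of $\mu$-equivariant complex-linear automorphisms instead.)

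Part (ii), however, stops short of the decisive step. Your reduction to identifying $\chi(\sigma_s)\in R(\mu)$ is sound, and the two constraints (augmentation $-1$ from orientation reversal, and $\chi^2=1$ from the equivariant homotopy of $\sigma_{s^2}$ to the identity) correctly narrow the candidates to $-1$ and $-t$. But the lemma is exactly equivalent to $\chi=-t$: if $\chi=-1$ then $\theta_s-\id=-2\,\id$ on the $R(\mu)$-summand, so $\ker=0$ and $\coker\cong(\Zz/2\Zz)^2$, contradicting the statement. You explicitly defer ruling out $\chi=-1$ to an unspecified ``equivariant Lefschetz-type'' computation and label it the hard part, so the crux of (ii) is not proved. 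This is precisely where the paper's proof does something concrete: since $\mu=\{\pm1\}$ acts by central scalars, any orientation-reversing $\hat\sigma_s$ is $\mu$-equivariantly (even linearly) homotopic to $\mathrm{diag}(1,\dots,1,-1)$, and the induced map on $\K_0(C_0(\Rz^d)\rtimes\mu)$ is then read off from \cite[Lemma~2.1]{CL2}, giving $\ker\cong\Zz\cong\coker$ directly. (Your fixed-point idea can in fact be completed cheaply: $\hat\sigma_s(0)=0$, so restriction to the origin intertwines $\theta_s$ with the identity of $R(\mu)$; the Bott class restricts to the Euler class $(1-t)^{d/2}$, whence $(\chi-1)(1-t)=0$, which together with augmentation $-1$ forces $\chi=-t$.) A second omission: the $\theta_s$ in the statement acts on all of $\K_0(C^*_r((A\rtimes\mu)\ltimes\overline{A}))$, not just on $K_{\mathrm{free}}$, so the divisible summand $K_{\mathrm{div}}$ must also be controlled; the paper does this by replacing $s$ with $m\cdot s$ so that $\theta_s-\id$ is an automorphism of $K_{\mathrm{div}}$, and your computation on $R(\mu)$ alone does not address this contribution.
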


\begin{proof}
    Let $s \in \Gamma$. We prove that $\theta_s$ preserves the subgroup $ \Im (\iota \rtimes \mu)_*$, and determine the action. 
    Following Lemma \ref{lem:realeven.K}, we fix a $\mu$-invariant inner product on $A$, identify the Pontrjagin dual $\hat{A}$ with $A_{\mathbb{R}}/\check{A}$, and choose an open ball $U =B_r(0) \subset A_{\mathbb{R}}$ with respect to the metric $\langle \cdot ,\cdot \rangle$ centered at the origin, which is automatically $\mu$-invariant. The radius $r>0$ is chosen sufficiently small so that the restriction of the quotient map, $U \to \hat{A}$, is injective. For $s \in \Gamma$, we write $\sigma_s^\prime $ for the contragredient of $\sigma_s$ with respect to the $\mu$-invariant inner product $\langle \cdot, \cdot \rangle$ on $A$. Since $\langle A, \sigma_s^\prime (\check{A})\rangle = \langle \sigma_s(A), \check{A}\rangle \subset 2\pi \mathbb{Z}$, $\sigma_s^\prime$ restricts to an endomorphism on $\check{A}$, and hence induces a covering map $\sigma_s^\prime \colon \hat{A} \to \hat{A}$. By definition, the pull-back $\sigma_s^{\prime *} \colon C(\hat{A}) \to C(\hat{A})$ is identified with $\sigma_s \colon C^*_r(A) \to C^*_r(A)$ via the Pontrjagin duality.
    By taking $r >0$ sufficiently small, we may assume that the restriction of the quotient $\sigma_s^\prime (U) \to \hat{A}$ is also injective. Note that $\sigma_s^\prime (U)$ is also $\mu$-invariant since $\sigma_s^\prime$ commutes with the $\mu$-action. 
    Now, the following diagram commutes $\mu$-equivariantly;
\[
    \xymatrix{
    C_0(A_{\mathbb{R}}) \ar[d]^{\sigma_s^{\prime *} } & \ar[l] C_0(U) \ar[r] \ar[d]^{\sigma_s^{\prime*}} & C(\hat{A}) \ar@{=}[r] \ar[d]^{\sigma_s^{\prime*}} & C^*_rA \ar[r] \ar[d]^{\sigma_s} & C(\overline{A}) \rtimes A \ar[d]^{\alpha_s \rtimes \sigma_s} \\
    C_0(A_{\mathbb{R}}) & \ar[l] C_0(\sigma_s^\prime (U)) \ar[r]  & C(\hat{A}) \ar@{=}[r]  & C^*_r(A) \ar[r]  & C(\overline{A}) \rtimes (A) , 
    }
\]
which induces the following morphisms of K-theory:
\begin{align}
\begin{split}
    \xymatrix{
    \K_0(C_0(A_{\mathbb{R}}) \rtimes \mu) \ar[d]^{(\sigma_s^{\prime *} \rtimes \mu)_* } & \ar[l]_\cong \K_0(C_0(U) \rtimes \mu) \ar[r]^{(\iota \rtimes \mu)_*} \ar[d]^{(\sigma_s^{\prime*} \rtimes \mu)_*} &  \K_0(C(\overline{A}) \rtimes (A \rtimes \mu)) \ar[d]^{\theta_s = (\alpha_s \rtimes (\sigma_s \rtimes \id_\mu))_*} \\
    \K_0(C_0(A_{\mathbb{R}}) \rtimes \mu) & \ar[l]_\cong \K_0(C_0(\sigma_s^\prime (U)) \rtimes \mu) \ar[r]^{(\iota \rtimes \mu)_*}  & \K_0(C(\overline{A}) \rtimes (A \rtimes \mu)) .     
    }
    \end{split}\label{eqn:diagram.Kfree}
\end{align}
By Remark \ref{rmk:Im.indep}, the commutativity of the right square shows that $\theta_s $ preserves the subgroup $\Im (\iota \rtimes \mu)_* $. Moreover, by the same reason as Remark \ref{rmk:Im.indep}, the left horizontal maps are isomorphisms. Thus, the left vertical morphism $(\sigma_s^{\prime *} \rtimes \mu)_* $ is identified with $\theta_s|_{\Im (\iota \rtimes \mu)_*}$. 
At the left vertical map, $\sigma_s^{\prime} \colon A_{\mathbb{R}} \to A_{\mathbb{R}}$ is a linear automorphism that commutes with the representation of $\mu$. Therefore, it is homotopic to the identity through $\mu$-equivariant linear isomorphisms if and only if its restriction to each eigenspace of $\mu$-representation has positive determinant. 

    Based on the preparations so far, we prove (i). If $K$ is totally imaginary, then $A_{\mathbb{R}}$ is decomposed to the direct sum of its complex embeddings $\bigoplus_{w \in V_{K,\mathbb{C}}} \mathbb{C}$ (\cite[Proposition I.5.1]{Neu}). 
    On each complex embedding $\mathbb{C}$, the multiplicative group $\Gamma \times \mu$ of $K$ acts by a multiplication by complex numbers. 
    Thus, the determinant as an $\mathbb{R}$-linear map is positive, and hence is homotopic to the identity.

    We then prove (ii).  
    By Lemma \ref{lem:realeven.K} and \eqref{eqn:diagram.Kfree}, the kernel and the cokernel of $\theta_s|_{K_\mathrm{free}} -\id$ in turn coincide with those of $(\sigma_s^{\prime *} \rtimes \id_\mu)_*-\id$.
    To determine these groups, we apply the computation in \cite{CL2}.
    Since $\mu = \{ \pm 1\}$ in this case, which acts on $A_\mathbb{R}$ by multiplication, there is no need to care about the $\mu$-eigenspace decomposition. In this case, $\det (\sigma_s^{\prime}) = \det (\sigma_s) = \operatorname{sign}(s) = -1$. Therefore, $\sigma_s^{\prime}$ is linearly (and hence $\mu$-equivariantly) homotopic to $\mathrm{diag}(1,\cdots ,1 , -1)$. This enables us to apply \cite[Lemma 2.1]{CL2}, to get $\ker ( (\sigma_s^{\prime *} \rtimes \id_\mu)_* -\id) \cong \mathbb{Z}$ and $\coker ( (\sigma_s^{\prime*} \rtimes \id_\mu )_* -\id) \cong \mathbb{Z}$. 
\end{proof}

    
\begin{lemma}\label{lem:E2.Ktheory.even}
    Assume that $|V_{K,\mathbb{R}}|$ is nonzero and even. Then we have
    \[     \mathrm{H}_m (\Gamma; \K_0(C^*_r((A \rtimes \mu) \ltimes \overline{A} ))) \cong  \lwedge_{\mathbb{Z}}^m \Gamma.
\]
\end{lemma}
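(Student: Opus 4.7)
The plan is to combine the $\Gamma$-equivariant decomposition $K \coloneqq \K_0(C^*_r((A \rtimes \mu) \ltimes \overline{A})) \cong \Im(\iota \rtimes \mu)_* \oplus K_{\mathrm{div}}$ established just before Lemma~\ref{lem:realeven.AHSS} with the distinguished element $s \in \Gamma$ produced in that lemma. First I will show that $\H_*(\Gamma; K_{\mathrm{div}}) = 0$. Fix $n \in \Zz_{>1}$ and write $n = \zeta \gamma_n$ with $\zeta \in \mu$ and $\gamma_n \in \Gamma$, using the splitting $\mathscr{S} = \mu \times \Gamma$. On each summand $\lwedge^{2k}_\Qz \mathscr{A}$ ($0 \leq 2k < d$) of $K_{\mathrm{div}}$, Theorem~\ref{thm:rings} together with $|N_\Qz(\zeta)| = 1$ show that $\theta_{\gamma_n}$ acts as $n^{2k-d}\lwedge^{2k}(\zeta^{-1})$; since $\zeta^{-1}$ is a root of unity the eigenvalues of $\lwedge^{2k}(\zeta^{-1})$ lie on the unit circle, so the complex eigenvalues of $\theta_{\gamma_n}$ have absolute value $n^{2k-d} < 1$ and $\theta_{\gamma_n} - \id$ is invertible. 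Hence $\H_*(\langle \gamma_n\rangle; K_{\mathrm{div}}) = 0$, and the Lyndon--Hochschild--Serre spectral sequence for $\langle \gamma_n\rangle \triangleleft \Gamma$ forces $\H_*(\Gamma; K_{\mathrm{div}}) = 0$. The $\Gamma$-equivariantly split short exact sequence $0 \to K_{\mathrm{div}} \to K \to K_{\mathrm{free}} \to 0$ then reduces the claim to computing $\H_m(\Gamma; K_{\mathrm{free}})$, where $K_{\mathrm{free}} \cong \Im(\iota \rtimes \mu)_* \cong R(\mu) \cong \Zz^2$.

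Next I will pin down the $\Gamma$-action on $K_{\mathrm{free}}$. Take $s \in \Gamma$ as in Lemma~\ref{lem:realeven.AHSS}(ii) and extend $s$ to a basis of $\Gamma$; by replacing each remaining basis vector $a_i$ with $a_i s^{\epsilon_i}$ for suitable $\epsilon_i \in \{0,1\}$ one obtains a direct-sum decomposition $\Gamma = \langle s\rangle \oplus \Gamma'$ with $\sign(\Gamma') = \{+1\}$. For any $t \in \Gamma'$, $\det(\hat{\sigma}_t) > 0$, so $\hat{\sigma}_t$ is linearly (and thus $\mu$-equivariantly, since $\mu = \{\pm 1\}$ acts centrally on $\Rz^d$) homotopic to the identity through $\mathrm{GL}_d^+(\Rz)$; consequently $(\hat{\sigma}_t^* \rtimes \id_\mu)_*$ is the identity on $\K_0(C_0(\Rz^d) \rtimes \mu)$. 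The commutative diagram in the proof of Lemma~\ref{lem:realeven.AHSS}(ii), together with Lemma~\ref{lem:realeven.K}, then gives $\theta_t = \id$ on $\Im(\iota \rtimes \mu)_* = K_{\mathrm{free}}$, so $\Gamma'$ acts trivially on $K_{\mathrm{free}}$. Applying the snake lemma to $0 \to K_{\mathrm{div}} \to K \to K_{\mathrm{free}} \to 0$ with the endomorphism $\theta_s - \id$ (invertible on $K_{\mathrm{div}}$ by the argument above) transfers the isomorphisms of Lemma~\ref{lem:realeven.AHSS}(ii) to $\H_0(\langle s\rangle; K_{\mathrm{free}}) \cong \coker(\theta_s - \id)|_{K_{\mathrm{free}}} \cong \Zz$ and $\H_1(\langle s\rangle; K_{\mathrm{free}}) \cong \ker(\theta_s - \id)|_{K_{\mathrm{free}}} \cong \Zz$, with $\H_p(\langle s\rangle; K_{\mathrm{free}}) = 0$ for $p \geq 2$.

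Finally, since $\Gamma = \langle s\rangle \times \Gamma'$ and $\Gamma'$ acts trivially on $K_{\mathrm{free}}$, the K\"unneth formula (Theorem~\ref{thm:Kunneth}) applies, and the $\Tor$-terms vanish because $\H_q(\Gamma'; \Zz) \cong \lwedge^q \Gamma'$ is $\Zz$-free, yielding
\[
\H_m(\Gamma; K_{\mathrm{free}}) \cong \bigoplus_{p+q=m} \H_p(\langle s\rangle; K_{\mathrm{free}}) \otimes \lwedge^q \Gamma' \cong \lwedge^m \Gamma' \oplus \lwedge^{m-1}\Gamma' \cong \lwedge^m \Gamma,
\]
as required. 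The main subtle point lies in the second paragraph: one must verify that the homotopy argument of Lemma~\ref{lem:realeven.AHSS}(ii) genuinely produces an identity K-theoretic action whenever $\sign(t) = +1$, which rests crucially on $\mu = \{\pm 1\}$ acting centrally on $\Rz^d$, so that any linear homotopy through $\mathrm{GL}_d^+(\Rz)$ is automatically $\mu$-equivariant.
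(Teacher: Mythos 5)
Your argument is correct in substance and follows essentially the same route as the paper: reduce to the summand $K_{\mathrm{free}}\cong\Im(\iota\rtimes\mu)_*$ using $\H_*(\Gamma;K_{\mathrm{div}})=0$, observe that a sign-one complement $\Gamma'$ of $\langle s\rangle$ acts trivially on it, and feed $\H_0(\langle s\rangle;K_{\mathrm{free}})\cong\H_1(\langle s\rangle;K_{\mathrm{free}})\cong\Zz$ into a K\"unneth computation. The paper packages the last two steps as a collapsing LHS spectral sequence for $s^{\Zz}\leq\Gamma$ rather than a direct K\"unneth argument for the product $\langle s\rangle\times\Gamma'$, and it asserts $\H_*(\Gamma;K_{\mathrm{div}})=0$ where you give an explicit eigenvalue proof; these are only presentational differences.

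One justification should be repaired: you claim $\theta_s-\id$ is invertible on $K_{\mathrm{div}}$ ``by the argument above,'' but that argument only treats the elements $\gamma_n$ attached to rational integers $n>1$, where every eigenvalue has modulus $n^{2k-d}<1$. For a general $s\in\Gamma$ with $\sign(s)=-1$ this bound can fail (e.g. a unit of norm $-1$ would give $\theta_s=\id$ on $\lwedge^0_\Qz\mathscr{A}$), so the eigenvalue estimate does not cover the $s$ you are using. For the particular $s$ supplied by Lemma~\ref{lem:realeven.AHSS}(ii) invertibility does hold, and it follows in one line from that lemma's conclusion: $\ker(\theta_s-\id)\cap K_{\mathrm{div}}$ is a $\Qz$-subspace embedding into $\ker(\theta_s-\id)\cong\Zz$, hence zero, and an injective additive (hence $\Qz$-linear) endomorphism of the finite-dimensional space $K_{\mathrm{div}}\cong\bigoplus_{0\leq 2k<d}\lwedge^{2k}_\Qz\mathscr{A}$ is bijective; alternatively, the proof of Lemma~\ref{lem:realeven.AHSS}(ii) arranges this invertibility in the choice of $s$. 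With that patch, your snake-lemma transfer of kernel and cokernel to $K_{\mathrm{free}}$ and the final K\"unneth step are fine.
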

\begin{proof}
     By Lemma \ref{lem:Kdiv}, it suffices to compute the group $\H_*(\Gamma , K_{\mathrm{free}})$. 
     Pick $s \in \Gamma$ so that $s^{\mathbb{Z}}$ forms a direct summand and $\mathrm{sign}(s) =-1$ (Lemma \ref{lem:sign2}), 
     \if0
     As is discussed in the proof of Lemma \ref{lem:Kdiv}, $\theta_s$ acts on $\lwedge^q_{\mathbb{Q}}(\mathscr{A} \otimes \mathbb{Q}) \subset K_{\mathrm{div}}$ by $|N_{\mathbb{Q}}(s)|^{-1}\lwedge^q(s)$. Similarly, for $m \in \mathbb{Z}_{<1}$, $\theta_{ms}$ acts on $\lwedge^q_{\mathbb{Q}}(\mathscr{A} \otimes\mathbb{Q})$ by $m^{q-d}|N_{\mathbb{Q}}(s)|^{-1}\lwedge^q(s)$  (cf.\ the first line of the proof of Proposition \ref{prop:group.homology.LHS}). 
     Thus, by replacing $s$ with $m \cdot s$ for some $m \in \mathbb{Z}_{>1}$ if necessary, we may assume that $\theta_s - \id$ acts on $K_{\mathrm{div}}$ by an automorphism. If $m$ is chosen so that the support of $m$ and $s$ are disjoint in the fractional ideal group $J_K$, then $(ms)^{\mathbb{Z}}$ is also a direct summand. 
     \fi
     and a complement $\Gamma^\perp $ of $s^{\mathbb{Z}}$ in $\Gamma$ such that $\mathrm{sign}(\Gamma^{\perp}) =\{+1\}$. 
     Then, by Lemma \ref{lem:realeven.AHSS} (ii), any $t \in \Gamma^\perp$ acts on $K_{\mathrm{free}}$ by the identity, and we have 
     \[
     \H_n(s^{\mathbb{Z}} , K_{\mathrm{free}}) \cong 
     \begin{cases}
         \ker (\theta_s - \id) \cong \mathbb{Z}, & \text{ if $n=0$, }\\
         \operatorname{coker} (\theta_s - \id) \cong \mathbb{Z}, & \text{ if $n=1$, }\\        0 & \text{ otherwise. }
     \end{cases}
     \]
     In particular, this group is torsion-free. 
     Since $\H_p(\Gamma^\perp,\mathbb{Z}) \cong \lwedge_{\mathbb{Z}}^p \Gamma$ is also torsion-free, and $\Gamma^\perp$ acts on $K_{\mathrm{free}}$ trivially, the K\"{u}nneth theorem (see e.g. \cite[Section V.2]{Brown}) shows that 
    \[ 
     \mathrm{H}_n (\Gamma^\perp \times s^{\mathbb{Z}} , K_{\mathrm{free}} ) \cong \bigoplus_{p+q=n}\mathrm{H}_p(\Gamma^\perp, \mathbb{Z}) \otimes \mathrm{H}_q(s^{\mathbb{Z}}, K_{\mathrm{free}} )
    \cong \lwedge^{p}_{\mathbb{Z}} \Gamma^\perp \oplus \lwedge^{p-1}_{\mathbb{Z}} \Gamma^\perp \cong \lwedge^{p}_{\mathbb{Z}}\Gamma. \qedhere 
    \]
\end{proof}

\begin{proof}[Proof of Theorem \ref{thm:LuckLi}]
Here we consider the spectral sequence \eqref{eqn:exact.couple.LHS} associated to the exact couple
\begin{align*}
\begin{split}
    \mathrm{DK}_{pq}^1 \coloneqq & \KK_{p+q}^{G} (\underline{E}G \times E_p\Gamma , \overline{\mathscr{A}}),\\
    \mathrm{EK}_{pq}^1 \coloneqq & \KK_{p+q}^G( \underline{E}G \times ( E_p\Gamma, E_{p-1}\Gamma) , \overline{\mathscr{A}}) \cong \KK^{\mathscr{A} \rtimes \mu}_{q}(\underline{E}G, \overline{\mathscr{A}}) \otimes_{\mathbb{Z}[\Gamma]}\mathbb{Z}[\Gamma^{p+1}] .    
\end{split}
\end{align*}
Here, the isomorphism in the second line is verified in the same way as \eqref{eqn:E1page.tensor}, by Lemma \ref{lem:KK.HH.LHS}. By Lemma \ref{lem:induction.2} for $\overline{\mathscr{A}}=\mathscr{A} \times_A \overline{A}$, Remark \ref{rmk:EG} (iv) claiming $\underline{E}G = \underline{E}A$, and the Baum-Connes isomorphism \eqref{eqn:Raven.BC} for $A \rtimes \mu$ and $\overline{A}$, we have the isomorphism 
\[
    \KK^{\mathscr{A} \rtimes \mu}_{q}(\underline{E}G, \overline{\mathscr{A}}) \cong \KK^{A \rtimes \mu}(\underline{E}A,\overline{A}) \cong  \K_0(C^*_r(A \rtimes \mu) \ltimes \overline{A})  \cong K_{\mathrm{fin}}^\mu \oplus K_{\mathrm{inf}}
\]
is torsion-free. Thus, Raven's Chern character gives an injective map 
\[
    \mathop{\widehat{\mathrm{ch}}_R} \colon \KK^{\mathscr{A} \rtimes \mu}_{q}(\underline{E}G, \overline{\mathscr{A}}) \otimes_{\mathbb{Z}[\Gamma]} \mathbb{Z}[\Gamma^{p+1}] \to \widehat{\mathrm{HH}}{}_{\mathscr{A} \rtimes \mu}^{-q}( \underline{E}G, \overline{\mathscr{A}}) \otimes_{\mathbb{C}[\Gamma]}\mathbb{C}[\Gamma^{p+1}]. 
\]
Since the differentials $d^1_{pq}$ of the spectral sequences $\mathrm{EK}$ and $\widehat{\mathrm{EH}}$ are given by the boundary map of the equivariant homology theories $\mathrm{KK}_{p+q}^G(\underline{E}G \times \cdot, \overline{\mathscr{A}})$ and $\mathrm{KK}_{p+q}^G(\underline{E}G \times \cdot, \overline{\mathscr{A}})$, they are compatible with respect to Raven's Chern character: $d^1_{pq} \circ \mathop{\widehat{\mathrm{ch}}_R} = \mathop{\widehat{\mathrm{ch}}_R} \circ d_{pq}^1$. Therefore, by comparing the two spectral sequence, it turns out that $d^1_{pq}$ for $\mathrm{EK}$, which is the restriction of that for $\widehat{\mathrm{EH}}$, is identical with that of group cohomology with coefficient in $\KK^{\mathscr{A} \rtimes \mu}_{q}(\underline{E}G, \overline{\mathscr{A}})$. Finally, by Remark \ref{rmk:action.commute.KK}, we obtain that 
\[
    \mathrm{EK}^2_{pq} \cong \mathrm{H}^p(\Gamma, \mathrm{KK}_q^N(\underline{E}G, \overline{\mathscr{A}})) \cong \mathrm{H}^p(\Gamma, \mathrm{K}_q(C^*_r((A \rtimes \mu ) \ltimes \overline{A})).
\]

By \eqref{eqn:Ktheory.A.mu} and $\H^p(\Gamma ; K_{\mathrm{div}}) \cong 0$, we have $\mathrm{EK}_{p0}^2 \cong \H^p(\Gamma , K_{\mathrm{free}})$ and $\mathrm{EK}_{p1}^2 \cong 0$.
If $K$ is totally imaginary, by Lemma \ref{lem:realeven.AHSS} (1) we have
    \[ 
    \mathrm{EK}_{pq}^2 \cong \H^p(\Gamma,K_{\mathrm{free}}) \cong
    \begin{cases}
        \lwedge_{\mathbb{Z}}^p \Gamma \oplus \widetilde{R}(\mu) \otimes_{\mathbb{Z}} \lwedge_{\mathbb{Z}}^p \Gamma = R(\mu) \otimes_{\mathbb{Z}} \lwedge_{\mathbb{Z}}^p \Gamma & \text{ if $q \equiv 0 \equiv d$ mod $2$, }\\
        0 & \text{ otherwise.}
    \end{cases}
    \]
    If $|V_{K,\mathbb{R}}|$ is odd, $\Gamma$ acts on $K_{\mathrm{fin}}^\mu$ trivially as is remarked above, and hence  
    \[ 
    \mathrm{EK}_{pq}^2 \cong 
    \begin{cases}
        \widetilde{R}(\mu) \otimes_{\mathbb{Z}} \lwedge_{\mathbb{Z}}^p \Gamma = \lwedge_{\mathbb{Z}}^p \Gamma & \text{ if $q\equiv d$ mod $2$, }\\
        0 & \text{ otherwise.}
    \end{cases}
    \]
    If $|V_{L,\mathbb{R}}|$ is nonzero even, then by Lemma \ref{lem:E2.Ktheory.even} we have
    \[
    \mathrm{EK}^2_{pq} \cong 
        \begin{cases}
        \lwedge_{\mathbb{Z}}^p \Gamma  & \text{ if $q\equiv d$ mod $2$, }\\
        0 & \text{ otherwise.}
    \end{cases}
    \]

    In any cases, the groups $\mathrm{EK}_{pq}^2$ are torsion-free, and hence is sent to $\bigoplus_{k \in \mathbb{Z}}\widehat{\mathrm{EH}}{}_{p,q+2k }^2$ injectively by Raven's Chern character $\widehat{\mathrm{ch}}_R$. 
    Now Lemma \ref{lem:E2.homology.mu} concludes that $\mathrm{EK}_{pq}^r$ collapses at the $E^2$-page. 
    Since there is no extension problem, we obtain the desired isomorphism. 
\end{proof}

\section{Integral dynamics and the Barlak--Omland--Stammeier conjecture}
\label{sec:BOS} 
In this section, we compute the groupoid homology of the groupoids underlying the C*-algebras from \cite{BOS} (Subsection~\ref{sec:BOShom}), and we prove \cite[Conjecture~6.5]{BOS} (Subsection~\ref{sec:BOSKtheory}). Combining these results, we see that the groupoids satisfy Matui's HK property. 

\subsection{Integral dynamics}
\label{sec:BOSprelims}
Let us recall the definition of integral dynamics from \cite{BOS} and fix some notation that will be used throughout this section. Let $\Sigma\subseteq\Zz_{>1}$ be a set of pairwise relatively coprime numbers, and let $S\subseteq\Zz_{>0}$ be the monoid generated by $\Sigma$. 
Then, $S$ acts on $A=\Zz$ by multiplication, yielding an algebraic action $S\acts A$. Let $g_\Sigma \coloneqq \gcd \{ s-1 \colon s \in \Sigma \}$, and put $\cP\coloneqq\{p\in\cP_\Qz : p\mid s \text{ for some }s\in\Sigma\}$. Applying Section~\ref{sec:gpdhom}, we obtain the groupoid $\cG_{S\acts\Zz} = (\A \rtimes \S) \ltimes \overline{\mathscr{A}}$,
where
\begin{align*}
    \A &= \Zz[1/s : s\in S] = \Zz[1/p \colon p \in \cP] \cong \bigotimes_{p \in \cP} \Zz[1/p], \\
    \S &= S^{-1}S = \gp{s : s\in \Sigma} \cong \Zz^{\oplus\Sigma}, \mbox{ and } \overline{A}\cong \Delta \coloneqq\prod_{p\in \cP} \Zz_p.
\end{align*}
The C*-algebra $C^*_r(\cG_{S\acts\Zz})$ is isomorphic to the C*-algebra from \cite[Definition~2.1]{BOS}.  

Our general results Theorem~\ref{thm1} and Proposition~\ref{prop:Z[1/n]} imply that 
\begin{equation}
 \label{eqn:odhom}
\H_0(\Zz, \Zz \Delta) = \A, \quad  \H_1(\Zz, \Zz\Delta) = \Zz,
\end{equation}
and $\H_q(\Zz, \Zz\Delta)=0$ for $q >1$. Note that these special cases also follow from, e.g., \cite{Scar}.

\subsection{Calculation of homology}
\label{sec:BOShom}
First, we assume that $|\Sigma| < \infty$ and let $N=|\Sigma|$. The case $|\Sigma|=\infty$ will be treated in Section~\ref{sec:infinitecase}.

\subsubsection{The torsion subgroupoid} \label{sssec:tor}

It is proven in \cite[Theorem~6.1]{BOS} that the K-theory of $C^*_r(\cG_{S\acts\Zz})$ decomposes as a direct sum of a free abelian part and a torsion part, where the torsion part is isomorphic to the K-theory of a C*-subalgebra of $C^*_r(\cG_{S\acts\Zz})$, called the \emph{torsion subalgebra} (see \cite[Definition~5.3]{BOS}). Moreover, it is observed in \cite[Remark~5.7]{BOS} and \cite[Corollary~5.8]{BOS} that this torsion subalgebra can be realized as the C*-algebra of a $k$-graph as defined in \cite{KP00}. It was proven in \cite{KP00} that such C*-algebras have canonical groupoid models, and the homology of such groupoids has been computed for several specific classes of $k$-graphs in \cite{FKPS}. 

Here, we recall the construction from \cite[Remark~5.7]{BOS} and observe that the groupoid underlying the torsion subalgebra embeds into $\cG_{S\acts\Zz}$. This allows us to compute the homology of $\cG_{S\acts\Zz}$ (Theorem~\ref{thm:homintdyn}) using the functoriality for LHS type spectral sequences for groupoids as established in Theorem~\ref{thm:LHSfunctorial}.

For $a = (a_s)_{s \in \Sigma} \in \Nz^\Sigma\cong S$, let $s^a \coloneqq \prod_{s \in \Sigma} s^{a_s}$ in order to simplify notation.  Let 
\[ 
\Lambda \coloneqq \bigsqcup_{a \in \Nz^\Sigma} \Lambda^a, \mbox{ where } \Lambda^a \coloneqq \{0,\dots, s^a-1\}.
\]
We consider $\Lambda$ as a category with a single object (i.e., a monoid) and composition rule given by
\[ 
\Lambda^a \times \Lambda^b \to \Lambda^{a+b},\ (m,n) \mapsto m+s^an.
\]
Note that this map is bijective. This composition rule is associative and defines the same category as that constructed in \cite[Remark~5.7]{BOS}. 

We consider the category $\Lambda$ as a $k$-graph whose length function is the canonical map $\Lambda \to \Nz^\Sigma$. 
Let $\Pi = \{(m,n) \in \Nz^\Sigma \times \Nz^\Sigma \colon m \leq n\}$. Then, the infinite path space $\Lambda^\infty$ is defined to be the set of functions $x \colon \Pi \to \Lambda$ such that 
\begin{align*}
\begin{array}{ll}
x(m,m)=x(0,0) \in \Lambda^0 & \mbox{for } m \in \Nz^\Sigma, \\
x(m,n) \in \Lambda^{n-m} & \mbox{for } m \leq n, \\
x(m,n)x(n,k)=x(m,k) & \mbox{for } m \leq n \leq k. 
\end{array}
\end{align*}
For $a \in \Nz^\Sigma$, define 
\[ P_a \colon \Lambda^\infty \to \Lambda^a,\ x \mapsto x(0,a). \]
Then, the topology of $\Lambda^\infty$ is generated by clopen sets $P_a^{-1}(m)$ for $m \in \Lambda^a$ and $a \in \Nz^\Sigma$. The space $\Lambda^\infty$ can be written as the inverse limit 
\[ \Lambda^\infty = \varprojlim (\Lambda^a, P_{a,b}), \]
where $P_{a,b} \colon \Lambda^a \to \Lambda^b$ for $b \leq a$ are the appropriate connecting maps which induce $P_a \colon \Lambda^\infty \to \Lambda^a$ as the limit map. We first compute the concrete form of $P_{a,b}$. 

\begin{lemma} \label{lem:Zp1}
    Let $a,b \in \Nz^\Sigma$ with $b \leq a$. 
    Under the identification $\Lambda^a \cong \Zz/s^a\Zz$, 
    the map $P_{a,b}$ coincides with the canonical projection $\Zz/s^a\Zz$ to $\Zz/s^b\Zz$. 
\end{lemma}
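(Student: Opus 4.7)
The plan is a direct unravelling of the $k$-graph definitions; no heavy machinery is required. Fix $b \leq a$ in $\Nz^\Sigma$ and an infinite path $x \in \Lambda^\infty$. The path identity $x(m,n)\,x(n,k) = x(m,k)$ specialized to $(m,n,k) = (0,b,a)$ gives the factorization
\[
x(0,a) = x(0,b) \cdot x(b,a) \quad \text{in } \Lambda,
\]
with $x(0,b) \in \Lambda^b$ and $x(b,a) \in \Lambda^{a-b}$. Applying the composition rule stated in the excerpt to the pair $(x(0,b),x(b,a))$ (so that the first factor's exponent, namely $b$, plays the role of ``$a$'' in the formula $(m,n)\mapsto m+s^a n$), this reads
\[
x(0,a) = x(0,b) + s^b \cdot x(b,a).
\]

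Next, I would observe that since $0 \leq x(0,b) < s^b$ and $0 \leq x(b,a) < s^{a-b}$, the displayed equality is nothing but the unique Euclidean-division expression of $x(0,a) \in \{0,1,\ldots,s^a-1\}$ by $s^b$. In particular, $x(0,b)$ is the remainder of $x(0,a)$ modulo $s^b$.

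Finally, the connecting map $P_{a,b}$ of the inverse system is characterized by $P_{a,b}\circ P_a = P_b$, i.e., $P_{a,b}(x(0,a)) = x(0,b)$ for every $x \in \Lambda^\infty$. Because the composition maps $\Lambda^a \times \Lambda^c \to \Lambda^{a+c}$ are (bi)jective, every element of $\Lambda^a$ extends to an infinite path, so $P_a$ is surjective onto $\Lambda^a$. Hence the identity $P_{a,b}(x(0,a)) = x(0,b)$ determines $P_{a,b}$ on all of $\Lambda^a$, and the previous paragraph identifies it with the canonical projection $\Zz/s^a\Zz \to \Zz/s^b\Zz$ under $\Lambda^a \cong \Zz/s^a\Zz$.

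The only real obstacle is bookkeeping: in the composition rule $(m,n)\mapsto m+s^a n$ it is the \emph{first} factor's exponent that appears as the coefficient of the second factor, so when applied to the factorization $x(0,a) = x(0,b)\cdot x(b,a)$ the coefficient is $s^b$, not $s^a$ or $s^{a-b}$. Getting this right is the one spot where a careless reading of the composition law could derail the argument; once it is in place, the statement follows immediately from the universal property of the inverse limit.
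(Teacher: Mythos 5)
Your proposal is correct and follows essentially the same route as the paper: both apply the factorization $x(0,a)=x(0,b)x(b,a)$ and the composition rule to get $P_a(x)=P_b(x)+s^b\,x(b,a)$, identifying $P_{a,b}$ with reduction modulo $s^b$. Your extra remarks on surjectivity of $P_a$ and the characterization $P_{a,b}\circ P_a=P_b$ merely spell out what the paper leaves implicit.
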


\begin{proof}
    Let $x \in \Lambda^\infty$. Then, we have $x(0,b)x(b,a)=x(0,a)$, so that $P_b(x)x(b,a)=P_a(x)$. Hence, we have $P_b(x) + s^bx(b,a) = P_a(x)$ in $\Zz$, which means that $P_b(x)$ coincides with the image of $P_a(x)$ under the canonical projection $\Zz/s^a\Zz \to \Zz/s^b\Zz$. 
\end{proof}
By Lemma~\ref{lem:Zp1}, $\Lambda^\infty$ is identified with $\Delta = \prod_{p \in \cP} \Zz_p$, and $P_a$ coincides with the projection $\Delta \to \Zz/s^a\Zz$ for $a \in \Nz^\Sigma$. Next, we compute the shift map 
\[ \sigma^a \colon \Lambda^\infty \to \Lambda^\infty,\ (\sigma^ax)(m,n) = x(m+a,n+a)\]
for $a \in \Nz^\Sigma$. 
\begin{lemma} \label{lem:shift1}
    For $x \in \Delta$ and $a \in \Nz^\Sigma$, we have 
    \[ \sigma^a(x) = \frac{x-P_a(x)}{s^a}. \]
    In particular, if $x \in \Zz$, then $\sigma^a(x)$ is the quotient of $x$ after division by $s^a$ with remainder. 
\end{lemma}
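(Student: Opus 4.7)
The plan is to verify the claimed identity by computing both sides' images under each projection $P_b \colon \Delta \to \Lambda^b$ and then invoking the universal property of the inverse limit $\Delta = \varprojlim(\Lambda^a, P_{a,b})$ identified in Lemma~\ref{lem:Zp1}. Throughout, I will work coordinatewise under $\Delta \cong \prod_{p \in \cP} \Zz_p$.

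First, I would unfold the definition of the shift: directly from $(\sigma^a x)(m,n) = x(m+a, n+a)$ one has $P_b(\sigma^a(x)) = (\sigma^a x)(0,b) = x(a,a+b) \in \Lambda^b$. The composition rule $x(0,a)\, x(a,a+b) = x(0,a+b)$, together with the explicit formula for composition in $\Lambda$, then translates to the integer identity $P_{a+b}(x) = P_a(x) + s^a \cdot x(a,a+b)$, giving $x(a,a+b) = (P_{a+b}(x) - P_a(x))/s^a$.

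Next, I would show that $y \coloneqq (x - P_a(x))/s^a$ is a well-defined element of $\Delta$ with matching $P_b$-images. Since $P_a(x) \equiv x_p \pmod{p^{v_p(s^a)}}$ in each $\Zz_p$ by Lemma~\ref{lem:Zp1}, the element $x - P_a(x)$ lies in $s^a \Delta$, so $y \in \Delta$. Reducing $y$ modulo $s^b$ coordinatewise and using the analogous congruence $x_p \equiv P_{a+b}(x) \pmod{p^{v_p(s^{a+b})}}$ yields $P_b(y) = (P_{a+b}(x) - P_a(x))/s^a = P_b(\sigma^a(x))$. Since this holds for every $b \in \Nz^\Sigma$, universality forces $\sigma^a(x) = y$. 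The case $x \in \Zz$ then follows because $P_a$ on $\Zz \subset \Delta$ returns the remainder upon division by $s^a$, so $\sigma^a(x) = (x-r)/s^a$ is precisely the quotient. I do not anticipate a serious obstacle: the real content is the identification of $P_{a,b}$ with the canonical projection in Lemma~\ref{lem:Zp1}, after which what remains is routine coordinatewise bookkeeping in each $\Zz_p$.
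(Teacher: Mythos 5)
Your argument is correct, and it takes a slightly different route from the paper's. Both proofs ultimately rest on the same two ingredients: the identification of $P_a$ with reduction modulo $s^a$ (Lemma~\ref{lem:Zp1}) and the composition identity $x(0,a)x(a,a+b)=x(0,a+b)$, which unpacks to $P_{a+b}(x)=P_a(x)+s^aP_b(\sigma^a x)$. The paper exploits this identity only for $x\in\Nz$ (taking $b=l$ large enough that $P_{l+a}(x)=x$, so the identity becomes literal), and then extends to all of $\Delta$ by density of $\Nz$ — a step that tacitly uses continuity of both $\sigma^a$ and $x\mapsto (x-P_a(x))/s^a$ on $\Delta$. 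You instead work with an arbitrary $x\in\Delta$, check that $y=(x-P_a(x))/s^a$ is a well-defined element of $\Delta$ (injectivity of multiplication by $s^a$ on $\prod_p\Zz_p$), and verify $P_b(y)=P_b(\sigma^a x)$ for every $b$, concluding by injectivity of $\Delta\to\varprojlim_b\Lambda^b$. Your version avoids the density-and-continuity reduction entirely at the cost of a little more coordinatewise bookkeeping; the paper's version is shorter because on $\Nz$ the identity needs no limit argument at all. Either proof is complete; just make sure, as you do, to justify dividing the congruence $s^ay\equiv P_{a+b}(x)-P_a(x)\pmod{s^{a+b}\Delta}$ by $s^a$, which uses that multiplication by $s^a$ is injective on $\Delta$.
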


Note that $P_a(x) \in \Lambda^a = \{0,\dots,s^a-1\}$, so that the right hand side makes sense. 

\begin{proof}
    It suffices to show the desired equation only for $x \in \Nz$ because $\Nz$ is dense in $\Delta$. Take $l \in \Nz^\Sigma$ sufficiently large so that $x \leq s^l$. Then, we have $P_m(x) =x$ for $m \geq l$.  
    From the equation $x(0,a)x(a,l+a) =x(0,l+a)$, we have 
    \[ P_a(x)+s^a(P_l(\sigma^ax)) = P_{l+a}(x) = x, \]
    so that we have 
    \[ P_l(\sigma^ax) = \frac{x-P_a(x)}{s^a}. \]
    The right hand side does not depend on $l$. Hence, we have $\sigma^ax \in \Nz$ and 
    $\sigma^ax = P_l(\sigma^ax)$, which completes the proof. 
\end{proof}
The Deaconu--Renault groupoid associated with the $k$-graph $\Lambda$ is 
\begin{equation}
\label{eqn:torsiongpoid}
   \cG_\tor \coloneqq \{ (x, a-b, y) \in \Delta \times \Zz^\Sigma \times \Delta \colon \sigma^a(x) = \sigma^b(y),\, a,b \in \Nz^\Sigma\}. 
\end{equation}
The C*-algebra $C_r^*(\cG_\tor)$ is the torsion subalgebra from \cite[Definition~5.3.]{BOS}, see \cite[Remark~5.7]{BOS}.

For $(x,a-b,y) \in \cG_\tor$, we have
\[ \frac{x-P_a(x)}{s^a} = \frac{y-P_b(y)}{s^b}, \]
so that $x = s^{a-b}(y-P_b(y)) +P_a(x)$. Therefore, we can embed $\alpha \colon \cG_\tor \to \cG_{S\acts\Zz}$ by 
\[ (x,a-b,y) \mapsto ((-s^{a-b}P_b(y)+P_a(x), s^{a-b}), y). \]
Note that $\alpha$ is an \'etale homomorphism. 

\begin{remark}
    The category $\Lambda$ is a monoid isomorphic to the right LCM monoid $U$ from \cite[Section~5]{BOS}, and the embedding $\alpha$ is essentially the groupoid version of \cite[Corollary~5.8]{BOS}.
\end{remark}

By \cite[Theorem~7.11]{FKPS}, we have 
\begin{equation} \label{eqn:torH}
    \H_p(\cG_{\tor}) \cong \begin{cases}
   (\Zz/g_\Sigma \Zz)^{\oplus \binom{N-1}{p}} & \text{ if } 0\leq p\leq N-1,\\
   0 & \text{ otherwise}.\end{cases}
\end{equation}

\subsubsection{Homology of the integral dynamics groupoid}
In this subsection, we perform the homology analogue of the K-theory calculation for the C*-algebras from \cite{BOS}. In particular, the key step is showing that the exact sequence in \eqref{eqn:ses5} splits by using the LHS spectral sequence and its functoriality, which is the homology analogue of \cite[Corollary~4.7]{BOS}.

Let $\rho \colon \cG_{S\acts\Zz} \to \S$ be the cocycle defined by $\rho(((a,s),x))=s$ for all $((a,s),x)\in\cG_{S\acts\Zz}$. Define a cocycle $\varphi \colon \cG_\tor \to \S$ by $\varphi = \rho \circ \alpha$. 
We have the following commutative diagram
\begin{equation}
    \label{eqn:square}
 \begin{tikzcd}
\ker \varphi \arrow[r] \arrow[d,"\beta"]& \cG_\tor \times_\varphi \S \arrow[d]\\
\ker \rho \arrow[r,"\iota"] & \cG_{S\acts\Zz} \times_\rho \S,
 \end{tikzcd}
\end{equation}
where $\beta$ is the restriction of $\alpha$ and $\iota$ is the canonical inclusion. The top horizontal line in diagram \eqref{eqn:square} is a groupoid equivalence by \cite[Lemma~6.1]{FKPS}.

\begin{remark}
\label{rmk:AF}
    The groupoid $\ker \varphi$ is an AF-groupoid in the sense of \cite[Defintion~2.2]{Mat12} (see, e.g., the proof of \cite[Lemma~6.2]{FKPS}). In particular, $\H_p(\cG_\tor \times_\varphi \S)=\H_p(\ker \varphi) = 0$ for all $p>0$ by \cite[Theorem~4.11]{Mat12}. 
\end{remark}

\begin{lemma} \label{lem:betaisom}
 The map $\H_0(\beta)\colon\H_0(\ker\varphi)\to\H_0(\ker\rho)$ is an isomorphism. In addition, the bottom horizontal map in diagram \eqref{eqn:square} induces an isomorphism in homology. 
\end{lemma}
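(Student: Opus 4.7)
I would prove the two claims separately, starting with the second as it is the easier step.

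For the second claim, the plan is to show that $\iota$ is a groupoid equivalence, so that it induces isomorphisms on all $\H_*$. Under $\iota$, the groupoid $\ker\rho$ is identified with the restriction of $\cG_{S\acts\Zz}\times_\rho\S$ to the clopen subset $\Delta\times\{e\}$ of its unit space $\Delta\times\S$. It therefore suffices to verify that $\Delta\times\{e\}$ is a full subset, i.e., that every $(x,g)\in\Delta\times\S$ lies in the same orbit as some point of $\Delta\times\{e\}$. Given such a pair, I would apply Lemma~\ref{lem:scrA+barA=barscrA} to the element $gx\in\ol{\A}$ to write $gx=b+z$ with $b\in\A$ and $z\in\Delta$. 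Then $\gamma\coloneqq((g^{-1}b,g^{-1}),z)$ is a well-defined element of $\cG_{S\acts\Zz}$ satisfying $(g^{-1}b,g^{-1}).z=g^{-1}(b+z)=x$ and $\rho(\gamma)=g^{-1}$, so that $(\gamma,g)$ is an element of the skew product connecting $(x,g)$ to $(z,e)\in\Delta\times\{e\}$.

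For the first claim, the plan is to identify both $\H_0(\ker\varphi)$ and $\H_0(\ker\rho)$ with $\A$ via the same Haar-measure integration map on the shared unit space $\Delta$. By Lemma~\ref{lem:scrAcapbarA=A}, one has $\ker\rho=\Zz\ltimes\Delta$, and Theorem~\ref{thm1} (see also Equation~\eqref{eqn:odhom}) identifies $\H_0(\ker\rho)$ with $\A=\Zz[1/s:s\in S]$ via $[f]\mapsto\int_\Delta f\,d\mu$, where $\mu$ is the normalized Haar measure on $\Delta$. For $\ker\varphi$, I would use its AF structure from Remark~\ref{rmk:AF}, writing $\ker\varphi=\varinjlim_a\mathcal{K}^a$ where $\mathcal{K}^a\coloneqq\{(x,0,y)\in\cG_\tor:\sigma^a(x)=\sigma^a(y)\}$ is the principal \'etale equivalence relation on $\Delta$ whose orbits are the size-$s^a$ fibers of $\sigma^a$ (Lemma~\ref{lem:shift1}). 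Each $\mathcal{K}^a$-bisection is Haar-measure preserving, so integration descends to a homomorphism $\int\colon\H_0(\ker\varphi)\to\A$. Surjectivity is immediate from $\int 1_{s^a\Delta}=1/s^a$; for injectivity, I would represent a class at sufficiently high level $a$ as $\bigl[\sum_{k=0}^{s^a-1}c_k\,1_{s^a\Delta+k}\bigr]_a$, observing that the bisections of $\mathcal{K}^a$ identify the classes $[1_{s^a\Delta+k}]_a$ for all $k$, so the class is determined by $\sum_k c_k=s^a\int f$, which vanishes under the hypothesis $\int f=0$.

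Finally, since $\beta$ is a subgroupoid inclusion that acts as the identity on the common unit space $\Delta$, the induced map $\H_0(\beta)$ commutes with both integration maps and thus corresponds to the identity on $\A$, giving the required isomorphism. The main technical step is the injectivity argument for $\int\colon\H_0(\ker\varphi)\to\A$, but the AF filtration makes it tractable; the fullness verification for the second claim is immediate from Lemma~\ref{lem:scrA+barA=barscrA}.
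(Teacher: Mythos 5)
Your proposal is correct and follows essentially the same route as the paper: both arguments rest on the observation that integer translations between the cosets $k+s^a\Delta$ lie in $\ker\varphi$, so that all such cosets give the same class in $\H_0(\ker\varphi)$, combined with the identification $\H_0(\ker\rho)=\H_0(\Zz,\Zz\Delta)\cong\A$ from \eqref{eqn:odhom}; the paper builds the inverse map $\Zz[1/n]\to\H_0(\ker\varphi)$ directly, while you run the same computation in the opposite direction as a Haar-integration isomorphism, which amounts to the same bookkeeping. Your explicit fullness verification for $\Delta\times\{e\}$ via Lemma~\ref{lem:scrA+barA=barscrA} is exactly the "can be seen directly" step the paper leaves to the reader, and it is carried out correctly.
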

\begin{proof}
Since $\ker\rho=\Zz\ltimes\Delta$, we have $\H_0(\ker\rho)=\H_0(\Zz,\Zz\Delta)=\Zz[1/n]$ (see Equation~\eqref{eqn:odhom}), where $n\coloneqq s_1\cdots s_N$. For $k\in\{0,...,n-1\}$, we have $(k,0,0)\in\ker\varphi$ because $k-0=P_n(k)-P_n(0)$, so that $[1_{k+n\Delta}]_0=[1_{n\Delta}]_0$ in $\H_0(\ker\varphi)$. Hence, $[1_\Delta]_0=\sum_{k=0}^{n-1}[1_{k+n\Delta}]_0=n[1_{n\Delta}]_0$, so that there is a homomorphism $\Zz[1/n]\to \H_0(\ker\varphi)$ such that $1/n\mapsto [1_\Delta]_0$. It is easy to see that this is the inverse of $\H_0(\beta)$.

    In order to show the second claim, it suffices to show that $(\ker \rho)^{(0)}=\Delta \times \{e\}$ is full in $\cG_{S\acts\Zz} \times_\rho \S$ in the sense that $\Delta \times \{e\}$ intersects every $(\cG_{S\acts\Zz} \times_\rho \S)$-orbit. This can be seen directly.
\end{proof}

\begin{theorem}
\label{thm:homintdyn}
    If $|\Sigma|=N < \infty$, then we have
    \[
    \H_n(\cG_{S\acts \Zz})\cong \begin{cases}
        \Zz/g_\Sigma \Zz & \text{ if } n=0,\\
        \Zz^{\oplus \binom{N}{n-1}} \oplus (\Zz/g_\Sigma \Zz)^{\oplus \binom{N-1}{n}} & \text{ if } 1\leq n\leq N-1,\\
        \Zz^N & \text{ if } n=N,\\
        \Zz & \text{ if } n=N+1,\\
        0 & \text{ if } n\geq N+2.
    \end{cases}
    \]
\end{theorem}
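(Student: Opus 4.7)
The plan is to analyze the Lyndon--Hochschild--Serre type spectral sequence associated to the cocycle $\rho\colon\cG_{S\acts\Zz}\to\S$, identifying both nonzero rows of its $E^2$-page (by comparison with the analogous spectral sequence for $\cG_\tor$ via Theorem~\ref{thm:LHSfunctorial}), then verifying collapse at $E^2$ and splitting of the resulting filtration.

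First I would identify the coefficient modules. By Lemma~\ref{lem:betaisom}, the inclusion $\ker\rho\hookrightarrow\cG_{S\acts\Zz}\times_\rho\S$ induces isomorphisms in homology in all degrees, and $\ker\rho=\A\ltimes\Delta$ is groupoid-equivalent to $\Zz\ltimes\Delta$ by Proposition~\ref{prop:equivAdditive}. Hence $\H_q(\cG_{S\acts\Zz}\times_\rho\S)\cong\H_q(\Zz,\Zz\Delta)$, which equals $\A$ for $q=0$, $\Zz$ for $q=1$, and vanishes for $q\geq 2$ by \eqref{eqn:odhom} (and Proposition~\ref{prop:Z[1/n]}). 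Moreover, Theorem~\ref{thm1} shows that the induced $\S$-action on $\Zz=\H_1$ is trivial, because $\lwedge^1(\sigma_s)/[\Zz:s\Zz]=s/s=1$. Consequently $E^2_{p,1}(\rho)=\H_p(\S,\Zz)=\Zz^{\binom{N}{p}}$ with trivial coefficients.

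Next I would identify $E^2_{p,0}(\rho)=\H_p(\S,\A)$ by comparison with $\cG_\tor$. Since $\ker\varphi$ is AF by Remark~\ref{rmk:AF} and is groupoid-equivalent to $\cG_\tor\times_\varphi\S$, the LHS spectral sequence for $\cG_\tor$ is concentrated in the row $q=0$ and collapses to $\H_n(\cG_\tor)\cong\H_n(\S,\H_0(\cG_\tor\times_\varphi\S))$. As $\alpha$ satisfies $\rho\circ\alpha=\varphi$, Example~\ref{ex:etalehom} together with Theorem~\ref{thm:LHSfunctorial} produces a morphism of LHS spectral sequences whose map on row $q=0$ at the $E^2$-page is $\H_p(\id_\S,(\alpha\times\id_\S)_*)$. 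Combining the commutativity of \eqref{eqn:square} with Lemma~\ref{lem:betaisom} shows that $(\alpha\times\id_\S)_*\colon\H_0(\cG_\tor\times_\varphi\S)\to\H_0(\cG_{S\acts\Zz}\times_\rho\S)$ is an $\S$-equivariant isomorphism, so $E^2_{p,0}(\rho)\cong\H_p(\cG_\tor)=(\Zz/g_\Sigma\Zz)^{\binom{N-1}{p}}$ by \eqref{eqn:torH}.

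To conclude, I would observe that all higher differentials vanish and that the filtration splits. For $r\geq 2$, the differential $d_r\colon E^r_{p,0}\to E^r_{p-r,r-1}$ departs a subquotient of the $g_\Sigma$-torsion group $(\Zz/g_\Sigma\Zz)^{\binom{N-1}{p}}$ and lands in either a subquotient of the free abelian group $\Zz^{\binom{N}{p-2}}$ (when $r=2$) or in zero (when $r\geq 3$); all differentials originating in row $q=1$ land in zero rows. Hence $E^\infty=E^2$, and the filtration yields short exact sequences
\[
0\to(\Zz/g_\Sigma\Zz)^{\binom{N-1}{n}}\to\H_n(\cG_{S\acts\Zz})\to\Zz^{\binom{N}{n-1}}\to 0
\]
which split because the quotient is free abelian. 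Unpacking binomial coefficients for $n=0$, $1\leq n\leq N-1$, $n=N$, $n=N+1$, and $n\geq N+2$ yields the claimed formulas. The anticipated main obstacle is the $\S$-equivariance check in the identification of row $q=0$: one must verify that the isomorphism of $\H_0$'s supplied by Lemma~\ref{lem:betaisom} intertwines the $\S$-module structures coming from the skew-product constructions. This is handled cleanly by the functoriality statement of Theorem~\ref{thm:LHSfunctorial}, which encodes this compatibility automatically once applied to the $\S$-equivariant étale homomorphism $\alpha$.
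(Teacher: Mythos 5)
Your overall strategy is the same as the paper's: identify the two nonzero rows of the LHS type spectral sequence for $(\cG_{S\acts\Zz},\rho)$, identify the $q=0$ row with $\H_*(\cG_\tor)$ via the equivariant comparison map coming from $\alpha$ and Theorem~\ref{thm:LHSfunctorial}, kill the differentials, and split the resulting extension. Your way of killing $d^2$ (a map from a $g_\Sigma$-torsion group to a free abelian group is zero) is a slight, perfectly valid shortcut compared with the paper's diagram chase through the morphism of spectral sequences, and your equivariance concern about the coefficient identification is correctly resolved by Theorem~\ref{thm:LHSfunctorial}(i), just as in the paper.

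However, there is a genuine error in the last step. For the homological spectral sequence with only the rows $q=0,1$ nonzero, the filtration of $\H_n(\cG_{S\acts\Zz})$ gives
\[
0\to E^\infty_{n-1,1}\to \H_n(\cG_{S\acts\Zz})\to E^\infty_{n,0}\to 0,
\]
i.e.\ the \emph{free} group $\H_{n-1}(\S,\Zz)\cong\Zz^{\oplus\binom{N}{n-1}}$ is the subgroup and the \emph{torsion} group $\H_n(\S,\A)\cong(\Zz/g_\Sigma\Zz)^{\oplus\binom{N-1}{n}}$ is the quotient — the opposite of what you wrote. Consequently your splitting argument ("the quotient is free abelian") does not apply: an extension of a torsion group by a free group need not split (e.g.\ $0\to\Zz\xrightarrow{g_\Sigma}\Zz\to\Zz/g_\Sigma\Zz\to 0$), and the theorem would fail if the extension were of this kind. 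The splitting has to be proved, and your own setup supplies it: since the $\cG_\tor$ spectral sequence is concentrated in the row $q=0$, the edge map $\H_n(\cG_\tor)\to E^\infty_{n,0}(\cG_\tor,\varphi)$ is an isomorphism, and by Theorem~\ref{thm:LHSfunctorial}(ii) the composite $\H_n(\cG_\tor)\xrightarrow{\H_n(\Omega_\alpha)}\H_n(\cG_{S\acts\Zz})\to E^\infty_{n,0}(\cG_{S\acts\Zz},\rho)\cong\H_n(\S,\A)$ agrees with the isomorphism on $E^\infty_{n,0}$ furnished by Lemma~\ref{lem:betaisom}; this provides a section of the surjection onto the torsion quotient, which is exactly how the paper obtains the split exact sequence \eqref{eqn:ses5}. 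With that correction the rest of your computation (including the unpacking of binomial coefficients) goes through unchanged.
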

\begin{proof}
    Remark~\ref{rmk:AF} implies that the LHS type spectral sequence
    \[E_{pq}^2(\cG_\tor,\varphi)=\H_p(\S,\H_q(\cG_\tor\times_\varphi\S))\Longrightarrow \H_{p+q}(\cG_\tor), 
    \]
    collapses at the $E^2$-page, so that we have 
    \[ 
    E^2_{p,0}(\cG_\tor,\varphi) = E^\infty_{p,0} = \H_p(\S, \H_0(\cG_\tor \times_\varphi \S)) = \H_p(\S, \H_0(\ker \varphi))
    \]
    On the other hand, $E^2$-page of the LHS type spectral sequence 
    \[   E_{pq}^2(\cG_{S\acts\Zz},\rho)=\H_p(\S,\H_q(\cG_{S\acts\Zz}\times_\rho\S))\Longrightarrow \H_{p+q}(\cG_{S\acts\Zz})
    \]
    is 
    \[ E^2_{pq}(\cG_{S\acts\Zz},\rho) = \H_p(\S, \H_q(\cG_{S\acts\Zz} \times_\rho \S)) = \H_p (\S, \H_q(\Zz \ltimes \Delta))\]
    for $q=0,1$, and is zero if $q > 1$ by Lemma~\ref{lem:betaisom}. 
    By Theorem~\ref{thm:LHSfunctorial} and Example~\ref{ex:etalehom}, $\alpha$ induces  
    a morphism $E^*(\cG_\tor,\varphi) \to E^*(\cG_{S\acts\Zz}, \rho)$ of spectral sequences, which gives rise to a morphism between long exact sequences as follows: 
    \[ 
    \begin{tikzcd}
      E^2_{p+1,0} \ar[r, "0"] \ar[d] & E^2_{p-1,1}(\cG_\tor, \varphi) \ar[r] \ar[d] & \H_p(\cG_\tor) \ar[r] \ar[d]& E^2_{p,0}(\cG_\tor, \varphi) \ar[r, "0"] \ar[d] & E^2_{p-2,1} \ar[d] \\
      E^2_{p+1,0} \ar[r, "d"] & E^2_{p-1,1}(\cG_{S\acts\Zz}, \rho) \ar[r] & \H_p(\cG_{S\acts\Zz}) \ar[r]& E^2_{p,0}(\cG_{S\acts\Zz}, \rho) \ar[r, "d"] & E^2_{p-2,1}\nospacepunct{.}
    \end{tikzcd}
    \]
    By Lemma~\ref{lem:betaisom}, the above diagram becomes
    \[ 
    \begin{tikzcd}
     0 \ar[r] & 0 \ar[r] \ar[d] & \H_p(\cG_\tor) \ar[r, "\cong"] \ar[d]& \H_p (\S, \H_0(\ker \varphi)) \ar[r] \ar[d, "\cong"]& 0 \\
      \dots \ar[r, "d"] & \H_{p-1} (\S, \H_1(\Zz \ltimes \Delta)) \ar[r] & \H_p(\cG_{S\acts\Zz}) \ar[r]& \H_p (\S, \H_0(\Zz \ltimes \Delta)) \ar[r, "d"]& \dots \nospacepunct{.}
    \end{tikzcd}
    \]
    In particular, the differential $d \colon E^2_{p,0}(\cG_{S\acts\Zz}, \rho) \to E^2_{p-2,1}(\cG_{S\acts\Zz}, \rho)$ is zero, so that we obtain the split exact sequence 
    \begin{equation}
    \label{eqn:ses5}
        0 \to \H_{p-1}(\S) \to \H_p(\cG_{S\acts\Zz}) \to \H_p(\S, \A) \to 0.
    \end{equation}
Here, we have used that $\H_1(\Zz \ltimes \Delta)=\H_1(\A,\Zz\ol{\A})$ by Proposition~\ref{prop:equivAdditive}, the action of $\S$ on $\H_1(\Zz \ltimes \Delta)$ is trivial by Theorem~\ref{thm1} because $\lwedge^1(s)\otimes \frac{1}{[\Zz : s\Zz]}=1$ for all $s\in S$, and that $\Zz\cong \H_1(\Zz \ltimes \Delta)$ by Proposition~\ref{prop:Z[1/n]}(iii).
    Now the claim follows from Equation~\eqref{eqn:torH}. 
\end{proof}

\subsubsection{Computation of $\H_p(\S,\A)$} \label{sssec:kunneth}
In the proof of Theorem~\ref{thm:homintdyn}, we saw that $\H_*(\cG_\tor)\cong\H_*(\S,\A)$. We now give an alternate direct proof of the calculation of $\H_*(\S,\A)$ without using Equation~\eqref{eqn:torH}. We need this direct calculation to deal with the case $|\Sigma|=\infty$. We continue to assume $|\Sigma|=N$ and let $\Sigma = \{s_1, \dots, s_N\}$. 

We let $\Zz[T,T^{-1}]$ denote the Laurent polynomial ring with one variable $T$. Note that $\Zz[T,T^{-1}]$ is isomorphic to the group ring $\Zz[\Zz]$. Let $F_\bullet$ be the standard projective resolution 
\[ 0 \to \Zz[T,T^{-1}] \xrightarrow{\times (T-1)} \Zz[T,T^{-1}] \xrightarrow{T \mapsto 1} \Zz \to 0\]
of $\Zz$ over $\Zz[T,T^{-1}]$ (note that $F_0=F_1=\Zz[T,T^{-1}]$). Let $s \in \Sigma$. Then, the tensor product $F_\bullet \otimes_{\Zz[T,T^{-1}]} \Zz[1/s]$ is isomorphic to the complex 
\[ P_\bullet(s)[1/s] \colon 0 \to \Zz[1/s] \xrightarrow{\times (s-1)} \Zz[1/s] \to 0,\]
so that $\H_0(P_\bullet(s))[1/s] = \Zz/(s-1)\Zz$ and $\H_1(P_\bullet(s)[1/s])=0$. It is known that the tensor product $F_\bullet^{\otimes N}$ is a projective resolution of $\Zz$ over $\Zz[\Zz^N]$ (see \cite[Section~V.1]{Brown}). In addition, 
\[ F_\bullet^{\otimes N} \otimes \Zz[1/s_1 \dots s_N] \cong F_\bullet^{\otimes N} \otimes \bigotimes_{i=1}^N \Zz[1/s_i] \cong \bigotimes_{i=1}^N (F_\bullet \otimes \Zz[1/s_i]) = \bigotimes_{i=1}^N P_\bullet(s_i)[1/s_i]. \]
Note that tensor products are always taken as chain complexes, and the coefficient modules $\Zz[1/s_i]$ are regarded as chain complexes 
\[ 0 \to \Zz[1/s_i] \to 0, \]
where $\Zz[1/s_i]$ sits at the $0$-th degree. Therefore, we have 
\[ 
\H_*(\S, \A) \cong \H_*\left(\bigotimes_{i=1}^N P_\bullet(s_i)[1/s]\right). 
\]
\begin{lemma}\label{lem:Kunneth.torus.torsion}
    The tensor product of chain complexes $\bigotimes_{s \in \Sigma} P_{\bullet }(s)$, where 
    \[ 
    P_{\bullet}(s) \colon 0 \to \mathbb{Z} \xrightarrow{s-1} \mathbb{Z} \to 0,
    \]
    has the homology 
\[ \H_p\Big( \bigotimes_{s \in \Sigma} P_{\bullet}(s) \Big) \cong 
\begin{cases}
(\Zz/g_{\Sigma}\Zz)^{\oplus \binom{N-1}{p}} & \text{ if } 0\leq p\leq N-1,\\
0 & \text{ otherwise. }
\end{cases}
\]
\end{lemma}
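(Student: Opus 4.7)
The plan is to prove this by induction on $N=|\Sigma|$, using the Künneth formula for chain complexes of free abelian groups. Write $\Sigma=\{s_1,\dots,s_N\}$ and $g_k\coloneqq\gcd\{s_i-1:1\leq i\leq k\}$, so that $g_\Sigma=g_N$. For a single factor, the complex $P_\bullet(s_1)\colon 0\to\Zz\xrightarrow{s_1-1}\Zz\to 0$ has $\H_0=\Zz/(s_1-1)\Zz=\Zz/g_1\Zz$ and $\H_p=0$ for $p\geq 1$ (multiplication by $s_1-1$ is injective), which matches $\binom{0}{p}$.

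For the inductive step, assume the formula for $C_k\coloneqq \bigotimes_{i=1}^k P_\bullet(s_i)$, namely $\H_p(C_k)\cong(\Zz/g_k\Zz)^{\oplus\binom{k-1}{p}}$ for $0\leq p\leq k-1$ and $0$ otherwise. Since each $P_\bullet(s_i)$ consists of free (hence flat) $\Zz$-modules, the Künneth formula for complexes (see \cite[Theorem~3.6.3]{Weibel}) produces a split short exact sequence
\begin{equation*}
0\to\bigoplus_{q+r=p}\H_q(C_k)\otimes\H_r(P_\bullet(s_{k+1}))\to\H_p(C_{k+1})\to\bigoplus_{q+r=p-1}\Tor(\H_q(C_k),\H_r(P_\bullet(s_{k+1})))\to 0.
\end{equation*}
Since $\H_r(P_\bullet(s_{k+1}))$ is nonzero only for $r=0$, where it equals $\Zz/(s_{k+1}-1)\Zz$, only the terms $q=p$ in the first sum and $q=p-1$ in the second sum survive. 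Using the standard identities $\Zz/a\Zz\otimes\Zz/b\Zz\cong\Zz/\gcd(a,b)\Zz$ and $\Tor(\Zz/a\Zz,\Zz/b\Zz)\cong\Zz/\gcd(a,b)\Zz$, and the fact that $\gcd(g_k,s_{k+1}-1)=g_{k+1}$, each direct summand of $\H_q(C_k)$ contributes a copy of $\Zz/g_{k+1}\Zz$.

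Putting this together and using Pascal's identity $\binom{k-1}{p}+\binom{k-1}{p-1}=\binom{k}{p}$, the split exact sequence yields
\begin{equation*}
\H_p(C_{k+1})\cong(\Zz/g_{k+1}\Zz)^{\oplus\binom{k-1}{p}}\oplus(\Zz/g_{k+1}\Zz)^{\oplus\binom{k-1}{p-1}}\cong(\Zz/g_{k+1}\Zz)^{\oplus\binom{k}{p}}
\end{equation*}
for $0\leq p\leq k$, with both terms vanishing for $p>k$. This closes the induction and gives the claimed formula at $k=N$. There is no real obstacle here beyond the bookkeeping: the main points to check are that Künneth applies (free abelian chain complexes), that only one cross-term in each direct sum contributes (due to the concentration of $\H_*(P_\bullet(s))$ in degree zero), and that the gcd-identity collapses iterated tensor and Tor groups of cyclic groups into the single cyclic group $\Zz/g_{k+1}\Zz$.
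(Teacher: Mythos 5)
Your proof is correct and follows essentially the same route as the paper: induction on the number of factors, peeling off one complex $P_\bullet(s)$ at a time via the split Künneth exact sequence, with the tensor and Tor terms each contributing cyclic groups whose orders collapse to the gcd, and Pascal's identity giving the binomial count. The only (harmless) difference is that you state the inductive hypothesis explicitly with the partial gcds $g_k$, which the paper leaves implicit.
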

\begin{proof}
Let $Q_\bullet \coloneqq \bigotimes_{i=1}^{N-1} P_\bullet(s_i)$ and $P_\bullet \coloneqq P_\bullet(s_N)$. 
Then, since $\H_0(P_{\bullet}) \cong \mathbb{Z}/(s_N-1)\mathbb{Z}$ and $\H_1(P_{\bullet})=0$, the K\"unneth formula for chain complexes 
yields the following K\"unneth exact sequence
\[ 0 \to \H_p(Q_\bullet) \otimes_{\mathbb{Z}} \mathbb{Z}/(s_N-1)\mathbb{Z} \to \H_p (Q_\bullet \otimes P_\bullet) \to \Tor^{\mathbb{Z}}_1(\H_{p-1}(Q_\bullet), \mathbb{Z}/(s_N-1)\mathbb{Z}) \to 0. \]
This exact sequence splits because each $P_n$ is a free $\mathbb{Z}$-module. 
By induction, we see that 
\[ \H_p(Q_\bullet) \otimes \Zz/(s_N-1)\Zz \cong (\Zz/g_{\Sigma}\Zz)^{\oplus \binom{N-2}{p}} \mbox{ and }
\Tor(\H_{p-1}(Q_\bullet), \Zz/(s_N-1)\Zz) \cong (\Zz/g_{\Sigma}\Zz)^{\oplus \binom{N-2}{p-1}},
\]
and hence $\H_p(\bigotimes_{s \in \Sigma} P_{\bullet }(s) ) \cong (\mathbb{Z}/g\mathbb{Z})^{\binom{n-1}{p}}$. 
\end{proof}
This Lemma \ref{lem:Kunneth.torus.torsion} and the universal coefficient theorem concludes that 
\begin{equation}
    \label{eqn:H(cS,cA)}
\H_p (\S, \A) \cong 
\begin{cases}
(\Zz/g_{\Sigma}\Zz)^{\oplus \binom{N-1}{p}} & \text{ if } 0\leq p\leq N-1,\\
0 & \text{ otherwise,} 
\end{cases}
\end{equation}
which recovers \eqref{eqn:torH}.

\subsubsection{The infinite case}
\label{sec:infinitecase}
Now we consider the case that $|\Sigma|=\infty$.

\begin{theorem} \label{thm:BOShomology}
    If $|\Sigma|=\infty$, then we have 
    \[
    \H_p(\cG_{S\acts\Zz}) \cong \begin{cases}
    \Zz/g_\Sigma \Zz & \text{ if } p=0,\\
        \Zz^{\oplus\infty} \oplus (\Zz/g_\Sigma \Zz)^{\oplus \infty} & \text{ if } p\geq 1.
    \end{cases}
    \]
\end{theorem}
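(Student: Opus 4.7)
The plan is to reduce the $|\Sigma|=\infty$ case to Theorem~\ref{thm:homintdyn} by exhibiting $\cG_{S\acts\Zz}$ as a direct limit of its finite-$\Sigma$ analogues and invoking the continuity of groupoid homology established in Proposition~\ref{prop:continuity}. Enumerating $\Sigma=\{s_1,s_2,\ldots\}$, put $\Sigma_n \coloneqq \{s_1,\ldots,s_n\}$, $S_n \coloneqq \langle\Sigma_n\rangle$, and $\cG_n \coloneqq \cG_{S_n\acts\Zz} = (\A_n\rtimes\S_n)\ltimes\overline{A}_n$ with $\A_n = \Zz[1/p : p \in \cP_n]$ and $\overline{A}_n = \prod_{p\in\cP_n}\Zz_p$. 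The natural inclusions $\A_n\rtimes\S_n \hookrightarrow \A_{n+1}\rtimes\S_{n+1}$ and projections $\overline{A}_{n+1}\twoheadrightarrow \overline{A}_n$ will satisfy condition~\eqref{eqn:technical}: for any prime $p \in \cP_{n+1}\setminus\cP_n$ every $s \in S_n$ is coprime to $p$, hence a unit in $\Zz_p$, so the action of an element $(a,s)\in\A_n\rtimes\S_n$ restricts to an automorphism on the new coordinate and commutes with projection. Proposition~\ref{prop:continuity} will then yield $\H_*(\cG_{S\acts\Zz}) \cong \varinjlim_n \H_*(\cG_n)$.

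From the proof of Theorem~\ref{thm:homintdyn} we have, for each $n$, a split short exact sequence
\begin{equation*}
0 \to \H_{p-1}(\S_n,\Zz) \to \H_p(\cG_n) \to \H_p(\S_n,\A_n) \to 0,
\end{equation*}
in which the sign module is trivial since $\S_n\subseteq\Qz_{>0}$, and whose splitting is induced by the inclusion $\alpha_n \colon \cG_{\tor,n}\hookrightarrow \cG_n$ composed with the isomorphism $\H_p(\cG_{\tor,n})\cong \H_p(\S_n,\A_n)$ coming from the collapse of the LHS spectral sequence for $\cG_{\tor,n}$. The compatible inclusions $\cG_{\tor,n}\hookrightarrow\cG_{\tor,n+1}$ and $\cG_n\hookrightarrow\cG_{n+1}$, together with Theorem~\ref{thm:LHSfunctorial} and Lemma~\ref{lem:technical}, will provide a morphism of LHS spectral sequences that is natural in $n$ and respects the sections. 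Passing to the direct limit (which is exact and preserves split exact sequences with compatible splittings) will give the split short exact sequence
\begin{equation*}
0 \to \H_{p-1}(\S,\Zz) \to \H_p(\cG_{S\acts\Zz}) \to \H_p(\S,\A) \to 0.
\end{equation*}

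Since $\S = \Zz^{(\Sigma)}$ acts trivially on $\Zz$, $\H_{p-1}(\S,\Zz) \cong \lwedge^{p-1}\S$ is free abelian of countably infinite rank for $p\geq 1$ and zero for $p=0$. To compute $\H_p(\S,\A) = \varinjlim_n \H_p(\S_n,\A_n)$, recall from Section~\ref{sssec:kunneth} that $\H_p(\S_n,\A_n) \cong (\Zz/g_{\Sigma_n}\Zz)^{\binom{n-1}{p}}$; since the ideal $(s-1 : s \in \Sigma)$ of the Noetherian ring $\Zz$ is finitely generated, $g_{\Sigma_n} = g_\Sigma$ for all sufficiently large $n$. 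From that stage onward the iterated K\"unneth formula will identify the transition map $\H_p(\S_n,\A_n) \to \H_p(\S_{n+1},\A_{n+1})$ with the embedding of the first summand in
\[
\H_p(\S_n,\A_n)\otimes \Zz/(s_{n+1}{-}1)\Zz \;\oplus\; \Tor\bigl(\H_{p-1}(\S_n,\A_n),\Zz/(s_{n+1}{-}1)\Zz\bigr).
\]
Since $\binom{n-1}{p}\to\infty$ for $p \geq 1$ while $\binom{n-1}{0}=1$ is constant, the direct limit becomes $(\Zz/g_\Sigma\Zz)^{\oplus\infty}$ for $p\geq 1$ and $\Zz/g_\Sigma\Zz$ for $p=0$, and combining with the split sequence above yields the stated formulas. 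The main obstacle is the naturality claim used in the second paragraph: checking that the diagram of $\alpha_n$'s and the inclusions $\cG_n\hookrightarrow\cG_{n+1}$, $\cG_{\tor,n}\hookrightarrow\cG_{\tor,n+1}$ actually induces a morphism of LHS spectral sequences respecting the sections, so that the splittings survive the limit. This is a diagram chase using Theorem~\ref{thm:LHSfunctorial} and Lemma~\ref{lem:technical}, but is the one step that requires genuine care.
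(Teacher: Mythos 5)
Your proposal is correct and follows essentially the same route as the paper: realize $\cG_{S\acts\Zz}$ as a direct limit of the finite-$\Sigma$ groupoids via Proposition~\ref{prop:continuity} (with condition~\eqref{eqn:technical} checked by coprimality), map the split sequences~\eqref{eqn:ses5} to one another using Theorem~\ref{thm:LHSfunctorial} and Lemma~\ref{lem:technical} (note these are \'etale correspondences, not inclusions, since the unit spaces map the other way), identify the outer transition maps as $\id_\Zz$ and $\A_n\hookrightarrow\A_{n+1}$, and compute $\varinjlim_n\H_p(\S_n,\A_n)$ by K\"unneth/UCT exactly as in Section~\ref{sssec:kunneth}. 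The step you flag as the main obstacle---choosing sections compatible with the limit---can be bypassed: since each level-wise sequence splits with free kernel and $g_\Sigma$-torsion quotient, the torsion subgroup of $\H_p(\cG_n)$ maps isomorphically onto $\H_p(\S_n,\A_n)$, and because taking torsion subgroups commutes with direct limits (and the quotient of the limit by its torsion is countably generated free), the limit sequence splits without any tracking of sections, which is how the splitting asserted in the paper's proof is justified.
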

\begin{proof}
    Fix a finite subset $\Sigma_0 \subseteq \Sigma$ such that $g_{\Sigma_0} = g_{\Sigma}$. Let $\Sigma_k \subseteq \Sigma$ be an increasing sequence of finite subsets of $\Sigma$, starting with $\Sigma_0$, such that $\Sigma_{k+1} = \Sigma_k \sqcup \{s_k\}$ and $\bigcup_k \Sigma_k = \Sigma$. Then, we have $g_{\Sigma_k} = g_\Sigma$ for all $k \geq 0$. 
    For each $k$, let $\A_k$, $S_k$, $\S_k$, $\Delta_k$, $\cG_k\coloneqq\cG_{S_k\acts\Zz}$, and $\cP_k$ be associated with $\Sigma_k$ as in Section~\ref{sec:BOSprelims}.  
    Let $\pi_k\colon\Delta_{k+1}\to \Delta_k$ be the canonical projection map and $j_k\colon \A_k\rtimes \S_k\to \A_{k+1}\rtimes \S_{k+1}$ the canonical inclusion map.
By Theorem~\ref{thm:LHSfunctorial}, Lemma~\ref{lem:technical}, and Proposition~\ref{prop:isomfunctors}, we have the following morphism between the exact sequences from Equation~\eqref{eqn:ses5}: 
   \begin{equation}
        \begin{tikzcd}
            0 \arrow[r] & \H_{p-1}(\S_{k+1},\H_1(\Zz,\Zz\Delta_{k+1})) \arrow[r] & \H_p(\cG_{k+1}) \arrow[r] & \H_p(\S_{k+1}, \H_0(\Zz,\Zz\Delta_{k+1})) \arrow[r] & 0 \\
            0 \arrow[r] & \H_{p-1}(\S_k,\H_1(\Zz,\Zz\Delta_k)) \arrow[r]\arrow[u,"\H_{p-1}(j_k\text{,}\,\H_1(\id\text{,}\,\pi_k^*))"] & \H_p(\cG_k) \arrow[r]\arrow[u,"\H_p(\Omega_k)"] & \H_p(\S_k, \H_0(\Zz,\Zz\Delta_k)) \arrow[r]\arrow[u,"\H_p(j_k\text{,}\,\H_0(\id\text{,}\,\pi_k^*))"] & 0\nospacepunct{,}
        \end{tikzcd}
    \end{equation}
where $\Omega_k\colon \cG_k\to\cG_{k+1}$ is the \'etale correspondence associated with $(j_k,\pi_k)$ as in Lemma~\ref{lem:technical}. To see that the equivariance condition in Definition~\ref{def:equiv} is satisfied, notice that $\S_k$ acts on $\prod_{p\in\cP_{k+1}\setminus\cP_k}\Zz_p$ by homeomorphisms.

Next, we need to check that, under the identification as in Equation~\eqref{eqn:odhom}, $\H_0(\id,\pi_k^*)$ is identified with $\id_\Zz$ and $\H_1(\id,\pi_k^*)$ is identified with the inclusion map $\iota_k \colon \A_k \hookrightarrow \A_{k+1}$.
For $q=1$, we have the following commutative diagram
\begin{equation}
    \begin{tikzcd}
       \H_1(\Zz)\arrow[r,"\rho_{1,\infty}"] \arrow[d,"\id"]& \H_1(\Zz,\Zz\Delta_k)\arrow[d,"\H_1({\id,\pi_k^*})"] \\ 
       \H_1(\Zz) \arrow[r,"\rho_{1,\infty}"]& \H_1(\Zz,\Zz\Delta_{k+1})\nospacepunct{,}
    \end{tikzcd}
\end{equation}
where the horizontal maps are from \eqref{eqn:rhoinfty}. They are isomorphisms by Proposition~\ref{prop:Z[1/n]}. 
For $q=0$, consider the diagram 
\begin{equation}
    \begin{tikzcd}
   \H_0(\Zz)\otimes \A_k\arrow[d,"\id"] \arrow[r,"\tilde{\rho}_{1,\infty}"] & \H_0(\Zz,\Zz\Delta_k) \otimes\A_k=\H_0(\Zz,\Zz\Delta_k)\arrow[d,"\H_0({\id,\pi_k^*})"] \\
   \H_0(\Zz)\otimes \A_k \arrow[d,"\incl"]\arrow[r,"\rho_{1,\infty}\otimes\id_{\A_k}"]& \H_0(\Zz,\Zz\Delta_{k+1}) \otimes\A_k=\H_0(\Zz,\Zz\Delta_{k+1})\arrow[d,"\id"] \\
   \H_0(\Zz)\otimes \A_{k+1} \arrow[r,"\tilde{\rho}_{1,\infty}"]& \H_0(\Zz,\Zz\Delta_{k+1})\otimes\A_{k+1}=\H_0(\Zz,\Zz\Delta_{k+1})\nospacepunct{,}
    \end{tikzcd}
    \end{equation}
    where the top and bottom horizontal maps are the isomorphisms from Proposition~\ref{prop:invertingrho}, and  $\rho_{1,\infty}$ is the map from \eqref{eqn:rhoinfty}. Proposition~\ref{prop:Z[1/n]} gives us the three equalities on the right hand side. It is easy to see that each of the smaller squares commutes, so the larger square also commutes.
    
Consequently, we have the following morphism of short exact sequences:
\begin{equation}
        \begin{tikzcd}
            0 \arrow[r] & \H_{p-1}(\S_{k+1}, \Zz) \arrow[r] & \H_p(\cG_{k+1}) \arrow[r] & \H_p(\S_{k+1}, \A_{k+1}) \arrow[r] & 0 \\
            0 \arrow[r] & \H_{p-1}(\S_k, \Zz) \arrow[r] \arrow[u,"\H_p({j_k,\id})"] & \H_p(\cG_k) \arrow[r] \arrow[u,"\H_p(\Omega_k)"] & \H_p(\S_k, \A_k) \arrow[r] \arrow[u,"\H_p({j_k,\iota_k})"] & 0\nospacepunct{.}
        \end{tikzcd}
    \end{equation}

By continuity of group homology (\cite[Exercise~V.5.3]{Brown} and continuity of Tor) combined with continuity of groupoid homology (Proposition~\ref{prop:continuity}), we obtain the following split exact sequence: 
    \[ 
    0 \to \H_{p-1}(\S, \Zz) \to \H_p(\cG_{S\acts\Zz}) \to \H_p(\S, \A) \to 0. 
    \]

    We have $\H_{p-1} (\S, \Zz) = \lwedge^{p-1} \S \cong \Zz^{\oplus\infty}$. In addition, we also have
    \begin{align*} \H_p(\mathscr{S}_k,\mathscr{A}_{\sigma}) \cong {}& \H_p(\mathscr{S}_k,(\mathscr{A}_k)_{\sigma}) \otimes_{\mathscr{A}_k} \mathscr{A} \oplus \Tor^{\mathscr{A}_k}_1(\H_p(\mathscr{S}_k,(\mathscr{A}_k
    )_{\sigma}), \mathscr{A})\\
    \cong {}& (\mathbb{Z}/g_{\Sigma}\mathbb{Z})^{\binom{|\Sigma_k|-1}{p-1}} \otimes_{\mathscr{A}_k} \mathscr{A} \oplus \Tor^{\mathscr{A}_k}_1((\mathbb{Z}/g_{\Sigma}\mathbb{Z})^{\binom{|\Sigma_k|-1}{p-1}}, \mathscr{A})\\
    \cong {}&(\mathbb{Z}/g_{\Sigma}\mathbb{Z})^{\binom{|\Sigma_k|-1}{p-1}},
    \end{align*}
    by the universal coefficient theorem (here $\mathscr{A}_\sigma$ denotes the $\mathscr{A}$-module $\mathscr{A}$ on which $\mathscr{S}$ acts nontrivially by $\sigma$) and hence
    \[
    \H_p(\mathscr{S},\mathscr{A}_{\sigma}) \cong \varinjlim_{k} \H_p(\mathscr{S}_k,\mathscr{A}_\sigma) \cong \varinjlim_{k} (\mathbb{Z}/g_{\Sigma}\mathbb{Z})^{\binom{n_k-1}{p-1}} \cong (\mathbb{Z}/g_{\Sigma}\mathbb{Z})^{\infty}. \qedhere
    \]
\end{proof}

\begin{remark}
If $g_\Sigma$ is odd, then the map $\zeta\colon \H_0(\cG_{S\acts\Zz}) \otimes \Zz/2\Zz \to \fg{\cG_{S\acts\Zz}}^{\rm ab}$ from \cite[Corollary~E]{Li:TFG} vanishes because $\H_0(\cG_{S\acts\Zz}) \otimes \Zz/2\Zz=0$, so that $\fg{\cG_{S\acts\Zz}}^{\rm ab}\cong \H_1(\cG_{S\acts\Zz})$. If $g_\Sigma$ is even, then the situation is similar to that for SFT groupoids in \cite[Section~5.5]{Mat15}, i.e., $\zeta$ sometimes vanishes, nevertheless $\H_0(\cG_{S\acts\Zz}) \otimes \Zz/2\Zz $ is nontrivial. 
\end{remark}

\subsection{Calculation of K-theory}
\label{sec:BOSKtheory}

Barlak, Omland, and Stammeier conjectured the following result:
\begin{theorem}[{\cite[Conjecture~6.5]{BOS}}]
\label{thm:BOS}
For any set $\Sigma\subseteq\Zz_{>1}$ of pairwise relatively coprime numbers with $|\Sigma|\geq 2$, the C*-algebra $C_r^*(\cG_\tor)$ is isomorphic to $\bigotimes_{s\in\Sigma}\cO_s$, where $\cG_\tor$ is the groupoid from \eqref{eqn:torsiongpoid}.
\end{theorem}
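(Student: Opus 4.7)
My plan is to combine the Kirchberg--Phillips classification theorem with a direct K-theory computation. First, I would verify that $C_r^*(\cG_\tor)$ is a unital Kirchberg algebra in the UCT class: unitality is immediate from compactness of $\cG_\tor^{(0)} = \Delta$; nuclearity and the UCT follow from Tu's theorem applied to $\cG_\tor$, which is a second countable, Hausdorff, amenable étale groupoid (as the Deaconu--Renault groupoid of a row-finite $k$-graph); simplicity follows from cofinality and aperiodicity of $\Lambda$, both easily checked under the standing hypotheses that $|\Sigma| \geq 2$ and the elements of $\Sigma$ are pairwise coprime and at least $2$; and pure infiniteness follows since every vertex projection of $\Lambda$ is properly infinite. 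By Kirchberg--Phillips it then suffices to match the pointed K-theory of $C_r^*(\cG_\tor)$ with that of $\bigotimes_{s \in \Sigma}\cO_s$.

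The target K-theory is readily computed by iterated K\"unneth: from $K_0(\cO_s) \cong \Zz/(s-1)\Zz$ with $[1]\mapsto 1$, and $K_1(\cO_s) = 0$, one obtains for finite $N = |\Sigma| \geq 2$ the isomorphisms $K_0(\bigotimes_s\cO_s) \cong K_1(\bigotimes_s\cO_s) \cong (\Zz/g_\Sigma\Zz)^{\oplus 2^{N-2}}$, and countable analogues when $|\Sigma|=\infty$. These ranks agree with $\sum_{p \text{ even}} \binom{N-1}{p} = 2^{N-2}$, matching the HK-conjecture prediction coming from Theorem~\ref{thm:homintdyn}.

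The main obstacle is the direct computation of $K_*(C_r^*(\cG_\tor))$. Using the cocycle $\varphi \colon \cG_\tor \to \Zz^N$ of Section~\ref{sec:BOShom}, I would realize $C_r^*(\cG_\tor)$ as a semigroup crossed product of the AF algebra $B \coloneqq C_r^*(\ker\varphi)$ by the commuting endomorphisms $\beta$ induced by the $k$-graph shifts. Via the Baum--Connes isomorphism for $\Zz^N$, the K-theory of $C_r^*(\cG_\tor)$ agrees with that of $C(\mathbb{T}^N, \mathcal{MT}(B,\beta))$, which is computed by the Atiyah--Hirzebruch spectral sequence, equivalently the cohomological Kasparov spectral sequence. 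The $E_2$-page consists entirely of torsion groups $\Zz/g_\Sigma\Zz$, so in contrast with Theorem~\ref{thm:LuckLi}, Raven's Chern character cannot detect the differentials. The key technical input is Lemma~\ref{lem:unital.B}, producing a projection-valued global section of the nonunital continuous field $\mathcal{MT}(B,\beta) \otimes \cO_{g_\Sigma+1}$. This yields a unital $*$-homomorphism $C(\mathbb{T}^N) \to C(\mathbb{T}^N, \mathcal{MT}(B,\beta) \otimes \cO_{g_\Sigma+1})$ and hence a morphism of AH spectral sequences whose source is understood via the ordinary Chern character on $\mathbb{T}^N$. Together with Dadarlat's classification of continuous C*-algebra bundles, this rigidly determines $K_*(C_r^*(\cG_\tor) \otimes \cO_{g_\Sigma+1})$, from which $K_*(C_r^*(\cG_\tor))$ is recovered, completing the proof by Kirchberg--Phillips.
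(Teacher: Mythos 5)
Your proposal follows essentially the same route as the paper---Kirchberg--Phillips reduction (using that $C_r^*(\cG_\tor)$ is a unital UCT Kirchberg algebra, \cite[Corollary~5.2]{BOS}), passage to the mapping torus field over $\Tz^N$ via Lemma~\ref{lem:Dirac.dualDirac}, and collapsing the Atiyah--Hirzebruch spectral sequence after tensoring with $\cO_{g_\Sigma+1}$ by means of the projection section of Lemma~\ref{lem:unital.B}---but the endgame has a genuine gap. Knowing $\K_*(C_r^*(\cG_\tor)\otimes\cO_{g_\Sigma+1})$ does not by itself ``recover'' $\K_*(C_r^*(\cG_\tor))$, and Dadarlat's bundle classification is not the tool for this step: in the paper it enters only as a consistency check (Remark~\ref{rem:bundle}); moreover the untensored field $\cB_1$ is a field of finite-dimensional algebras, not a bundle of Kirchberg algebras, so those classification results do not apply to it, and for the tensored field they classify bundles rather than compute the K-theory you still need. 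What is actually required (and what the paper does) is: (i) the inclusion $C(\Tz^N,\cB_1)\to C(\Tz^N,\cB_1\otimes\cO_{g_\Sigma+1}\otimes\mathbb{K})$ induces injections on $E_2$-pages (Lemma~\ref{lem:AHSS.torsion.K}), so the collapsing of the tensored spectral sequence (Lemma~\ref{lem:bundle.Cuntz.AHSS}) propagates back to the untensored one by a diagram chase; and (ii) the extension problem for $\K_*(C(\Tz^N,\cB_1))$ is settled by a counting argument: a nontrivial extension of the $E_\infty$-terms would contain an element of order $>g_\Sigma$, which would force $\K_*(C(\Tz^N,\cB_1)\otimes\cO_{g_\Sigma+1})$ to be strictly smaller than the group computed via the K\"unneth theorem in Lemma~\ref{prp:Kunneth}. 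Without (i) and (ii) your outline does not determine $\K_*(C_r^*(\cG_\tor))$.

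Two further omissions. Kirchberg--Phillips for unital algebras requires matching the class of the unit: you compute the pointed K-theory of $\bigotimes_{s\in\Sigma}\cO_s$, but you never locate $[1]\in\K_0(C_r^*(\cG_\tor))$. The paper shows that under the isomorphism of Lemma~\ref{lem:Dirac.dualDirac} the class $[1_B]$ corresponds to a Bott element supported in a top-dimensional cell, hence to a generator of $E_2^{N0}\cong\Zz/g_\Sigma\Zz$, so $[1]$ has order exactly $g_\Sigma$; since an element of order $g_\Sigma$ in a finite abelian group of exponent $g_\Sigma$ generates a direct summand, this yields the required pointed isomorphism. Finally, your plan only treats finite $\Sigma$ on the groupoid side; for $|\Sigma|=\infty$ one needs the inductive limit argument at the end of the paper's proof, where the connecting maps are identified with inclusions of mapping torus fields inducing injections on $E_2$-terms, so that the finite computations (including the order of $[1]$) pass to the limit.
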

Since it is already shown that $C^*_r(\cG_{\mathrm{tor}})$ is a UCT Kirchberg algebra (\cite[Corollary~5.2]{BOS}), this theorem is equivalent to $\K_0(C^*_r(\cG_{\tor})) \cong \K_0(C^*_r(\cG_{\tor})) \cong  (\Zz / g_{\Sigma} \Zz)^{2^{|\Sigma| -1}}$ and $n[1] =0$ if and only if $g_{\Sigma} \mid  n$.
By \cite[Theorem~6.1]{BOS} and Theorem \ref{thm:homintdyn}, this means that the groupoid $\cG_{S \curvearrowright \mathbb{Z}}$ has the HK property. 
In this subsection, we prove Theorem~\ref{thm:BOS}. Hereafter, we assume that $N=|\Sigma| <\infty$, and the case of $|\Sigma| =\infty$ is dealt with in the last paragraph of the proof of Theorem \ref{thm:BOS} below.  

First of all, we recall a presentation of the torsion subalgebra $C^*_r(\mathcal{G}_{\mathrm{tor}})$ from \cite{BOS} by a semigroup crossed product $\Mz_{\mathfrak{s}^\infty}\rtimes\Nz^N$, where $\Mz_{\mathfrak{s}^\infty}$ is the UHF algebra (see \cite[Corollary~5.4]{BOS}).
We first recall the definition of this $\mathbb{N}^N$-action on $\mathbb{M}_{\mathfrak{s}^\infty}$. 
Let $\Sigma=\{ s_i\}_{i=1}^N$ be a finite set of mutually coprime positive integers.
Set $\mathfrak{s}:=\prod_{i=1}^N s_i$ and $g=\gcd (\{s_i-1 \})$. 
The space of $\mathfrak{s}$-adic integers is denoted by $\Delta  := \varprojlim \Delta_{\mathfrak{s}^l}$, where $\Delta_m:= \mathbb{Z} / m \mathbb{Z}$. 
The inclusion $C(\Delta _{\mathfrak{s}^l}) \hookrightarrow C(\Delta_{\mathfrak{s}^{l+1}})$ extends to $\mathbb{K}(\ell^2\Delta_{\mathfrak{s}^l}) \to \mathbb{K}(\ell^2 \Delta_{\mathfrak{s}^{l+1}})$ by sending the matrix unit $e_{x,y}$ to $\sum_{a \in \mathbb{Z}/\mathfrak{s}\mathbb{Z}} e_{x+\mathfrak{s}^la,y+\mathfrak{s}^la}$ for any $x, y \in \Delta_{\mathfrak{s}^l}$. 
We write $B:=\varinjlim \mathbb{K}(\ell^2\Delta_{\mathfrak{s}^l})$. 
The product by $s_i$ gives a (non-unital) $\ast$-homomorphism $\beta_i \colon \mathbb{K}(\ell^2\Delta_{\mathfrak{s}^l /s_i}) \to \mathbb{K}(\ell^2\Delta_{\mathfrak{s}^{l}})$. More explicitly, $\beta_i(e_{x,y})=e_{s_ix,s_iy}$ for any $x,y \in \Delta_{\mathfrak{s}^l /s_i}$. It is assembled to an endomorphism $\beta_i \colon B \to B$. 
Let $\widetilde{B}\coloneqq \varinjlim_{\Nz^N}\{B; \beta_i\}$ be the direct limit with connecting maps coming from the $\Nz^N$-action defined by $\beta_i$.
Then, $1_B(\widetilde{B} \rtimes \mathbb{Z}^N)1_B$ is isomorphic to $C^*_r(\cG_{\tor})$ (cf.~\cite[Theorem 2.2.1]{Laca}).

Following the line of \cite{Kasparov} and \cite{BOS}, we rephrase the K-theory of the crossed product by $\mathbb{Z}^N$ in terms of bundles (or continuous fields) of C*-algebras over the classifying space $\mathbb{T}^N =B\mathbb{Z}^N$. This enables us to apply the Atiyah--Hirzebruch (AH) spectral sequence (see e.g.~\cite{HJJS}) for computing the K-group of $A \rtimes_\alpha \mathbb{Z}^N$ for a $\mathbb{Z}^N$-C*-algebra $A$. This is the same thing as the spectral sequence of \cite[Theorem 6.7]{BOS} (we also refer to \cite{SavinienBellissard,barlak}).
\begin{lemma}\label{lem:Dirac.dualDirac}
    Let $A$ be a $\mathbb{Z}^N$-C*-algebra. Then, the C*-algebra defined by
    \[ \mathsf{MT}(A,\alpha) = \{ a \in C([0,1]^N ,A) : a(t_1,\cdots, t_{j-1},0,t_{j+1},\cdots, t_N) =\alpha_j (a(t_1,\cdots, t_{j-1},1,t_{j+1},\cdots, t_N)) \} \]
    satisfies $\K_*(A \rtimes \mathbb{Z}^N) \cong \K_{*-N}(\mathsf{MT}(A,\alpha ))$.
\end{lemma}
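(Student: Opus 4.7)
The plan is to realize $\mathsf{MT}(B,\beta)$ as the induced C*-algebra $\operatorname{Ind}_{\mathbb{Z}^N}^{\mathbb{R}^N}(B)$ and then combine Green's imprimitivity theorem with an $N$-fold application of Connes' Thom isomorphism. Concretely, bounded continuous functions $f\colon \mathbb{R}^N \to B$ satisfying the equivariance condition $f(x+n) = \beta_n^{-1}(f(x))$ for all $x\in\mathbb{R}^N$, $n\in\mathbb{Z}^N$, restrict bijectively to sections on the fundamental domain $[0,1]^N$ satisfying the boundary identifications appearing in the definition of $\mathsf{MT}(B,\beta)$. Under this identification, the natural translation action of $\mathbb{R}^N$ on $\operatorname{Ind}_{\mathbb{Z}^N}^{\mathbb{R}^N}(B)$ descends to a well-defined action of $\mathbb{R}^N$ on $\mathsf{MT}(B,\beta)$.

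Once this identification is in place, Green's imprimitivity theorem for the closed inclusion $\mathbb{Z}^N \leq \mathbb{R}^N$ yields a Morita equivalence
\[
\mathsf{MT}(B,\beta) \rtimes \mathbb{R}^N \;\sim_{\mathrm{M}}\; B \rtimes \mathbb{Z}^N,
\]
and in particular $\K_*(\mathsf{MT}(B,\beta)\rtimes\mathbb{R}^N) \cong \K_*(B\rtimes\mathbb{Z}^N)$. On the other hand, applying Connes' Thom isomorphism $\K_*(A\rtimes\mathbb{R})\cong \K_{*+1}(A)$ iteratively $N$ times with $A = \mathsf{MT}(B,\beta)$ gives
\[
\K_*(\mathsf{MT}(B,\beta) \rtimes \mathbb{R}^N) \;\cong\; \K_{*+N}(\mathsf{MT}(B,\beta)) \;=\; \K_{*-N}(\mathsf{MT}(B,\beta))
\]
by Bott periodicity. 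Combining the two isomorphisms gives $\K_*(B\rtimes \mathbb{Z}^N) \cong \K_{*-N}(\mathsf{MT}(B,\beta))$, as desired.

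The main obstacle, though largely cosmetic, is keeping the sign conventions of the cocycle condition consistent so that the identification of $\mathsf{MT}(B,\beta)$ with $\operatorname{Ind}_{\mathbb{Z}^N}^{\mathbb{R}^N}(B)$ exactly matches the definition in the statement. A conceptually equivalent but more elementary route proceeds by induction on $N$: one uses the iterative description $\mathsf{MT}(B,\beta) \cong \mathsf{MT}(\mathsf{MT}(B,(\beta_1,\dots,\beta_{N-1})),\tilde{\beta}_N)$, where $\tilde{\beta}_N$ is the pointwise extension of $\beta_N$ to the intermediate mapping torus, and reduces to the classical $N=1$ statement $\K_*(B\rtimes\mathbb{Z})\cong \K_{*-1}(\mathsf{MT}(B,\alpha))$. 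This $N=1$ case itself follows from Connes' Thom applied to the $\mathbb{R}$-flow on $\mathsf{MT}(B,\alpha)$, combined with the Morita equivalence $\mathsf{MT}(B,\alpha)\rtimes\mathbb{R}\sim_{\mathrm{M}} B\rtimes\mathbb{Z}$. In either approach, the key content is the Morita-equivalent realization of $B\rtimes\mathbb{Z}^N$ as the $\mathbb{R}^N$-crossed product of a compactly supported model.
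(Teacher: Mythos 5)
Your proof is correct, but it takes a genuinely different route from the paper. The paper never forms a crossed product by $\mathbb{R}^N$: it invokes Kasparov's Dirac--dual Dirac method to get a $\mathbb{Z}^N$-equivariant $\KK$-equivalence $B \sim B \otimes C_0(\mathbb{R}^{2N})$, splits off a trivially acted $C_0(\mathbb{R}^N)$ factor (which produces the degree shift by $N$ via ordinary Bott periodicity), and then writes down an explicit $\ast$-isomorphism $(B \otimes C_0(\mathbb{R}^N)) \rtimes \mathbb{Z}^N \cong \mathsf{MT}(B \otimes \mathbb{K}(\ell^2\mathbb{Z}^N), \beta \otimes \mathrm{Ad}\,\lambda)$ together with a Morita equivalence down to $\mathsf{MT}(B,\beta)$. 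You instead identify $\mathsf{MT}(B,\beta)$ with the induced algebra $\mathrm{Ind}_{\mathbb{Z}^N}^{\mathbb{R}^N}B$ (the $C_0$-condition on $\mathbb{R}^N/\mathbb{Z}^N$ is vacuous by compactness, and the convention $f(x+e_j)=\beta_j^{-1}(f(x))$ does reproduce the boundary condition $b|_{t_j=0}=\beta_j(b|_{t_j=1})$), apply Green's imprimitivity theorem for $\mathbb{Z}^N \leq \mathbb{R}^N$ to get $\mathsf{MT}(B,\beta)\rtimes\mathbb{R}^N \sim_{\mathrm{M}} B\rtimes\mathbb{Z}^N$, and then apply Connes' Thom isomorphism $N$ times and Bott periodicity. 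The two routes are close cousins: for a discrete cocompact subgroup, the paper's explicit isomorphism is essentially the stabilized form of Green's theorem, $(B\otimes C_0(\mathbb{R}^N))\rtimes\mathbb{Z}^N \cong \mathrm{Ind}_{\mathbb{Z}^N}^{\mathbb{R}^N}(B)\otimes\mathbb{K}(\ell^2\mathbb{Z}^N)$, so the real divergence is how the dimension shift is produced --- equivariant Bott periodicity in $\KK^{\mathbb{Z}^N}$ (which fits the paper's broader Baum--Connes/Kasparov spectral sequence framework) versus the Connes Thom isomorphism for the induced $\mathbb{R}^N$-flow, a heavier but entirely standard black box that keeps everything at the level of classical crossed-product machinery. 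Your alternative induction on $N$ also goes through, provided you record the easy commutation $\mathsf{MT}(B,\beta_N)\rtimes\mathbb{Z}^{N-1}\cong \mathsf{MT}(B\rtimes\mathbb{Z}^{N-1},\hat{\beta}_N)$ needed to chain the inductive hypothesis with the $N=1$ case.
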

The proof can be found in e.g.\ \cite[Theorem 1.2.6]{barlak2}.
There, it is first shown that $(A \rtimes \mathbb{Z}^N) \otimes C_0(\mathbb{R}^N)$ and $(A \otimes C_0(\mathbb{R}^{N})) \rtimes \mathbb{Z}^N $ have the same K-theory, and then the latter C*-algebra is stably isomorphic to the mapping torus. 
The first step is proved either via the Connes–Thom isomorphism, as in \cite{barlak2}, or via the fact that $A$ and $A \otimes C_0(\mathbb{R}^{2N})$ are $\mathbb{Z}^{N}$-equivariantly KK-equivalent, as established by Kasparov’s Dirac–dual Dirac method \cite{Kasparov}. 
The second step is given by the $\ast$-isomorphism
    \[ 
    (A \otimes C_0(\mathbb{R}^N)) \rtimes \mathbb{Z}^N \xrightarrow{\cong } \mathsf{MT}(A \otimes \mathbb{K}(\ell^2\mathbb{Z}^N),\beta \otimes \Ad \lambda), \quad (a \otimes f )\lambda_g \mapsto \Big( t \mapsto a\otimes \sum_{h \in \mathbb{Z}^N} f(t+h) \cdot e_{g+h,h} \Big),
    \]
    where $\lambda $ and $e_{gh}$ denotes the left regular representation and the matrix unit on $\ell^2\mathbb{Z}^N$ respectively. Indeed, the right hand side is Morita equivalent to $\mathsf{MT}(A,\alpha )$ via the imprimitivity bimodule $\mathsf{MT}(A \otimes \ell^2 \mathbb{Z}^N,\alpha \otimes \lambda ) $ defined in the same way as the mapping torus of C*-algebras.

The C*-algebra $\mathsf{MT}(A,\alpha)$ is regarded as the continuous section algebra of the mapping torus bundle of C*-algebras
\[ 
    \mathcal{MT}(A,\alpha) \coloneqq [0,1]^N \times A  / \{ (t_1,\cdots,t_{j-1},1,t_{j+1},\cdots , t_N,a) \sim  (t_1,\cdots,t_{j-1},0,t_{j+1},\cdots , t_N,\alpha_j(a))\}. 
\]
The definition of this bundle can be generalized to the case that $\beta$ is an endomorphism action of $\mathbb{N}^N$ with a little effort: we need to replace the fiber $A$ over $t \in [0,1]^N$ with $\Im (\alpha_{k_1} \circ \cdots \circ \alpha_{k_s})$ if $t_{k_l} = 0$ for $l=1,\cdots ,s$ (a more precise definition is given below in Definition \ref{def:mapping.torus}, covering the more general case). 
In this case, the above $\mathcal{MT}(A,\alpha)$ remains well-defined as a continuous field of C*-algebras in the sense of \cite[Definitions 10.1.2, 10.3.1]{Dixmier}. However, it is no longer a bundle of C*-algebras in the sense that it does not have a local trivialization, i.e., there is $p \in \mathbb{T}^N$ such that any open neighborhood $p \in U$ does not admit a fiber-preserving homeomorphism $\mathcal{MT}(A,\alpha)|_{U} \cong U \times A$ that is a $\ast$-isomorphism at each fiber.
Indeed, in general, a C*-algebra over a compact Hausdorff space (i.e., a C*-algebra $A$ equipped with a non-degenerate $\ast$-homomorphism $C(X) \to \mathcal{Z}\mathcal{M}(A)$) 
 is always regarded as the continuous section algebra $C(\mathbb{T}^N,\mathcal{A})$ of an upper-semicontinuous field of C*-algebras $\mathcal{A}$ (cf. \cite[Section 1]{Blanchard}). 

The notion of a mapping torus can be further generalized to the case where not only the range but also the domain of each $\beta_i$ is a proper subalgebra of $B$. The precise definition is given as follows. 

\begin{definition}\label{def:mapping.torus}
    Let $A$ be a C*-algebra. We call a triple $(\alpha ,D_\alpha,R_\alpha )$ (denoted by $\alpha$ in short) a partial $\ast$-automorphism if $D_\alpha$, $R_\alpha$ are C*-subalgebras of $A$ and $\alpha \colon D_\alpha \to R_\alpha$ is a $\ast$-isomorphism. The composition of partial $\ast$-automorphisms is given by the triple $(\alpha_2 \circ \alpha_1, \alpha_1^{-1}(D_{\alpha_2}), \alpha_2(D_{\alpha_2} \cap R_{\alpha_1}))$. 
    For a mutually commutative $N$-tuple of partial $\ast$-automorphisms $\alpha_1, \cdots ,\alpha_N$, its mapping torus field is defined as
    \[ 
    \mathcal{MT}(A,\alpha):=\Big\{ (t,b) \in [0,1]^N \times A : \begin{array}{l}
    \text{$b \in R_{\alpha_{j_1} \circ \cdots \circ \alpha_{j_r}}$ if $t_{j_l} =0$ for $l=1,\cdots,r$} \\ 
     \text{$b \in D_{\alpha_{k_1} \circ \cdots \circ \alpha_{k_s}}$ if $t_{k_l} =1$ for $l=1,\cdots,s$}
     \end{array}
     \Big\}/\sim,
    \]
    where $\sim $ is the equivalence relation given by
    \[
       ((t_1,\cdots, t_{j-1},0,t_{j+1},\cdots, t_N),b) = ((t_1,\cdots, t_{j-1},1,t_{j+1},\cdots, t_N), \alpha_j(b)).
    \]
\end{definition}
When $D_\alpha \subsetneq A$, this definition can be seen as a C*-algebraic and higher-dimensional analogue of the HNN extension of groups.

Next, we recall some generalities of the AH spectral sequence for K-theory of C*-algebras over a compact Hausdorff space $X$.
Compare the following definition with the AH spectral sequence of topological K-theory (which is found in e.g.\ \cite[Definition 21.3.4]{HJJS}). We also note that the same construction appears in \cite[Appendix A]{barlak}.
Let us fix an increasing sequence $\emptyset =X_{-1} \subset X_0 \subset X_1 \subset \cdots \subset X_n =X$ of closed subspaces of $X$. 
The AH spectral sequence for a continuous field $\mathcal{D}$ of C*-algebras over $X$, associated to this filtration of $X$, is the spectral sequence associated to the exact couple 
\begin{gather}
    D_1^{pq} \coloneqq \K_{-p-q}(C_0(X \setminus X_{p-1}, \mathcal{D})), \quad E^{pq}_1 =  \K_{-p-q} (C_0(X_p \setminus X_{p-1}, \mathcal{D}) ),
\end{gather}
whose structure maps 
\[
    D_1^{p+1,q-1} \xrightarrow{i} D_1^{p,q} \xrightarrow{j} E_1^{p,q} \xrightarrow{k} D_1^{p+1,q-2}
\]
are given by the morphisms of six-term exact sequences associated to 
\[
    0 \to C_0(X \setminus X_p , \mathcal{D}) \to C_0(X \setminus X_{p-1}, \mathcal{D}) \to C_0(X_{p} \setminus X_{p-1} ,\mathcal{D}) \to 0. 
\]
Here, for a locally closed subspace $U \subset X$, $C_0(U, \mathcal{D})$ denotes the continuous section algebra on it vanishing at its boundary, i.e., 
\[ C_0(U, \mathcal{D}):=\{ f \in C(\overline{U},\mathcal{D}) : f|_{\overline{U} \setminus U} \equiv 0 \}. \]

By the general theory of the spectral sequence associated to an exact couple (see \cite[Subsection 6.1]{Schochet} for our setting), the first differential $d_1^{pq} \colon E_1^{pq} \to E_1^{p+1,q}$ is given by the composition $j\circ k$, in other words, the boundary map associated to the exact sequences 
\[ 0 \to C_0(X_{p} \setminus X_{p-1}, \mathcal{D}) \to C_0(X_p \setminus X_{p-2} , \mathcal{D}) \to C_0(X_{p-1}\setminus X_{p-2}, \mathcal{D}) \to 0. \]
This spectral sequence converges to $\K_*(C_0(X,\mathcal{D}))$ by the same reason as topological K-theory (for the precise proof, see \cite[Lemma 6.7]{Schochet}). 

\begin{remark}\label{rmk:d1.mapping.torus}
In the case of $C(\mathbb{T}^N,\mathcal{A}) =\mathsf{MT}(A,\alpha)$, where $\mathcal{A}\coloneqq \mathcal{MT}(A,\alpha)$ associated to a mutually commuting $N$-tuple $\{\alpha_i\}$ of partial $\ast$-automorphisms on a C*-algebra $A$, we can employ the standard filtration of $\mathbb{T}^N$, i.e., 
\[
    \mathbb{T}^N_p = \bigsqcup_{I \subset \{ 1,\cdots,N\}, |I| \leq p}\mathbb{T}^I \subset \mathbb{T}^{\{1,\cdots,N\}} =\mathbb{T}^N.
\] 
For each $p \geq 0$, the continuous field $\mathcal{A}$ has a trivialization on $\mathbb{T}^N_p \setminus \mathbb{T}^N_{p-1} \cong \bigsqcup_{|I|=p}(0,1)^I$. Hence we have 
\[
    \K_{-p-q}(C_0(\mathbb{T}^N_p \setminus \mathbb{T}^N_{p-1}, \cA)) \cong \bigoplus _{I \subset \{ 1,\cdots,N\}, |I| = p} \K_{-q} (D_{\alpha_{I^c}}),
\]
where $\alpha_I \coloneqq \alpha_{i_1}\circ \cdots \circ \alpha_{i_p}$ if $I=\{i_1,\cdots,i_p\}$. 
Moreover, for each $I, J \subset \{ 1,\cdots,N\}$ with $|I|=p$ and $|J|=p-1$, the corresponding component of the differential $d_1^{pq} \colon \K_{-q}(D_{\alpha_{J^c}}) \to \K_{-q}(D_{\alpha_{I^c}})$ is the boundary map of the exact sequence
\begin{align}
    0 \to C_0((\mathbb{T}\setminus \{0\})^I ,\mathcal{A}) \to C_0((\mathbb{T}\setminus \{0\})^I \cup (\mathbb{T}\setminus \{0\})^J , \mathcal{A}) \to C_0((\mathbb{T}\setminus \{0\})^J,\mathcal{A}) \to 0.
    \label{eqn:boundary.d1.AHSS}
\end{align}
For example, when $N=1$, then the above exact sequence for $I=\{1\}$ and $J=\emptyset$ is 
\[
    0 \to C_0(\mathbb{T}\setminus \{0\} ,A) \to \{ b \in C([0,1],A) \mid b(1) \in D_\alpha, \alpha(b(1))=b(0) \}  \xrightarrow{\ev_1} D_{\alpha}\to 0.
\]
In this case, by comparing the six-term exact sequence of 
\[
\xymatrix{
    0 \ar[r] & C_0(\mathbb{T}\setminus \{0\},A) \ar[r] \ar@{=}[d] & C(\mathbb{T}^N,\mathcal{MT}(A,\alpha)) \ar[r] \ar[d] & D_\alpha \ar[r] \ar[d]^{\iota \oplus \alpha}  & 0 \\
    0 \ar[r] & C_0(\mathbb{T}\setminus \{0\},A) \ar[r]  & C([0,1],A) \ar[r]  & A \oplus A \ar[r]   & 0 ,
}
\]
where $\iota \colon D_\alpha \to A$ denotes the inclusion, the boundary map turns out to be $ \alpha_* - \iota_* \colon \K_*(D_{\alpha}) \to \K_{*}(A)$.  
By the same reason, for general $I,J$, the boundary map of K-groups associated to \eqref{eqn:boundary.d1.AHSS} is 
\[
    d_{1}^{pq}= \sum_{|I|=p, \ I=J \cup \{ i \}} \Big( (\alpha_i)_* - \iota_* \colon \K_0(D_{\alpha_{J^c}}) \to \K_0(D_{\alpha_{I^c}}) \Big) . 
\]

When the inclusions $D_{\alpha_I} \to A$ induce an isomorphism on $\K$-groups, as in the case we are interested in, the groups $E_1^{pq}$ are isomorphic to direct sums of copies of $\K_{-q}(A)$. In this case, the resulting $E_1$-page is nothing else than the cellular cohomology group of $\mathbb{T}^N$ with respect to the cellular decomposition $\mathbb{T}^N=\bigsqcup_{p=0}^{N} (\mathbb{T}^N_p \setminus \mathbb{T}_{p-1}^N)$, with the coefficient $\K_{-q}(A)$ twisted by the monodromy $\alpha_*$ (for the definition of the cohomology twisted by a local system, we refer the readers to \cite[3.H]{Hatcher}). 
\end{remark}

\begin{remark}\label{rem:sheaf}
Although it is not used in the rest of the paper, we remark that the above discussion is a special case of a general story: In general, the $E_2$-page computed from the above $E_1$-page is identical with the sheaf cohomology $\H^p(X; \mathcal{K}_{-q}(\mathcal{D}))$, where the sheaf $\mathcal{K}_{-q}(\mathcal{D})$ is the sheafification of the presheaf $U \mapsto \K_{-q}(C_b(U,\mathcal{D}))$ (\cite[Proposition 5.2]{Segal2}). 
In particular, this $E_2$-page is independent of the choice of a cellular decomposition of $X$. 

When the continuous field $\mathcal{D}$ comes from a locally trivial bundle of C*-algebras with the fiber $D$ defined by the transition functions $g_{ij} \colon U_i \cap U_j \to \mathop{\mathrm{Aut}}(D)$, the sheaf $\mathcal{K}_{-q}(\mathcal{D})$ comes from the local system (i.e., a flat $\K_{-q}(D)$-bundle) whose transition functions are given by $(g_{ij})_*$. Especially, when $\mathcal{D} = \mathcal{MT}(A,\alpha)$ as before, the sheaf $\mathcal{K}_{-q}(\mathcal{MT}(A,\alpha))$ is identical with the local system coming from the flat $\K_{-q}(B)$-bundle
\[ \underline{\K_{-q}(A)}:= [0,1]^N \times \K_{-q}(A) /(((t_1, \cdots, t_{i-1},0,t_{i+1}, \cdots, t_N), x) \sim ((t_1, \cdots, t_{i-1},1,t_{i+1}, \cdots, t_N), (\alpha_i)_*(x))). \]
\end{remark}

Set $B_l:=\mathbb{K}(\ell^2\Delta_{\mathfrak{s}^l}) \subset B$. 
Then the restriction of endomorphisms $\beta_i$ to $B_l$ gives a mutually commuting $n$-tuple of partial $\ast$-automorphism $(\beta_i , \beta_i^{-1}(B_l), \beta_i(\beta_i^{-1}(B_l)))$, denoted by $\beta_i$ or $(\beta_i,D_{l,\beta_i}, R_{l,\beta_i})$ in short.  
To be more explicit, $D_{l,\beta_i} $ is $ \mathbb{K}(\ell^2\Delta_{\mathfrak{s}^l/s_i})$ regarded as a subalgebra of $B_l$ by an extension of $C(\Delta_{\mathfrak{s}^l/s_i}) \to C(\Delta_{\mathfrak{s}^l})$ induced from the projection $\Delta_{\mathfrak{s}^l} \to \Delta_{\mathfrak{s}^l/s_i}$, and $R_{l,\beta_i} = \mathbb{K}(\ell^2(s_i\Delta_{\mathfrak{s}^l/s_i}))$ regarded as a full corner subalgebra of $B_l$. We define
\[ \mathcal{B}_l:=\mathcal{MT} (B_l,\{\beta_i\}_{i=1}^N), \quad \mathcal{B}:=\mathcal{MT} (B,\{\beta_i\}_{i=1}^N), \quad \widetilde{\mathcal{B}}:=\mathcal{MT} (\widetilde{B},\{\beta_i\}_{i=1}^N).\]

\begin{lemma}[cf. {\cite[Proposition 6.12]{BOS}}]\label{lem:AHSS.torsion.K}
    The $E_2$-pages of the AH spectral sequence computing
    \[ 
    \K_*(C(\mathbb{T}^N,\widetilde{\mathcal{B}})) \cong \K_*(\mathsf{MT}(\widetilde{B},\beta)) \cong \K_*(\widetilde{B}\rtimes \mathbb{Z}^N),
    \]
    where the isomorphisms come from Lemma \ref{lem:Dirac.dualDirac} and the discussion below, is isomorphic to the cohomology group
    \[ E_2^{pq}  \cong 
    \begin{cases}
       (\mathbb{Z}/g \mathbb{Z})^{\binom{N-1}{p-1}} & \text{if $q$ is even}, \\
        0 & \text{if $q$ is odd}.
    \end{cases} \]
    Moreover, for any $l \geq 1$ and $r \geq 1$, the inclusions $\mathcal{B}_l \to \mathcal{B} \to \widetilde{\mathcal{B}}$ induce isomorphisms of $E_r$-pages of the AH spectral sequences. 
    In particular, they induce isomorphisms of K-groups of the global section algebras. 
\end{lemma}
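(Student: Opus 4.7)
My plan proceeds in three stages. First, I identify the $E_2$-page of the AH spectral sequence for the locally trivial C*-bundle $\mathcal{MT}(\widetilde{B},\beta)$ using Remark~\ref{rem:sheaf}. Second, I evaluate the resulting group cohomology via a Koszul complex argument. Third, I handle the comparison with $\mathcal{B}_l$ and $\mathcal{B}$ using the inductive limit structure of the construction.

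For the first stage: since each $\beta_i$ is an honest automorphism of $\widetilde{B}$, the mapping torus $\mathcal{MT}(\widetilde{B},\beta)$ is a genuine locally trivial C*-bundle over $\mathbb{T}^N$ with fiber $\widetilde{B}$. By Remark~\ref{rem:sheaf}, the sheaf $\mathcal{K}_{-q}(\mathcal{MT}(\widetilde{B},\beta))$ coincides with the local system $\underline{\K_{-q}(\widetilde{B})}$ on $\mathbb{T}^N$, so $E_2^{pq} \cong \H^p(\mathbb{T}^N;\underline{\K_{-q}(\widetilde{B})}) \cong \H^p(\mathbb{Z}^N;\K_{-q}(\widetilde{B}))$. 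Since $\widetilde{B}$ is an inductive limit of matrix algebras, $\K_1(\widetilde{B}) = 0$, so $E_2^{pq} = 0$ for $q$ odd. A direct rank count on $\beta_i\colon\mathbb{K}(\ell^2\Delta_{\mathfrak{s}^l/s_i}) \to \mathbb{K}(\ell^2\Delta_{\mathfrak{s}^l})$ shows that $(\varphi_i)_*$ acts on $\K_0(B) \cong \mathbb{Z}[1/\mathfrak{s}]$ by multiplication by $1/s_i$, so $\K_0(\widetilde{B}) \cong \mathbb{Z}[1/\mathfrak{s}]$ with the $\mathbb{Z}^N$-action generated by the multiplications $\cdot s_i^{\mp 1}$.

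For the second stage: the group cohomology $\H^*(\mathbb{Z}^N;\mathbb{Z}[1/\mathfrak{s}])$ is computed by the Koszul cochain complex $K^\bullet = \bigotimes_{i=1}^N\bigl(\mathbb{Z}[1/\mathfrak{s}] \xrightarrow{s_i-1} \mathbb{Z}[1/\mathfrak{s}]\bigr)$---the signs $s_i^{\pm 1}-1$ generate the same ideal in $\mathbb{Z}[1/\mathfrak{s}]$ since each $s_i$ is a unit. Exactness of localization gives $K^\bullet \cong K^\bullet_{\mathbb{Z}} \otimes_{\mathbb{Z}}\mathbb{Z}[1/\mathfrak{s}]$, where $K^\bullet_{\mathbb{Z}}$ is the analogous complex over $\mathbb{Z}$. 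Reversing grading (via $I \mapsto I^c$ on Koszul generators) identifies $K^\bullet_{\mathbb{Z}}$ with the chain complex $\bigotimes_s P_\bullet(s)$ of Lemma~\ref{lem:Kunneth.torus.torsion}, whence $\H^p(K^\bullet_{\mathbb{Z}}) \cong \H_{N-p}\bigl(\bigotimes_s P_\bullet(s_i)\bigr) \cong (\mathbb{Z}/g\mathbb{Z})^{\binom{N-1}{p-1}}$ for $1 \leq p \leq N$. Because $g \mid s_i-1$ for each $i$, we have $\mathfrak{s} \equiv 1 \pmod g$; thus $\mathfrak{s}$ is already a unit in $\mathbb{Z}/g\mathbb{Z}$ and the localization is trivial, yielding $E_2^{pq} \cong (\mathbb{Z}/g\mathbb{Z})^{\binom{N-1}{p-1}}$ in the claimed bi-degrees.

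For the third stage: I exploit the fact that $\widetilde{B} = \varinjlim_{\mathbb{N}^N}\{B;\varphi_i\}$ as a $\mathbb{Z}^N$-C*-algebra, so that $\mathcal{MT}(\widetilde{B},\beta)$ is canonically an inductive limit of copies of $\mathcal{B}$ with connecting maps induced by the $\varphi_i$, and analogously $\mathcal{B} = \varinjlim_l \mathcal{B}_l$. Since $C_0(\sigma,-)$ commutes with inductive limits on each cell $\sigma$ of a fixed triangulation of $\mathbb{T}^N$, the $E_1$-pages and $d_1$-differentials form compatible direct systems, and hence so does every $E_r$-page. Because $(\varphi_i)_*$ is multiplication by the unit $1/s_i$ in $\mathbb{Z}[1/\mathfrak{s}]$, the connecting maps are already isomorphisms on every $E_r$-page, making the inclusions $\mathcal{B}_l \to \mathcal{B} \to \mathcal{MT}(\widetilde{B},\beta)$ induce isomorphisms of $E_r$-pages, and in particular an isomorphism of the global K-theory. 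The main obstacle I anticipate is verifying the cellwise inductive limit identification in the presence of the partial actions defining $\mathcal{B}$ and $\mathcal{B}_l$: one must show that the fiber inclusions $R_{l,\beta_i}, D_{l,\beta_i} \hookrightarrow \widetilde{B}$ become K-theoretic equivalences after passing to the direct limit, so that the failure of $\mathcal{B}$ and $\mathcal{B}_l$ to be locally trivial does not obstruct the comparison with the genuine C*-bundle $\mathcal{MT}(\widetilde{B},\beta)$.
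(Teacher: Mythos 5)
Your Stages 1--2 are essentially the paper's own computation in a different dress: the paper writes down the cellwise $E_1$-complex directly and observes that it is $\bigotimes_{s\in\Sigma}\bigl(0\to\Zz[\mathfrak{s}^{-1}]\xrightarrow{\,s-1\,}\Zz[\mathfrak{s}^{-1}]\to 0\bigr)$, then quotes Lemma~\ref{lem:Kunneth.torus.torsion} after reversing the grading, exactly as you do via the Koszul complex for $\H^*(\Zz^N;\Zz[1/\mathfrak{s}])$; your use of $\gcd(g,\mathfrak{s})=1$ to see that inverting $\mathfrak{s}$ does not change the $g$-torsion cohomology is the same point the paper relies on (the sign ambiguity $s_i$ versus $s_i^{-1}$ on $\K_0(\widetilde{B})$ is, as you note, harmless).

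The gap is in Stage 3. Your justification that the connecting maps ``are already isomorphisms on every $E_r$-page'' rests on $(\varphi_i)_*$ being multiplication by a unit of $\Zz[1/\mathfrak{s}]$, but that only covers the maps in the $\varphi$-direction, i.e.\ $\mathcal{B}\to\mathcal{MT}(\widetilde{B},\beta)$, whose $E_1$-map is indeed the isomorphism $\K_0(B)\cong\Zz[1/\mathfrak{s}]\to\K_0(\widetilde{B})\cong\Zz[1/\mathfrak{s}]$. It does not apply to the inclusions $\mathcal{B}_l\to\mathcal{B}_{l+1}\to\mathcal{B}$: there the cellwise $E_1$-entries are $\K_0(B_l)\cong\Zz$, the connecting maps are multiplication by $\mathfrak{s}$ (and the map into $\K_0(B)\cong\Zz[1/\mathfrak{s}]$ is the non-surjective localization map), so these are certainly not isomorphisms on $E_1$, and your stated reason gives no control on $E_2$ either. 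What is actually needed --- and what the paper does --- is to compute the $E_1$-complex of $\mathcal{B}_l$ separately as $\bigotimes_{s\in\Sigma}\bigl(0\to\Zz\xrightarrow{\,s-1\,}\Zz\to 0\bigr)$, identify its $E_2$-page with $(\Zz/g\Zz)^{\binom{N-1}{p-1}}$ by Lemma~\ref{lem:Kunneth.torus.torsion}, and then observe that the map of $E_1$-complexes is termwise the localization $\Zz\hookrightarrow\Zz[1/\mathfrak{s}]$, which induces on cohomology the localization map of $g$-torsion groups and hence an isomorphism because $\mathfrak{s}\equiv 1\ (\mathrm{mod}\ g)$; the isomorphism on $E_2$ then propagates to all $E_r$ with $r\geq 2$ (the ``$r\geq 1$'' in the statement should be read this way, since on $E_1$ it is false) and, via the finite filtration, to the K-groups of the section algebras. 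You already have the coprimality fact in Stage~2, so the repair is short, but as written your limit argument would not establish the $\mathcal{B}_l$ comparison. Your closing worry about the fibers over boundary cells is easily dispatched: $R_{l,\beta_i}$ is a full corner of $B_l$ and $D_{l,\beta_i}$ a unital-multiplicity copy, and the paper simply asserts $\K_{-p-q}(C_0(\sigma,\mathcal{B}))\cong\K_{-q}(B)$ cellwise, so this is not where the difficulty lies.
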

\begin{proof}
We write $s_I \coloneqq \prod_{i \in I} s_i$ for $I \subset \{1,\dots,N\}$.
Recall that $B$ is an AF algebra with $\K_0(B) \cong \mathbb{Z}[\mathfrak{s}^{-1}]$.
We fix this isomorphism so that a rank $1$ projection of $B_l$ corresponds to $1 \in \mathbb{Z}[\mathfrak{s}^{-1}]$.
Under this identification, for each $I$, the $\K_0$-group of the matrix subalgebra $D_{\beta_I} \cong \mathbb{K}(\ell^2\Delta_{\mathfrak{s}^l / s_I}) \subset B_l \subset B$ is identified with its image in $\K_0(B)$, namely the subgroup $s_I \mathbb{Z} \subset \mathbb{Z}[\mathfrak{s}^{-1}]$. 
Moreover, since each partial $\ast$-homomorphism
$\beta_i \colon D_{\beta_{I}} \to D_{\beta_{I \setminus \{i\}}}$ sends a rank $1$ projection to a rank $s_i$ one, it induces multiplication by $s_i^{-1}$ on $\K_0$. 
This also implies that $\beta_{i,*} = \K_0(B) \to \K_0(B)$ is also given by multiplication by $s_i^{-1}$. 
In particular, $\beta_{i,*}$ are isomorphisms. This shows $\K_0(B) \cong \K_0(\widetilde{B})$, on which $\mathbb{Z}^N$ acts by $s_i^{-1}$. 

Consider the AH spectral sequence $E^{pq}_r$ associated to $\widetilde{\mathcal{B}}$. Then we have $E_1^{p,2q+1}\cong 0$, and by the above discussion and Remark \ref{rmk:d1.mapping.torus}, the cochain complex $(E_1^{p,2q},d_{1}^{p,2q})$ is decomposed into the tensor product of length $2$ complexes as
\begin{align}
\begin{split}
    E_1^{*,2q} \cong {}&{} \bigg( 0 \to \bigoplus_{|I_0|=0}\mathbb{Z}[\mathfrak{s}^{-1}] \cdot x_{I_0}
    \xrightarrow{\partial^0} 
    \bigoplus_{|I_1|=1} \mathbb{Z}[\mathfrak{s}^{-1}] \cdot x_{I_1}
    \xrightarrow{\partial^1} 
     \cdots  \xrightarrow{\partial^{N-1}} \bigoplus_{|I_N|=N} \mathbb{Z}[\mathfrak{s}^{-1}]\cdot x_{I_N} \to 0  \bigg)  \\
    \cong {}&{} \bigotimes_{s \in \Sigma} (0 \to \mathbb{Z}[\mathfrak{s}^{-1}] \xrightarrow{s^{-1} -1} \mathbb{Z}[\mathfrak{s}^{-1}] \to 0) \\
    \cong {}&{}  \bigotimes_{s \in \Sigma} (0 \to \mathbb{Z}[\mathfrak{s}^{-1}] \xrightarrow{s-1} \mathbb{Z}[\mathfrak{s}^{-1}] \to 0),
\end{split}\label{eqn:E1.B1}
\end{align}
where each $I_j $ denotes a subset of $\{1,\cdots,N\}$, $x_I$ denotes the generator of $\mathrm{K}_0(C_0((0,1)^I, \widetilde{\mathcal{B}}) )$ corresponding to $1 \in \mathbb{Z}[\mathfrak{s}^{-1}]$ under the above identification, and
\[
    \partial^j =\bigoplus_{I_{j+1}=I_j \cup \{s_i\} } \Big( \sigma(I_j,s_i) \cdot (s_i^{-1}-1) \colon \mathbb{Z}[\mathfrak{s}^{-1}] \cdot x_{I_j} \to \mathbb{Z}[\mathfrak{s}^{-1}] \cdot x_{I_{j+1}}\Big).
\]
Here, $\sigma(I_j,s_i) \coloneqq (-1)^{m-1}$ if $s_i$ is the $m$-th element when the elements of $I_j\cup\{s_i\}$ are arranged in increasing order.
The second isomorphism is given by identifying the direct summand $\mathbb{Z}[\mathfrak{s}^{-1}] \cdot x_{I_j}$ with the tensor factor in which, for each $s \in \Sigma$, one takes the left copy of $\mathbb{Z}[\mathfrak{s}^{-1}]$ if $s \notin I_j$ and the right copy if $s \in I_j$.
The third isomorphism is given by the commutative diagram below:  
\[
    \xymatrix{
        \mathbb{Z}[\mathfrak{s}^{-1}] \ar[r]^{s^{-1}-1} \ar[d]^{-s^{-1}} & \mathbb{Z}[\mathfrak{s}^{-1}] \ar@{=}[d] \\
        \mathbb{Z}[\mathfrak{s}^{-1}] \ar[r]^{s-1} & \mathbb{Z}[\mathfrak{s}^{-1}].
    }
\]
By inverting the degree, this chain complex $(E_1^{N-p,2q},d_1^{N-p,2q})$ is almost the same thing as the one considered in Lemma \ref{lem:Kunneth.torus.torsion}, the only difference being that $\mathbb{Z}$ is replaced with $\mathbb{Z}[\mathfrak{s}^{-1}]$. Therefore, by a similar computation as Lemma \ref{lem:Kunneth.torus.torsion}, the cohomology group $E_2^{p,2q}$ is isomorphic to $(\mathbb{Z}/g \mathbb{Z})^{\binom{N-1}{N-p}} = (\mathbb{Z}/g \mathbb{Z})^{\binom{N-1}{p-1}}$.
This shows the first half of the claim. 

Again by Remark \ref{rmk:d1.mapping.torus}, the AH spectral sequence $\bar{E}_1^{p,2q}$ associated to $\mathcal{B}_l$ satisfies $\bar{E}_{2}^{p,2q+1} \cong 0$, and the cochain complex $(\bar{E}_1^{p,2q},d_1^{p,2q})$ a similar decomposition as
\begin{align*} 
    \bar{E}_1^{*,2q} \cong \bigotimes_{s \in \Sigma} (0 \to s\mathbb{Z} \xrightarrow{s^{-1} -1} \mathbb{Z} \to 0) \cong \bigotimes_{s \in \Sigma} (0 \to \mathbb{Z} \xrightarrow{s-1} \mathbb{Z} \to 0),
\end{align*}
where the right isomorphism is given by a restriction of the above commutative diagram. This is exactly the same as the complex considered in Lemma \ref{lem:Kunneth.torus.torsion}, and hence $\bar{E}_2^{p0}$ is isomorphic to $(\mathbb{Z}/g \mathbb{Z})^{\binom{N-1}{N-p}} = (\mathbb{Z}/g \mathbb{Z})^{\binom{N-1}{p-1}}$ as well. 

The above  computation also shows that the map $\bar{E}_2^{p,2q} \to E_2^{p,2q}$ induced by the inclusion $\iota \colon \mathcal{B}_l \to \mathcal{B}$ is an isomorphism. 
By comparing the differentials $d_{r}^{pq}$ inductively, it implies that the induced maps $\bar{E}_2^{pq} \to E_2^{pq}$ are isomorphisms on every page $E_r$ for $r\ge 2$.
\end{proof}

In order to determine the higher differentials of the AH spectral sequence for $\mathcal{B}_1$, we first study the continuous field $\mathcal{B}_1 \otimes \mathcal{O}_{g+1} \otimes \mathbb{K}$. 
\begin{remark}\label{lem:mapping.torus.composition}
    The continuous field $\mathcal{B}_1 \otimes \mathcal{O}_{g+1} \otimes \mathbb{K}$ is isomorphic to a locally trivial bundle of C*-algebras.  
    In general, by \cite[Theorem 1.1]{Dadarlat2}, the mapping torus $\mathcal{MT}(A,\alpha_1,\cdots,\alpha_N)$ associated to a mutually commuting $N$-tuple of partial $\ast$-automorphisms on a stable Kirchberg algebra is isomorphic to a locally trivial bundle of C*-algebras if the inclusions $D_{\alpha_{i_1}\circ \cdots \circ \alpha_{i_k}} \subset A$ and $R_{\alpha_{i_1}\circ \cdots \circ \alpha_{i_k}}\subset A$ induces isomorphism in K-theory.
    For $A=B_1$, since $\beta_{s_i}$ induces the multiplication by $s_i$ in $\K$-theory, the induced map $(\beta_{s_i} \otimes \id_{\mathcal{O}_{g+1}})_*$ is the identity on $\K_0(\mathcal{O}_{g+1} \otimes \mathbb{K}) \cong \mathbb{Z}/g\mathbb{Z}$.
\end{remark}

\begin{lemma}\label{lem:unital.B}
    The continuous section algebra $C(\mathbb{T}^N,\mathcal{B}_1 \otimes \mathcal{O}_{g+1} \otimes \mathbb{K})$ has a projection $e$ whose evaluation at any point $t \in \mathbb{T}^N$ represents the generator of $\K_0(\mathcal{O}_{g+1}) \cong \mathbb{Z}/g\mathbb{Z}$.
\end{lemma}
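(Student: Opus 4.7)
The plan is to identify $C(\mathbb{T}^N, \mathcal{B}_1 \otimes \mathcal{O}_{g+1} \otimes \mathbb{K})$ with the mapping torus $\mathsf{MT}(B_1 \otimes \mathcal{O}_{g+1} \otimes \mathbb{K}, \beta)$ and to build the desired projection as a continuous map $e \colon [0,1]^N \to B_1 \otimes \mathcal{O}_{g+1} \otimes \mathbb{K}$ satisfying, for each $j$, the conditions $e|_{t_j=0} \in R_{\beta_j} \otimes \mathcal{O}_{g+1} \otimes \mathbb{K}$, $e|_{t_j=1} \in D_{\beta_j} \otimes \mathcal{O}_{g+1} \otimes \mathbb{K}$, and the cocycle relation $e|_{t_j=0} = (\beta_j \otimes \id)(e|_{t_j=1})$, together with the fibrewise requirement $[e(t)] = [1_{\mathcal{O}_{g+1}}] \in \mathbb{Z}/g\mathbb{Z}$.

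First I would pin down $e$ at the vertices $\epsilon \in \{0,1\}^N$ of the cube. Using the pairwise coprimality of the $s_k$'s and the Chinese remainder theorem $\Delta_{\mathfrak{s}} \cong \prod_k \Delta_{s_k}$, one identifies $B_1 \cong \bigotimes_{k=1}^N \mathbb{M}_{s_k}$; under this identification $1_{R_{\beta_i}} = \bigl(\bigotimes_{k \neq i} I_{s_k}\bigr) \otimes e^{(i)}_{00}$ for the rank-one projection $e^{(i)}_{00} \in \mathbb{M}_{s_i}$ onto $\delta_0$, while $1_{D_{\beta_i}} = 1_{B_1}$. Fix a rank-one $p_0 \in \mathbb{K}$ and set
\[
    P_\epsilon \coloneqq \bigotimes_{k=1}^N q^{\epsilon_k}_k \otimes 1_{\mathcal{O}_{g+1}} \otimes p_0, \qquad q^0_k \coloneqq e^{(k)}_{00},\ \ q^1_k \coloneqq I_{s_k}.
\]
A direct calculation then verifies that $P_\epsilon$ lies in the prescribed intersection of the $R_{\beta_j}$'s and $D_{\beta_j}$'s, that $(\beta_j \otimes \id)(P_{\epsilon^{(j,1)}}) = P_{\epsilon^{(j,0)}}$ (with $\epsilon^{(j,\delta)}$ differing from $\epsilon$ only at coordinate $j$, set to $\delta$), and that $[P_\epsilon] = \prod_{k \colon \epsilon_k = 1} s_k \equiv 1 \pmod{g}$ by the arithmetic fact $s_k \equiv 1 \pmod g$. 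Hence each vertex projection represents the generator of $\K_0(\mathcal{O}_{g+1}) \cong \mathbb{Z}/g\mathbb{Z}$.

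Next I would extend $e$ inductively over the skeleta of $[0,1]^N$, requiring $e|_F$ on each face $F$ to take values in the corner algebra $A_F \subset B_1 \otimes \mathcal{O}_{g+1} \otimes \mathbb{K}$ determined by the fixed coordinates of $F$; via the CRT tensor decomposition, each $A_F$ is a stable Kirchberg algebra of the form $\mathbb{M}_{m_F} \otimes \mathcal{O}_{g+1} \otimes \mathbb{K}$ for some divisor $m_F$ of $\mathfrak{s}$, in which the vertex projections on the boundary of $F$ share a common $\K_0$-class. Extension from vertices to $1$-cells is automatic because $\K_1(\mathcal{O}_{g+1}) = 0$ makes $\mathrm{Proj}_{[1]}(A_F)$ path-connected (equivalently, $U(\mathcal{M}(A_F))$ is contractible by Mingo--Phillips). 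The hard part will be extending across cells of dimension $\geq 2$, where $\mathrm{Proj}_{[1]}(A_F)$ has nontrivial higher homotopy (for instance, $\pi_1 \cong \mathbb{Z}/g\mathbb{Z}$). The main obstacle is therefore to show that the obstruction classes produced by our boundary data vanish in each dimension; this should follow from the coherence of the vertex construction combined with the triviality of the K-theoretic monodromy of $\mathcal{B}_1 \otimes \mathcal{O}_{g+1} \otimes \mathbb{K}$ noted in Remark~\ref{lem:mapping.torus.composition}, and the existence of such a global section is ultimately consistent with Dadarlat's classification of $\mathcal{O}_\infty$-stable continuous fields of Kirchberg algebras \cite{Dadarlat,Dadarlat2,Dadarlat3} recalled in Remark~\ref{rem:bundle}.
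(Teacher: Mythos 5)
There is a genuine gap, and it sits exactly where you place it: the extension of the projection over the cells of dimension $\geq 2$. Your vertex data and the $1$-skeleton extension are fine (equality of $\K_0$-classes plus $\K_1(\mathcal{O}_{g+1})=0$ does give paths of projections), but from dimension $2$ on you only assert that the obstruction classes ``should'' vanish because the construction is coherent and because a global section would be ``consistent with'' Dadarlat's classification. Consistency is not an argument: by the very classification you invoke (Remark~\ref{rem:bundle}), there are many mutually non-isomorphic $\mathcal{O}_{g+1}\otimes\mathbb{K}$-bundles over $\mathbb{T}^N$, most of which do \emph{not} admit a projection-valued section whose fibrewise class generates $\K_0(\mathcal{O}_{g+1})\cong\mathbb{Z}/g\mathbb{Z}$; indeed the existence of such a section is essentially equivalent to the triviality statement one is trying to prove, and the triviality of the $\K$-theoretic monodromy recorded in Remark~\ref{lem:mapping.torus.composition} only gives local triviality of the bundle, not vanishing of its higher invariants. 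The obstructions you would have to kill live in $\H^{p}(\mathbb{T}^N;\pi_{p-1})$ with $\pi_{2k-1}$ of the projection space isomorphic to $\mathbb{Z}/g\mathbb{Z}$, and nothing in your set-up forces them to vanish; this is the whole content of the lemma, not a routine verification.

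The paper avoids obstruction theory altogether by exploiting a tensor factorization that your cube-by-cube approach discards. Via the Chinese remainder theorem one writes the \emph{field} itself as $\mathcal{B}_1\cong\mathcal{D}_{s_1}\otimes_{\mathbb{T}^N}\cdots\otimes_{\mathbb{T}^N}\mathcal{D}_{s_N}$, where $\mathcal{D}_u=\mathcal{MT}(\mathbb{K}(\ell^2\Delta_u),\{\delta^u_s\}_{s\in\Sigma})$. For $s\neq u$ the maps $\delta^u_s$ are implemented by commuting permutation unitaries fixing the rank-one projection $e_u$, so after simultaneous diagonalization the restricted bundle $\mathcal{D}_u|_{\mathbb{T}^{\Sigma\setminus\{u\}}}$ is trivializable with $e_u$ a constant section; this reduces the problem for each factor to a \emph{one}-dimensional extension in the remaining coordinate, where $u[e_u]=[1]$ forces $[e_u]=[1]$ in $\K_0(\mathbb{K}\otimes\mathcal{O}_u)\cong\mathbb{Z}/(u-1)\mathbb{Z}$ and hence a path of projections from $e_u$ to $1$ in $\mathcal{D}_u\otimes\mathcal{O}_u$, compatible with the mapping-torus identification since $\beta_u(1)=e_u$. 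Tensoring these sections over $u\in\Sigma$ and composing with a unital embedding $\mathcal{O}_{s_1}\otimes\cdots\otimes\mathcal{O}_{s_N}\to\mathcal{O}_{g+1}$ (Kirchberg--Phillips) yields the projection $e$. If you want to salvage your approach, you would need to supply exactly this kind of structural input (or an explicit computation that the higher obstructions vanish); as written, the key step is missing.
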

\begin{proof}
    Let us decompose $B_1=\mathbb{K}(\ell^2\Delta_{\mathfrak{s}})$ into the tensor product $\bigotimes_{s \in \Sigma} \mathbb{K}(\ell^2\Delta_{s})$ by the Chinese remainder theorem. 
    Since $\mathbb{Z}/\mathfrak{s}\mathbb{Z} \cong \prod_{s \in \Sigma}(\mathbb{Z}/s\mathbb{Z})$ is a ring isomorphism, the multiplication by $s$ acts on each tensor component individually. 
    For $s, u \in \Sigma$, we write $\delta_{s}^{u}$ for the partial $\ast$-automorphism induced from multiplication by $s$ on $\Delta_{u}$. If $s \neq u$, then $\delta_{s}^u$ is a $\ast$-automorphism. If $s = u$, the domain and range are $D_{\delta_u^u} = \mathbb{C}\cdot 1$ and $R_{\delta_u^u} = \mathbb{C}\cdot e_u$, where $e_u$ denotes the projection onto $\ell^2(u\Delta_{u}) \cong \Cz$. Set
    \[
    \mathcal{D}_{u}:=\mathcal{MT}(\mathbb{K}(\ell^2\Delta_{u}), \{ \delta_{s}^{u}\}_{s \in \Sigma}).
    \]
Then, $\mathcal{D}_{u}|_{\mathbb{T}^{\Sigma \setminus \{u\}}}$ is the matrix bundle associated to the $\mathbb{Z}^{\Sigma\setminus \{u\}}$-action $\{ \delta_{s}^{u}\}_{s \in \Sigma \setminus \{ u \} }$ by automorphisms. 
Notice that the tuple $\{\delta_{s}^{u}\}_{s \in \Sigma \setminus \{u\} }$ is implemented by a mutually commuting tuple of unitaries $v_{s}^u$ permuting the standard basis. These unitaries satisfy $e_uv_{s}^u e_u=e_u$. Therefore, we get a continuous section
\begin{align}
    e_u' \coloneqq e_u \otimes 1_{C([0,1]^{\Sigma \setminus \{ u \} })} \in C(\mathbb{T}^{\Sigma \setminus \{ u \}}, \mathcal{MT}(\mathbb{K}(\ell^2\Delta_u) , \{ \delta_s^u\}_{s \in \Sigma }) )  \subset C([0,1]^{\Sigma} ,\mathbb{K}(\ell^2\Delta_u))
    \label{eqn:section.D.e}
\end{align}
of $\mathcal{D}_u$. 

By the simultaneous diagonalization, we obtain a homotopy $\tilde{\delta}_s^u$ of $\ast$-automorphisms on $\mathbb{K}(\ell^2\Delta_u)$ connecting $\delta_s^u$ and the identity, i.e., $\tilde{\delta}_s^u \colon \mathbb{K}(\ell^2\Delta_u) \to C([0,1]) \otimes \mathbb{K}(\ell^2\Delta_u)$ such that $\ev_0 \circ \tilde{\delta}_s^u = \delta_s^u$ and $\ev_1 \circ \tilde{\delta}_s^u =\id$, such that $\ev_{t_1}\circ \tilde{\delta}_{s_1}^u$ and $\ev_{t_2} \circ \delta_{s_2}^u$ commutes for any $t_1,t_2 \in [0,1]$ and $s_1,s_2 \in \Sigma$. Moreover, we may choose such $\tilde{\delta}_s^u$ so that $\tilde{\delta}_s^u(e_u) = e_u \otimes 1$. Now, the bundle map 
\[
    \Psi \colon \mathcal{D}_u |_{\mathbb{T}^{\Sigma \setminus \{ u \} }}  \to \mathbb{T}^{\Sigma \setminus \{u\}} \times  \mathbb{K}(\ell^2\Delta_u)
\]
given by
\[
    \Psi( \{ t_s \}_{s \in \Sigma \setminus \{u\}} ,d) =\big(  \{ t_s \}_{s \in \Sigma \setminus \{u\}} , (\ev_{t_{s_N}} \circ\tilde{\delta}_{s_N}^u) \circ \cdots \circ (\ev_{t_{s_1}} \circ\tilde{\delta}_{s_1}^u)(d)\big)
\]
is well-defined, gives a trivialization of $\mathcal{D}_u|_{\mathbb{T}^{\Sigma \setminus \{u \} }}$, and sends $e_u' $ in \eqref{eqn:section.D.e} to the constant section $e_u \otimes 1 \in \mathbb{K}(\ell^2\Delta_u) \otimes C(\mathbb{T}^{\Sigma \setminus \{ u\}})$. 
\if0
$\delta_s^u(t)$, , i.e., an action $\tilde{\delta}^u \colon \mathbb{Z}^{\Sigma \setminus \{ u \}} \curvearrowright C([0,1]) \otimes \mathbb{K}(\ell^2\Delta_u)$ that induces the identity on $C([0,1]) = \mathcal{Z}(C([0,1]) \otimes \mathbb{K}(\ell^2\Delta_u))$ such that $\ev_0 \circ \tilde{\delta}_s^u = \delta_s^u \circ \ev_0$ and $\ev_1 \circ \tilde{\delta}_s^u = \ev_1$, that fixes $e_u$, i.e., $\tilde{\delta}_{s}^u (e_u) = e_u \otimes 1$. Now, 
\[
    C(\mathbb{T}^{\Sigma \setminus \{ u \}}  , \mathcal{MT}(C([0,1]) \otimes \mathbb{K}(\ell^2\Delta_u), \{ \tilde{\delta}^u_s \}_{s \in \Sigma} ))
\]
as the continuous section algebra of a locally trivial bundle over $\mathbb{T}^{\Sigma \setminus \{u \}} $, which gives a homotopy of $\mathcal{D}_u$ and the trivial bundle $\mathbb{T}^{\Sigma \setminus \{u \}} \times \mathbb{K}(\ell^2\Delta_u)$. 
Recall that a matrix bundle corresponds, in 1-to-1 up to isomorphism, to the associated principal $PU(u)$-bundle.
Moreover, since we now have a global section of projections $e_u \otimes 1$, the structure group is reduced to $P(U(u-1) \times U(1)) \cong U(u-1)$. 
By a standard argument (see, e.g., \cite[Corollary 4.9.7]{Husemoller}), a homotopy connecting two principal $U(u-1)$-bundles induces an isomorphism between them. Going back to the associated matrix bundles, we obtain an isomorphism $\mathcal{D}_u|_{\mathbb{T}^{\Sigma \setminus \{u\}}} \cong \mathbb{T}^{\Sigma \setminus \{u \}} \times \mathbb{K}(\ell^2\Delta_u)$ that sends the global section $e_u \otimes 1$ to the constant section $e_u \otimes 1$. 
\fi

Now, since $e_u'$ is identified with a constant function in the above trivialization of $\mathcal{D}_u$, we have
\[ u[e_u \otimes 1_{C([0,1])^{\Sigma\setminus \{u\} }}] =[1_{C(\mathbb{T}^{\Sigma \setminus \{ u \} },\mathcal{D}_u)}] \in \K_0(C(\mathbb{T}^{\Sigma \setminus \{u\}},\mathcal{D}_{u})) \cong \K_0(C(\mathbb{T}^{\Sigma \setminus \{u\}}) \otimes \mathbb{K}(\ell^2\Delta_{u})).\]
Hence there is a continuous path of projections $\tilde{e}_u(t) \in C(\mathbb{T}^{\Sigma \setminus \{u\} },\mathcal{D}_{u} \otimes \mathcal{O}_{u} \otimes \mathbb{K})$ connecting $\tilde{e}_u(0)=e_u \otimes 1_{\mathcal{O}_u} \otimes p$ and $\tilde{e}_u(1)=1_{\mathcal{O}_u} \otimes p$, where $p$ denotes a fixed rank $1$ projection in $\mathbb{K}$. 
Since $\delta_u^u(1_{\mathbb{K}(\ell^2\Delta_u)})=e_u$, the element $\tilde{e}_u$ is indeed contained in the subalgebra 
\begin{align*}
    C(\mathbb{T}^{\Sigma}, \mathcal{D}_u \otimes \mathcal{O}_u  \otimes \mathbb{K}) \cong {}&{} \{ b \in  C([0,1], C(\mathbb{T}^{\Sigma \setminus \{u\}} , (\mathcal{D}_u \otimes \mathcal{O}_u  \otimes \mathbb{K})|_{\mathbb{T}^{\Sigma \setminus \{u\}}})) : \delta_u^u(b(1)) = b(0) \}. 
\end{align*}

Since $(B_1, \beta) \cong \bigotimes_{s \in \Sigma} (\mathbb{K}(\ell^2\Delta_s) , \delta^u)$, the unital inclusions $\mathbb{K}(\ell^2\Delta_s) \to B_1$ induce $\ast$-homomorphisms $\phi_s \colon C(\mathbb{T}^N,\mathcal{D}_s) \to C(\mathbb{T}^N,\mathcal{B}_1)$ whose images are mutually commutative, and hence
\[
    \Phi \coloneqq \bigotimes_{s \in \Sigma} (\phi_s \otimes \id_{\mathcal{O}_{s}  \otimes \mathbb{K}} )\colon \bigotimes_{s \in \Sigma}C(\mathbb{T}^N,\mathcal{D}_{s}\otimes \mathcal{O}_{s} \otimes \mathbb{K}) \to C(\mathbb{T}^N,\mathcal{B}_1\otimes (\mathcal{O}_{s_1} \otimes \cdots \otimes \mathcal{O}_{s_N}) \otimes  \mathbb{K}). 
\]
\if0
we have an isomorphism of continuous fields
\begin{align}
\mathcal{B}_1 \cong\mathcal{D}_{s_1} \otimes_{\mathbb{T}^N} \cdots \otimes_{\mathbb{T}^N} \mathcal{D}_{s_N}  \label{eqn:bundle.assemble}
\end{align}
over $\mathbb{T}^N$. 
Here, for continuous fields $\mathcal{D}_1, \mathcal{D}_2$ of C*-algebras over a locally compact Hausdorff space $X$, their tensor product $\mathcal{D}_1 \otimes_X \mathcal{D}_2$ over $X$ (cf.\ \cite[Definition 0.1]{Blanchard2}) is defined by 
\[
    \mathcal{D}_1 \otimes_X \mathcal{D}_2\coloneqq \bigsqcup_{p \in X} \mathcal{D}_{1,p} \otimes \mathcal{D}_{2,p},
\]
equipped with the set of continuous sections 
\[
    C_0(X,\mathcal{D}_1) \otimes_X C_0(X,\mathcal{D}_2) \coloneqq C_0(X,\mathcal{D}_1) \otimes C_0(X,\mathcal{D}_2) /I_{X},
\]
where $I_X$ the ideal generated by $af \otimes b - a \otimes fb$, where $f \in C_0(X)$, $a \in C_0(X,\mathcal{D}_1) $ and $b \in C_0(X,\mathcal{D}_2)$.  We remark that $\mathcal{D}_1 \otimes_X \mathcal{D}_2$ is in general an upper semi-continuous field, and it is a continuous field if either $C_0(X,\mathcal{D}_1)$ or $C_0(X_2,\mathcal{D}_2)$ is exact (due to \cite[Corollary 5.7]{McConnell} and \cite[Proposition 3.1]{Blanchard2}). 
In our setting, we can verify that $\mathcal{D}_{s_1}\otimes_{\mathbb{T}^N}\otimes \cdots \otimes_{\mathbb{T}^N} \mathcal{D}_{s_N}$ forms a continuous field directly from an explicit isomorphism \eqref{eqn:bundle.assemble}.
By the isomorphism \eqref{eqn:bundle.assemble},
\fi
This defines a projection
\[
    \tilde{e} \coloneqq \Phi(\tilde{e}_{s_1} \otimes \cdots \otimes \tilde{e}_{s_N}) = \varphi_{s_1}(\tilde{e}_{s_1})\cdots  \varphi_{s_N}(\tilde{e}_{s_N})
    \in 
    C(\mathbb{T}^N, \mathcal{B}_1 \otimes (\mathcal{O}_{s_1} \otimes \cdots \otimes \mathcal{O}_{s_N}) \otimes \mathbb{K}). 
\]
For any $t \in \mathbb{T}^N$, we have 
\[ 
    [\tilde{e}(t)]=[\tilde{e}(1,\cdots,1)]=[1_{\mathbb{K}(\ell^2\Delta_s) \otimes \mathcal{O}_{s_1} \otimes \cdots \otimes \mathcal{O}_{s_N}} \otimes p] \in \K_0(\mathbb{K}(\ell^2\Delta_{\mathfrak{s}}) \otimes \mathcal{O}_{s_1} \otimes \cdots \otimes \mathcal{O}_{s_N}  \otimes \mathbb{K}).
\]
Note that the unit $1$ and the rank one projection in $\mathbb{K}(\ell^2\Delta_{\mathfrak{s}})$ represent the same $g$-torsion element in this K-group.  Now, by choosing a unital embedding $\iota \colon \mathcal{O}_{s_1} \otimes \cdots \otimes \mathcal{O}_{s_N} \to \mathcal{O}_{g+1}$ by means of the Kirchberg--Phillips theorem, we get the desired projection $e:=\iota(\tilde{e})$ of $C(\mathbb{T}^N,\mathcal{B}_1 \otimes \mathcal{O}_{g+1})$. 
\end{proof}

\begin{lemma}\label{lem:bundle.Cuntz.AHSS}
    The Atiyah--Hirzebruch spectral sequence for the $\K_*$-group of $C(\mathbb{T}^N,\mathcal{B} \otimes \mathcal{O}_{g+1} \otimes \mathbb{K})$ collapses at the $E_2$-page to
    \[ 
    E_\infty^{pq} \cong E_2^{pq}  \cong \begin{cases} \lwedge ^p (\mathbb{Z}/g\mathbb{Z})^N \cong (\mathbb{Z}/g\mathbb{Z})^{\binom{N}{p}} & \text{if $q$ is even, }\\  0 & \text{if $q$ is odd.} \end{cases}
    \]
    Moreover, the inclusion $\mathcal{B}  \to \mathcal{B} \otimes \mathcal{O}_{g+1} \otimes \mathbb{K}$, given by $b \mapsto b \otimes 1_{\mathcal{O}_{g+1}} \otimes p$ ($p$ is a rank $1$ projection), induces an injective map $(\mathbb{Z}/g\mathbb{Z})^{\binom{N-1}{p-1}} \to (\mathbb{Z}/g\mathbb{Z})^{\binom{N}{p}}$ of $E_2$-pages.
\end{lemma}
\begin{proof}
By the K\"unneth theorem, the the cochain complex $(E_1^{p0},d_1^{p0})$ for the bundle $\mathcal{B} \otimes \mathcal{O}_{g+1} \otimes \mathbb{K}$ is obtained from \eqref{eqn:E1.B1} by tensoring with $\mathbb{Z}/g\mathbb{Z}$. Since $s_i -1$ acts on $\mathbb{Z}/g\mathbb{Z}$ by the identity, we have $d_1^{pq}=0$. From this, and the K\"{u}nneth theorem of cochain complexes, we see that the $E_2$-page takes the form described in the statement, and moreover that the inclusion $\mathcal{B} \to \mathcal{B} \otimes \mathcal{O}_{g+1} \otimes \mathbb{K}$ induces an injection on the $E_2$-page.

The remaining task is to show the convergence at the $E_2$-page. 
By Lemma \ref{lem:unital.B}, $\iota(f) \coloneqq e\cdot f$ for $f \in C(\mathbb{T}^N)$ defines a  $\ast$-homomorphism $\iota \colon C(\mathbb{T}^N) \to C(\mathbb{T}^N, \mathcal{B} \otimes \mathcal{O}_{g+1} \otimes \mathbb{K})$ that respects the $C(\mathbb{T}^N)$-C*-algebra structure. 
This induces the map of AH spectral sequences. 
Meanwhile, the AH spectral sequence for $\K^0(\mathbb{T}^N)$, say $\tilde{E}_2^{pq}$, has the torsion-free $E_1$-page $\tilde{E}_1^{p,2q} \cong \mathbb{Z}^{\binom{N}{p}}$ and $\tilde{E}_1^{p,2q+1} \cong 0$. 
By comparing their ranks with those of $\K^0(\mathbb{T}^N) \cong  \K^1(\mathbb{T}^N) \cong  \mathbb{Z}^{2^{N-1}}$, it follows that $\tilde{E}_r^{pq}$ collapses at $E_1$-page. 
Moreover, since $[e(t)]$ generates $\K_0(B \otimes \mathcal{O}_{g+1} \otimes \mathbb{K}) \cong \mathbb{Z}/g\mathbb{Z}$ for any $t \in \mathbb{T}^N$, one has that
\[
    \iota_* \colon \widetilde{E}_1^{pq} \cong \K_{-p}(C_0(\mathbb{T}_p^N \setminus \mathbb{T}_{p-1}^N)) \to \K_{-p}(C_0(\mathbb{T}_p^N \setminus \mathbb{T}_{p-1}^N , \mathcal{B} \otimes \mathcal{O}_{g+1} \otimes \mathbb{K})) \cong E_1^{pq}
\]
is surjective. 

They show the vanishing of the higher differentials by induction on $r$. Indeed, by the vanishing of $d_l^{pq}$ for $l \leq r-1$, we obtain the commutative diagram
\[
    \xymatrix{
    \tilde{E}_{r}^{p,q} \ar@{}[r]|\cong &\mathbb{Z}^{\binom{N}{p}} \ar@{->>}[r] \ar[d]^{\tilde{d}_r^{p,q} =0} & \mathbb{Z}/g\mathbb{Z}^{\binom{N}{p}} \ar@{}[r]|\cong \ar[d]^{d_r^{p,q}} & E_r^{p,q} \\
    \tilde{E}_{r}^{p+r,q-r+1} \ar@{}[r]|\cong &\mathbb{Z}^{\binom{N}{p}} \ar@{->>}[r]  & \mathbb{Z}/g\mathbb{Z}^{\binom{N}{p}} \ar@{}[r]|\cong & E_r^{p+r,q-r+1}, 
    }
\]
which shows the vanishing of $d_r^{pq}$. 
\end{proof}
\begin{remark}
    There is another way to understand Lemma \ref{lem:bundle.Cuntz.AHSS}. Indeed, the group $E_r^{00} \subset E_2^{00} \cong  \K_0(B \otimes \mathcal{O}_{g+1} \otimes \mathbb{K})$ is by definition the subgroup consisting of elements that has an extension to the $r$-skeleton of $\mathbb{T}$. The differential $d_r^{00}([p])$ is given by the boundary map $\partial [p]$, which vanishes if and only if $[p]$ has an extension to the $(r+1)$-skeleton. Therefore, a $(B \otimes \mathcal{O}_{g+1} \otimes \mathbb{K})$-bundle has a global section by projection whose restriction to the point is a generator of $\K_0(\mathcal{O}_{g+1})$ if and only if $d_r^{00}=0$ for any $r \geq 2$, i.e., $E_2^{00}=E_\infty^{00}$. Indeed, one can see that $d_r^{00}=0$ implies $d_r^{pq} =0$ for any $p,q$, and hence the AH spectral sequence collapses at the $E_2$-page, by using the structure of $\K^*(\mathbb{T}^N)$-modules on $\K_*(C(\mathbb{T}^N,\mathcal{B} \otimes \mathcal{O}_{g+1} \otimes \mathbb{K}))$.
\end{remark}

\begin{lemma}\label{prp:Kunneth}
    We have $\K_*(C(\mathbb{T}^N,\mathcal{B} \otimes \mathcal{O}_{g+1} \otimes \mathbb{K}))\cong \K_*(\mathsf{MT}(B,\beta) \otimes \mathcal{O}_{g+1}) \cong \bigoplus_{k \in \mathbb{Z}}\lwedge ^{*+2k} (\mathbb{Z}/g\mathbb{Z})^N$. 
\end{lemma}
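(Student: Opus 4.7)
The first isomorphism is a formal identification: since $\mathcal{O}_{g+1} \otimes \mathbb{K}$ is nuclear, tensoring distributes fiberwise over $C(\mathbb{T}^N)$-algebras to give $C(\mathbb{T}^N, \mathcal{B} \otimes \mathcal{O}_{g+1} \otimes \mathbb{K}) \cong \mathsf{MT}(B,\beta) \otimes \mathcal{O}_{g+1} \otimes \mathbb{K}$, and stability then eliminates the $\mathbb{K}$ factor.

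For the second isomorphism, my strategy combines the collapse of the AH spectral sequence in Lemma \ref{lem:bundle.Cuntz.AHSS} with the cyclic generator provided by $\iota$ from Lemma \ref{lem:unital.B} and a torsion bound coming from the K\"unneth formula. The full corner projection $e$ from Lemma \ref{lem:unital.B}, viewed inside $C(\mathbb{T}^N, \mathcal{B} \otimes \mathcal{O}_{g+1} \otimes \mathbb{K})$ via the natural inclusion $\mathcal{B}_1 \hookrightarrow \mathcal{B}$, yields a $\ast$-homomorphism $\iota \colon C(\mathbb{T}^N) \to C(\mathbb{T}^N, \mathcal{B} \otimes \mathcal{O}_{g+1} \otimes \mathbb{K})$. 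Because $[e(t)]$ represents a generator of $\K_0(\mathcal{O}_{g+1}) \cong \mathbb{Z}/g\mathbb{Z}$ at every fiber, the induced morphism of AH spectral sequences restricts on $E_2$-pages to the canonical reduction $\lwedge^p \mathbb{Z}^N \twoheadrightarrow \lwedge^p(\mathbb{Z}/g\mathbb{Z})^N$. Both spectral sequences collapse at $E_2$, and since the AH filtration on $\mathbb{T}^N$ has finite length, a standard stepwise lifting argument gives surjectivity of $\iota_* \colon \K_*(C(\mathbb{T}^N)) \to \K_*(C(\mathbb{T}^N, \mathcal{B} \otimes \mathcal{O}_{g+1} \otimes \mathbb{K}))$.

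Next, I would verify that $\mathsf{MT}(B,\beta) = C(\mathbb{T}^N, \mathcal{B})$ lies in the UCT bootstrap class. The algebra is constructed inductively through partial-automorphism mapping tori starting from the UHF algebra $B$, and each step yields an extension by a suspension of a sub-UHF algebra, an operation that preserves bootstrap class membership. Consequently, the K\"unneth formula applied to $\mathsf{MT}(B,\beta) \otimes \mathcal{O}_{g+1}$ produces a short exact sequence whose outer terms, $\K_*(\mathsf{MT}(B,\beta)) \otimes \mathbb{Z}/g\mathbb{Z}$ and $\Tor(\K_{*-1}(\mathsf{MT}(B,\beta)), \mathbb{Z}/g\mathbb{Z})$, are both annihilated by $g$, so $g \cdot \K_*(\mathsf{MT}(B,\beta) \otimes \mathcal{O}_{g+1}) = 0$.

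Combining these ingredients, $\iota_*(g \cdot 1) = g \cdot [e] = 0$, so $\iota_*$ descends to a surjection $\lwedge^*(\mathbb{Z}/g\mathbb{Z})^N \cong \lwedge^* \mathbb{Z}^N / g \lwedge^* \mathbb{Z}^N \twoheadrightarrow \K_*(C(\mathbb{T}^N, \mathcal{B} \otimes \mathcal{O}_{g+1} \otimes \mathbb{K}))$. Both sides are finite abelian groups of the same total order $g^{2^N}$, with matching cardinalities in each parity degree as read off from the $E_\infty$-page, so the surjection is an isomorphism respecting the $\mathbb{Z}/2\mathbb{Z}$-grading. The main obstacle I anticipate is the bootstrap class argument for the iterated partial-automorphism mapping torus $\mathsf{MT}(B,\beta)$, which requires unpacking Definition \ref{def:mapping.torus} carefully to verify that each induction step fits into a suitable extension whose kernel and quotient both lie in the bootstrap class.
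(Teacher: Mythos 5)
Your argument is correct, but it resolves the crux of the lemma --- the extension problem for the collapsed AH spectral sequence --- by a genuinely different route than the paper. The paper's proof is purely algebraic at this point: the K\"unneth formula shows that every element of $\K_*(\mathsf{MT}(B,\beta)\otimes\mathcal{O}_{g+1})$ is $g$-torsion, and an extension $0\to(\mathbb{Z}/g\mathbb{Z})^l\to M\to(\mathbb{Z}/g\mathbb{Z})^m\to 0$ with $gM=0$ splits because the quotient is a free, hence projective, $\mathbb{Z}/g\mathbb{Z}$-module; so $\K_*$ is the direct sum of the $E_\infty$-terms computed in Lemma~\ref{lem:bundle.Cuntz.AHSS}. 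You instead upgrade the $\ast$-homomorphism $\iota$ built from the projection $e$ of Lemma~\ref{lem:unital.B} to a comparison of AH spectral sequences, deduce surjectivity of $\iota_*$ from surjectivity on the collapsed, finite-length $E_2=E_\infty$-pages (the coefficient map $\mathbb{Z}\to\mathbb{Z}/g\mathbb{Z}$, $1\mapsto[e(t)]$, inducing the reduction $\lwedge^p\mathbb{Z}^N\twoheadrightarrow\lwedge^p(\mathbb{Z}/g\mathbb{Z})^N$), use the same K\"unneth $g$-torsion observation to descend to a surjection from $\lwedge^*(\mathbb{Z}/g\mathbb{Z})^N$, and conclude by a degreewise order count against the $E_\infty$-page. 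Both routes rest on Lemmas~\ref{lem:unital.B} and~\ref{lem:bundle.Cuntz.AHSS} plus the K\"unneth torsion bound; yours is somewhat longer but yields explicit generators of the K-group (images under $\iota_*$ of the exterior generators of $\K^*(\mathbb{T}^N)$), while the paper's splitting argument is shorter but nonconstructive. Two small points. First, the step you flag as the main obstacle --- verifying that $\mathsf{MT}(B,\beta)$ lies in the bootstrap class via an induction through partial-automorphism mapping tori --- is unnecessary: the K\"unneth theorem requires only one tensor factor to lie in the bootstrap class, and $\mathcal{O}_{g+1}$ does; this is exactly how the paper invokes it, so you can drop that verification entirely (which is just as well, since the sketched induction through Definition~\ref{def:mapping.torus} is the shakiest part of your write-up). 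Second, when you transport $e$ from $\mathcal{B}_1$ to $\mathcal{B}$ you should record that $[e(t)]$ remains a generator of $\K_0(B\otimes\mathcal{O}_{g+1}\otimes\mathbb{K})\cong\mathbb{Z}/g\mathbb{Z}$: this holds because $\mathfrak{s}\equiv 1 \pmod{g}$, so the inclusion $B_1\hookrightarrow B$ induces an isomorphism on this K-group --- the same fact is implicitly used in the proof of Lemma~\ref{lem:bundle.Cuntz.AHSS}.
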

\begin{proof}
    By Lemma \ref{lem:bundle.Cuntz.AHSS}, the remaining task is to show that $\K_*(\mathsf{MT}(B,\beta) \otimes \mathcal{O}_{g+1})$ is isomorphic to the direct sum of $E_\infty$-terms, that is, there is no nontrivial extension. 
    To see this, notice that an extension $M$ of $(\mathbb{Z}/g\mathbb{Z})^m$ by $(\mathbb{Z}/g\mathbb{Z})^l$ is not trivial if and only if there is $x \in M$ such that $g x \neq 0$. 
    Besides that, the K\"{u}nneth theorem shows that  $\K_*(C(\mathbb{T}^N,\mathcal{B}) \otimes \mathcal{O}_{g+1})$ is isomorphic to a direct sum of abelian groups of the form $M_1 \otimes (\mathbb{Z}/g\mathbb{Z})$ and $\Tor^{\mathbb{Z}}_1 (M_2 , \mathbb{Z}/g\mathbb{Z}) $, where $M_1$ and $M_2$ are finitely generated. Thus, any element of this group is $g$-torsion. This finishes the proof. 
\end{proof}

\if0
\begin{remark}\label{rem:bundle}
    Although not needed in the sequel, we discuss what has been proved in Lemmas \ref{lem:bundle.Cuntz.AHSS} and \ref{prp:Kunneth} from the viewpoint of the classification theory of bundles of Kirchberg algebras by Dadarlat \cite{Dadarlat,Dadarlat2,Dadarlat3} and Izumi--Matui \cite{IzumiMatui}. Indeed, these lemmas conclude that the bundle $\widetilde{\mathcal{B}} \otimes \mathcal{O}_{g+1} \otimes \mathbb{K}$ is trivial. We check it by induction. Assume that $(\widetilde{\mathcal{B}} \otimes \mathcal{O}_{g+1} \otimes \mathbb{K})|_{\mathbb{T}^{n-1} \times \{ 0 \}}$ is trivial. Then, there is a continuous map $\phi \colon \mathbb{T}^{N-1} \to \Aut(\widetilde{B} \otimes \mathcal{O}_{g+1} \otimes \mathbb{K})$ such that 
    \[
    \widetilde{\mathcal{B}} \otimes \mathcal{O}_{g+1} \otimes \mathbb{K} \cong \mathbb{T}^{n-1} \times [0,1] \times (\widetilde{B} \otimes \mathcal{O}_{g+1} \otimes \mathbb{K}) /  \sim,  
    \]
    where the equivalence relation is given by $((x,1,b) \sim (x,0,\varphi_{x}(b)) )$ for any $x \in \mathbb{T}^{n-1}$. 
    Since $\widetilde{B} \otimes \mathcal{O}_{g+1} \otimes \mathbb{K} \cong \mathcal{O}_{g+1} \otimes \mathbb{K}$, by \cite[Theorem 5.9]{Dadarlat},  there are as many homotopy classes of $\varphi$'s as the group 
    \[
    [\varphi] \in [\mathbb{T}^{n-1},\Aut(\mathcal{O}_{g+1} \otimes \mathbb{K})]_0 \cong  \KK(\mathcal{O}_{g+1} \otimes \mathbb{K},C_0(\mathbb{T}^{n-1} \setminus \{0\})\otimes \mathcal{O}_{g+1} \otimes \mathbb{K}).
    \]
    On the other hand, by Lemma \ref{prp:Kunneth}, the six-term exact sequence associated to 
    \[
    0 \to C(\mathbb{T}^{n-1} \times (0,1), \widetilde{\mathcal{B}} \otimes \mathcal{O}_{g+1} \otimes \mathbb{K} ) \to  C(\mathbb{T}^{n}, \widetilde{\mathcal{B}} \otimes \mathcal{O}_{g+1} \otimes \mathbb{K} ) \to  C(\mathbb{T}^{n-1}, \widetilde{\mathcal{B}} \otimes \mathcal{O}_{g+1} \otimes \mathbb{K} ) \to 0
    \]
    has trivial boundary map, and the middle short exact sequence splits. This shows that $[\phi] -[\id]=0 \in \KK(C(\mathbb{T}^{n-1}) \otimes \mathcal{O}_{g+1}\otimes \mathbb{K}, C(\mathbb{T}^{n-1} ) \otimes \mathcal{O}_{g+1} \otimes \mathbb{K})$. By a direct computation, the canonical map
    \[
     \KK(\mathcal{O}_{g+1} \otimes \mathbb{K},C_0(\mathbb{T}^{n-1} \setminus \{0\})\otimes \mathcal{O}_{g+1} \otimes \mathbb{K})\to \KK(C(\mathbb{T}^{n-1}) \otimes \mathcal{O}_{g+1}\otimes \mathbb{K}, C(\mathbb{T}^{n-1} ) \otimes \mathcal{O}_{g+1} \otimes \mathbb{K})
    \]
    is injective, and hence $\phi$ is homotopic to the identity, and hence $\widetilde{B} \otimes \mathcal{O}_{g+1} \otimes \mathbb{K} |_{\mathbb{T}^{n}}$ is trivial.
\end{remark}
\fi

\begin{lemma}\label{lem:abelian.group.tor}
    Let $\{0\} =G_{-1} \leq G_0 \leq \cdots \leq G_n=G$ be an increasing sequence of abelian groups such that, for any $i=0,\cdots,n$, we have $G_i/G_{i-1} \cong (\mathbb{Z}/g\mathbb{Z})^{m_i}$ for some $m_i \in \mathbb{Z}_{\geq 1}$. Then, $|G \otimes_{\mathbb{Z}} (\mathbb{Z}/g\mathbb{Z}) \oplus \operatorname{Tor}^{\mathbb{Z}}_1(G,\mathbb{Z}/g\mathbb{Z})| =|G|^2$ if and only if $G \cong (\mathbb{Z}/g\mathbb{Z})^{m_0+\cdots+m_n}$.   
\end{lemma}
\begin{proof}
    First, we have $|G \otimes_{\mathbb{Z}} (\mathbb{Z}/g\mathbb{Z})| = |G/\Im (g \colon G \to G) | \leq |G|$ and $|\operatorname{Tor}^{\mathbb{Z}}_1(G,\mathbb{Z}/g\mathbb{Z})| = |\operatorname{Ker} (g \colon G \to G) | \leq |G|$. This means $|G \otimes_{\mathbb{Z}} (\mathbb{Z}/g\mathbb{Z}) \oplus \operatorname{Tor}^{\mathbb{Z}}_1(G,\mathbb{Z}/g\mathbb{Z})| \leq |G|^2$ and the equality holds if and only if every element of $G$ is $g$-torsion. 
    This proves the ``if part''. To see the ``only if part'', assume that every element of $G$ is $g$-torsion. Then, the exact sequence $0 \to G_{i-1} \to G_{i} \to G_{i}/G_{i-1}  \to 0$ splits. Indeed, choosing lifts of a basis of $G_i/G_{i-1}\cong (\mathbb{Z}/g\mathbb{Z})^{m_i}$ defines a group homomorphism $G_i/G_{i-1} \to G_i$ splitting the quotient map. This inductively shows that $G$ is a direct sum of copies of $\mathbb{Z}/g\mathbb{Z}$.
\end{proof}

\begin{proof}[Proof of Theorem~\ref{thm:BOS}]
    We first consider the case that $|\Sigma| <\infty$. We first show that 
    \[ 
    \K_*(C^*_r(\mathcal{G}_{\mathrm{tor}})) = \K_{*}(\widetilde{B} \rtimes \mathbb{Z}^N) \cong \K_{*-N}(C(\mathbb{T}^N , \cB)),
    \]
    where the isomorphism given in Lemma \ref{lem:Dirac.dualDirac}, is isomorphic to 
    \[
    \K_*\Big(\bigotimes_{s \in \Sigma }\mathcal{O}_{s}\Big) \cong \bigoplus_{p \in * + 2\mathbb{Z}}(\mathbb{Z}/g\mathbb{Z})^{\binom{|\Sigma|-1}{p-1} }.
    \]
    Note that, since we have assumed $N=|\Sigma| \geq 2$, the number of copies of $\mathbb{Z}/g\mathbb{Z}$ of the $\K_0$ and $\K_1$ groups are the same, thus there is no need to care for the degree shift by $-N$.
    We have already observed in Lemma \ref{lem:AHSS.torsion.K} that the $E_2$-page of the AH spectral sequence for $\K_*(C(\mathbb{T}^N,\mathcal{B}_1))$ is isomorphic to $(\mathbb{Z}/g\mathbb{Z})^{\binom{|\Sigma|-1}{p-1}}$ for even $q$ and $0$ for odd $q$. Hence it suffices to show that this spectral sequence collapses at the $E_2$-page and there is no extension problem. They follow from Lemmas \ref{lem:bundle.Cuntz.AHSS} and \ref{prp:Kunneth} in the following way.

    As is shown in Lemma \ref{lem:bundle.Cuntz.AHSS}, the inclusion $C(\mathbb{T}^N,\mathcal{B}_1) \to C(\mathbb{T}^N,\mathcal{B}_1 \otimes \mathcal{O}_{g+1} \otimes \mathbb{K})$ induces injections of $E_2$-pages, and the AH spectral sequence for $\mathcal{B}_1 \otimes \mathcal{O}_{g+1} \otimes \mathbb{K}$ collapses at the $E_2$-page. Thus, a diagram chase shows that the higher derivatives $d_r^{pq}$ for the AH spectral sequence for $\mathcal{B}_1$ also vanish. 
    Suppose that $\K_*(C(\mathbb{T}^N,\mathcal{B}_1))$ is a nontrivial extension of $E_\infty$-pages. Then, by Lemma \ref{lem:abelian.group.tor} and the K\"unneth theorem in C*-algebra K-theory, the number of elements of $\K_*(C(\mathbb{T}^N,\mathcal{B}_1) \otimes \mathcal{O}_{g+1})$ is strictly smaller than the square of the number of elements of $\K_*(C(\mathbb{T}^N,\mathcal{B}_1))$. This contradicts Lemma \ref{prp:Kunneth}.

By the Kirchberg--Phillips theorem, the remaining task is to show that $[1] \in \K_0(C^*_r(\cG_{\tor}))$ satisfies that $n[1]=0$ if and only if $g \mid n$. Under the isomorphism of K-theory in Lemma \ref{lem:Dirac.dualDirac}, $[1_B]$ corresponds to the image of the Bott generator $\beta \in \K_N(C_0(\Rz^N) )$ via the inclusion
\[
    C_0(\Rz^N) \cong C_0(\mathbb{R}^N) \otimes \Cz  1_B  \hookrightarrow C_0(\mathbb{R}^N) \otimes \widetilde{B} \hookrightarrow (C_0(\mathbb{R}^N) \otimes \widetilde{B}) \rtimes \mathbb{Z}^N.
\]
Up to homotopy and Morita equivalence, this $\ast$-homomorphism is identified with the inclusion 
\[
    C_0((0,1)^N) \otimes \Cz 1_B \hookrightarrow C(\mathbb{T}^N,\cB).
\]
Hence the Bott generator $\beta \in \K_N(C_0(\Rz^N) \otimes \Cz 1_B)$ is sent to the generator of 
\begin{equation*}
 E_2^{N0} = \mathop{\mathrm{Coker}}(\bigoplus \beta_s -\iota \colon \bigoplus_s \K_0(D_{\beta_s}) \to \K_0(B)) \cong \Zz/g\Zz,   
\end{equation*}
which is a subgroup of $\K_N(C(\mathbb{T}^N,\mathcal{B}))$.

    Finally, we consider the case that $|\Sigma|=\infty$. Even in this case, we have the same K-theory computation $\K_*(C^*_r(\mathcal{G}_{\mathrm{tor}})) \cong  (\mathbb{Z}/g\mathbb{Z})^{\infty}$.
    To see this, let us choose an increasing sequence of finite subsets $\Sigma_1 \subset \Sigma_2 \subset \cdots \subset \Sigma$ such that $\bigcup\Sigma_n =\Sigma$, $N_n\coloneqq |\Sigma_n|$ is even and $\gcd (\{ s-1 : s \in \Sigma_1\}) = g$. Then we have
    \[ 
    \widetilde{B}_{\Sigma} =\varinjlim_{s^a \in \mathbb{N}^{\Sigma}} \bigotimes_{s \in \Sigma} \mathbb{M}_{s^\infty} \cong \varinjlim_{n \to \infty} \bigg( \varinjlim_{s^a \in \mathbb{N}^{\Sigma_n}} \bigotimes_{s \in \Sigma} \mathbb{M}_{s^\infty} \bigg) \cong \varinjlim_{n \to \infty} \bigg( \varinjlim_{s^a \in \mathbb{N}^{\Sigma_n}} \bigotimes_{s \in \Sigma_n} \mathbb{M}_{s^\infty} \bigg) = \varinjlim_{n \to \infty} \widetilde{B}_{\Sigma_n}
    \]
    and hence 
    \[ 
    C_r^*(\mathcal{G}_{\mathrm{tor}}) \sim_{\mathrm{Morita}} \widetilde{B}_{\Sigma} \rtimes \mathbb{Z}^\Sigma 
    = \varinjlim_n \widetilde{B}_{\Sigma_n} \rtimes \mathbb{Z}^{\Sigma_n}  \sim_{\mathrm{Morita}} \varinjlim_n C^*_r(\mathcal{G}_{n,\mathrm{tor}}), 
    \]
    where $\mathcal{G}_{n,\mathrm{tor}}$ denotes the torsion subgroupoid associated to $\Sigma_n$. Through the K-theory isomorphism from Lemma~\ref{lem:Dirac.dualDirac}, the inclusion $\widetilde{B}_{\Sigma_n} \rtimes \Zz^{N_n} \to \widetilde{B}_{\Sigma_{n+1}} \rtimes \Zz^{N_{n+1}}$ corresponds to the inclusion
    \begin{align*}
    \mathsf{MT}(\widetilde{B}_{\Sigma_n},\{ \beta_{s} \}_{s \in \Sigma_n}) \otimes C_0(\mathbb{R}^{N_{n+1}-N_n}) 
     \to
    \big(
    \mathsf{MT}(\widetilde{B}_{\Sigma_{n+1}}, \{\beta_{s}\}_{s \in \Sigma_{n}}) \otimes C_0(\mathbb{R}^{N_{n+1}-N_n}) \big) \rtimes \Zz^{N_{n+1} - N_n},
    \end{align*}
    where the left hand side is KK-equivalent to $\mathsf{MT}(\widetilde{B}_{\Sigma_n},\beta_{n})$ and the right hand side is Morita equivalent to $ \mathsf{MT}(\widetilde{B}_{\Sigma_{n+1}},\beta_{n+1})$.  
    This inclusion is homotopic to 
    \begin{align*}
    {}&{}C(\mathbb{T}^{N_{n}}, \mathcal{MT}(\widetilde{B}_{\Sigma_n},\{ \beta_s\}_{s \in \Sigma_n})) \otimes C_0((0,1)^{N_{n+1}-N_n}) \\
    \to {}&{} 
    C(\mathbb{T}^{N_{n}}, \mathcal{MT}(\widetilde{B}_{\Sigma_{n+1}},\{\beta\}_{s \in \Sigma_n})) \otimes C_0((0,1)^{N_{n+1}-N_n}))
    \cong C_0(\mathbb{T}^{N_{n+1}} \setminus \mathbb{T}^{N_n}, \mathcal{MT}(\widetilde{B}_{\Sigma_{n+1}},\{\beta\}_{s \in \Sigma_{n+1}}))\\
    \subset {}&{}
    C(\mathbb{T}^{N_{n+1}} , \mathcal{MT}(\widetilde{B}_{\Sigma_{n+1}},\beta_{n+1})) .
    \end{align*}
    By Lemmas \ref{lem:Kunneth.torus.torsion} and (the proof of)  \ref{lem:AHSS.torsion.K}, the induced map in $E_2$-pages is 
    \[
    \H_{N_n-*}\Big( \bigotimes_{s \in \Sigma_n} P(s) \Big) \to \H_{N_{n+1}-*}\Big( \bigotimes_{s \in \Sigma_{n+1}} P(s) \Big), 
    \]
    induced from the tensor product of the inclusion of chain complexes $(0 \to 0 \to \mathbb{Z} \to 0) \to (0 \to \mathbb{Z} \xrightarrow{s-1} \mathbb{Z} \to 0)$. By the K\"unneth theorem, 
    this inclusion induces the injective maps to a direct summand in the $E_2$-terms.
    Thus, the above proof of Theorem \ref{thm:BOS} for the case where $\Sigma$ is finite shows that 
    \[ 
    \K_*(C^*_r(\mathcal{G}_{\mathrm{tor}})) \cong \varinjlim_n  \K_*(C^*_r(\mathcal{G}_{n,\mathrm{tor}})) \cong \varinjlim_{n \to \infty} (\mathbb{Z}/g\mathbb{Z})^{\binom{|\Sigma_n|-1}{p-1}} \cong  
    (\mathbb{Z}/g\mathbb{Z})^{\infty}. 
    \] 
    For any $n$,  the unit of $C^*_r(\cG_{n,\mathrm{tor}})$ satisfies $m[1] =0 \in \K_0(C^*_r(\cG_{n,\mathrm{tor}}))$ if and only if $g \mid m$. Thus so does $[1] \in \K_0(C^*_r(\cG_{\tor}))$. This finishes the proof. 
\end{proof}

\appendix

\section{Raven's bivariant Chern character} \label{section:Raven}
In this Appendix, we review Raven’s Chern character \cite{Raven,DW}, a natural transformation from topological bivariant equivariant KK-theory to the Baum--Schneider equivariant bivariant homology theory, and collect a number of basic facts that is used in the paper.

Let $G$ be a countable discrete group. For a pair of proper cocompact $G$-spaces $(Z,W)$, i.e.\ a proper cocompact $G$-space $Z$ and its closed $G$-subspace $W$,  let $\mathrm{K}_G^0(Z,W)$ denote the equivariant K-group of the pair $(Z,W)$, that is, the Gr\"{o}thendieck group of the triples $(E_0,E_1,u)$, where $E_0,E_1$ are $G$-vector bundles on $Z$ and $u \colon E_0|_W \to E_1|_W$ is a $G$-isomorphism.
Let $(X,A)$ be a pair of proper cocompact $G$-CW-complexes and let $Y$ be a compact Hausdorff $G$-space. 
The topological KK-group $\KK^G((X,A),Y)$ is defined by the set of equivalence classes of equivariant KK-cycles (\cite[Section 4]{Raven}). 
Here, an even (resp.\ odd) topological $\mathrm{KK}^G$-cycle is a triple $(M,f,\xi)$, where $M$ is an even (resp.\ odd) dimensional proper cocompact $G$-$\mathrm{Spin}^c$-manifold with possibly non-empty boundary, $f \colon (M,\partial M) \to (X,A)$ is a proper $G$-map, and $\xi \in \mathrm{K}_G^0(M \times Y)$. 
In \cite[Corollary 4.7.9]{Raven}, this group is shown to be isomorphic to Kasparov's equivariant KK-group $\mathrm{KK}^G(C_0(X \setminus A),C(Y))$. 
When $A = \emptyset$, we abbreviate it and write $\mathrm{KK}^G(X,Y)$.

On the other hand, for such $(X,A)$ and $Y$, the Baum--Schneider bivariant homology $\mathrm{HH}^*_G((X,A),Y)$ is defined in terms of sheaf theory as  
\begin{align}
    \mathrm{HH}^n_G((X,A),Y)\coloneqq \Hom_{\mathbf{D}^+(\mathbb{C}[G])} (R\Gamma_c \mathbb{C}_{X \setminus A}, R\Gamma_c\mathbb{C}_Y[n]).\label{eqn:Baum.Schneider}
\end{align}
Here, $\mathbf{D}_G^+(X;\mathbb{C})$ and $\mathbf{D}^+(\mathbb{C}[G])$ denote the derived categories of nonnegative chain complexes of $G$-equivariant sheaves on $X$ and those of $\mathbb{C}[G]$-modules respectively, $\mathbb{C}_X$ denotes the locally constant sheaf on $X$, and $R\Gamma_c \colon \mathbf{D}_G^+(X;\mathbb{C}) \to \mathbf{D}^+(\mathbb{C}[G])$ denotes the derived functor of the compactly supported global section functor.
We remark that in \cite{Raven} the coefficient ring is arbitrary, but in this paper we only use $\mathbb{C}$ and abbreviate it. 
\begin{remark}\label{rmk:Raven.opposite}
    We follow Raven’s notation, in which the symbol $\circ$ for morphisms in the derived categories is used in the opposite direction from the usual convention for composition of maps.
\end{remark}
As is proved in \cite[Theorem 4.6.9, Subsection 6.5]{Raven}, both $\mathrm{KK}^G((X,A),Y)$ and $\mathrm{HH}^n_G((X,A),Y)$ are equivariant homology functors with respect to the first variable.

\begin{remark}\label{rmk:homology.functor.boundary}
When $Y$ is not compact, the equivariant KK-groups and HH-groups are defined by
\begin{align*} 
    \mathrm{KK}^G_*((X,A),Y) \coloneqq {}&{} \mathop{\mathrm{Ker}} (\mathrm{KK}^G_*((X,A),Y^+) \to \mathrm{KK}^G_*((X,A),\mathrm{pt})), \\
    \mathrm{HH}_G^*((X,A),Y) \coloneqq {}&{} \mathop{\mathrm{Ker}} (\mathrm{HH}_G^*((X,A),Y^+) \to \mathrm{HH}_G^*((X,A),\mathrm{pt})),
\end{align*}
where $Y^+$ denotes the one-point compactification and $\mathrm{pt} \in Y^+$ denotes the additional point. In particular, if $Y= \bigsqcup_{n \in \mathbb{N}} Y_n$ and each $Y_n$ is compact, then we have $\mathrm{F}^G((X,A),Y) \cong \bigoplus_n \mathrm{F}^G((X,A),Y_n)$ for the both case $\mathrm{F}^G=\mathrm{KK}^G_*$ and $\mathrm{HH}_G^*$. 
Even in this case, the topological and C*-algebraic KK-groups are isomorphic, which is verified by comparing the two short exact sequences 
\begin{gather}
    0 \to \KK^G((X,A),Y) \to \KK^G((X,A),Y^+) \to \KK^G((X,A),\mathrm{pt}) \to 0, \\
    0 \to \KK^G(C_0(X \setminus A), C_0(Y)) \to \KK^G(C_0(X \setminus A), C(Y^+)) \to \KK^G(C_0(X \setminus A), \mathbb{C}) \to 0.
\end{gather}
By the naturality of the isomorphism $\KK^G((X,A),Y) \cong \KK^G(C_0(X\setminus A),C(Y))$ with respect to the second variable, which is obvious from the definition (\cite[p. 43]{Raven}), we obtain an isomorphism of these exact sequences. 
Note that a topological KK-cycle $[M,f,\xi] \in \KK^G_*((X,A),Y^+)$ determines an element of $\KK^G_*((X,A),Y)$ if $\xi \in \K^0_G(M \times Y^+, M \times \mathrm{pt} \cup \partial M \times Y^+)$.  
As for the Baum--Schneider homology, this new definition of $\mathrm{HH}_G^*((X,A),Y)$ agrees with the original one, \eqref{eqn:Baum.Schneider}, by \cite[Proposition 6.5.6]{Raven}. 
By comparing the short exact sequences, $\widehat{\mathrm{ch}}_R$ is defined for non-compact $Y$, and is an isomorphism after tensoring with $\mathbb{C}$ (cf.\ \cite[Corollary~7.3.12]{Raven}). 
The following Remark \ref{rmk:Raven.noncompact} and Lemma \ref{lemma:induction.tKK.HH} deals with compact $Y$ in the proof, however, these results are generalized to this non-compact setting by considering $Y^+$.   
\end{remark}

    The equivariant KK- and HH- theories are both equivariant bivariant homology theories. In other words, for each fixed first (resp. second) variable, they define an equivariant cohomology (resp. homology) functor in the second (resp. first) variable, and any morphism in the first (resp. second) variable induces a morphism between the corresponding cohomology (resp. homology) functors. This fact is essentially proved in Raven, but since some of the statements are not discussed explicitly, we provide the details in the following Remarks \ref{rmk:bivariant.homologies}, \ref{rmk:bivariant.homologies.2} and \ref{rmk:bivariant.homologies.3}.

\begin{remark}\label{rmk:bivariant.homologies}
    If one has $g \colon (X_1, A_1) \to (X_2, A_2)$ and $h \colon Y_1 \to Y_2$, then the induced maps commutes, i.e., 
    \begin{align}
    \begin{split}
        \mathrm{KK}^G_*(g,Y_1) \circ \mathrm{KK}^G_*((X_1,A_1),h)[M,f,\xi] ={}&{} [M,g \circ f, h^*\xi] \\
        ={}&{} \mathrm{KK}^G((X_2,A_2),h) \circ \mathrm{KK}^G_*(g,Y_2)[M,f,\xi], \\
        \mathrm{HH}_G^*(g,Y_1) \circ \mathrm{HH}_G^*((X_1,A_1),h)(\phi) ={}&{} [g] \circ \phi \circ [h] \\
        ={}&{} \mathrm{HH}_G^*((X_2,A_2),h) \circ \mathrm{HH}_G^*(g,Y_2)(\phi).
        \end{split}\label{eqn:bivariant.natural}
    \end{align}
    Here, $[g]$ and $[h]$ denote the induced map in the derived category (\cite[Example 6.5.3 (a)]{Raven}). 
\end{remark}

\begin{remark}\label{rmk:bivariant.homologies.2}
    Second, the boundary map in the first (resp.\ second) variable commutes with morphisms in the first (resp.\ second) variable, as part of the axioms of a generalized (co)homology theory.
    For the KK-theory, the boundary map in the first variable is given by $\delta[M,f,\xi] \coloneqq [\partial M,f|_{\partial M},\xi|_{\partial M}]$, and hence  
    \begin{align*}
    \mathrm{KK}^G(g|_A,Y)\circ \delta ([M,f,\xi]) ={}&{} \mathrm{KK}^G(g|_A,Y)([\partial M,f|_{\partial M},\xi|_{\partial M}]) = [\partial M, g|_A \circ f|_{\partial M}, \xi|_{\partial M}]\\
    ={}&{}\delta ([M,g \circ f,\xi]) = \delta \circ \mathrm{KK}^G(g,Y)([M,f,\xi]).
    \end{align*}
    For the boundary map in the second variable follows from \cite[Theorem 4.6.9]{Raven} and the corresponding fact in the equivariant Kasparov theory. Indeed, by \cite[Appendix]{KasSka}, the functor $\mathrm{KK}^G_*(C_0(X \setminus A), \cdot ) $ is half-exact.
    Thus a short exact sequence $0 \to C_0(Y \setminus Z) \to C_0(Y) \to C_0(Z) \to 0$ of commutative $G$-C*-algebras associates the long (six-term) exact sequence. Moreover, if one has a morphism of short exact sequences of commutative $G$-C*-algebras, then the induced maps intertwines the boundary map.  
    
    For the HH-theory, this fact is essentially proved in \cite[Proposition 6.5.6]{Raven}. We briefly expand the argument and make the naturality statement explicit.
    For a $G$-sheaf on $X$ and a closed $G$-subspace $ A \subset X$, let $\mathscr{F}_{X \setminus A}$ be the sheaf whose section on $U \subset X$ consists of sections of $\mathscr{F}$ on $U$ that vanishes on $U \cap A$, and  $\mathscr{F}_{A}$ be the quotient sheaf $\mathscr{F}/\mathscr{F}_{X \setminus A}$. 
    Both are special cases of the general definition in \cite[Definition 6.4.3]{Raven}. Under the identifications $R\Gamma_c \mathbb{C}_{X,X \setminus A } \cong R\Gamma_c \mathbb{C}_{X \setminus A}$ and $R\Gamma_c \mathbb{C}_{X,A } \cong R\Gamma_c \mathbb{C}_{A}$ (in the proof of \cite[Proposition 6.5.6]{Raven}), the boundary map 
    \[
    \delta \colon  \Hom_{\mathbf{D}^+(\mathbb{C}[G])}(R\Gamma_c\mathbb{C}_{X \setminus A}, R\Gamma_c\mathbb{C}_Y[n])
    \to \Hom_{\mathbf{D}^+(\mathbb{C}[G])}(R\Gamma_c\mathbb{C}_{A}, R\Gamma_c\mathbb{C}_Y[n+1]) \cong \mathrm{HH}^{n+1}_G(A,Y)
    \]
    is defined by the precomposition, after a degree shift by $1$, with 
    \[
    R\Gamma_c \big( \mathbb{C}_{X,A} \xleftarrow{\simeq} M(\mathbb{C}_{X,X \setminus A} \to \mathbb{C}_X ) \to \mathbb{C}_{X,X \setminus A}[1]\big) \in \Hom_{\mathbf{D}^+(\mathbb{C}[G])}(R\Gamma_c\mathbb{C}_{X,A}, R\Gamma_c\mathbb{C}_{X,X\setminus A}[1]).  
    \]
    Here, the mapping cone sheaf $M(\mathbb{C}_{X,X \setminus A} \to \mathbb{C}_{X})$ is quasi-isomorphic to $\mathbb{C}_{X,A}$ by the exactness of $0 \to \mathbb{C}_{X,X\setminus A} \to \mathbb{C}_X \to \mathbb{C}_{X,A} \to 0$. 
    For a continuous $G$-map $f \colon (X_1,A_1) \to (X_2,A_2)$, one has a $G$-sheaf morphism $f^* \colon \mathbb{C}_{X_2} \to f_*\mathbb{C}_{X_1}$ sending a locally constant function  $s \in \Gamma(U,\mathbb{C}_{{X_2}})$ to $f^*s \in \Gamma (f^{-1}(U),\mathbb{C}_{X_1} ) \cong \Gamma (U, f_*\mathbb{C}_{X_1})$. Since it restricts to a $f^* \colon \mathbb{C}_{X_2,X_2 \setminus A_2} \to f_*\mathbb{C}_{X_1,X_1 \setminus A_1}$, we get
    the commutative diagram 
    \[
    \xymatrix{
     \mathbb{C}_{X_2,A_2} \ar[d] & \ar[l]_{\simeq \quad \quad \quad } M(\mathbb{C}_{X_2,X_2 \setminus A_2} \to \mathbb{C}_{X_2} ) \ar[r]\ar[d]  & \mathbb{C}_{X_2,X_2 \setminus A_2}[1] \ar[d] \\
    f_*\mathbb{C}_{X_1,A_1} & \ar[l]_{\simeq \quad \quad \quad } f_*M( \mathbb{C}_{X_1,X_1 \setminus A_1} \to \mathbb{C}_{X_1} ) \ar[r] & f_*\mathbb{C}_{X_1,X_1 \setminus A_1}[1] \\
    }
    \]
    in the derived category of sheaves $\mathbf{D}^+_G(X_2;\mathbb{C})$. This concludes the desired naturality. The same argument also proves the naturality of $\delta$ with respect to the second variable.
\end{remark}

\begin{remark}\label{rmk:bivariant.homologies.3}
    Recall that a family of natural transforms of $G$-equivariant homology functors $\rho \colon \mathrm{F}_{1,*} \to \mathrm{F}_{2,*}$ is a morphism of $G$-equivariant homology functors (i.e., commutes with the boundary map) if the following diagram commutes (cf.\ \cite[Proposition II.3.6]{Rudyak});
    \[
    \xymatrix{
    \mathrm{F}_{1,*+1}(\Sigma X) \ar[r]^\sigma \ar[d]^\rho  & \mathrm{F}_{1,*}( X) \ar[d]^\rho \\
    \mathrm{F}_{2,*+1}(\Sigma X) \ar[r]^\sigma   & \mathrm{F}_{2,*}( X).
    }
    \]
    Here, $\Sigma \coloneqq ([0,1],\{0,1\})$, $\Sigma X \coloneqq \Sigma \times X$, and the horizontal maps $\sigma$ are suspension isomorphisms given by
    \[
        \mathrm{F}_{i,*+1}(X \times [0,1], X \times \{0,1\}) \xrightarrow{\delta} \mathrm{F}_{i,*}(X \times \{0,1\} ) = 
        \mathrm{F}_{i,*}(X) ^{\oplus 2}\xrightarrow{\pr_1} \mathrm{F}_{i,*}(X). 
    \]
    In KK- and HH- theories, the suspension isomorphisms have explicit descriptions given by 
    \begin{align*}
        \sigma \colon{}&{} \mathrm{KK}^G_*(X,Y) \to \mathrm{KK}^G_{*+1}(\Sigma X, Y) ,{}&&{} \quad [M,f,\xi] \mapsto [[0,1] \times M,\id_{[0,1]} \times f, 1_{[0,1]} \otimes \xi], \\
        \sigma \colon{}&{}\mathrm{HH}_G^{-*}(X,Y) \to \mathrm{HH}_G^{-(*+1)}(\Sigma X, Y),{}&&{} \quad \varphi \mapsto \alpha \otimes \phi.
    \end{align*}
    Here, we write $\alpha$ the generator of $\mathrm{HH}^{-1}(\Sigma ,\mathrm{pt}) \cong  \mathbb{C}$ (\cite[Corollary 6.5.7]{Raven}) and $\otimes$ the external product in Baum--Schneider homology, which is given in the following way. For a $G$-space $Z$, we write $P_Z$ a projective resolution of $R\Gamma_c\mathbb{C}_Z$. The K\"unneth theorem of sheaf cohomology (see e.g.\ \cite[Exercise II.18]{KasSha}) states that there is an isomorphism $R\Gamma_c\mathbb{C}_{X \times (0,1)}  \cong R\Gamma_c\mathbb{C}_X \otimes^L R\Gamma_c\mathbb{C}_{(0,1)}\cong P_X \otimes P_{(0,1)}$. Thus, for $\phi \in \Hom_{\mathbf{K}^+(\mathbb{C}[G])}(P_X,P_Y[-n])$, the morphism $\phi \otimes \alpha \colon  P_{(0,1)} \otimes P_X \to P_Y[-n] \otimes \mathbb{C}[-1]$ defines an element of $\Hom_{\mathbf{D}^+(\mathbb{C}[G])} (R\Gamma_c\mathbb{C}_{X \times (0,1)}, R\Gamma_c\mathbb{C}_Y[-n-1])$.

    In particular, this shows that the boundary map in the first variable commutes with the morphism in the second variable. Indeed, for $g \colon Y_1 \to Y_2$, we have
    \begin{gather*}
        \sigma \circ \mathrm{KK}_*^G(X,g) [M,f,\xi] = [[0,1] \times M, \id_{[0,1]} \times f, 1 \otimes (\id_M \otimes g)^*\xi ] = \mathrm{KK}_*^G(\Sigma X,g)  \circ \sigma , \\
        \sigma \circ \mathrm{HH}^*_G(X,g)(\phi) = \alpha \otimes (\phi \circ [g])=(\alpha \otimes \phi) \circ [g] = \mathrm{HH}_G^*(\Sigma X,g) \circ \sigma,
    \end{gather*}
    for $[M,f,\xi] \in \KK^G_*(X,Y_2)$ and $\phi \in \Hom_{\mathbf{K}^+(\mathbb{C}[G])} (P_X,P_{Y_2})$.
\end{remark}

In \cite[Definition 7.3.1]{Raven}, Raven introduced the bivariant Chern character at $\gamma \in G_{\mathrm{tor}}$, a torsion element $\gamma $ of $G$, as
\[
    \mathop{\mathrm{ch}_R^\gamma} \colon \KK^G_n((X,A),Y) \to \bigoplus_{k \in \mathbb{Z}}\mathrm{HH}_{Z(\gamma)}^{-n + 2k}((X^{\gamma},A^\gamma),Y^{\gamma }),
\]
where $Z(\gamma) \coloneqq \{ g \in G \mid g\gamma = \gamma g\}$ denote the centralizer group of $\gamma$ and $X^\gamma \coloneqq \{ x \in X \mid \gamma \cdot x=x\}$ denote the $\gamma$-fixed point set. The case of $\gamma =e$ is called the localized Chern character, while others are called delocalized. As is proved in \cite[Corollary 7.3.4]{Raven}, $\mathop{\mathrm{ch}_R^\gamma}$ is a natural transform of the $G$-equivariant homology functors with respect to the first variable $(X,A)$. 
Let
\begin{align}
    \widehat{\mathrm{HH}}{}_G^{*} ((X,A) , Y) \coloneqq \bigoplus_{[\gamma ]   \in  \mathcal{C}(G_{\rm tor}) }  \mathrm{HH}_{Z(\gamma )}^{*} ((X^\gamma , A^\gamma)  ,  Y^\gamma  ). 
    \label{eq:range.Raven.torsion}
\end{align}
Here, $\mathcal{C}(G_{\rm tor})$ denotes the set of conjugacy classes of torsion elements in $G$. In \cite[Theorem 7.3.11]{Raven}, 
\[
    \mathop{\widehat{\mathrm{ch}}_R} \coloneqq \bigoplus_{[\gamma] \in \mathcal{C}(G_{\mathrm{tor}})} 
        \mathop{\mathrm{ch}_R^\gamma} \colon \KK^G_n((X,A),Y) \to \bigoplus_{k \in \mathbb{Z}}    \widehat{\mathrm{HH}}{}_G^{-n+2k} ((X,A) , Y)
\]
is shown to be an isomorphism after tensoring with $\mathbb{C}$. Note that, if $G$ is torsion-free, then the right hand side is $\bigoplus_k \mathrm{HH}_G^{-n+2k}((X,A),Y)$. 

By comparing the short exact sequences in Remark \ref{rmk:Raven.noncompact}, $\widehat{\mathrm{ch}}_R$ is defined for non-compact $Y$, and is an isomorphism after tensoring with $\mathbb{C}$ (cf.\ \cite[Corollary~7.3.12]{Raven}). 
The following Remark \ref{rmk:Raven.noncompact} and Lemma \ref{lemma:induction.tKK.HH} deals with compact $Y$ in the proof, however, these results are generalized to this non-compact setting by considering $Y^+$.

\begin{remark}\label{rmk:Raven.precise}
We do not enter into the details of Raven’s theory any further than necessary, but we must establish the minimal preliminaries in order to complete the formal argument. 
We recall the definition of Raven's Chern character.
Raven’s Chern character $\mathrm{ch}_{R}^\gamma([M,f,\xi])$ localized at $\gamma \in G_{\mathrm{tor}}$, defined in \cite[Definition 7.3.1]{Raven}, is explicitly given by
\begin{align}
    \begin{split}
        \mathrm{ch}_{R}^\gamma([M,f,\xi])\coloneqq {}&{}  [f] \circ [\pi_{\mathring{M}}] \circ [[ \mathrm{ch}_G^\gamma (\xi)]] \circ (\mathrm{ch}_G^\gamma(M) \times 1_{Y^\gamma}), \\
        \mathrm{ch}_G^\gamma(M) \coloneqq {}&{}[[\mathrm{Td}_G^\gamma (M)]] \circ <\!\!< \mathrm{ch}_G^\gamma (\beta_{TM^\natural \oplus n}) >\!\!> \circ \int_{(TM \oplus n)^\gamma} . 
    \end{split}
    \label{eqn:Raven.Chern}
\end{align}
We refer the reader to \cite{Raven} for the precise meaning of each notation.
We remind the meaning of each term.
For a manifold $M$, the derived global section $R\Gamma_c \mathbb{C}_M$ is given by the complex of compactly supported differential forms $\Omega_c^\bullet (M) $. 
For a totally disconnected space $Y$, the sheaf $\mathbb{C}_Y$ is $c$-soft, and hence $R\Gamma_c \mathbb{C}_Y = \Gamma_c\mathbb{C}_Y=\mathbb{C}[Y]$. 
Under these identifications, \eqref{eqn:Raven.Chern} is given by the composition 
    \begin{align}
    R\Gamma_c\mathbb{C}_{X^\gamma \setminus A^\gamma} \xrightarrow{f^* } R\Gamma_c\mathbb{C}_{\mathring{M}^\gamma} ={}&{} \Omega_c^\bullet (\mathring{M}^\gamma ) \xrightarrow{\pi_{\mathring{M}^\gamma} ^* } \Omega_c^\bullet (\mathring{M}^\gamma ) \otimes \mathbb{C}[Y^\gamma] \xrightarrow{ \mathrm{ch}_G^\gamma (\xi)} \Omega_c^{\bullet}(\mathring{M}^\gamma ) \otimes \mathbb{C}[Y^\gamma]  \\
    \xrightarrow{\mathrm{Td}_G^\gamma(M^\gamma ) \otimes \id}{}&{} \Omega_c^\bullet(\mathring{M}^\gamma ) \otimes \mathbb{C}[Y^\gamma] \xrightarrow{\mathrm{ch}_G^\gamma (\beta_{TM^\natural \oplus n}) } \Omega_c^{\bullet}((T\mathring{M} \oplus n)^\gamma ) \otimes \mathbb{C}[Y^\gamma] \xrightarrow{\int_{(T\mathring{M} \oplus n)^\gamma } \otimes \id} \mathbb{C}[Y^\gamma] ,
    \end{align}
    where $\mathrm{ch}_G^\gamma (\xi)$ and $\mathrm{Td}_G^\gamma(M^\gamma )$ are given by the exterior product (or the cup product) with the corresponding $Z(\gamma)$-invariant differential forms, whose definitions are given in \cite[Definitions 7.1.4, 7.2.9]{Raven}. The morphism $<\!\!<\! \mathrm{ch}_G^\gamma (\beta_{TM^\natural \oplus n})\!>\!\!>$ is the Thom isomorphism, given by the composition
    \[
    \Omega_c^\bullet (M^\gamma ) \xrightarrow{\pi^*} \Omega^\bullet((TM \oplus n)^\gamma ) \xrightarrow{\mathrm{ch}_G^\gamma (\beta_{TM ^\natural \oplus n})} \Omega^\bullet ((TM \oplus n)^\gamma ),   
    \]
    whose image is actually contained in $\Omega_c^\bullet ((TM \oplus n)^\gamma )$ (\cite[Example 7.1.6]{Raven}). 
    \if0
    We further observe that, in the special situation where $\gamma = e$, $Y = \mathrm{pt}$, and $X = M$ is a free proper cocompact $G$-$\mathrm{Spin}^c$-manifold, Raven’s Chern character coincides with the better-known Chern character.
    First, as is noted in \cite{Raven} (in the paragraph above Definition 7.2.12) as a motivation of his definition, we have $<\!\!< \mathrm{ch}_G^\gamma (\beta_{TM^\natural \oplus n}) >\!\!> \circ \int_{(TM \oplus n)^\gamma} = \int_{M^\gamma}$ if the $Z(\gamma)$-action on $M^\gamma$ preserves the orientation, particularly if $\gamma =e$. 
    Second, we have the following  Poincar\'{e} duality isomorphisms
    \begin{align*}
    \K^*_G(M) \cong \KK_{n-*}^G(M,\mathrm{pt}), \quad {}&{} \xi \mapsto [M,\id,\xi], \\
    \H^*(\Omega_c^\bullet (M)_G,d) \cong \mathrm{HH}_G^{*-n}(M,\mathrm{pt}), \quad {}&{} \omega \mapsto \int_M \circ [[\omega ]] .
    \end{align*}
    The former isomorphism follows from \cite[Theorem 4.9]{Kasparov} and \cite[Corollary 4.7.9]{Raven}. The latter isomorphism is an immediate consequence of the Poincar\'{e} duality $\Hom(\Omega^{\bullet}(M/G),\mathbb{C}) \simeq \Omega^{n-\bullet}(M/G)$ and the isomorphism of de Rham complexes $\Omega_c^\bullet (M)_G \cong \Omega_c^\bullet (M/G)$. 
    In conclusion, the following diagram commutes: 
    \[
    \xymatrix{
    \KK^G_{*}(M,\mathrm{pt}) \ar[d]^{\mathrm{ch}_R^e} \ar[r]^{\mathrm{PD}} & \K^{n-*}_G(M) \ar[r]^{\cong} \ar[d]^{\mathrm{Td}_G(M) \cdot \mathrm{ch}( \cdot )} & \K^{n-*}(M/G) \ar[d]^{\mathrm{Td}(M/G) \cdot \mathrm{ch}( \cdot )} \\
    \bigoplus_k \mathrm{HH}_G^{-*+2k}(M, \mathrm{pt}) \ar[r]^{\mathrm{PD}}&  \bigoplus_k \mathrm{H}^{n-*+2k}(\Omega_c^{\bullet}(M)_G ,d)  \ar[r]^{\cong }& \bigoplus_k \mathrm{H}^{n-*+2k}(M/G ; \mathbb{C}). 
    }
    \]
    In particular, if $\mathrm{Td}(M/G)$ is trivial, e.g.\ $G=\mathbb{Z}^n$ and $M=\mathbb{R}^n$, then the right vertical map is the ordinary Chern character. Its image coincides with the integral cohomology $\bigoplus_k \mathrm{H}^{n-*+2k}(M/G; \mathbb{Z}) \subset \bigoplus_k \mathrm{H}^{n-*+2k}(M/G; \mathbb{C})$. 
    \fi
\end{remark}

\begin{remark}\label{rmk:Raven.natural.cohomological}
     It is proved in \cite[Corollary 7.3.4]{Raven} that Raven's Chern character is natural with respect to the first variable. That is, if one has a continuous $G$-map $f \colon (X_1,A_1) \to (X_2,A_2)$, then  
    \[
    \mathop{\widehat{\mathrm{ch}}_R} \circ \mathrm{KK}^G_*(f,Y) = \widehat{\mathrm{HH}}{}^{-*}_{G}(f,Y) \circ \mathop{\widehat{\mathrm{ch}}_R}.
    \] 
    Moreover, it is also proved in \cite[Corollary 7.3.4]{Raven} that $\mathop{\widehat{\mathrm{ch}}_R}$ intertwines the boundary maps with respect to the first variable; $\mathop{\widehat{\mathrm{ch}}_R} \circ \delta_{\mathrm{KK}^G} = \delta_{\widehat{\mathrm{HH}}_G} \circ \mathop{\widehat{\mathrm{ch}}_R}$.
    
    As is stated in \cite[Theorem 2.19]{DW}, Raven's Chern character is natural with respect to the second variable, as well as the first variable. Indeed, for a continuous $G$-map $g \colon Y_1 \to Y_2$ of compact $G$-spaces, we have 
    \begin{align*}
        \mathrm{ch}_R^\gamma (g^*[M,f,\xi]) = {}&{} \mathrm{ch}_R^\gamma ([M,f,g^*\xi]) =  [f] \circ [\pi_{\mathring{M} \times Y_1 \to \mathring{M}}] \circ [[\mathrm{ch}_G^\gamma(g^*\xi)]] \circ (\mathrm{ch}_G^\gamma(\mathring{M}) \times 1_{Y_1^\gamma}) \\
    ={}&{}  [f] \circ [\pi_{\mathring{M} \times Y_2 \to \mathring{M}}] \circ [\id_{\mathring{M}} \times g] \circ  [[g^*\mathrm{ch}_G^\gamma(\xi)]] \circ (\mathrm{ch}_G^\gamma(\mathring{M}) \times 1_{Y_1^\gamma}) \\
    ={}&{}  [f] \circ [\pi_{\mathring{M} \times Y_2 \to \mathring{M}}] \circ  [[\mathrm{ch}_G^\gamma(\xi)]] \circ [\id_{\mathring{M}} \times g] \circ  (\mathrm{ch}_G^\gamma(\mathring{M}) \times 1_{Y_1^\gamma}) \\
    ={}&{}  [f] \circ [\pi_{\mathring{M} \times Y_2 \to \mathring{M}}] \circ  [[\mathrm{ch}_G^\gamma(\xi)]] \circ (\mathrm{ch}_G^\gamma(\mathring{M}) \times 1_{Y_2^\gamma}) \circ [g] \\
    ={}&{} \mathrm{ch}_R^\gamma (M,f,\xi) \circ [g]. 
    \end{align*}
    The equality at the third line is \cite[Proposition 6.5.12]{Raven}. Moreover, $\mathop{\widehat{\mathrm{ch}}_R}$ intertwines suspension isomorphisms, and hence $\delta$, with respect to the second variable. This is verified as
    \begin{align*}
        {}&{}\mathrm{ch}_R^\gamma ([[0,1] \times M,\id_{[0,1]} \times f,1_{[0,1]} \otimes \xi])\\
        = {}&{} (\id_{R\Gamma_c\mathbb{C}_{(0,1)}} \otimes [f]) \circ (\id_{R\Gamma_c\mathbb{C}_{(0,1)}} \otimes [\pi]) \circ (\id_{R\Gamma_c\mathbb{C}_{(0,1)}} \otimes [[\mathrm{ch}_G^\gamma (\xi)]]) \circ (\mathrm{ch}(0,1) \otimes \mathrm{ch}_G^\gamma(\mathring{M}) \times 1_{Y^\gamma}) \\ 
        ={}&{} \mathrm{ch}([[0,1],\id,1]) \otimes \mathrm{ch}_R^\gamma ([M,f,\xi]) = \alpha \otimes \mathrm{ch}_R^\gamma ([M,f,\xi]). 
    \end{align*}
    Here, we have used the fact that the Chern character of the fundamental class of $[0,1]$, $\mathrm{ch}([[0,1],\id,1]) \in \mathrm{HH}(\Sigma, \mathrm{pt})$, is the generator $\alpha$. 
\end{remark}

\begin{remark}\label{rmk:Raven.noncompact}
    The above definition is for $X$ cocompact. For non-cocompact $G$-CW-pair $(X,A)$, the groups are defined as
    \begin{align*} 
        \mathrm{F}^G((X,A),Y) \cong{} \varinjlim_{(Z,B) \subset (X,A)}\mathrm{F}^G((Z,B),Y) 
    \end{align*}
    for both $\mathrm{F}^G=\mathrm{KK}^G_*$ and $\mathrm{HH}_G^{*}$, where $(Z,B)$ runs over all pairs of cocompact $G$-CW-subcomplexes of $(X,A)$ (cf.~\cite[Definition~2.15]{DW}). Note that the KK-group of this definition is called the representable KK-group and is denoted by $\mathrm{RKK}_*^G((X,A),Y)$ in the literature. For example, for an infinite family of proper cocompact $G$-CW-complexes $\{X_j\}_{j \in J}$, 
    \begin{align}
    \bigoplus_{j \in J} \mathrm{F}^G(\iota_j,Y) \colon \bigoplus_{j \in J} \mathrm{F}^G(X_j,Y) \xrightarrow{\cong} \mathrm{F}^G\bigg( \bigsqcup_{j \in J} X_j,Y\bigg) \label{eqn:disjoint.union}
    \end{align}
    are isomorphisms for both $\mathrm{F}^G=\mathrm{KK}^G_*$ and $\widehat{\mathrm{HH}}{}_G^*$, where $\iota_j \colon X_j \to \bigsqcup X_j$ denotes the inclusion. 

    A continuous map $f \colon (X_1,A_1) \to (X_2,A_2)$, even if it is not proper, induces 
    \begin{align}
        \mathrm{F}^G(f,Y) \colon \mathrm{F}^G((X_1,A_1),Y) \to \mathrm{F}^G((X_2,A_2),Y). \label{eqn:nonproper.functoriality.tKK.HH}
    \end{align}
    Indeed, for any cocompact $G$-CW-subcomplex $(Z_1,W_1) \subset (X_1,A_1)$, the restriction $f |_{(Z_1,W_1)}$ becomes a proper map to some cocompact $G$-CW-subcomplex $(Z_2,W_2) \subset (X_2,A_2)$.  
    In the case of HH-theory, we add a brief remark on the explicit form of the map $\mathrm{HH}^*_G(f,Y)$. Notice that we have a homomorphism
    \begin{align*}
    \mathrm{HH}^*_G((X,A),Y) \cong {}&{} \varinjlim_{(Z,B) \subset (X,A)} \Hom_{\mathbf{D}^+(\mathbb{C}[G])}  (R\Gamma_c \mathbb{C}_{Z \setminus B} , R\Gamma_c\mathbb{C}_Y) \\
    {}&{}\to \Hom_{\mathbf{D}^+(\mathbb{C}[G])} \bigg( \varprojlim_{(Z,B) \subset (X,A)} R\Gamma_c\mathbb{C}_{Z,B}, R\Gamma_c \mathbb{C}_{Y} \bigg),
    \end{align*}
    which is not necessarily an isomorphism in general. 
    If there are increasing sequences $(Z_{a,i},B_{a,i}) \subset (X_a,A_a)$ for $a =1,2$ such that $f(Z_{1,i}) \subset Z_{2,i}$, the pull-back  $(f|_{Z_{1,i} \setminus B_{1,i}})^* \colon R\Gamma_c\mathbb{C}_{Z_{2,i} \setminus B_{2,i}} \to R\Gamma_c\mathbb{C}_{Z_{1,i} \setminus B_{1,i}}$ defines 
    \[
    f^* \coloneqq \varprojlim_i(f|_{Z_{1,i},B_{1,i}})^* \colon \varprojlim_{i} R\Gamma_c\mathbb{C}_{Z_{2,i} \setminus B_{2,i}} \to \varprojlim_{i} R\Gamma_c\mathbb{C}_{Z_{1,i} \setminus B_{1,i}}, 
    \]
    which can be composed with an element of $\mathrm{HH}^*_G((X_2,A_2),Y)$. For example, if $(X,A)$ is a disjoint union of proper cocompact $G$-CW-complexes $(X_i,A_i)$, we have $\varprojlim_{(Z,B) \subset (X,A)} R\Gamma_c\mathbb{C}_{Z \setminus B} \cong \prod_i R\Gamma_c \mathbb{C}_{X_i \setminus A_i}$. 

    Moreover, by its naturality with respect to the first variable (Remark \ref{rmk:Raven.natural.cohomological}), Raven's Chern character is defined for non-cocompact $(X,A)$ (\cite[Corollary 2.21]{DW}), and satisfies the same naturality $\mathop{\widehat{\mathrm{ch}}_R}\circ \mathrm{KK}^G_*(f,Y) = \widehat{\mathrm{HH}}{}_G^{-*} (f,Y) \circ \mathop{\widehat{\mathrm{ch}}_R}$. 
\end{remark}

    \begin{remark}\label{rmk:BS.Zcoeff}
        In this paper, we work throughout with Baum–Schneider homology with $\mathbb{C}$-coefficients. However, at one point in Lemma \ref{lem:Raven.Chern.torus}, we need to use the version with $\mathbb{Z}$-coefficients (as is treated in \cite{DW}). 
        In the statements of the following Lemmas \ref{lemma:induction.tKK.HH} and \ref{lem:induction.2}, the isomorphisms induced on Baum–Schneider homologies remains valid with $\mathbb{Z}$-coefficients by the same proof.
    \end{remark}
\begin{lemma}\label{lemma:induction.tKK.HH}
    Let $H$ be a subgroup of $G$. Then, for any pair of proper $H$-spaces $(X,A)$ and any $G$-space $Y$, there are isomorphisms
    \begin{align*}
        \mathrm{KK}_*^G(G \times_H (X,A) , Y) \cong \mathrm{KK}_*^H((X,A),Y), \quad \widehat{\mathrm{HH}}{}_G^*(G \times_H (X,A),Y) \cong \widehat{\mathrm{HH}}{}_H^*((X,A),Y),
    \end{align*}
    that form morphisms of $H$-equivariant homology functors with respect to the first variable, and are compatible with localized Chern character.
\end{lemma}
    Here, for a pair $(X,A)$ of $H$-spaces, we write $G \times_H (X,A) \coloneqq  (G \times_H X,G \times_H A)$. 
    When $H$ is a finite subgroup of $G$, it is proved in \cite[Propositions 4.7.5, 6.5.11]{Raven} that they are isomorphic, and in \cite[Proposition 7.3.9]{Raven} that they are compatible with Raven's Chern character (under an additional assumption that $X=\mathrm{pt}$). 
\begin{proof}    
    Following \cite[Definitions 4.7.2, 6.5.10]{Raven}, let 
    \begin{align*}
        i_H^G \colon & \mathrm{KK}_*^H((X,A),Y) \to  \mathrm{KK}_*^G(G \times_H (X,A), Y), && i_H^G[M,f,\xi] = [G \times_H M, \id_G \times_H f , \mathrm{Ind}_H^G \xi] , 
        \\
        i_H^G \colon & \mathrm{HH}_H^*((X,A),Y) \to  \mathrm{HH}_G^*(G \times_H (X,A), Y), && i_H^G (\phi) = (\id_{\mathbb{C}[G]} \otimes_{\mathbb{C}[H]} \phi) \circ \mu_Y.
    \end{align*}
    Here, the induction map 
    \[
    \operatorname{Ind}_H^G \colon \mathrm{K}_0^H(M \times Y,\partial M \times Y) \to \mathrm{K}_0^G(G \times _H (M \times Y)  , G \times_H (\partial M \times  Y) )
    \]
    is given by sending $[E,F,u] \in \mathrm{K}_0^H(M \times Y, \partial M \times Y)$, where $E,F$ are $H$-vector bundles over $M \times Y$ and $u \colon E|_{\partial M \times Y} \to F|_{\partial M \times Y} $ is an $H$-equivariant bundle isomorphism, to $[G \times_H E, G \times_H F, \id_G \times_H u]$. Note that, since $Y$ is assumed to be a $G$-space, we have an isomorphism of $G$-spaces
    \[
        G \times _H (M \times Y) \cong (G \times_H M ) \times Y, \quad [g,(m,y)] \mapsto ([g,m],gy).
    \]
    We emphasize once again, as already noted in Remark \ref{rmk:Raven.precise}, that the composition symbol for HH-classes is taken in the opposite direction from that for morphisms.
    As in \cite[Proposition 6.5.8]{Raven}, for a $H$-sheaf $F$ on $X \setminus A$, we canonically regard $\mathbb{C}[G] \otimes_{\mathbb{C}[H]} F$ as a sheaf on $G \times_H X$. 
    This gives the induction functor between the categories of nonnegative cochain complexes of $G$-equivariant sheaves on $X \setminus A$ as
    \[ 
    \mathbb{C}[G] \otimes_{\mathbb{C}[H]} \cdot \colon \mathbf{K}_H^+(X \setminus A, \mathbb{C}) \to \mathbf{K}_G^+(G \times_H (X \setminus A) ; \mathbb{C}).
    \]
    Since it preserves the $c$-softness, the tensor product morphism $\id_{\mathbb{C}[G]} \otimes_{\mathbb{C}[H]} \phi$ is regarded as an element of $\mathrm{HH}_G^*(G \times _H (X,A), G \times_H Y)$. 
    The element $\mu_Y \in \mathrm{HH}_G^*(G \times_H Y,Y)$ is given by  
    \[ 
    \mu_Y \colon R\Gamma_c\mathbb{C}_{G \times_H Y} \cong \mathbb{C}[G] \otimes_{\mathbb{C}[H]} R\Gamma_c\mathbb{C}_Y \to R\Gamma_c \mathbb{C}_Y, \quad \mu_Y(g,s) = g \cdot s \quad \text{ for $s \in R\Gamma_c\mathbb{C}_Y$. }
    \]
    The inverse is given by
    \begin{align*}
        r_G^H \colon & \mathrm{KK}_*^G(G \times _H (X,A), Y) \to \mathrm{KK}_*^H((X,A),Y), && r_G^H[M,f,\xi] = [f^{-1}(H\times_H X),f,\mathrm{Res}_G^H \xi] , 
        \\
        r_G^H \colon &  \mathrm{HH}_G^*(G \times_H (X,A), Y) \to \mathrm{HH}_H^*((X,A),Y), && r_G^H \psi  =\eta_{X\setminus A} \circ \mathrm{Res}_G^H \psi,
    \end{align*}
    where $\mathrm{Res}_G^H \colon \mathrm{K}^0_G(M \times Y, \partial M \times Y) \to \mathrm{K}^0_H(M \times Y, \partial M \times Y)$ and $\mathrm{Res}_G^H \colon \mathbf{D}^+(\mathbb{C}[G]) \to \mathbf{D}^+(\mathbb{C}[H])$ are the restriction functors, and 
    \[
    \eta_{X\setminus A} \in \Hom_{\mathbf{D}^+(\mathbb{C}[H])}(R\Gamma_c\mathbb{C}_{X\setminus A} , R\Gamma_c\mathbb{C}_{G \times_H (X \setminus A)})
    \]
    is given by 
    \[ 
    \eta_{X\setminus A} \colon R\Gamma_c \mathbb{C}_{X\setminus A} \to R\Gamma_c\mathbb{C}_{G \times_H (X \setminus A)} \cong \mathbb{C}[G] \otimes_{\mathbb{C}[H]} R\Gamma_c \mathbb{C}_{X\setminus A}, \quad \eta_{X \setminus A}(s)= \delta_e \otimes s.
    \]
    They are inverses to each other. Indeed, for the $\mathrm{KK}^G_*$-functors, both $r_G^H \circ i_H^G = \id$ and $i_H^G \circ r_G^H=\id$ are clear from the definition. To see the same for the $\mathrm{HH}_G^*$-functors, let us take a projective resolution $P^\bullet$ of $R\Gamma_c\mathbb{C}_{X \setminus A}$ and $Q^\bullet$ of $R\Gamma_c\mathbb{C}_Y$. Note that $\mathbb{C}[G] \otimes_{\mathbb{C}[H]} P^\bullet$ is a projective resolution of $R\Gamma_c\mathbb{C}_{G \times_H X} \cong \mathbb{C}[G] \otimes_{\mathbb{C}[H]} R\Gamma_c\mathbb{C}_X$. Now, for $\phi \in \mathrm{HH}_H^n((X,A),Y) = \Hom_{\mathbf{K}^+(\mathbb{C}[H])} (P,Q[n])$ and $\psi \in \mathrm{HH}_G^n(G \times_H (X,A),Y) = \Hom _{\mathbf{K}^+(\mathbb{C}[G])}(\mathbb{C}[G] \otimes_{\mathbb{C}[H]} P,Q[n])$, we have  
    \begin{gather*}
        (\eta_{X \setminus A} \circ  (\id_{\mathbb{C}[G]} \otimes_{\mathbb{C}[H]}\varphi) \circ \mu_Y) \cdot s =  \mu_Y (e \otimes \varphi(s)) = \varphi(s),\\
        \mu_Y \circ (\id_{\mathbb{C}[G]} \otimes_{\mathbb{C}[H]} (\psi \circ \eta_{X \setminus A}))\cdot (g \otimes s) = \mu_Y(g \otimes \psi(e \otimes s)) = g \cdot (\psi (e \otimes s)) = \psi(g \otimes s),
    \end{gather*}
    for any $s \in P^m$. They prove $r_G^H \circ i_H^G = \id$ and $i_H^G \circ r_G^H=\id$. Here, for the latter equality, we have used the fact that $\psi$ is by definition a $\mathbb{C}[G]$-module map.

    The isomorphism of the delocalized parts of the bivariant homology theory is proved from this case. 
    Indeed, for $\gamma \in G_{\mathrm{tor}}$, we have
    \begin{align*}
    (G \times_H X)^\gamma = {}&{} \{ [g,p] \in G \times_H X : g^{-1} \gamma g =: \gamma' \in H,  \gamma' \cdot p=p  \}\\
    = {}&{} \bigsqcup_{
    \substack{[\gamma'] \in \mathcal{C}(H_{\mathrm{tor}})\\ \gamma '=g^{-1}\gamma g}} gZ_G(\gamma') \times_{Z_H(\gamma')} X^{\gamma'}
    \end{align*}
    as $Z_G(\gamma)$-spaces (here $Z_H(\gamma)$ and $Z_G(\gamma)$ denotes the centralizer subgroups in $H$ and $G$ respectively). This shows that the following composition is bijective:
    \begin{align*}
    \bigoplus_{[\gamma] \in \mathcal{C}(G_{\mathrm{tor}})}\mathrm{HH}_{Z_G(\gamma) }^*((G \times_H X)^\gamma , Y^\gamma )
    \cong {}&{}
    \bigoplus_{[\gamma] \in \mathcal{C}(G_{\mathrm{tor}})} 
    \bigoplus_{
    \substack{[\gamma'] \in \mathcal{C}(H_{\mathrm{tor}})\\ \gamma '=g^{-1}\gamma g}} \mathrm{HH}_{Z_G(\gamma)}^*(gZ_G(\gamma') \times_{Z_H(\gamma')}X^{\gamma'}, Y^{\gamma})\\
    \cong {}&{}
    \bigoplus_{[\gamma] \in \mathcal{C}(G_{\mathrm{tor}})} 
    \bigoplus_{
    \substack{[\gamma'] \in \mathcal{C}(H_{\mathrm{tor}})\\ \gamma '=g^{-1}\gamma g}} \mathrm{HH}_{Z_G(\gamma')}^*(Z_G(\gamma') \times_{Z_H(\gamma')}X^{\gamma'}, Y^{\gamma'})\\
    \xrightarrow{\oplus r^{Z_H(\gamma')}_{Z_G(\gamma')}}{}&{} \bigoplus_{[\gamma'] \in \mathcal{C}(H_{\mathrm{tor}})} \mathrm{HH}^*_{Z_H(\gamma')}(X^{\gamma'},Y^{\gamma'}).  
    \end{align*}

   Finally, we verify that Raven's Chern character commutes with $r_G^H$. 
   According to \eqref{eqn:Raven.Chern}, for $\gamma \in H_{\mathrm{tor}}$, Raven's Chern character localized at $\gamma$ of an induced KK-cycle $i_H^G[M,f,\xi]$, namely
   \[
      \mathrm{ch}_R^\gamma \circ i_H^G[ M, f,  \xi]) =
   \mathrm{ch}_R^\gamma([G \times_H M,\id_G \times_H f, \mathrm{Ind}_H^G\xi]),
   \]
   is given by the composition of pull-backs, exterior products with characteristic forms, and the integration, as 
    \begin{align}
    R\Gamma_c\mathbb{C}_{(G \times_H (X \setminus A))^\gamma} \xrightarrow{f^* }{}&{} R\Gamma_c\mathbb{C}_{(G \times_H M)^\gamma} = \Omega_c^\bullet ((G \times _H M)^\gamma ) \xrightarrow{\pi_{M^\gamma} ^* } \Omega_c^\bullet ((G \times_H M)^\gamma ) \otimes \mathbb{C}[Y^\gamma] \\
    \xrightarrow{ \mathrm{ch}_G^\gamma (\mathrm{Ind}_H^G \xi)} {}&{} \Omega_c^{\bullet}((G \times_H M)^\gamma ) \otimes \mathbb{C}[Y^\gamma]  
    \xrightarrow{\mathrm{Td}_G^\gamma((G \times_H M)^\gamma ) \otimes \id} \Omega_c^\bullet((G \times_H M)^\gamma) \otimes \mathbb{C}[Y^\gamma] \\
    \xrightarrow{\mathrm{ch}_G^\gamma (\beta_{T(G \times M)^\natural \oplus n}) }{}&{} \Omega_c^{\bullet}((G \times_H (TM \oplus n))^\gamma ) \otimes \mathbb{C}[Y^\gamma] \xrightarrow{\int_{(TM \oplus n)^\gamma } \otimes \id} \mathbb{C}[Y^\gamma] .
    \end{align}
    Applying $r_G^H$ to this composition is nothing but restricting it to the identity component $(eH \times_H X)^\gamma = X^\gamma$. 
    Meanwhile, the restrictions of the characteristic forms $\mathrm{ch}_G^\gamma(\mathrm{Ind}_H^G \xi)$, $\mathrm{Td}_G^\gamma(G \times_H M)$, and $\mathrm{ch}_G^\gamma (\beta_{T(G \times M^\natural) \oplus n})$ to $M^\gamma$ are nothing but those defined from $[M,f,\xi]$, that is, $\mathrm{ch}_H^\gamma(\xi)$, $\mathrm{Td}_H^\gamma(M)$, and $\mathrm{ch}_H^\gamma (\beta_{TM^\natural \oplus n})$. This concludes that 
    \[
    r_G^H \circ \mathrm{ch}_{R,G}^\gamma \circ i_H^G [M,f,\xi] = \mathrm{ch}_{R,H}^\gamma [M,f,\xi], 
    \]
    where $\mathrm{ch}_{R,G}^\gamma$ and $\mathrm{ch}_{R,G}^\gamma$ denote Raven's $G$- and $H$- equivariant Chern character.

    Finally, we show the naturality of $i_H^G$. If one has a continuous $H$-map $F \colon (X_1,A_1) \to (X_2,A_2)$, which induces $\id_G \times_H F \colon G \times_H (X_1,A_1) \to G \times_H (X_2,A_2)$, then 
    \begin{align*}
        \KK^G_*(\id_G \times_H F,Y) \circ i_H^G[M,f,\xi] ={}&{} [G \times_H M,(\id_G \times_H F) \circ (\id_G \times_H f),\operatorname{Ind}_H^G\xi] = i_H^G \circ \KK^G_*(F,Y)[M,f,\xi], \\
        \mathrm{HH}_G^*(\id_G \times_H F,Y) \circ i_H^G(\varphi) ={}&{} (\id_{\mathbb{C}[G]} \otimes_{\mathbb{C}[H]} [F]) \circ (\id_{\mathbb{C}[G]} \otimes_{\mathbb{C}[H]}\phi) \circ \mu_Y = i_H^G \circ \mathrm{HH}_H^*(F,Y)(\varphi). 
    \end{align*}
    This shows that $i_H^G$ are natural transformations. We also have
        \begin{align*}
        i_H^G [[0,1] \times M, \id_{[0,1] } \times f,\xi] ={}&{} [[0,1] \times G \times_HM,\id_{[0,1]} \times \id_G \times_H f,1_{[0,1]} \otimes \operatorname{Ind}_H^G \xi] = i_H^G[M,f,\xi], \\
        i_H^G (\varphi \otimes \alpha)={}&{} (\id_{\mathbb{C}[G]} \otimes_{\mathbb{C}[H]}( \phi \otimes \alpha )) \circ (\mu_Y \otimes \id_{(0,1)}) = i_H^G(\phi) \otimes \alpha, 
    \end{align*}
    which shows that $i_H^G$ are compatible with the suspension isomorphisms, and hence with the boundary maps $\delta$ by Remark \ref{rmk:bivariant.homologies.3}. 
\end{proof}

\begin{lemma}\label{lem:induction.2}
Let $(X,A)$ be a free and proper $G$-CW-complex, let $H$ be a subgroup of $G$, and let $Y$ be an $H$-space. Then, there are isomorphisms 
\[
    \mathrm{KK}^H_*((X,A),Y) \cong \mathrm{KK}^G_*((X,A) , G \times_H Y), \quad \mathrm{HH}^*_H((X,A),Y) \cong \mathrm{HH}_G^*((X,A),G \times_H Y),
\]
that are morphisms of $G$-equivariant generalized homology functors on the category of free and proper $G$-CW-complexes with respect to the first variable, and compatible with the localized Chern character. 
\end{lemma}
The proof of \cite[Proposition 6.6]{ChaEch} indicates that the assumption of the freeness of $G \curvearrowright X$ is unnecessary. 
The general case, however, requires a more elaborate argument, which lies outside the scope of this paper, so we restrict ourselves to a simpler situation. 
\begin{proof}
For the definition of the isomorphisms, consider the diagram
\[
\xymatrix{
   \KK^{H}_*((X,A),Y) \ar[r]^{\mathrm{ch}_R^e} \ar[d]^{j_*} &
   \bigoplus_k \mathrm{HH}_{H}^{-*+2k}((X,A),Y) \ar[d]^{j_*  } \\
   \KK^{H}_*((X,A),G \times_H Y) \ar[r]^{\mathrm{ch}_R^e} \ar[d]^{i_H^G} &
   \bigoplus_k \mathrm{HH}_{H}^{-*+2k}((X,A),G \times_H Y) \ar[d]^{i_H^G} \\
   \KK^{G}_*(G \times_H (X,A) , G \times_H Y) \ar[r]^{\mathrm{ch}_R^e} \ar[d]^{ \mathrm{KK}_*^G(\alpha,G \times_HY)} &
   \bigoplus_k \mathrm{HH}_{G}^{-*+2k}(G \times_H (X,A) , G \times_H Y) \ar[d]^{\mathrm{HH}_G^*(\alpha,G \times_HY)}\\
   \KK^{G}_*((X,A) , G \times_H Y) \ar[r]^{\mathrm{ch}_R^e} &
   \bigoplus_k \mathrm{HH}_{G}^{-*+2k}((X,A) , G \times_H Y).
}
\]
 Here, $\alpha \colon G \times_H (X,A) \to (X,A)$ is the $G$-action on $X$ and $j \colon Y \to G \times _H Y$ denotes the open embedding to $eH \times _H Y \subset G \times_H Y$. For $\mathrm{F}^H_* = \mathrm{KK}^H_* $ and $\bigoplus_{k}\mathrm{HH}_H^{-*+2k}$, this $j$ induces the morphisms of KK- and HH- groups as the restriction
 \[
     j_* \coloneqq \big( \mathrm{F}^H_*((X,A),J) \colon \mathrm{F}_*^H((X,A), Y^+) \to \mathrm{F}_*^H((X,A), (G \times _H Y)^+) \big)\big|_{\mathrm{F}_*^H((X,A),Y)},
 \]
where $J \colon (G \times_H Y)^+ \to Y^+$ is the continuous $H$-map such that $J|_{j(Y)} =j^{-1}$ and $J|_{j(Y)^c}$ is the constant map to the basepoint $\mathrm{pt} \in Y^+$. 
\if0
     More explicitly, this $j_*$ is described as $j_*[M,f,\xi] \coloneqq [M,f,j_*\xi]$ for $[M,f,\xi] \in \mathrm{KK}_*^H((X,A),Y)$, where 
 \[
 j_* \colon \mathrm{K}^0_H(M \times Y, \partial M \times Y) \to \mathrm{K}_H^0(M \times G \times _H Y, \partial M \times G \times _H Y)
 \]
 denotes the open embedding map in equivariant K-theory. Similarly, $j$ induces the morphism of HH-group as $\varphi \mapsto \varphi \circ j_*$ and $\varphi \in \Hom_{\mathbf{D}^+(\mathbb{C}[H])}(R\Gamma_c\mathbb{C}_X, R\Gamma_c \mathbb{C}_Y)$, where $j_* \in \Hom_{\mathbf{D}^+(\mathbb{C}[H])}(R\Gamma_c\mathbb{C}_Y, \mathbb{C}[G] \otimes_{\mathbb{C}[H]}R\Gamma_c \mathbb{C}_Y)$ denotes the inclusion to $\mathbb{C}[eH]\otimes_{\mathbb{C}[H]}R\Gamma_c \mathbb{C}_Y $. 
 \fi
 The middle square commutes by Lemma \ref{lemma:induction.tKK.HH}, and the top and bottom squares commute by the naturality of Raven's Chern character (\cite[Theorem 7.3.11]{Raven} and Remark \ref{rmk:Raven.natural.cohomological}). Moreover, for any a $G$-map $f \colon (X_1,A_1) \to (X_2, A_2)$, we have
 \begin{align*}
    {}&{}\mathrm{F}_*^G(\alpha, G \times_H Y) \circ i_H^G \circ \mathrm{F}_*^H((X,A),J) \circ \mathrm{F}_*^H(f,Y) \\
    ={}&{} \mathrm{F}_*^G(\alpha, G \times_H Y) \circ i_H^G \circ \mathrm{F}_*^H(f,G \times_H Y) \circ \mathrm{F}_*^H((X,A),J) \\
    ={}&{}\mathrm{F}_*^G(\alpha, G \times_H Y) \circ \mathrm{F}_*^G ( \id_G \times_H f,G \times_H Y) \circ i_H^G \circ \mathrm{F}_*^H((X,A),J) \\
= {}&{} \mathrm{F}_*^G(f,G \times_H Y) \circ \mathrm{F}_*^G(\alpha, G \times_H Y) \circ  i_H^G \circ \mathrm{F}_*^H((X,A),J),
 \end{align*}
 for both $\mathrm{F}_*^H=\mathrm{KK}^H_*$ and $\bigoplus_k \mathrm{HH}^{*+2k}_H$. This proves the naturality of the isomorphism. Here, the first equality follows from Remark \ref{rmk:bivariant.homologies}; the second, from the naturality of $i_H^G$ proved in Lemma \ref{lemma:induction.tKK.HH}; and the third, from the identity $f \circ \alpha = \alpha \circ (\id_G \times_H f)$, which holds by $G$-equivariance of $f$. 
 Similarly, since the suspension isomorphism $\mathrm{F}^H_*(X,Y) \to \mathrm{F}^H_{*+1}(\Sigma X,Y)$ is natural with respect to morphisms in both the first and second variables (Remarks \ref{rmk:bivariant.homologies.2} and \ref{rmk:bivariant.homologies.3}), as well as with respect to $i_H^G$ (Lemma \ref{lemma:induction.tKK.HH}), the above isomorphism intertwines the suspension isomorphism, and hence $\delta$, as well.

 We then show that the two vertical compositions are isomorphisms. The proof is the same as the one for Kasparov KK-theory, given in \cite[Proposition 6.6]{ChaEch}. First, by a standard Mayer--Vietoris argument comparing the long exact sequences, we reduce the isomorphism to the case that $(X,A)=(G,\emptyset)$. We first consider the case that $X$ be a free proper finite $G$-CW-complex and $A=\emptyset$. This $X$ is obtained from $X_0=\emptyset$ by iterating finitely many times the operation of attaching a $G$-cell $D_n = G \times D^{k_n}$ to $X_n$ along its boundary to form $X_{n+1}$.
 Since $\mathrm{F}_* \coloneqq \mathrm{KK}^H_*(\cdot, Y)$ and $\mathrm{F}_*' \coloneqq \mathrm{KK}^G_*(\cdot , G \times_H Y)$ (resp.\ $\mathrm{F}_* \coloneqq \mathrm{HH}_H^{-*}(\cdot, Y)$ and $\mathrm{F}_*' \coloneqq \mathrm{HH}_G^{-*}(\cdot , G \times_H Y)$) are $G$-equivariant homology functors (cf.\ Remark \ref{rmk:homology.functor.boundary}), it intertwines the boundary homomorphisms $\delta$, and hence the diagram 
    \[
    \xymatrix{
    \mathrm{F}_{*+1}(D^{k_n},\partial D^{k_n})  \ar[d]^\cong \ar[r]^{\delta} &
    \mathrm{F}_*(X_n) \ar[r] \ar[d]^\cong &  
    \mathrm{F}_*(X_{n+1}) \ar[r] \ar[d] & 
    \mathrm{F}_*(D^{k_n},\partial D^{k_n}) \ar[r]^\delta \ar[d]^\cong &
    \mathrm{F}_{*-1}(X_n) \ar[d]^\cong &  
 \\     
    \mathrm{F}_{*+1}'(D^{k_n},\partial D^{k_n}) \ar[r]^\delta &  
    \mathrm{F}_*'(X_n) \ar[r]  & 
    \mathrm{F}_*'(X_{n+1}) \ar[r]  & 
    \mathrm{F}_*'(D^{k_n},\partial D^{k_n}) \ar[r]^\delta &
    \mathrm{F}_{*-1}'(X_n)  &  
    }
    \]
    commutes by taking as the vertical maps those constructed above. An iterative use of the five lemma proves the isomorphism. For the case where $A$ is non-empty, applying the same comparison to the long exact sequence for the pair $(X,A)$ yields the desired isomorphism. Finally, for possibly infinite proper $G$-CW-pairs $(X,A)$, the groups $\mathrm{F}(X,A)$ and $\mathrm{F}'(X,A)$ are both defined as limits (Remark \ref{rmk:Raven.noncompact}), thus the isomorphism established above extends immediately.

    In the case of $(X,A)=(G,\emptyset)$, by Lemma \ref{lemma:induction.tKK.HH} we have
 \begin{gather}
    \mathrm{F}^H(G,Y) \cong  \bigoplus_{Hg \in H \backslash G} \mathrm{F}^H(Hg, Y) \cong \bigoplus_{H g \in H \backslash G} \mathrm{F}(\mathrm{pt}, Y), \\
    \mathrm{F}^G(G , G \times_H Y) \cong \mathrm{F}(\mathrm{pt},G \times_H Y) \cong \bigoplus_{gH \in G/H} \mathrm{F}(\mathrm{pt}, Y), 
 \end{gather}
 for $\mathrm{F}^G=\mathrm{KK}_*^G$ or $\mathrm{HH}^*_G$. 
We now prove that $\mathrm{F}^G(\alpha,G \times_H Y) \circ i_H^G \circ j_* \colon \mathrm{F}^H(G,Y) \to \mathrm{F}^G(G ,G \times_H Y)$ is identified, under the above isomorphisms, with the direct sum of the identity maps on the respective summands, where the left and the right coset corresponds by $Hg \mapsto g^{-1}H$. 

We first prove the claim for KK-theory. Let $[M,\pr,\xi] \in \KK(\mathrm{pt},Y)$, i.e., $M$ is a closed $\mathrm{Spin}^c$-manifold and $\xi \in \K^0(M \times Y^+,M \times \mathrm{pt})$. The corresponding element in $\KK^H(Hg,Y)$ is $[Hg \times M, \pr_{Hg}, \bar{\xi}]$, where  $\bar{\xi} \in \K^0_H(Hg \times M \times Y^+, Hg \times M \times \mathrm{pt})$ is $H$-invariant and $\bar{\xi}|_{\{g\} \times M \times Y} = \pr_{M \times Y}^*\xi$. This element is sent to 
\begin{align*}
    {}&{}\KK^G(\alpha,G \times_H Y) \circ i_H^G \circ \KK^H(\mathrm{pt},J) [Hg \times M, \pr_{Hg}, \bar{\xi}] \\
    ={}&{}\KK^G(\alpha,G \times_H Y) \circ i_H^G [Hg \times M, \pr_{Hg}, J^*\bar{\xi}] \\
    ={}&{} \KK^G(\alpha,G \times_H Y)  [G \times_H Hg \times M, \pr_{G \times_H Hg}, \Ind_H^G \circ  (\id_M \times J)^*\bar{\xi}] \\
    ={}&{} [G \times_H Hg \times M, \alpha \circ \pr_{G \times_H Hg}, \Ind_H^G \circ (\id_M \times J)^*\bar{\xi}] \\
    ={}&{} [G \times M, \pr_{G} , (\alpha^{-1} \times \id_{M} \times \id_{Y})^* \circ \Ind_H^G \circ (\id_M \times J)^*\bar{\xi}]. 
\end{align*}
The last equality is given by the identification of the topological KK-cycles given by the diffeomorphism $\alpha \times \id_M$. Now, the element 
\[
    (\alpha^{-1} \times \id_{M} \times \id_{Y})^* \circ \Ind_H^G \circ (\id_M \otimes J)^*(\bar{\xi}) \in \K^0_G(G \times M \times (G \times_H Y^+), G \times M \times (G \times_H \mathrm{pt}))
\]
is $G$-invariant, is supported on 
\[
    (\alpha \times \id_M \times \id_Y) \big( G \cdot ((H \times_H Hg) \times M \times j(Y)) =G \cdot (Hg \times M \times j(Y)),
\]
and restricts to $\xi$ on $\{g\} \times M \times j(Y)$. By the $G$-equivariance, its restriction to $\{e \} \times M \times (G \times _H Y)$ is supported on $\{e \} \times M \times g^{-1}j(Y)$ and the corresponding restriction is $g^*\xi \in \K^0(M \times g^{-1}j(Y))$. That is, 
\[
    r_G^{\{e\}} \circ \KK^G(\alpha,G \times_H Y) \circ i_H^G \circ j_*[Hg \times M, \pr_{Hg}, \bar{\xi}] = [\{e \} \times M, \pr, g^*\xi] \in \KK(\mathrm{pt}, g^{-1}H \times_H Y). 
\]
By the pull-back via $g^{-1} \colon gH \times_H Y \to H \times_H Y \cong Y$, we go back to the original element $[M,\pr,\xi] \in \KK(\mathrm{pt},Y)$.

We then show the claim for HH-theory. Let $S_Y^\bullet$ be a $c$-soft resolution of the $G$-sheaf $\mathbb{C}_Y$, and let $\varphi \in \Hom_{\mathbf{K}^+(\mathbb{C})} (\mathbb{C}, \Gamma_cS_Y[n] ) \cong \mathrm{HH} (\mathrm{pt},Y)$ (recall that $\mathbb{C}_{X}$ is a c-soft resolution as itself if $X$ is discrete). The corresponding element $\bar{\varphi} \in \Hom_{\mathbf{K}^+(\mathbb{C}[H])} (\mathbb{C}[Hg], \Gamma_cS_Y[n] ) \cong \mathrm{HH}^H(Hg,Y)  $ is characterized by its $H$-invariance and $\bar{\varphi}(\delta_g) =  \varphi (1)$. 
Then
\begin{align}
    \begin{split}
    \mathrm{HH}_G^n(\alpha,G \times_H Y) \circ i_H^G \circ j_*(\bar{\varphi}) 
    ={}&{}\mathrm{HH}^G(\alpha,G \times_H Y) \circ  i_H^G( \bar{\varphi} \circ [J]) \\
    ={}&{} \mathrm{HH}^G(\alpha,G \times_H Y) ( (\id \otimes \bar{\varphi} \circ [J]) \circ \mu_{G \times_H Y}) \\
    ={}&{}[\alpha] \circ (\id \otimes \bar{\varphi} \circ j_*) \circ \mu_{G \times_H Y}. 
    \end{split}\label{eqn:induction2.HH}
\end{align}
The right hand side, which is explicitly given as
\begin{align*}
    \mathbb{C}[G] \xrightarrow{[\alpha]} \mathbb{C}[G \times_H Hg] ={}&{}\mathbb{C}[G] \otimes_{\mathbb{C}[H]}\mathbb{C}[Hg] \xrightarrow{\id \otimes \bar{\varphi}} \mathbb{C}[G]\otimes_{\mathbb{C}[H]}\Gamma_cS_Y[n] \\
    {}&{}\xrightarrow{\id \otimes j_*}\mathbb{C}[G] \otimes_{\mathbb{C}[H]} \big( \mathbb{C}[G] \otimes_{\mathbb{C}[H]} \Gamma_cS_Y[n] \big) \xrightarrow{\mu_{G \times_HY}} \mathbb{C}[G] \otimes_{\mathbb{C}[H]} \Gamma_cS_Y[n],  
\end{align*}
sends $\delta_e \in \mathbb{C}[G]$ as
\[
    \delta_e \mapsto \delta_{g^{-1}} \otimes \delta_g \mapsto \delta_{g^{-1}} \otimes \bar{\varphi}(\delta_g) = \delta_{g^{-1}} \otimes \varphi(1) \mapsto \delta_{g^{-1}} \otimes \delta_e \otimes \varphi(1) \mapsto \delta_{g^{-1}} \otimes g^{-1}\varphi(1).
\]
This shows that 
\[
        \Big( r_G^{\{e\}} \circ \mathrm{HH}_G^n(\alpha,G \times_H Y) \circ i_H^G \circ j_*(\bar{\varphi})\Big) (1)  = \delta_{g^{-1}} \otimes g^{-1}\cdot \varphi(1) \in \Hom_{\mathbf{K}^+(\mathbb{C})} (\mathbb{C}, \mathbb{C}[g^{-1}H] \otimes_{\mathbb{C}[H]}\Gamma_cS_Y[n]).
\]
By the pull-back via $g^{-1} \colon gH \times_H Y \to H \times_H Y \cong Y$, corresponding to $g \colon \mathbb{C}[g^{-1}H] \otimes_{\mathbb{C}[H]}\Gamma_cS_Y[n] \to \mathbb{C}[H] \otimes_{\mathbb{C}[H]} \Gamma_cS_Y[n]$, we go back to the original element $\varphi$.
\end{proof}

\end{document}